\newtheorem{Theorem}{Theorem}[section]
\newtheorem{Lemma}{Lemma}[section]
\newtheorem{Corollary}{Corollary}[section]
\begin{document}

\title{
Derivation and Analysis of Simplified Filters for Complex Dynamical Systems
}

\author{
Wonjung Lee
\thanks {
Mathematics Institute
and Centre for Predictive Modelling, School of Engineering, 
University of Warwick, 
U.K.
(W.Lee@warwick.ac.uk).
}
\and Andrew Stuart
\thanks {
Mathematics Institute, University of Warwick, 
U.K.
(A.M.Stuart@warwick.ac.uk).
}
}

\pagestyle{myheadings} 
\markboth{Derivation and Analysis of Simplified Filters for Complex Dynamical Systems}{Wonjung Lee and Andrew Stuart}
\maketitle

\begin{abstract}
Filtering is concerned with the sequential estimation of the state,
and uncertainties, of a Markovian system, given noisy observations. 
It is particularly difficult to achieve accurate filtering in
complex dynamical systems, such as those arising in turbulence,
in which effective low-dimensional representation of the desired
probability distribution is challenging. 
Nonetheless recent advances have shown considerable success in filtering 
based on certain carefully chosen simplifications 
of the underlying system, which 
allow closed form filters. This leads to filtering
algorithms with significant, but judiciously chosen, model error.
The purpose of this article is to analyze the effectiveness of these 
simplified filters, and to suggest modifications of them which 
lead to improved filtering in certain time-scale regimes. 
We employ a Markov switching process for the true signal underlying 
the data, rather than working with a fully resolved DNS PDE model. 
Such Markov switching models haven been demonstrated to
provide an excellent surrogate test-bed for the turbulent bursting 
phenomena which make filtering of complex physical models, such as
those arising in atmospheric sciences, so challenging.
\end{abstract}

\begin{keywords}
\smallskip
Sequential filtering, Bayesian statistics, Complex dynamical systems, 
Model error.

{\bf AMS subject classifications.}
60G35, 93E11, 94A12
\end{keywords}

\section{Introduction}
\label{sec:intro}

\subsection{Overview}
Filtering is concerned with the sequential updating of Markovian systems,
given noisy, partial observations of the system state 
\cite{law2014data,majda2012filtering,reich2015probabilistic}.
Due to the increasing prevalence of data in all areas of science and 
engineering, and due to 
the inherent complexity of physical models developed for the description of
many phenomena 
arising in science and engineering, the need for accurate and speedy 
filters is paramount. However in its full form filtering requires
the description of a time-evolving probability distribution on the 
system state, conditioned on data, which for many systems can be hard 
to represent in a computationally tractable way. This is a particular
challenge for the complex physical models arising in areas such
as atmospheric sciences 
\cite{kalnay2003atmospheric},
oceanography 
\cite{bennett1992inverse}
and
oil reservoir simulation 
\cite{oliver2008inverse}.
However a recent body of work by Majda and coworkers 
\cite{harlim2008filtering, majda2010mathematical, majda2012filtering,
castronovo2008mathematical,
keating2011new, branicki2013dynamic,
branicki2012filtering, sapsis2013statistically, harlim2013test}
has demonstrated the possibility of using drastic simplifications of 
the models for complex turbulent phenomena in order to construct effective
filters which are computationally tractable in real-time. The
underlying philosophy of this work is to replace the true underlying
Markovian model (often deterministic, but chaotic) with a simplified
stochastic model which captures the key physical phenomena 
at the statistical level yet
is amenable to closed form expressions for the purpose of filtering.
It is possible to interpret this work as providing an important step
towards the adoption of {\em physically informed machine learning},
going beyond traditional machine learning methodologies which
often attempt to build models from the data alone 
\cite{bishop2006pattern, murphy2012machine}.
The purpose of our work is to shed further light on this body of work,
through analysis, through the derivation of new methods in the same
spirit, and through careful numerical experiments.

In order to carry out this program we do not work with a full complex
model of turbulence for our true signal, but rather work with a  simple
switching stochastic model (SSM), a stochastic differential equation 
driven by a sign-alternating two-state Markov 
process \cite{mao2006stochastic,walter2006conditional}.  The system 
is either forced or dissipated depending on the sign of the driving 
signal, and as a consequence admits intermittent bursting phenomena,
similar to what is seen in real turbulent signals
\cite{zakharov1992kolmogorov, frisch1996turbulence, bohr2005dynamical}.
The use of this model as a simplified model for turbulent bursting, and 
demonstration of its effectiveness in this context, may be seen from 
the papers \cite{gershgorin2010test, gershgorin2010improving}.
This SSM, then, is viewed as the ``true'' Markov model
whose signals generate the data. Our objective is to find simplified
models, amenable to filtering, which capture the essential features of
the SSM. 
We now define the filtering problem and
outline the simplified models that we consider.

\subsection{The True Model and Model Error}
Consider an $\mathbb{R}^d$-valued Markov process 
$x(t)$ where $t \geq 0$. The process is hidden
and we only have access to $y_n$, $n \in \mathbb{N}$, 
which is a (partial) noisy observation of
$x_n \equiv x(nT)$ for some $T > 0$.
For $Y_{n}:=\{y_1,\cdots,y_{n}\}$ the key objective in probabilistic 
filtering is the sequential updating of $\mathbb{P}(x_n|Y_n)$
\cite{kushner1967approximations, 
jazwinski1970stochastic, 
anderson1979optimal, 
doucet2001sequential,
majda2012filtering}.

To perform filtering,
the standard approach adopted in large
scale geophysical applications is to alternate
the uncertainty propagation
$\mathbb{P}\left( x_{n-1} \vert Y_{n-1} \right)  \mapsto  \mathbb{P}\left( x_n \vert Y_{n-1}\right)$,
and the data acquisition
$\mathbb{P}\left( x_{n} \vert Y_{n-1} \right)  \mapsto \mathbb{P}\left( x_n \vert Y_{n}\right)$
in a sequential manner.
The former step
corresponds to probabilistic solution of
the governing equation for $x(t)$,
while the latter step is accomplished by 
Bayes' rule
$\mathbb{P}( x_n \vert Y_n )  \propto \mathbb{P}(x_n \vert Y_{n-1}) \mathbb{P}(y_n \vert x_n)$,
which asserts that
the posterior distribution is proportional to the product of the prior distribution and 
the likelihood (viewed as function of $x_n$).
Examples of Bayesian filters include 
the Kalman filter
\cite{Kalman60,
kalman1961new},
the extended Kalman filter 
\cite{gelb1974applied},
the ensemble Kalman filter
\cite{evensen2009data},
the particle filter
\cite{gordon1993novel}
and the Gaussian mixture filter
\cite{sorenson1971recursive,
chen2000mixture, 
stordal2011bridging}.

In this paper the true model $x_n$ underlying the data will be 
found from discrete time sampling of the
following switching stochastic model, or SSM for short:  
\begin{equation}
\begin{split}
\label{eq:fm}
\textbf{(SSM)}\qquad
\begin{cases}
du &= -\gamma u dt + \sigma_u dB_u \\
\gamma &\in  \,\,\,\{\gamma_{+} ,\gamma_{-}  \} 
\end{cases}
\end{split}
\end{equation}
where $\gamma(t)$
is a Markov process,
alternately
taking constant values of
$\{\gamma_{+}>0,\gamma_{-}<0\}$.
The distribution functions of the random variables
\begin{equation*}
\begin{split}
\tau^{\gamma_{+}} &= \inf \{ t: \gamma(t)=\gamma_{-} \vert \gamma(0)=\gamma_{+}\}\\
\tau^{\gamma_{-}} &= \inf \{ t: \gamma(t)=\gamma_{+} \vert \gamma(0)=\gamma_{-}\}
\end{split}
\end{equation*}
are given by
\begin{equation*}
\begin{split}
\mathbb{P}(\tau^{\gamma_{+}} < t) &= 1-e^{-\frac{\lambda_{+}}{\epsilon} t} \\
\mathbb{P}(\tau^{\gamma_{-}} < t) &= 1-e^{-\frac{\lambda_{-}}{\epsilon} t}
\end{split}
\end{equation*}
respectively.
The positive parameter $\epsilon$
determines the transition rates,
accounting for 
the time-scale separation 
between 
input signal
$\gamma$
and 
output response
$u$.
In case of small $\epsilon$,
there is rapid switching 
between $\gamma_{+}$ and $\gamma_{-}$.
On the other hand,
switching is a rare event when $\epsilon$ is large.
In the general notation above we have $x=(u,\gamma).$

For $x_n=(u_n,\gamma_n)=(u(nT),\gamma(nT))$ 
we assume the noisy observations are of the form
\begin{equation}
\label{eq:obs}
\qquad y_n = u_n + \eta_n, \qquad \eta_n \sim \mathcal{N}({0},R_n) 
\end{equation}
where
$\{\eta_n\}_{n \ge 0}$ is an 
independent and identically distributed 
centred Gaussian.
The filtering distribution 
$\mathbb{P}(x_n|Y_n)$,
determined by
(\ref{eq:fm}) and (\ref{eq:obs}),
does not allow for a closed-form representation.
In the following, 
we address the problem
through {\em filtering with model error}: that is,
instead of a straightforward application
to the genuine system, we 
replace the process $x$ by a different Markov model
which is more amenable to filtering explicitly than is the SSM.
We tune the parameters of the new models to 
maximize their statistical resemblances with the SSM.
It is important to note that in this paper,
due to the low dimensionality of SSM,
the introduction of reduced models used for filtering
presumably does not
lead to a
significant saving of computational costs.
However the aim is
to understand the application of the methodology developed
by Majda and coworkers which is targetted at situations 
where the true signal is very expensive to simulate, 
whilst the models used for filtering are orders of magnitude cheaper.
Furthermore we investigate
a new theory-based conceptual framework 
to illustrate this body of work, and to develop generalizations of it,
working in a simple setting where the true signal of interest
comes from the SSM.

There are four forms of filters with 
model error considered in this paper (acronyms explained later). 
The MSM and DSM are particularly relevant when $\epsilon$ is smaller,
while the dMSM and dDSM are designed especially for larger $\epsilon$.
The MSM is found from the SSM by replacing
the switching process $\gamma$ by its mean (constant in time) value,
giving rise to a process ${\bar u}$ instead of $u$.
The  DSM is found by replacing
the switching process $\gamma$ by the solution of an 
Ornstein-Uhlenbeck (OU) 
process, giving rise to a process  ${\widehat u}$ instead of $u.$
The dMSM is found by replacing $u$ by a process with a constant $\gamma$
in time, but choosing that constant randomly, according to carefully 
chosen weights. This leads to  replacement of
$u$ by a process ${\bar u}'.$ And finally the dDSM  
is found by replacing $u$ by 
${\widehat u}'$
in which $\gamma$ is given by
one of two OU processes for all time, but 
choosing the OU process randomly, according to carefully
chosen weights.
From now on, it will help to keep in mind that MSM and DSM are approximations of SSM for smaller $\epsilon$,
and dMSM and dDSM are approximations of SSM for larger $\epsilon$.

\subsection{Our Contributions}
Existing and extensive numerical studies
naturally give rise to two fundamental questions 
about filtering with model error: (i)
what are the precise 
conditions under which
a given filter with model error is the best 
choice out of some class of filters;
and (ii)
how to choose the free parameters so 
as to maximize the consequent filtering accuracy.
To address these questions 
we investigate the accuracy of
the filters with model error via careful numerical experiments,
and introduce a systematic approach for parameter 
determination.
Specifically, our contributions in the present paper are as follows:

\begin{itemize}

\item 
in addition to studying the filters with model error
MSM and DSM, introduced in  
\cite{gershgorin2010test, gershgorin2010improving},
we also introduce our own filters with model error: 
dMSM and dDSM;

\item 
we build a Gaussian filter and a Gaussian mixture filter for SSM;

\item we show the consistency of the reduced models 
in the extremely small (large) $\epsilon$ regime 
by proving limit theorems that connect the filter signal models
MSM (dMSM) and DSM (dDSM)
with the true signal model SSM;

\item 
we use asymptotic analysis in the 
small (large) $\epsilon$ regime
to obtain analytic formulae
for the adaptive parameters of the simplified models
MSM (dMSM) and DSM (dDSM);

\item 
we employ optimization to solve minimization problem 
that yields suitable parameters for the simplifications
when the scale-separation is
not extreme but
moderate or weak;

\item 
we perform direct numerical simulations to show 
the accuracy and feasibility of the methods. 

\end{itemize}

\subsection{Organization of the Paper}
The paper 
is organized 
as follows.
We precisely define the models used for filtering 
in section~\ref{sec:appssm}.
Our main results are in
section~\ref{sec:DSMparamters},
where various tools, tuned to the relevant
parameter regime for $\epsilon$, are 
deployed to improve filtering accuracy.
We perform numerical experiments in section~\ref{sec:ns} 
and draw conclusions in section~\ref{sec:conclusions}.
Lengthy calculations 
concerning the analysis of models 
are gathered in the appendices, in order to improve accessibility of
the paper.

\section{Filtering With Model Error: Simplifications of SSM}
\label{sec:appssm}
Here we define
four adaptive approximate models for SSM,
based on the analysis of
the qualitative behaviors of the switching process,
and use them to build filters.
Subsection~\ref{subsec:ssr} is concerned with the case when $\epsilon$ is small
(scale-separation regime)
and
subsection~\ref{subsec:rer} is when $\epsilon$ is large
(rare-event regime).

\subsection{Scale-Separation Regime}
\label{subsec:ssr}

\subsubsection{Mean Stochastic Model (MSM)}
In many multi-scale problems, 
the governing equation in which the driving signal is significantly faster
is replaced by an equation 
with non-oscillatory coefficient found as a limit (usually in a weak sense) 
of scale-separation \cite{pavliotis2008multiscale,
bensoussan2011asymptotic,
cioranescu2000introduction}.
This work suggests that, when $\epsilon$ is sufficiently small,
the mean stochastic model (MSM)
\begin{equation}
\begin{split}
\label{eq:msm}
\textbf{(MSM)}\qquad
\left\{ \begin{array}{ll}
d\bar{u} &= - \bar{\gamma}  \bar{u}\,dt+\sigma_u dB_u \\
\bar{\gamma} &= \text{const}
\end{array} \right. 
\end{split}
\end{equation}
can be a good
approximation
of SSM.
Using MSM for filtering we note that, provided $\bar{u}_0$ is Gaussian,
all distributions 
$\mathbb{P}(\bar{u}_{n-1}\vert  Y_{n-1}) \mapsto
\mathbb{P}(\bar{u}_{n}\vert  Y_{n-1}) \mapsto
\mathbb{P}(\bar{u}_n\vert  Y_n)$ 
are Gaussians and may be updated by the Kalman filter \cite{Kalman60}.

\begin{figure}
  \centering
\includegraphics[width=0.91\textwidth]{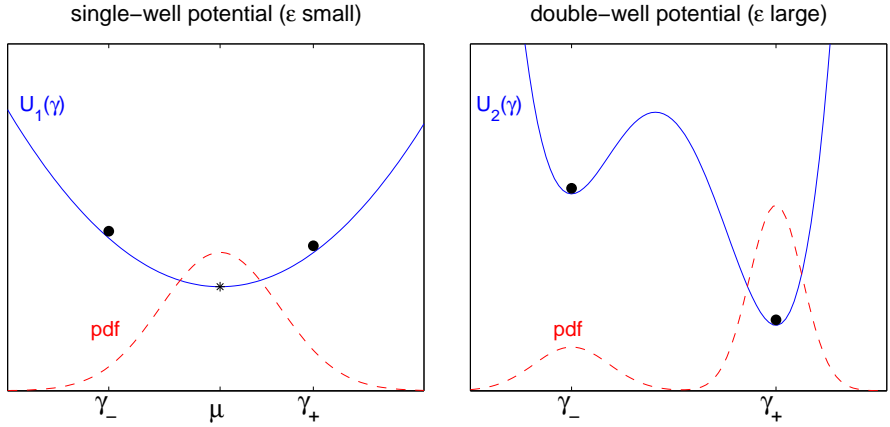}
\caption{
Regularized
modeling
of
the qualitative behaviors of
switching process. 
}
\label{fig:ssm} 
\end{figure}

\subsubsection{Diffusive Stochastic Model (DSM)}
The diffusive stochastic model (DSM) is given by
\begin{equation*}
\begin{split}
\textbf{(DSM)}\qquad
\left\{ 
\begin{array}{ll}
d\widehat{u} &= - \widehat{\gamma}  \widehat{u}\,dt+\sigma_u dB_u \\
d\widehat{\gamma} & = -\frac{\nu}{\epsilon}(\widehat{\gamma}-\mu) \,dt+
\frac{\sigma}{\sqrt{\epsilon}} dB_{\gamma} \\
\end{array} \right. 
\end{split}
\end{equation*}
Note that $\widehat{\gamma}$ is in an Ornstein-Uhlenbeck process: 
solution of the Langevin equation
\begin{equation}
\begin{split}
\label{eq:le}
d{\widetilde{\gamma}} & = -\frac{1}{\epsilon}\nabla U({\widetilde{\gamma}})dt+
\frac{\sigma}{\sqrt{\epsilon}} dB_{\gamma}
\end{split}
\end{equation}
with the potential
\begin{equation}
\label{eq:pot}
\begin{split}
U(x)=U_1(x):= \frac{\nu}{2}(x-\mu)^2.
\end{split}
\end{equation}
We aim to tune this process to match the response of the system,
in the observed variable $u$. The reason for interest in this model
is that, although exact filtering is not possible, 
it is possible to compute an approximate Gaussian filter,
based on exact propagation of the first two moments.
Indeed provided $(\widehat{u}(0),\widehat{\gamma}(0))$ is joint Gaussian,
the mean and covariance of 
$(\widehat{u}(T),\widehat{\gamma}(T))$ are exactly solvable.
Denoting $\widehat{\gamma}_n \equiv \widehat{\gamma}(n T)$,
the resultant moment mapping can be used for uncertainty propagation:
$\mathbb{P}(\widehat{u}_{n-1}, \widehat{\gamma}_{n-1}\vert  Y_{n-1}) \mapsto
\mathbb{P}(\widehat{u}_{n}, \widehat{\gamma}_{n}\vert  Y_{n-1}) $.
Under this Gaussian approximation,
the Kalman filter may be applied to obtain 
$\mathbb{P}(\widehat{u}_{n}, \widehat{\gamma}_{n}\vert  Y_{n-1})
\mapsto
\mathbb{P}(\widehat{u}_n, \widehat{\gamma}_{n}\vert  Y_n)$.
The resulting filter is named 
the stochastic parametrization extended Kalman filter (SPEKF)
in \cite{gershgorin2008nonlinear} where it was introduced.
Finally, a proper marginalization at every step yields the 
object of interest: $\mathbb{P}(\widehat{u}_n\vert  Y_n)$.

\subsection{Rare-Event Regime}
\label{subsec:rer}

\subsubsection{Dual-mode Mean Stochastic Model (dMSM)}
\label{sec:dsm}
When $\epsilon$ is large enough, transitions in $\gamma$ are rare.
To study this case, we build the following 
dual-mode mean stochastic model (dMSM):
\begin{equation}
\begin{split}
\label{eq:msmd}
\textbf{(dMSM)}\qquad
\left\{ \begin{array}{ll}
d\bar{{u}}' &= - \bar{{\gamma}}'  \bar{{u}}'\,dt+\sigma_u dB_u \\
\bar{{\gamma}}' &=  
\left\{ \begin{array}{ll}
{\gamma}_{+} \;\text{with probability} \;\bar{\rho}_{+} \;\text{for}\;  t \geq 0 \\
{\gamma}_{-} \;\text{with probability} \;\bar{\rho}_{-}= 1-\bar{\rho}_{+}\;\text{for}\;  t \geq 0 \\
\end{array} \right. 
\end{array} \right. 
\end{split}
\end{equation}
as the reduced modeling of SSM.
This can be viewed as an example of the more general 
switching linear dynamical system model \cite{ghahramani2000variational}.

If the probability distribution of $\bar{u}'(0)$ 
is the sum of weighted Gaussian kernels, then
note that
\begin{equation}
\label{eq:probmsmd}
\begin{split}
\mathbb{P}(\bar{u}'(T))
=\mathbb{P}(\bar{\gamma}'=\gamma_{+})\mathbb{P}(\bar{u}'(T)\vert \bar{\gamma}'=\gamma_{+})
+\mathbb{P}(\bar{\gamma}'=\gamma_{-})\mathbb{P}(\bar{u}'(T)\vert \bar{\gamma}'=\gamma_{-}).
\end{split}
\end{equation}
Under this assumption on $\bar{u}'(0)$, then, we may use 
the Gaussian mixture filter 
to obtain the exact filtering solution of dMSM.
The procedure
$\mathbb{P}(\bar{u}'_{n-1}\vert  Y_{n-1}) \mapsto
\mathbb{P}(\bar{u}'_{n}\vert  Y_{n-1})$
is performed using (\ref{eq:probmsmd}),
and a parallel application of the Kalman filter to each 
Gaussian kernel, along with updating of the weights of each kernel, 
completes the update $\mathbb{P}(\bar{u}'_{n}\vert  Y_{n-1})
\mapsto
\mathbb{P}(\bar{u}'_n\vert  Y_n)$.

In practice, the geometric growth in the number of kernels
in the number of prediction steps
prevents tractable exact inference  as data is accumulated
sequentially.
One resolution that we adopt here is through
the projection of the filtering solution onto 
the space of tractable distributions.
Following the idea of
assumed density filtering
\cite{maybeck1982stochastic},
a large mixture of Gaussians is replaced by a smaller mixture of Gaussians
at regular time-intervals, while filtering progresses \cite{5977695}.

\subsubsection{Dual-mode Diffusive Stochastic Model (dDSM)}
As in the DSM we now try to use a diffusion process to model
the switching process $\gamma$, in order to benefit from the
possibility of propagating second moments exactly, as is done in the DSM.
When $\epsilon$ is large, however, the process
(\ref{eq:le}) with the single-well potential \eqref{eq:pot} 
is not suitable for mimicking rare transitions.
We instead consider 
a double-well potential $U_2(x)$ for $U(x)$
(illustrated in the right-panel of Fig.~\ref{fig:ssm}).
In this scenario, the motion of $\widetilde{\gamma}$
is captured within either of the potential wells for significant 
time periods, but random perturbations allow it to effectively
jump over the potential barrier
and enter the parallel metastable state.

Based upon the quadratic expansions
\begin{equation*}
\begin{split}
U_2(x) 
& \simeq 
\left\{ \begin{array}{ll}
U_2(\mu_{+})+\frac{\nu_{+}}{2}(x-\mu_{+})^2 \qquad
\text{when} \;\;|x-\mu_{+}| \;\; \text{is small} \\
U_2(\mu_{-})+\frac{\nu_{-}}{2}(x-\mu_{-})^2 \qquad
\text{when} \;\;|x-\mu_{-}| \;\; \text{is small}\\
\end{array} \right. 
\end{split}
\end{equation*}
we 
build a new model 
\begin{equation}
\begin{split}
\label{eq:dsmdeq}
\textbf{(dDSM)}\qquad
\left\{ \begin{array}{ll}
d\widehat{{u}}' &= - \widehat{{\gamma}}'  \widehat{{u}}'\,dt+\sigma_u dB_u \\
d\widehat{{\gamma}}' & 
=
\left\{ \begin{array}{ll}
-\frac{\nu_{+}}{\epsilon}(\widehat{{\gamma}}'
-\mu_{+}) \,dt+
\frac{\sigma_{+}}{\sqrt{\epsilon}} dB_{\gamma} 
\;\text{with probability} \;\widehat{\rho}_{+}\\
-\frac{\nu_{-}}{\epsilon}(\widehat{{\gamma}}'
-\mu_{-}) \,dt+
\frac{\sigma_{-}}{\sqrt{\epsilon}} dB_{\gamma} 
\;\text{with probability} \;\widehat{\rho}_{-}= 1-\widehat{\rho}_{+}
\end{array} \right. 
\end{array} \right. 
\end{split}
\end{equation}
where
the uncertainty is separately delivered by two independent sets of SDEs.  
Eq.~(\ref{eq:dsmdeq}) is named by the dual-mode diffusive stochastic model (dDSM).

When
$(\widehat{u}'(0),\widehat{\gamma}'(0))$
is a Gaussian mixture,
utilizing the exact solvability of
the first two moments of
the propagated distributions (as for DSM),
the probability of
$(\widehat{u}'(T),\widehat{\gamma}'(T))$
can be approximated as Gaussian mixture with the number of kernels doubled,
similarly to dMSM. As for dMSM we may perform a reduction of
the number of mixtures to retain computational tractability.
In this way, the approximate Gaussian mixture filter
$\mathbb{P}(\widehat{u}'_{n-1}, \widehat{\gamma}'_{n-1}\vert  Y_{n-1}) \mapsto
\mathbb{P}(\widehat{u}'_{n}, \widehat{\gamma}'_{n}\vert  Y_{n-1}) 
\mapsto
\mathbb{P}(\widehat{u}'_n, \widehat{\gamma}'_{n}\vert  Y_n)$ 
is established.

\section{Model Validations}
\label{sec:DSMparamters}
In this section, we proceed
(i) to validate the proposed models,
and (ii) to determine the adaptive parameters.
We classify
the $\epsilon$ parameter regime
into the six regions;
the scale-separation limit $\{\epsilon \to 0\}$, 
the sharp scale-separation regime $\{\epsilon \ll 1\}$, 
the imprecise scale-separation regime $\{\epsilon <1 \}$, 
the moderately rare-event regime $\{\epsilon >1\}$, 
the extremely rare-event regime $\{\epsilon \gg 1\}$, 
the rare-event limit $\{\epsilon \to \infty\}$.
Subsection~\ref{sec:equivalence}
is devoted to the
study of the case
$\{\epsilon \to 0, \epsilon \to \infty \}$, 
and
subsection~\ref{subsec:dsmcoeff}
to
$\{\epsilon \ll 1, \epsilon \gg 1 \}$, 
and
subsection~\ref{subsec:mini}
to
$\{\epsilon < 1, \epsilon > 1\}$.

\subsection{Convergence Results}
\label{sec:equivalence}
Here we demonstrate 
the consistency of the simplified models
by showing that
$u_T, \widehat{u}_T \to \bar{u}_T$
(subscript notation will be abused and $u_T = u(T)$)
as $\epsilon \to 0$
and that
$u_T, \widehat{u}'_T \to \bar{u}'_T$ as $\epsilon \to \infty$ 
in senses 
elucidated
in what follows. All
proofs are deferred to the Appendix~\ref{sec:equivalenceproof}.

\subsubsection{Scale-Separation Limit}
The main results here are the following theorem and corollary; 
the constants are defined in the developments following their
statements. 

\bigskip

\begin{Theorem}
\label{ssmthm}
Assume that ${u}_0$, $\bar{u}_0$
and $\widehat{u}_0$ 
are identically distributed Gaussian random variables, and assume that 
$({u}_0,{\gamma}_0)$
and $(\widehat{u}_0,\widehat{\gamma}_0)$ are 
independent pairs of random variables.
If $\bar{\gamma}$ and $\mu$ are equal to
$\bar{\gamma}_\infty
\equiv
\frac{\lambda_{-}\gamma_{+}+\lambda_{+}\gamma_{-}}{\lambda_{-} +\lambda_{+}}$,
then, for any fixed $T>0$, 
as $\epsilon \to 0$
the mean and variance of $u_T$ and $\widehat{u}_T$ 
converge to those of $\bar{u}_T$.
\end{Theorem}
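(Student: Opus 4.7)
The plan is to write each of the three processes in closed form by variation of parameters, reduce the first two moments to expectations of exponentials of the time-integrated decay coefficient, and then show that those integrals concentrate on $\bar{\gamma}_\infty T$ as $\epsilon\to 0$. Since the signal SDEs are linear in the observed variable,
\[
u_T = e^{-\Gamma_T}u_0 + \sigma_u\int_0^T e^{-(\Gamma_T-\Gamma_s)}\,dB_u(s),\qquad \Gamma_t:=\int_0^t\gamma(r)\,dr,
\]
with analogous representations for $\bar{u}_T$ (with $\Gamma_t=\bar{\gamma}\,t$) and $\widehat{u}_T$ (with $\widehat{\Gamma}_t:=\int_0^t\widehat{\gamma}(r)\,dr$). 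Independence of $u_0$ from $(\gamma,B_u)$, Fubini, and the It\^o isometry applied conditionally on $\gamma$ reduce the first two moments to
\[
\mathbb{E}[u_T]=\mathbb{E}[e^{-\Gamma_T}]\,\mathbb{E}[u_0],\qquad \mathbb{E}[u_T^2]=\mathbb{E}[e^{-2\Gamma_T}]\,\mathbb{E}[u_0^2]+\sigma_u^2\int_0^T\mathbb{E}[e^{-2(\Gamma_T-\Gamma_s)}]\,ds,
\]
and likewise for $\widehat{u}_T$, $\bar{u}_T$. The theorem is therefore equivalent to showing that $\mathbb{E}[e^{-\alpha(\Gamma_T-\Gamma_s)}]$ and $\mathbb{E}[e^{-\alpha(\widehat{\Gamma}_T-\widehat{\Gamma}_s)}]$ both converge to $e^{-\alpha\bar{\gamma}_\infty(T-s)}$ for $\alpha\in\{1,2\}$, with enough uniformity in $s$ to commute the limit with the outer $ds$ integral.

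For the SSM I would exploit the time rescaling $\gamma(\epsilon\cdot)$, which is a two-state Markov chain with $\epsilon$-independent rates $\lambda_\pm$ and unique invariant distribution assigning mass $\lambda_\mp/(\lambda_++\lambda_-)$ to $\gamma_\pm$; its stationary mean is exactly $\bar{\gamma}_\infty$. The ergodic theorem (robust to the initial distribution of $\gamma_0$) then gives
\[
\Gamma_T-\Gamma_s=\epsilon\int_{s/\epsilon}^{T/\epsilon}\gamma(\epsilon\tau)\,d\tau\longrightarrow \bar{\gamma}_\infty(T-s)\qquad\text{in probability as }\epsilon\to 0.
\]
Since $|\Gamma_T-\Gamma_s|\le(T-s)\max(|\gamma_+|,|\gamma_-|)$, the relevant exponentials are uniformly bounded in $s$ and $\omega$, so bounded convergence transports the limit through both the expectation and the $ds$ integral.

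For the DSM I would solve the OU SDE explicitly and integrate to obtain $\widehat{\Gamma}_T-\widehat{\Gamma}_s$ as a Gaussian random variable whose mean is $\mu(T-s)+(\widehat{\gamma}_0-\mu)\int_s^T e^{-\nu t/\epsilon}\,dt$ and whose variance is computable in closed form from the OU covariance; both the transient mean correction and the variance are $O(\epsilon)$ uniformly in $s\in[0,T]$. Hence $\widehat{\Gamma}_T-\widehat{\Gamma}_s\to\mu(T-s)$ in $L^2$, and because log-normal moments depend continuously on their Gaussian parameters with bounds uniform in $\epsilon$, one gets $\mathbb{E}[e^{-\alpha(\widehat{\Gamma}_T-\widehat{\Gamma}_s)}]\to e^{-\alpha\mu(T-s)}$ uniformly in $s$. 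Setting $\bar{\gamma}=\mu=\bar{\gamma}_\infty$ in the explicit MSM expressions $\mathbb{E}[\bar{u}_T]=e^{-\bar{\gamma}_\infty T}\mathbb{E}[u_0]$ and $\mathbb{E}[\bar{u}_T^2]=e^{-2\bar{\gamma}_\infty T}\mathbb{E}[u_0^2]+\sigma_u^2\int_0^T e^{-2\bar{\gamma}_\infty(T-s)}\,ds$, the corresponding SSM and DSM moments converge to them, and the variance follows since it is a continuous function of the first two moments.

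The main obstacle I anticipate is not conceptual but bookkeeping: justifying the conditional Fubini / It\^o isometry step for SSM where $\gamma$ is non-Gaussian, checking the interchange of limit and $ds$ integral (bounded convergence for SSM, uniform integrability of log-normals for DSM), and verifying that the possibly non-stationary initial $\widehat{\gamma}_0$ contributes only an $O(\epsilon)$ transient. The ergodic limit for the two-state chain, once the $\epsilon$-rescaling is extracted, is a standard law of large numbers and should not cause difficulty.
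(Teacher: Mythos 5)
Your proposal is correct and shares the paper's overall skeleton: variation of constants reduces the first two moments of $u_T$, $\widehat{u}_T$ to the quantities $\langle e^{\alpha(\Gamma_T-\Gamma_s)}\rangle$, $\langle e^{\alpha(\widehat{\Gamma}_T-\widehat{\Gamma}_s)}\rangle$ for $\alpha=-1,-2$ (the paper's Eq.~(3.4) and Lemma~3.1), and the theorem then rests on showing these MGFs converge to $e^{\alpha\bar{\gamma}_\infty(T-s)}$, with bounded convergence handling the outer $ds$ integral. Where you diverge is in how the two MGF limits are proved. The paper routes both through a general averaging lemma for the backward Kolmogorov equation (multiscale expansion in the generator, Fredholm alternative, Lemma~C.1) to get weak convergence of the integral processes, and then a separate lemma (Lemma~C.2, integration by parts on distribution functions) to upgrade weak convergence to MGF convergence, invoking Chernoff tail bounds in the DSM case. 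You instead use, for the SSM, the ergodic theorem for the rescaled two-state chain together with the almost-sure bound $|\Gamma_T-\Gamma_s|\le(T-s)\max(|\gamma_+|,|\gamma_-|)$, so that bounded convergence gives the MGF limit directly without any tail analysis; and, for the DSM, the explicit Gaussian representation of $\widehat{\Gamma}_T-\widehat{\Gamma}_s$ and the log-normal moment formula, which is exactly the computation the paper carries out in Appendix~B.2--B.3 for the asymptotic matching but does not use in the convergence proof itself. Your tactics are more elementary and self-contained for this specific model; the paper's generator-based lemma is heavier but applies uniformly to both the jump process and the diffusion and would extend to fast processes that are neither bounded nor Gaussian. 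The only point to make explicit is that your log-normal step for the DSM requires $\widehat{\gamma}_0$ (not just $\widehat{u}_0$) to be Gaussian, or at least to have a finite exponential moment near zero so the $O(\epsilon)$ transient $(\widehat{\gamma}_0-\mu)b_\gamma$ washes out; the paper makes the same implicit assumption, so this is not a gap relative to the original.
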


\begin{proof}[of Theorem \ref{ssmthm}]
This follows from Lemmas~\ref{thm:uconv2}, \ref{thm:ssmmgf}, \ref{thm:dsmmgf},
using the explicit calculations which are presented after the corollary below.
\end{proof}

\bigskip

\begin{Corollary}
\label{cor1}
Under the conditions in Theorem~\ref{ssmthm},
the mean and variance in
the Gaussian filters for
$u_n\vert Y_n$
(defined in Appendix~\ref{subsec:ssmgaussianfilter})
and
$\widehat{u}_n\vert Y_n$,
converge to
those of $\bar{u}_n|Y_n$,
for fixed $n > 0$,
as $\epsilon \to 0$.
\end{Corollary}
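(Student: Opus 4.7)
The plan is to prove the corollary by induction on $n$, exploiting the structure of the Gaussian filter as an alternation between a prediction step, which propagates moments through the dynamics over an interval of length $T$, and a linear-Gaussian update step applied to the observation \eqref{eq:obs}.

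For the base case $n=0$ the three filters are initialized with the same Gaussian marginal on $u_0$ (together with the appropriate marginals on $\gamma_0$ or $\widehat{\gamma}_0$), so the conclusion is immediate. For the inductive step, assume at time $(n-1)T$ the posterior means and variances of the three filters coincide in the $\epsilon\to 0$ limit. I would first handle the prediction step. This is where Theorem~\ref{ssmthm} is used, but the subtlety is that the initial law is now itself $\epsilon$-dependent, being the posterior from the previous stage. I would therefore promote Theorem~\ref{ssmthm} to a slightly stronger statement allowing the initial Gaussian mean and variance to depend on $\epsilon$, provided they converge as $\epsilon\to 0$. Since $u$ satisfies a conditionally linear SDE given the driving process $\gamma$ (or $\widehat{\gamma}$), the one-step moment map is affine in the initial mean and variance, with coefficients extracted from the explicit formulas used in Lemmas~\ref{thm:uconv2}, \ref{thm:ssmmgf}, \ref{thm:dsmmgf}. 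These coefficients remain bounded uniformly in $\epsilon$, so convergence of the initial moments propagates to convergence of the predicted moments.

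Next, the data step is handled by the Kalman formulas. Writing $m_n^-,\Sigma_n^-$ for the predicted mean and variance and $m_n,\Sigma_n$ for the filtered ones, the update $m_n = m_n^- + K_n(y_n-m_n^-)$, $\Sigma_n=(1-K_n)\Sigma_n^-$, with $K_n=\Sigma_n^-/(\Sigma_n^-+R_n)$, depends continuously (indeed rationally) on $(m_n^-,\Sigma_n^-)$. Hence convergence of the predicted moments transfers to convergence of the posterior moments, closing the induction. The same observation sequence $Y_n$ is used across all three filters, which is automatic since $Y_n$ is a fixed realization drawn from the true SSM and does not move with $\epsilon$.

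The main obstacle I anticipate is not the induction skeleton but the joint extension of Theorem~\ref{ssmthm} required to feed the DSM prediction step, since the Gaussian filter for DSM propagates the joint mean and covariance of the augmented state $(\widehat{u},\widehat{\gamma})$ and then marginalizes. One must therefore check that not only the marginal moments of $\widehat{u}_T$ but also the cross-covariance $\mathrm{Cov}(\widehat{u}_T,\widehat{\gamma}_T)$, and hence the full joint Gaussian used at step $n$, behave correctly as $\epsilon \to 0$. The analogous issue for SSM is milder because $\gamma_0$ is independent of $u_0$ with a fixed two-state law, but the cross-moments between $u_T$ and $\gamma_T$ still require tracking. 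I expect both to follow by a direct inspection of the small-$\epsilon$ asymptotics already carried out in Lemmas~\ref{thm:ssmmgf} and \ref{thm:dsmmgf}, but this is the step that requires genuine calculation rather than being a purely formal consequence of Theorem~\ref{ssmthm}.
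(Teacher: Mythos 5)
Your proposal is correct and is essentially the paper's argument written out in full: the paper's entire proof is the observation that the Kalman data-assimilation map is a continuous (rational) function of the predicted mean and variance, which is exactly the engine of your induction, with Theorem~\ref{ssmthm} supplying the prediction step. The two subtleties you flag --- that the prediction step at stage $n$ is applied to an $\epsilon$-dependent posterior rather than the fixed initial law of Theorem~\ref{ssmthm}, and that for DSM the filter carries the joint covariance of $(\widehat{u},\widehat{\gamma})$ so the cross-covariance must also be controlled as $\epsilon\to 0$ --- are real and are left implicit in the paper, so your more careful treatment is a strict refinement rather than a different route.
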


\begin{proof}[of Corollary \ref{cor1}]
This follows from 
the data assimilation formula of the
Kalman filter
\cite{anderson1979optimal}.
\end{proof}

\bigskip

Let ${u}^\epsilon$ solve
\begin{equation}
\begin{split}
\label{eq:langevin}
d{u}^\epsilon &= -{\gamma}^\epsilon {u}^\epsilon dt + \sigma_u dB_u
\end{split}
\end{equation}
for a random process
${\gamma}^\epsilon$.
For
$\Gamma^\epsilon_t \equiv \int^t_0 {\gamma}^\epsilon (s) ds$
the integral process of
${\gamma}^\epsilon$
we have the
variation-of-constants yields
\begin{equation}
\begin{split}
\label{eq:voc}
u_T^\epsilon & = e^{-\Gamma^\epsilon_T}u_0^\epsilon 
+ \sigma_u \int^T_0  e^{-(\Gamma^\epsilon_T-\Gamma^\epsilon_t)}dB_u(t).
\end{split}
\end{equation}
Application of the It\^o formula shows that
the mean and covariance are given by
\begin{equation}
\begin{split}
\label{eq:uqmom}
\left\langle {u}^\epsilon_T \right\rangle & = \left\langle e^{-{\Gamma}^\epsilon_T}{u}^\epsilon_0 \right\rangle 
\\
\text{Var}(u^\epsilon_T) &=
\left\langle \left( e^{-{\Gamma}^\epsilon_T}u_0^\epsilon \right)^2  \right\rangle
-\left\langle e^{-{\Gamma}^\epsilon_T}u^\epsilon_0 \right\rangle^2
+ \sigma_u^2  \int^T_0  \left\langle e^{-2({\Gamma}^\epsilon_T-{\Gamma}^\epsilon_t)} \right\rangle dt.
\end{split}
\end{equation}
Here and  henceforth, $\langle \cdots\rangle$ denotes the statistical average.
Eq.~(\ref{eq:uqmom}) reveals that
the moment generating function (MGF) of integral process of $\gamma^\epsilon$ 
are particularly relevant to
the first two moments propagation of $u^\epsilon$
governed by (\ref{eq:langevin}).

\bigskip

\begin{Lemma}
\label{thm:uconv2}
Let 
$\bar{u}_t$ satisfy MSM
(\ref{eq:msm}).
If
\begin{equation}
\label{eq:mgfconvs}
\left\langle e^{\alpha( \Gamma^\epsilon_T-\Gamma^\epsilon_t )} \right\rangle 
\to 
e^{\alpha \bar{\gamma}(T-t) }
\quad \text{for} \;\;\; \alpha=-1, -2
\;\;\;\text{and} \;\;\; 0\leq t\leq T
\end{equation}
and
if
\begin{equation}
\begin{split}
\label{eq:mcconvconds}
\left\langle \left( e^{-{\Gamma}^\epsilon_T}u_0^\epsilon \right)^m  \right\rangle
\to 
\left\langle \left( e^{-\bar{\gamma} T} \bar{u}_0 \right)^m \right\rangle 
\quad \text{for} \;\;\; m=1,2
\end{split}
\end{equation}
as $\epsilon \to 0$
then
the mean and variance of 
${u}^\epsilon_T$ converge to those of $\bar{u}_T$.
Further
if
\begin{equation}
\begin{split}
\label{eq:l2convconds}
\left\langle 
\left( e^{-\Gamma^\epsilon_T}{u}^\epsilon_0 - e^{-\bar{\gamma}T}\bar{u}_0  \right)^2 \right\rangle 
\to 0
\end{split}
\end{equation}
as $\epsilon \to 0$
then
${u}^\epsilon_T$ converges to $\bar{u}_T$ 
in $L^2(\Omega;{\mathbb R}).$
The convergence rates are determined by 
those associated with Eqs.~(\ref{eq:mgfconvs}), (\ref{eq:mcconvconds})
and (\ref{eq:l2convconds}).
\end{Lemma}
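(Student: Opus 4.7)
The plan is to read both $u^\epsilon_T$ and $\bar u_T$ through the variation-of-constants representation \eqref{eq:voc}, so that
\[ u^\epsilon_T = e^{-\Gamma^\epsilon_T}u^\epsilon_0 + \sigma_u\int_0^T e^{-(\Gamma^\epsilon_T-\Gamma^\epsilon_t)}\,dB_u(t), \qquad \bar u_T = e^{-\bar\gamma T}\bar u_0 + \sigma_u\int_0^T e^{-\bar\gamma(T-t)}\,dB_u(t). \]
Throughout, $B_u$ is taken independent of $\gamma^\epsilon$, so after conditioning on $\gamma^\epsilon$ each stochastic integral becomes a Wiener integral with a deterministic integrand; this orthogonality is the tool that decouples the initial-condition contributions from the noise contributions in every expectation that appears.

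For the mean, the stochastic integral has vanishing (conditional, hence unconditional) expectation, so $\langle u^\epsilon_T\rangle = \langle e^{-\Gamma^\epsilon_T}u^\epsilon_0\rangle$, which tends to $\langle e^{-\bar\gamma T}\bar u_0\rangle = \langle \bar u_T\rangle$ by \eqref{eq:mcconvconds} at $m=1$. For the variance I invoke the It\^o identity \eqref{eq:uqmom}: the first two summands converge by \eqref{eq:mcconvconds} with $m=1,2$, while the last summand is $\sigma_u^2\int_0^T \langle e^{-2(\Gamma^\epsilon_T-\Gamma^\epsilon_t)}\rangle\,dt$, whose integrand tends pointwise to $e^{-2\bar\gamma(T-t)}$ by \eqref{eq:mgfconvs} with $\alpha=-2$; exchanging the limit with the $t$-integral yields convergence of the variances.

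For the $L^2$ statement I use the natural coupling in which $u^\epsilon$ and $\bar u$ are driven by the same $B_u$, so that
\[u^\epsilon_T - \bar u_T = \bigl(e^{-\Gamma^\epsilon_T}u^\epsilon_0 - e^{-\bar\gamma T}\bar u_0\bigr) + \sigma_u\int_0^T \bigl(e^{-(\Gamma^\epsilon_T-\Gamma^\epsilon_t)} - e^{-\bar\gamma(T-t)}\bigr)\,dB_u(t).\]
Conditioning on $\gamma^\epsilon$ (and on the pair $u^\epsilon_0,\bar u_0$) kills the cross term between the two summands, and the conditional It\^o isometry followed by unconditioning gives
\[\langle (u^\epsilon_T-\bar u_T)^2\rangle = \langle(e^{-\Gamma^\epsilon_T}u^\epsilon_0-e^{-\bar\gamma T}\bar u_0)^2\rangle + \sigma_u^2\int_0^T \Bigl(\langle e^{-2(\Gamma^\epsilon_T-\Gamma^\epsilon_t)}\rangle - 2e^{-\bar\gamma(T-t)}\langle e^{-(\Gamma^\epsilon_T-\Gamma^\epsilon_t)}\rangle + e^{-2\bar\gamma(T-t)}\Bigr)\,dt.\]
The initial-condition term vanishes by \eqref{eq:l2convconds}, and the integrand tends to zero pointwise in $t$ by \eqref{eq:mgfconvs} applied at $\alpha=-1$ and $\alpha=-2$.

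The main obstacle, and the only place any real estimation is needed, is justifying the interchange of the $\epsilon\downarrow 0$ limit with the Lebesgue integral on $[0,T]$ in both the variance and the $L^2$ computations. I would upgrade the pointwise MGF convergence \eqref{eq:mgfconvs} to a uniform-in-$(\epsilon,t)$ bound that permits dominated convergence; such a bound is cheap for the concrete processes to which the lemma is applied (for SSM, $\gamma^\epsilon$ is bounded so $\langle e^{\alpha(\Gamma^\epsilon_T-\Gamma^\epsilon_t)}\rangle \le e^{|\alpha|\max(|\gamma_+|,|\gamma_-|)T}$, and for DSM the integrated OU process is Gaussian so the MGF is explicit and uniformly controlled on any fixed time window). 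The final clause on convergence rates is then obtained by propagating the quantitative rates from \eqref{eq:mgfconvs}, \eqref{eq:mcconvconds} and \eqref{eq:l2convconds} through the same algebra.
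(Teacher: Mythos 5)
Your proposal is correct and follows essentially the same route as the paper: the variation-of-constants formula \eqref{eq:voc}, the moment identities \eqref{eq:uqmom}, the It\^o isometry for the difference of the two stochastic integrals, and bounded/dominated convergence to pass the limit through the $t$-integral. The only (harmless) deviations are that you kill the cross term by conditioning to get an exact identity for $\langle (u^\epsilon_T-\bar u_T)^2\rangle$ where the paper simply uses the bound $|a+b|^2\le 2|a|^2+2|b|^2$, and that you spell out the uniform domination of the MGFs, which the paper leaves implicit in its appeal to the bounded convergence theorem.
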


\bigskip

Let 
$\Gamma_t \equiv \int^t_0 \gamma(s) ds$ 
and
$\widehat{\Gamma}_t \equiv \int^t_0 \widehat{\gamma}(s) ds$
be the integral processes associated to SSM and DSM respectively.
Because 
$\bar{\Gamma}_t \equiv \int_0^t \bar{{\gamma}}(s) ds=\bar{{\gamma}} t$
($\bar{{\gamma}}(s)=\bar{{\gamma}}$ is constant) in the case of the MSM,
we expect
$u_T, \widehat{u}_T \to \bar{u}_T$
provided both integral processes,
$\Gamma_t$ 
and
$\widehat{\Gamma}_t $,
behave like the probability distribution $\delta_{\bar{\gamma}t}$
in the small $\epsilon$ limit.
It turns out this is indeed the case 
due to averaging. The next two lemmas highlight this behavior.

\bigskip

\begin{Lemma}[SSM]
\label{thm:ssmmgf}
Let
$\gamma_t \sim \rho_{+}(t)\delta_{\gamma_{+}} +\rho_{-}(t)\delta_{\gamma_{-}}$
then, for any fixed $t>0$,
$\rho_{\pm}(t)\to \frac{\lambda_{\mp}}{\lambda_{-} +\lambda_{+}}$
as $\epsilon \to 0$.
Let
$\gamma_\infty \sim
\frac{\lambda_{-}}{\lambda_{-} +\lambda_{+}}\delta_{\gamma_{+}}
+\frac{\lambda_{+}}{\lambda_{-} +\lambda_{+}}\delta_{\gamma_{-}}$
and
$\bar{\gamma}_\infty \equiv \langle \gamma_\infty\rangle
=
\frac{\lambda_{-}\gamma_{+}+\lambda_{+}\gamma_{-}}{\lambda_{-} +\lambda_{+}}$
then, for 
any fixed $T>t>0$,
we have 
$
\left\langle e^{\alpha( {\Gamma}_T-{\Gamma}_t )} \right\rangle 
\to 
e^{\alpha \bar{\gamma}_\infty(T-t) }
$
as $\epsilon \to 0$.
\end{Lemma}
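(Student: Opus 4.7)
\medskip
\noindent\textbf{Proof plan.}
Both claims rest on the scaling $Q=\epsilon^{-1}L$ of the generator, with
$L=\begin{pmatrix}-\lambda_+ & \lambda_+\\ \lambda_- & -\lambda_-\end{pmatrix}$,
so that the Markov chain equilibrates on the intrinsic time scale $\epsilon$ and hence instantaneously in the limit $\epsilon\to 0$.
For the first assertion, I would write down the Kolmogorov forward ODE $\dot\rho=Q^{\top}\rho$ and diagonalize the $2\times 2$ matrix $Q^{\top}$: its eigenvalues are $0$ and $-(\lambda_++\lambda_-)/\epsilon$, with null eigenvector proportional to $\pi=(\lambda_-,\lambda_+)^{\top}/(\lambda_++\lambda_-)$. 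Combined with $\rho_+(0)+\rho_-(0)=1$ this yields the closed form $\rho_+(t)=\pi_+ + (\rho_+(0)-\pi_+)e^{-(\lambda_++\lambda_-)t/\epsilon}$, whose exponential factor dies for any fixed $t>0$ and gives the claimed limit $\rho_{\pm}(t)\to\lambda_\mp/(\lambda_++\lambda_-)$.

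For the MGF claim I would invoke the Markov property and time-homogeneity to write
$\langle e^{\alpha(\Gamma_T-\Gamma_t)}\rangle = \rho_+(t)\phi_+(T-t) + \rho_-(t)\phi_-(T-t)$,
where $\phi_\pm(u):=\langle e^{\alpha(\Gamma_{s+u}-\Gamma_s)}\mid\gamma(s)=\gamma_\pm\rangle$ is independent of $s$. A Feynman--Kac backward calculation produces the linear ODE $\phi'=A\phi$ with $A=\epsilon^{-1}L+B$, $B=\mathrm{diag}(\alpha\gamma_+,\alpha\gamma_-)$, and initial condition $\phi(0)=(1,1)^{\top}$. The right null space of $L$ is spanned by $(1,1)^{\top}$ and its left null space by $\pi$, so classical averaging theory predicts an effective slow scalar coefficient $\pi B(1,1)^{\top}=\alpha\bar\gamma_\infty$. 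To make this precise I would compute the two eigenvalues of $A$ directly from its $2\times 2$ characteristic polynomial: they are analytic in $\epsilon$ and equal, to leading order, $\alpha\bar\gamma_\infty$ and $-(\lambda_++\lambda_-)/\epsilon+O(1)$, with corresponding eigenvectors that are $O(\epsilon)$ perturbations of $(1,1)^{\top}$ and $(\lambda_+,-\lambda_-)^{\top}$. Expanding $(1,1)^{\top}$ in this basis gives slow-mode coefficient $1+O(\epsilon)$ and fast-mode coefficient $O(\epsilon)$, so $\phi_\pm(u)=e^{\alpha\bar\gamma_\infty u}+O(\epsilon)$ uniformly on compact $u$-intervals. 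Together with the first claim this yields
$\rho_+(t)\phi_+(T-t)+\rho_-(t)\phi_-(T-t)\to(\pi_++\pi_-)e^{\alpha\bar\gamma_\infty(T-t)}=e^{\alpha\bar\gamma_\infty(T-t)}$.

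A shorter probabilistic alternative, should the explicit ODE algebra feel heavy, rescales $s=t+\epsilon r$ so that $\Gamma_T-\Gamma_t=\epsilon\int_0^{(T-t)/\epsilon}\tilde\gamma(r)\,dr$ for the $\epsilon$-free rate-$\lambda_\pm$ chain $\tilde\gamma$; the ergodic theorem for finite-state Markov chains gives $(T-t)^{-1}(\Gamma_T-\Gamma_t)\to\bar\gamma_\infty$ in probability, and since $e^{\alpha(\Gamma_T-\Gamma_t)}$ is uniformly bounded by $e^{|\alpha|\max(|\gamma_+|,|\gamma_-|)(T-t)}$, bounded convergence delivers the MGF limit. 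The main obstacle is the singular-perturbation step in the ODE route, where I must verify that the perturbed fast eigenvector of $A$ remains close to $(\lambda_+,-\lambda_-)^{\top}$ so that the expansion of $(1,1)^{\top}$ picks up only $O(\epsilon)$ fast component; in the probabilistic route the analogous care goes into the fact that the rescaled chain starts from $\rho(t)$ rather than $\pi$, which is absorbed by the first part of the lemma. All remaining pieces---the closed-form two-state spectral data, Feynman--Kac, the ergodic theorem, and dominated convergence---are routine for a two-state continuous-time Markov chain.
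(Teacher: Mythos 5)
Your proposal is correct, and both of your routes reach the conclusion, but neither is the route the paper takes, so a comparison is worth recording. For the first assertion you and the paper do the same thing: the explicit solution of the two-state forward equation (the paper's Eq.~(\ref{eq:probgamma}), after the rescaling $\lambda_{\pm}\to\lambda_{\pm}/\epsilon$) exhibits the decaying factor $e^{-(\lambda_-+\lambda_+)t/\epsilon}$. For the MGF claim the paper does \emph{not} work with the Feynman--Kac matrix $A=\epsilon^{-1}L+B$ at all; instead it proves an abstract averaging result (Lemma~\ref{thm:weakconv}, via a multiscale expansion of the backward equation and the Fredholm alternative) to conclude that $\Gamma_T-\Gamma_t$ converges weakly to the constant $\bar{\gamma}_\infty(T-t)$, and then upgrades weak convergence to MGF convergence through a separate integration-by-parts lemma (Lemma~\ref{lem:mgf}), whose tail hypotheses are verified precisely because $\Gamma_T-\Gamma_t$ is supported in $[\gamma_-(T-t),\gamma_+(T-t)]$. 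Your primary (spectral) route is more explicit: the $2\times 2$ eigenvalue perturbation, with slow eigenvalue $\pi B\mathbf{1}+\mathcal{O}(\epsilon)=\alpha\bar{\gamma}_\infty+\mathcal{O}(\epsilon)$ and fast eigenvalue $-(\lambda_++\lambda_-)/\epsilon+\mathcal{O}(1)$, yields not only the limit but an $\mathcal{O}(\epsilon)$ rate, which the paper only obtains indirectly (it defers rates to the convergence in Lemma~\ref{lem:mgf}); the price is that the argument is tied to the two-state structure, whereas the paper's averaging lemma is reused verbatim for the diffusion case (Lemma~\ref{thm:dsmmgf}). Your alternative probabilistic route (time rescaling, ergodic theorem, bounded convergence using $|e^{\alpha(\Gamma_T-\Gamma_t)}|\le e^{|\alpha|\max(|\gamma_+|,|\gamma_-|)(T-t)}$) is essentially the paper's argument in compressed form --- convergence in probability to a constant implies the weak convergence the paper establishes, and the uniform bound is exactly the fact that makes the paper's Lemma~\ref{lem:mgf} applicable; as you correctly observe, the dependence of the initial law $\rho(t)$ on $\epsilon$ is harmless since the ergodic time average of an irreducible finite-state chain is insensitive to the starting distribution.
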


\bigskip

\begin{Lemma}[DSM]
\label{thm:dsmmgf}
Let
$\widehat{\gamma}_\infty \sim \mathcal{N}(\mu,\sigma^2/2\nu)$
then, for any fixed $t>0$,
the mean and variance of
$\widehat{\gamma}_t$ converge to those of $\widehat{\gamma}_\infty$ 
as $\epsilon \to 0$.
Furthermore, 
we have, for any fixed $T>t>0$, 
$
\left\langle e^{\alpha( \widehat{\Gamma}_T-\widehat{\Gamma}_t )} \right\rangle \to e^{\alpha \mu(T-t) }
$
as $\epsilon \to 0$.
\end{Lemma}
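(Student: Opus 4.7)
The plan is to exploit the explicit solvability of the Ornstein--Uhlenbeck process $\widehat{\gamma}$ together with a clean rearrangement of its defining SDE that isolates the integral process $\widehat{\Gamma}$.

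For the first assertion, I would write out the OU solution in closed form,
\[
\widehat{\gamma}_t = \mu + (\widehat{\gamma}_0 - \mu)\,e^{-\nu t/\epsilon} + \frac{\sigma}{\sqrt{\epsilon}}\int_0^t e^{-\nu(t-s)/\epsilon}\,dB_\gamma(s),
\]
and compute moments directly. Taking expectations gives $\langle \widehat{\gamma}_t\rangle = \mu + (\langle\widehat{\gamma}_0\rangle-\mu)\,e^{-\nu t/\epsilon}$, while It\^o's isometry yields $\text{Var}(\widehat{\gamma}_t) = \text{Var}(\widehat{\gamma}_0)\,e^{-2\nu t/\epsilon} + \frac{\sigma^2}{2\nu}\bigl(1-e^{-2\nu t/\epsilon}\bigr)$. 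For any fixed $t>0$, $e^{-\nu t/\epsilon}\to 0$ as $\epsilon\to 0$, so the mean converges to $\mu$ and the variance to $\sigma^2/(2\nu)$, matching the parameters of $\widehat{\gamma}_\infty \sim \mathcal{N}(\mu,\sigma^2/2\nu)$.

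For the MGF assertion, the key step is to integrate the OU SDE over $[t,T]$, which produces
\[
\widehat{\gamma}_T - \widehat{\gamma}_t = -\frac{\nu}{\epsilon}\,\bigl(\widehat{\Gamma}_T - \widehat{\Gamma}_t - \mu(T-t)\bigr) + \frac{\sigma}{\sqrt{\epsilon}}\,\bigl(B_\gamma(T)-B_\gamma(t)\bigr).
\]
Rearranging gives the clean decomposition $\widehat{\Gamma}_T - \widehat{\Gamma}_t = \mu(T-t) + R_\epsilon$ with
\[
R_\epsilon = \frac{\epsilon}{\nu}\bigl(\widehat{\gamma}_t-\widehat{\gamma}_T\bigr) + \frac{\sigma\sqrt{\epsilon}}{\nu}\bigl(B_\gamma(T)-B_\gamma(t)\bigr).
\]
This exhibits the fluctuations $\widehat{\Gamma}_T - \widehat{\Gamma}_t - \mu(T-t)$ as an $O(\sqrt{\epsilon})$ Gaussian combination (under the standing Gaussian initial condition) whose $L^2$-norm vanishes thanks to the uniform second-moment bound from the first part.

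It then remains to pass from $L^2$ convergence of $R_\epsilon\to 0$ to convergence of $\langle e^{\alpha R_\epsilon}\rangle \to 1$. Since the OU solution is Gaussian when $\widehat{\gamma}_0$ is Gaussian (or deterministic), $R_\epsilon$ is a Gaussian variable, so its MGF is simply $\exp\!\bigl(\alpha\langle R_\epsilon\rangle + \tfrac{1}{2}\alpha^2 \text{Var}(R_\epsilon)\bigr)$, and both moments go to $0$; consequently $\langle e^{\alpha(\widehat{\Gamma}_T - \widehat{\Gamma}_t)}\rangle = e^{\alpha\mu(T-t)}\langle e^{\alpha R_\epsilon}\rangle \to e^{\alpha\mu(T-t)}$. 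The main technical point to verify is the $O(\epsilon)$ bound on $\text{Var}(R_\epsilon)$, which requires computing the cross-covariance between $\widehat{\gamma}_t-\widehat{\gamma}_T$ and the Brownian increment $B_\gamma(T)-B_\gamma(t)$; this is straightforward from the explicit OU formula, and a Cauchy--Schwarz estimate combined with the Part~1 bound on $\text{Var}(\widehat{\gamma}_s)$ closes the argument.
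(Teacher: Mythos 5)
Your proof is correct, but it takes a genuinely different route from the paper's. The paper proves the MGF limit by homogenization: it writes the generator of $(\widehat{\Gamma},\widehat{\gamma})$ as $Q_1+\frac{1}{\epsilon}Q_0$, invokes a general averaging lemma (multi-scale expansion of the backward Kolmogorov equation plus a Fredholm solvability condition) to get the weak convergence $\widehat{\Gamma}_t\rightharpoonup\mu t$, upgrades this to $\widehat{\Gamma}_T-\widehat{\Gamma}_t\rightharpoonup\mu(T-t)$ via Slutsky's theorem, and then passes from weak convergence to MGF convergence through a separate integration-by-parts criterion whose tail hypotheses are checked with a Chernoff bound on the Gaussian distribution function. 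You instead integrate the OU equation over $[t,T]$ to obtain the exact identity $\widehat{\Gamma}_T-\widehat{\Gamma}_t=\mu(T-t)+R_\epsilon$ with $R_\epsilon=\frac{\epsilon}{\nu}(\widehat{\gamma}_t-\widehat{\gamma}_T)+\frac{\sigma\sqrt{\epsilon}}{\nu}\bigl(B_\gamma(T)-B_\gamma(t)\bigr)$, note that $R_\epsilon$ is Gaussian under the standing Gaussian initial condition, and read off the limit from the closed-form Gaussian MGF once $\langle R_\epsilon\rangle=O(\epsilon)$ and $\mathrm{Var}(R_\epsilon)=O(\epsilon)$ are established (the latter using the uniform-in-$\epsilon$ bound $\mathrm{Var}(\widehat{\gamma}_s)\le\max\{\mathrm{Var}(\widehat{\gamma}_0),\sigma^2/2\nu\}$, which needs $\nu>0$, and Cauchy--Schwarz for the cross term). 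Your argument is more elementary --- it bypasses the weak-convergence machinery entirely --- and it yields an explicit $O(\epsilon)$ convergence rate for the exponent of the MGF, which the paper's qualitative route does not immediately provide; it is in fact closer in spirit to the exact moment computations the paper carries out separately in its appendix on DSM asymptotics. What the paper's approach buys in exchange is generality: the same averaging lemma and MGF-convergence criterion are reused verbatim for the jump-process case (the SSM lemma), whereas your decomposition is tied to the linear OU structure and to Gaussianity of $\widehat{\gamma}_0$. Your treatment of the first assertion (mean and variance of $\widehat{\gamma}_t$) by direct computation from the variation-of-constants formula is exactly what the paper records elsewhere, and is complete.
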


\subsubsection{Rare-Event Limit}

The main results in this regime are the following
theorem and corollary.

\bigskip

\begin{Theorem}
\label{ssmthm2}
Assume that 
${u}_0$, $\bar{u}'_0$ 
and $\widehat{u}'_0$ 
are 
identically distributed 
Gaussian 
random variables, 
and assume that $({u}_0,{\gamma}_0)$
and $(\widehat{u}'_0,\widehat{\gamma}'_0)$
are independent pairs of random variables.
Then, for any fixed $T>0$, 
the mean and variance of
$\mathbb{P}({u}_T\vert {\gamma}_0=\gamma_{\pm})$,
$\mathbb{P}(\widehat{u}'_T\vert \widehat{\gamma}'_0=\gamma_{\pm})$
converges to 
those of 
$\mathbb{P}(\bar{u}'_T\vert \bar{\gamma}' =\gamma_{\pm})$
as $\epsilon \to \infty$.

Furthermore,
let
${\gamma}_0 \triangleq \gamma_\infty$,
and
if
$\bar{\gamma}'$ 
and
$\widehat{\gamma}'_0$ 
are identically distributed with $\gamma_\infty$,
then
the weight, mean and variance
of 
components
in
the Gaussian mixture approximation for
${u}_T$, $\widehat{u}'_T$
converge to those of
$\bar{u}'_T$
as $\epsilon \to \infty$.

\end{Theorem}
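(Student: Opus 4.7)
The plan is to mirror the structure used for Theorem~\ref{ssmthm}: reduce everything to convergence of moment generating functions of the integral processes $\Gamma_t = \int_0^t \gamma(s)\,ds$ and $\widehat{\Gamma}'_t = \int_0^t \widehat{\gamma}'(s)\,ds$, and then feed those MGF limits through the variation-of-constants identity (\ref{eq:voc}) and the first two-moment formulas (\ref{eq:uqmom}). The novelty compared to the scale-separation proofs is that both MGF limits must be taken \emph{conditional} on the initial mode ($\gamma_0$ for SSM, the branch indicator for dDSM), because as $\epsilon \to \infty$ the mode is essentially frozen throughout $[0,T]$ rather than being averaged out.

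For the SSM piece, conditional on $\gamma_0 = \gamma_\pm$ the event $A_\epsilon := \{\gamma(s) \equiv \gamma_\pm \text{ on } [0,T]\}$ has probability $e^{-\lambda_\pm T/\epsilon} = 1 - O(1/\epsilon)$, and on $A_\epsilon$ one has $\Gamma_T - \Gamma_t = \gamma_\pm(T-t)$ identically. Since $|\gamma(s)| \le \max(|\gamma_+|,|\gamma_-|)$ almost surely, the integrand $e^{\alpha(\Gamma_T-\Gamma_t)}$ is uniformly bounded, and the contribution from $A_\epsilon^c$ to $\langle e^{\alpha(\Gamma_T - \Gamma_t)} \mid \gamma_0 = \gamma_\pm\rangle$ is $O(1/\epsilon)$, yielding the limit $e^{\alpha \gamma_\pm(T-t)}$ for $\alpha \in \{-1,-2\}$. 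For the dDSM piece, conditional on the $\pm$ branch with $\widehat{\gamma}'_0 = \mu_\pm = \gamma_\pm$, the explicit OU solution gives $\widehat{\gamma}'(t) = \mu_\pm + (\sigma_\pm/\sqrt{\epsilon})\int_0^t e^{-\nu_\pm(t-s)/\epsilon}\,dB_\gamma(s)$, whose fluctuation has variance $O(1/\epsilon)$. Since $\widehat{\Gamma}'_T - \widehat{\Gamma}'_t$ is Gaussian with mean $\gamma_\pm(T-t)$ and vanishing variance, its MGF tends to $e^{\alpha \gamma_\pm(T-t)}$. Plugging each conditional MGF into (\ref{eq:uqmom}) and invoking the assumed independence of $u_0$ from $\{\gamma(s)\}_{s \ge 0}$ (and of $\widehat{u}'_0$ from the branch) gives the first claim.

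For the mixture statement, I would apply the law of total probability. Under $\gamma_0 \sim \gamma_\infty$, the weights $\mathbb{P}(\gamma_0 = \gamma_\pm) = \lambda_\mp/(\lambda_+ + \lambda_-)$ coincide exactly with $\bar{\rho}_\pm$ and $\widehat{\rho}_\pm$ by the matching hypothesis, so writing $\mathbb{P}(u_T) = \sum_\pm \mathbb{P}(\gamma_0 = \gamma_\pm)\,\mathbb{P}(u_T \mid \gamma_0 = \gamma_\pm)$ and analogously for $\widehat{u}'_T$ and $\bar{u}'_T$ produces three two-component mixtures with identical weights, while the component means and variances agree in the limit by the first claim. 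The step I expect to be most delicate is the SSM error control: the object being averaged is $e^{\alpha(\Gamma_T - \Gamma_t)}\,u_0^m$, so one must decompose the conditioning event into zero, one, and two-or-more switches and verify a uniform-in-$\epsilon$ moment bound that allows dominated convergence. Since $\gamma$ is a.s. bounded and $u_0$ is Gaussian with a fixed law independent of $\gamma_0$, this should go through without difficulty, but the bookkeeping of the sub-events is the main technical hurdle.
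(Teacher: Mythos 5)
Your proposal is correct, and its overall architecture coincides with the paper's: reduce the theorem to convergence of the \emph{conditional} MGFs $\left\langle e^{\alpha(\Gamma_T-\Gamma_t)}\,\middle\vert\,\gamma_0=\gamma_{\pm}\right\rangle$ and $\left\langle e^{\alpha(\widehat{\Gamma}'_T-\widehat{\Gamma}'_t)}\,\middle\vert\,\widehat{\gamma}'_0=\gamma_{\pm}\right\rangle$, push these limits through the variation-of-constants moment formulas (the conditional analogue of Eq.~(\ref{eq:uqmom}), which is exactly Lemma~\ref{thm:uconv3}), and obtain the mixture statement by total probability with weights that match identically under the hypothesis $\gamma_0\triangleq\gamma_\infty$. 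Where you genuinely diverge is in the proof of the SSM limit (the content of Lemma~\ref{lem:ssmd}): the paper passes to the limit $\epsilon\to\infty$ term by term in the explicit series representation (\ref{eq:ssmcharasy}), invoking the uniform convergence established in Theorem~\ref{thm:unif} so that only the zero-switch term survives; you instead condition on the no-switch event $A_\epsilon$, on which $\Gamma_T-\Gamma_t=\gamma_{\pm}(T-t)$ exactly, and bound the complement's contribution by the a.s.~boundedness of $\gamma$ times $\mathbb{P}(A_\epsilon^c\vert\gamma_0=\gamma_{\pm})=1-e^{-\lambda_{\pm}T/\epsilon}=\mathcal{O}(1/\epsilon)$. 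Your route is more elementary (no series machinery or Weierstrass M-test needed), makes the $\mathcal{O}(1/\epsilon)$ convergence rate explicit, and sidesteps the ``zero, one, two-or-more switches'' bookkeeping you worried about, since the single no-switch/at-least-one-switch dichotomy together with boundedness of $e^{\alpha(\Gamma_T-\Gamma_t)}$ already suffices; the paper's route buys reuse of formulas it needs anyway for the filter implementation. For the dDSM piece both arguments amount to the same explicit Gaussian computation, though note one small imprecision: you wrote the OU solution as if $\widehat{\gamma}'_0=\mu_{\pm}$, whereas the theorem conditions on $\widehat{\gamma}'_0=\gamma_{\pm}$ with $\mu_{\pm}$ arbitrary; the transient term $(\gamma_{\pm}-\mu_{\pm})e^{-\nu_{\pm}t/\epsilon}$ should be retained, but since it tends to $\gamma_{\pm}-\mu_{\pm}$ as $\epsilon\to\infty$ the integrated mean still converges to $\gamma_{\pm}(T-t)$ and nothing breaks.
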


\begin{proof}
This follows from Lemmas~\ref{thm:uconv3}, \ref{lem:ssmd}, \ref{lem:dsmd}
below.
\end{proof}

\bigskip

\begin{Corollary}
Under the conditions in Theorem~\ref{ssmthm2},
the weight, mean and variance 
of mixture components
in
the Gaussian mixture filters for
$u_n\vert Y_n$
(defined in Appendix~\ref{subsec:ssmgaussiansumfilter})
and
$\widehat{u}'_n\vert Y_n$,
converge to
those of 
$\bar{u}'_n|Y_n$,
for fixed $n > 0$,
as $\epsilon \to \infty$.
\end{Corollary}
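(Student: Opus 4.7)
I would prove this by induction on $n$, exploiting the decomposition of the Gaussian mixture filter at each step into (i) a prediction substep in which each kernel is propagated by the model dynamics over the interval $[(n-1)T, nT]$, and (ii) a Bayesian update substep in which each kernel is multiplied by the Gaussian likelihood $\mathcal{N}(y_n; u_n, R_n)$ and the weights are renormalized by the per-kernel evidence. Theorem~\ref{ssmthm2} is designed precisely to control the prediction substep, so the bulk of the work is organizational bookkeeping combined with continuity of the update.

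For the base case $n=0$, the hypotheses of Theorem~\ref{ssmthm2} ensure that $\mathbb{P}(u_0,\gamma_0)$, $\mathbb{P}(\bar{u}'_0,\bar{\gamma}')$ and $\mathbb{P}(\widehat{u}'_0,\widehat{\gamma}'_0)$ admit identical two-component Gaussian mixture representations indexed by the mode value $\gamma\in\{\gamma_+,\gamma_-\}$, with common weights $\lambda_\mp/(\lambda_-+\lambda_+)$. For the inductive step, I apply Theorem~\ref{ssmthm2} to each pair (kernel, mode value) to obtain convergence of the conditional means and variances of the predicted kernels; the mode weights are essentially preserved because $\bar{\gamma}'$ is frozen in dMSM, while in SSM and dDSM the probability of a mode switch over the fixed interval $T$ tends to zero as $\epsilon\to\infty$. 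The Bayesian update then produces the posterior mixture via the standard Kalman formulas applied to each kernel together with the evidence-based reweighting; as both maps are rational continuous functions of the input moments and of $y_n$, convergence of moments and weights at step $n-1$ transfers to convergence at step $n$.

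The main obstacle will be handling the assumed-density reduction that the SSM and dDSM filters employ to combat the geometric growth in the number of kernels across many prediction steps. My plan is to argue that in the rare-event limit this reduction is essentially trivial: any kernel produced by a sample path containing at least one mode switch in $[0,nT]$ carries weight that vanishes with $(\lambda_++\lambda_-)/\epsilon$, so after reduction only the two "no-switch" branches survive, and these match the two dMSM components by Theorem~\ref{ssmthm2}. Encoding this rigorously, for instance by bounding the total-variation distance between the unreduced filtering distribution and a two-component reference by the probability of any switch event in $[0,nT]$ and then invoking continuity of the reduction procedure in this metric, is the only place where a non-trivial estimate that is uniform in $n$ is genuinely needed.
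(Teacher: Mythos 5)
Your proposal is correct and follows essentially the same route as the paper, whose entire proof is the one-line observation that the convergence of component weights, means and variances supplied by Theorem~\ref{ssmthm2} passes through the parallel, per-component Kalman update and evidence-based reweighting by continuity; your induction on $n$ simply makes this bookkeeping explicit. The only caveat is that your closing concern about an estimate uniform in $n$ is moot: the statement fixes $n$ and sends $\epsilon\to\infty$, so the number of kernels is fixed and a switch-probability bound of order $n(\lambda_{+}+\lambda_{-})T/\epsilon$ for that fixed $n$ already suffices.
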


\begin{proof}
This follows from parallel application of the Kalman filter update
to the mixture components.
\end{proof}

\bigskip

\begin{Lemma}
\label{thm:uconv3}
Let
$\bar{u}'_t$ 
solve dDSM 
(\ref{eq:msmd}).
If, for each fixed $T>t>0$,
\begin{equation}
\label{eq:mgfconvd}
\left\langle e^{\alpha( \Gamma^\epsilon_T-\Gamma^\epsilon_t )} 
\vert \gamma^\epsilon_0=\gamma_{\pm}\right\rangle 
\to 
e^{\alpha {\gamma_{\pm}}(T-t) }
\quad \text{for} \;\;\; \alpha=-1, -2
\;\;\;\text{and} \;\;\; 0\leq t\leq T
\end{equation}
and if
\begin{equation}
\begin{split}
\label{eq:mcconvcondd}
\left\langle \left( e^{-{\Gamma}^\epsilon_T}u_0^\epsilon \right)^m \vert \gamma^\epsilon_0=\gamma_{\pm}\right\rangle
\to 
\left\langle \left( e^{-{\gamma}_{\pm} T} \bar{u}_0 \right)^m \right\rangle 
\quad \text{for} \;\;\; m=1,2
\end{split}
\end{equation}
as $\epsilon \to \infty,$ then
the mean and variance of 
${u}^\epsilon_T\vert \gamma^\epsilon_0=\gamma_{\pm}$ converge to 
those of
$\bar{u}_T\vert \bar{\gamma}'=\gamma_{\pm}$.
The convergence rates are determined by 
those associated with Eqs.~(\ref{eq:mgfconvd}), (\ref{eq:mcconvcondd}).

Furthermore, if ${\gamma}^\epsilon_0\triangleq \bar{\gamma}'$,
then 
the weight, mean and variance
of 
components
in
the Gaussian mixture approximation for
${u}^\epsilon_T$
converge to those of $\bar{u}_T$ from
$\mathbb{P}({u}^\epsilon_T)
=\mathbb{P}({\gamma}^\epsilon_0=\gamma_{+})\mathbb{P}({u}^\epsilon_T\vert {\gamma}^\epsilon_0=\gamma_{+})
+\mathbb{P}({\gamma}^\epsilon_0=\gamma_{-})\mathbb{P}({u}^\epsilon_T\vert {\gamma}^\epsilon_0=\gamma_{-})$.

\end{Lemma}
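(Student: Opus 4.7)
The proof mirrors that of Lemma~\ref{thm:uconv2}, with every unconditional expectation replaced by its conditional analogue under the event $\{\gamma^\epsilon_0=\gamma_\pm\}$. I would first apply the pathwise variation-of-constants formula~\eqref{eq:voc} under this conditioning. Since the driving Brownian motion $B_u$ is independent of the pair $(u^\epsilon_0,\gamma^\epsilon_0)$, and hence of the integrated rate process $\Gamma^\epsilon$, under either conditioning event, the It\^o isometry together with the tower property yield
\[
\left\langle u^\epsilon_T \,\vert\, \gamma^\epsilon_0=\gamma_\pm\right\rangle = \left\langle e^{-\Gamma^\epsilon_T} u^\epsilon_0 \,\vert\, \gamma^\epsilon_0=\gamma_\pm\right\rangle,
\]
along with the conditional analogue of the variance identity in~\eqref{eq:uqmom}. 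The limit object $\bar{u}_T\vert \bar{\gamma}'=\gamma_\pm$ has its rate frozen at $\gamma_\pm$, so its mean is $e^{-\gamma_\pm T}\langle \bar{u}_0\rangle$ and its variance equals $e^{-2\gamma_\pm T}\mathrm{Var}(\bar{u}_0)+\sigma_u^2\int_0^T e^{-2\gamma_\pm(T-t)}\,dt$.

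Comparing term by term, the convergence of the mean reduces to hypothesis~\eqref{eq:mcconvcondd} with $m=1$, while the variance convergence reduces to~\eqref{eq:mcconvcondd} with $m=2$ together with the MGF hypothesis~\eqref{eq:mgfconvd} at $\alpha=-2$. The only routine point is passing the limit under $\int_0^T\,dt$ in the variance formula; this is justified by dominated convergence, which is straightforward in the SSM case (since $\gamma$ is bounded, so the integrand is uniformly bounded on $[0,T]$) and follows from the explicit Gaussian form of the MGF in the DSM case. The convergence rates propagate directly from the rates in~\eqref{eq:mgfconvd}--\eqref{eq:mcconvcondd}.

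For the \textbf{Furthermore} clause, the law of total probability gives
\[
\mathbb{P}(u^\epsilon_T) = \sum_{\pm}\mathbb{P}(\gamma^\epsilon_0=\gamma_\pm)\,\mathbb{P}(u^\epsilon_T\,\vert\,\gamma^\epsilon_0=\gamma_\pm),
\]
and the dMSM structure~\eqref{eq:probmsmd} provides the analogous decomposition for $\bar{u}_T$. The identification $\gamma^\epsilon_0\triangleq\bar{\gamma}'$ forces equality of the component weights, and the two-moment Gaussian approximation retains only the conditional mean and variance of each component, both of which converge by the first part of the lemma. This delivers the claimed weight-mean-variance convergence of the Gaussian mixture approximating $u^\epsilon_T$ to the Gaussian mixture for $\bar{u}_T$.

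The main subtlety, rather than an obstacle, is the gap between the genuine conditional law $\mathbb{P}(u^\epsilon_T\vert\gamma^\epsilon_0=\gamma_\pm)$, which is generally non-Gaussian because $\gamma^\epsilon$ continues to evolve for $t>0$, and the Gaussian surrogate used by the mixture filter; the lemma intentionally asserts only moment-level convergence, so this gap is absorbed into the Gaussian approximation. The substantive estimates are done downstream in Lemmas~\ref{lem:ssmd} and~\ref{lem:dsmd}, which verify~\eqref{eq:mgfconvd}--\eqref{eq:mcconvcondd} for the SSM and DSM respectively; the present lemma is the bookkeeping that packages those estimates into a statement about $u^\epsilon_T$.
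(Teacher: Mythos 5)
Your proposal is correct and follows essentially the same route as the paper: the paper's proof consists precisely of writing down the conditional analogue of the moment identities in Eq.~(\ref{eq:uqmom}) and letting the hypotheses (\ref{eq:mgfconvd})--(\ref{eq:mcconvcondd}) do the rest, which is exactly your term-by-term comparison. Your additional remarks on dominated convergence for the time integral and on the weight matching in the \emph{Furthermore} clause supply details the paper leaves implicit, but they do not change the argument.
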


\bigskip

To ensure the convergences of SSM and dDSM to dMSM,
as $\epsilon$ grows, both
${\Gamma}_t$ and
$\widehat{\Gamma}'_t \equiv \int^t_0 \widehat{\gamma}'(s)ds$
need to converge to
$\bar{\Gamma}'_t \equiv \int_0^t \bar{{\gamma}}'(s) ds
\sim \bar{\rho}_{+}\delta_{\gamma_{+}t}+ \bar{\rho}_{-}\delta_{\gamma_{-}t}$.

\bigskip

\begin{Lemma}
[SSM]
\label{lem:ssmd}
For fixed $T>t>0$
$\left\langle e^{\alpha (\Gamma_T-\Gamma_t)} \vert {\gamma}_0=\gamma_{\pm} \right\rangle \to
e^{ \alpha \gamma_{\pm}(T-t)}$
as $\epsilon \to \infty$.
\end{Lemma}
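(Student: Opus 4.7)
The plan is to exploit the fact that, when $\epsilon$ is large, the switching rate $\lambda_\pm/\epsilon$ is small, so the process $\gamma(s)$ starting from $\gamma_\pm$ at time $s=0$ remains equal to $\gamma_\pm$ throughout $[0,T]$ with overwhelming probability. On this ``no-switch'' event the integral $\Gamma_T-\Gamma_t$ is exactly $\gamma_\pm(T-t)$, which yields the desired limit; on the complementary event a crude uniform bound suffices.

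More precisely, let $A$ denote the event that $\gamma$ has no transition during $[0,T]$. From the exponential waiting-time distributions that define the SSM we read off
$$\mathbb{P}(A \mid \gamma_0 = \gamma_\pm) = e^{-\lambda_\pm T/\epsilon},$$
which tends to $1$ as $\epsilon \to \infty$. On $A \cap \{\gamma_0 = \gamma_\pm\}$ we have $\gamma(s) \equiv \gamma_\pm$ for $s \in [0,T]$, so $\Gamma_T - \Gamma_t = \gamma_\pm(T-t)$ and hence $e^{\alpha(\Gamma_T - \Gamma_t)} = e^{\alpha \gamma_\pm (T-t)}$. Splitting the expectation, I would write
$$\left\langle e^{\alpha(\Gamma_T - \Gamma_t)} \mid \gamma_0 = \gamma_\pm \right\rangle = e^{\alpha \gamma_\pm (T-t)}\, e^{-\lambda_\pm T/\epsilon} + \left\langle e^{\alpha(\Gamma_T - \Gamma_t)} \mathbf{1}_{A^c} \mid \gamma_0 = \gamma_\pm \right\rangle.$$
For the error term, observe that $\gamma(s) \in \{\gamma_+, \gamma_-\}$ always, so $\Gamma_T - \Gamma_t$ lies in the compact interval with endpoints $\gamma_-(T-t)$ and $\gamma_+(T-t)$; in particular $|e^{\alpha(\Gamma_T - \Gamma_t)}| \le e^{|\alpha| M (T-t)}$ with $M := \max(|\gamma_+|, |\gamma_-|)$. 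The error term is therefore bounded by $e^{|\alpha| M (T-t)}(1 - e^{-\lambda_\pm T/\epsilon})$, which vanishes as $\epsilon \to \infty$, giving the claim with an $O(1/\epsilon)$ convergence rate.

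There is no substantive obstacle here; the argument reduces to the elementary observation that a Poisson clock of intensity $\lambda_\pm/\epsilon$ has no arrivals in the fixed window $[0,T]$ with probability $e^{-\lambda_\pm T/\epsilon} \to 1$, combined with a uniform bound on the integrand stemming from the boundedness of $\gamma$. The only minor point worth flagging is that one must condition on $\gamma_0$ and use the no-switch event on the full interval $[0,T]$ (not merely on $[t,T]$), so that the initial-value conditioning pins down $\gamma$ everywhere on $[0,T]$ and hence also on $[t,T]$.
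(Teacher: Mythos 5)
Your proof is correct. The decomposition over the no-switch event $A$ is valid: on $A\cap\{\gamma_0=\gamma_\pm\}$ the path is frozen at $\gamma_\pm$ so the integrand equals $e^{\alpha\gamma_\pm(T-t)}$ exactly, the probability $e^{-\lambda_\pm T/\epsilon}\to 1$, and since $\gamma(s)\in\{\gamma_+,\gamma_-\}$ pathwise the integrand is deterministically bounded by $e^{|\alpha|M(T-t)}$, so the contribution of $A^c$ is $O(1/\epsilon)$. Your closing remark about conditioning on no switch over all of $[0,T]$ rather than just $[t,T]$ is the right thing to flag, since the conditioning is on $\gamma_0$ and one needs the initial value to propagate to the interval $[t,T]$.

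This is, however, a genuinely different route from the paper's. The paper proves the lemma by taking $\epsilon\to\infty$ in the series representation of $\left\langle e^{\alpha\Gamma_t}\,\vert\,\gamma_0=\gamma_+\right\rangle$ (Eq.~(\ref{eq:ssmcharasy})), invoking the uniform convergence established in Theorem~\ref{thm:unif} to pass the limit through the sum so that only the $n=0$ (zero-transition) term survives, and then transferring the result from $\Gamma_T$ to the increment $\Gamma_T-\Gamma_t$ via the algebraic identity (\ref{eq:probgammats}), whose mixing coefficients degenerate to $(1,0)$ as $\epsilon\to\infty$. Your main term $e^{\alpha\gamma_\pm(T-t)}e^{-\lambda_\pm T/\epsilon}$ is precisely that surviving $n=0$ term, so the two arguments share the same core observation; the difference is in how the remainder is controlled. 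The paper leans on the series machinery it has already built for the filters (Weierstrass M-test, convolution formulas), whereas you use only the pathwise boundedness of $\gamma$ and a one-line probability estimate. Your version is more elementary and self-contained, avoids any question about the range of $\alpha$ for which the exponential-moment factors in the series are finite, handles the increment $\Gamma_T-\Gamma_t$ in one stroke without the intermediate reduction to $t=0$, and yields the $O(1/\epsilon)$ rate explicitly. What the paper's route buys is uniformity with the rest of its MGF computations, since the same series is reused elsewhere; as a standalone proof of this lemma, your argument is the cleaner one.
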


\bigskip

\begin{Lemma}
[dDSM]
For fixed $T>t>0$
\label{lem:dsmd}
$\left\langle e^{\alpha (\widehat{\Gamma}'_T-\widehat{\Gamma}'_t)} 
\vert \widehat{\gamma}'_0=\gamma_{\pm} \right\rangle 
\to
e^{ \alpha \gamma_{\pm}(T-t)}$
as $\epsilon \to \infty$.
\end{Lemma}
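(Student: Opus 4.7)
The plan is to exploit the fact that, conditional on the choice of OU mode and on $\widehat{\gamma}'_0=\gamma_{\pm}$, the process $\widehat{\gamma}'$ is exactly the Ornstein--Uhlenbeck solution of $d\widehat{\gamma}' = -(\nu_{\pm}/\epsilon)(\widehat{\gamma}'-\mu_{\pm})\,dt + (\sigma_{\pm}/\sqrt{\epsilon})\,dB_\gamma$ started at $\gamma_{\pm}$. In particular $\widehat{\gamma}'(s)$ is Gaussian for every $s$, and so the bounded linear functional $\widehat{\Gamma}'_T - \widehat{\Gamma}'_t = \int_t^T \widehat{\gamma}'(s)\,ds$ is also Gaussian. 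The conditional MGF therefore collapses to the closed form
\begin{equation*}
\left\langle e^{\alpha(\widehat{\Gamma}'_T-\widehat{\Gamma}'_t)} \,\vert\, \widehat{\gamma}'_0=\gamma_{\pm}\right\rangle = \exp\!\left(\alpha m_\epsilon + \tfrac{1}{2}\alpha^2 v_\epsilon^2\right),
\end{equation*}
and the claim reduces to showing $m_\epsilon \to \gamma_{\pm}(T-t)$ and $v_\epsilon^2 \to 0$ as $\epsilon\to\infty$.

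For the mean, I would substitute the explicit OU solution and take expectations to obtain $m_\epsilon = \int_t^T \left[\mu_{\pm} + (\gamma_{\pm}-\mu_{\pm})e^{-\nu_{\pm} s/\epsilon}\right]\,ds$. Since $e^{-\nu_{\pm} s/\epsilon} \to 1$ uniformly on $[t,T]$ as $\epsilon\to\infty$, the integrand converges uniformly to $\gamma_{\pm}$, giving $m_\epsilon \to \gamma_{\pm}(T-t)$. This is the heart of the heuristic: in the rare-event limit the OU drift is too weak to relax the process toward its equilibrium $\mu_{\pm}$ on time scales of order $T$, so it sits near its initial value $\gamma_{\pm}$.

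For the variance, I would apply Fubini to write $v_\epsilon^2 = 2\int_t^T\!\int_t^s \mathrm{Cov}(\widehat{\gamma}'(s),\widehat{\gamma}'(r))\,dr\,ds$, substitute the standard OU covariance $\mathrm{Cov}(\widehat{\gamma}'(s),\widehat{\gamma}'(r)) = (\sigma_{\pm}^2/2\nu_{\pm})\,e^{-\nu_{\pm}(s-r)/\epsilon}\left(1 - e^{-2\nu_{\pm} r/\epsilon}\right)$ valid for $s\geq r$, and use the elementary bound $1 - e^{-2\nu_{\pm} r/\epsilon} \leq 2\nu_{\pm} r/\epsilon$ together with the uniform boundedness of the exponential prefactor. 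The result is $v_\epsilon^2 = O(1/\epsilon)$, which vanishes as $\epsilon\to\infty$.

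Combining these two limits in the Gaussian MGF yields the claim, and the dual-mode indexing by $\pm$ contributes no additional difficulty since the two branches are handled separately by the same computation. The argument is essentially parallel to Lemma~\ref{thm:dsmmgf} in the opposite (scale-separation) regime, but with the asymptotic behavior of the OU process reversed: here slow drift and vanishing noise pin $\widehat{\gamma}'$ near its initial value, rather than allowing it to relax toward its invariant measure. Accordingly there is no genuine obstacle, and the only care required is to expand $e^{-\nu_{\pm} s/\epsilon}$ at the correct rate so that the limiting integrand emerges as $\gamma_{\pm}$ rather than $\mu_{\pm}$.
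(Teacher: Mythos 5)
Your proposal is correct and follows essentially the same route as the paper: both exploit the exact Gaussianity of the conditioned OU integral so that the MGF reduces to $\exp(\alpha m_\epsilon + \tfrac{1}{2}\alpha^2 v_\epsilon^2)$, and both show $m_\epsilon \to \gamma_{\pm}(T-t)$ and $v_\epsilon^2 = O(1/\epsilon)$ as $\epsilon\to\infty$ (the paper via its Appendix~\ref{subsec:dsmrer} expansions of $b_\gamma$ and $\mathrm{Var}(\mathcal{B}_\gamma)$ feeding into Eq.~(\ref{eq:spmcharasy}) and (\ref{eq:dsmdasym}), you via the explicit OU covariance kernel). Your version is somewhat more self-contained, but there is no substantive difference in the argument.
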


\subsection{Asymptotic Matching}
\label{subsec:dsmcoeff}
The convergence results 
in the preceding subsection demonstrate that
the filtering performances of the approximate filters,
and the exact filter, would be similar to one another 
in that
$\bar{\gamma}  = \bar{\gamma}_\infty$,
$\mu = \bar{\gamma}_\infty$
(when $\epsilon \ll 1$)
and
$\bar{\gamma}'\triangleq {\gamma}_\infty$, 
$\widehat{\rho}_{\pm} \propto {\lambda_{\mp}}$,
$\mu_{\pm} = \gamma_{\pm}$
(when $\epsilon \gg 1$).
The former result relates to  
the robustness of the DSM filter inherited from the adaptive parameters
$\{ \mu,\sigma\}$, demonstrated here when $\epsilon$ is small,
and demonstrated through extensive numerical simulations in
\cite{gershgorin2010test, gershgorin2010improving}.

However, when $\epsilon$ deviates considerably from the
two extreme values ($\epsilon =0$ and $\epsilon =\infty$),
the choice of associated parameters in the filtering models
is indeed one critical 
factor for a successful 
filtering with model error.
The current and next subsections 
concern the determination of 
$\Theta\equiv\{\mu,\nu,\sigma\}$ for DSM,
and
$\Theta'\equiv\{\widehat{\rho}_{\pm},\mu_{\pm},\nu_{\pm},\sigma_{\pm}\}$ for dDSM.
Unlike earlier works in this area where
these associated parameters are 
chosen from a number of parallel
direct numerical simulations
comparing the original dynamics and its simplifications,
our approach will specify the parameters
in a systematic analysis-based manner.

\subsubsection{Sharp Scale-Separation Regime}
In this parameter regime,
because DSM is associated to a nonlinear 
approximate
Kalman filter,
we attempt to equate
the first and second order statistics
of SSM and DSM,
\begin{equation}
\begin{split}
\label{eq:momcriteria}
\left\{ 
\begin{array}{ll}
\langle u_T \rangle & = \langle \widehat{u}_T \rangle
\\
\text{Var}(u_T)  & = \text{Var}(\widehat{u}_T)
\end{array} \right. 
\end{split}
\end{equation}
for high accuracy.
It is worth noticing that, in view of (\ref{eq:uqmom}),
if the MGFs agree with one another, that is if
\begin{subnumcases}
{\label{eq:criteria22}}
\label{eq:criteria}
\qquad \; \left\langle e^{\alpha \Gamma_T}\right\rangle 
= \left\langle e^{\alpha \widehat{\Gamma}_T}\right\rangle  \qquad \quad \text{for} \;\;\; \alpha=-1, -2
\\
\label{eq:mcriteria2}
\left\langle e^{\alpha (\Gamma_T-\Gamma_t)}\right\rangle 
 =
\left\langle e^{\alpha (\widehat{\Gamma}_T-\widehat{\Gamma}_t)}\right\rangle 
\quad \text{for}\; \; \; \alpha=-2 \;\;\;\text{and}\;\;\;  0\leq t \leq T
\end{subnumcases}
and if $({u}_0,{\gamma}_0)$ and $(\widehat{u}_0,\widehat{\gamma}_0)$
are uncorrelated, 
and if ${u}_0 \triangleq \widehat{u}_0$,
then Eq.~(\ref{eq:momcriteria}) holds.
Motivated by convergence to the common limit, as demonstrated above,
we here strive to asymptotically satisfy (\ref{eq:criteria22}) 
when $\epsilon \ll 1$.

To that end,
we derive the approximation
\begin{equation}
  \begin{split}
\label{eq:fmcharasy1}
\left\langle e^{\alpha (\Gamma_T-\Gamma_t)} \right\rangle 
& \simeq \exp\Bigg( \alpha \bar{\gamma}_\infty(T-t) +\alpha^2\frac{3}{8}\frac{
(\gamma_{-}-\gamma_{+})^2(\lambda_{-}^2+\lambda_{+}^2)}{\lambda_{-}\lambda_{+}
(\lambda_{+}+ \lambda_{-})} (T-t) \epsilon\\
& +
\alpha \left(
\mathbb{P}(\gamma_0=\gamma_{+})
\frac{(\gamma_{+}-\gamma_{-})}{4\lambda_{+}} 
+
\mathbb{P}(\gamma_0=\gamma_{-})
\frac{(\gamma_{-}-\gamma_{+})}{4\lambda_{-}} 
\right)
\epsilon  
+\mathcal{O}(\epsilon^2)
\Bigg) 
\quad \epsilon < T-t
  \end{split}
\end{equation}
in the Appendix~\ref{subsec:scaleseparation}.
We also derive the approximations
\begin{subequations}
\label{eq:spmcharasy0}
\begin{align}
\label{eq:spmcharasy1}
\left\langle e^{\alpha \widehat{\Gamma}_T}\right\rangle 
& = \exp\left( \alpha \left( \mu T + \langle \widehat{\gamma}_0-\mu\rangle
\frac{\epsilon}{\nu}\right)
+\alpha^2 \frac{\sigma^2}{2\nu^2}T\epsilon +\mathcal{O}(\epsilon^2) \right) 
\quad \epsilon < T \\
\label{eq:spmcharasy2}
\left\langle e^{\alpha (\widehat{\Gamma}_T-\widehat{\Gamma}_t)} \right\rangle 
& = \exp\left( \alpha \mu(T-t) 
+\alpha^2 \frac{\sigma^2}{2\nu^2}(T-t)\epsilon + \mathcal{O}(\epsilon^2) \right) 
\quad \epsilon < t 
\end{align}
\end{subequations}
in the Appendix~\ref{subsec:dsmssr}.
Importantly, the exponents of MGFs are
of the second-order with respect to $\alpha T$ 
up to $\mathcal{O}(\epsilon)$,
indicating that both $\Gamma_T$ and $\widehat{\Gamma}_T$ 
are statistically closer to 
Gaussian in this parameter regime.

From a comparison between
(\ref{eq:fmcharasy1}) and (\ref{eq:spmcharasy0}),
we realize
Eq.~(\ref{eq:criteria22})
is 
asymptotically
met
provided
$\bar{\gamma}_\infty=\mu$
and
\begin{equation}
\begin{split}
\label{eq:sc1}
\frac{3}{8}\frac{ (\gamma_{-}-\gamma_{+})^2(\lambda_{-}^2+\lambda_{+}^2)}{\lambda_{-}\lambda_{+}
(\lambda_{+}+ \lambda_{-})} & =\frac{\sigma^2}{2\nu^2}
\end{split}
\end{equation}
and
\begin{equation}
\begin{split}
\label{eq:sc2}
\mathbb{P}(\gamma_0=\gamma_{+})
\frac{(\gamma_{+}-\gamma_{-})}{4\lambda_{+}} 
+
\mathbb{P}(\gamma_0=\gamma_{-})
\frac{(\gamma_{-}-\gamma_{+})}{4\lambda_{-}} 
=\frac{\langle \widehat{\gamma}_0-\mu \rangle}{\nu}.
\end{split}
\end{equation}

Eqs.~(\ref{eq:sc1}), (\ref{eq:sc2})
can be solved to determine a unique set of
$\{\nu,\sigma^2\}$ but
might result in $\nu < 0$ which is unphysical.
In order to avoid 
this 
possibility, 
we impose 
the equivalence between variances of stationary processes
$\gamma_\infty$ and 
$\widehat{\gamma}_\infty$ 
\begin{equation}
\begin{split}
\label{eq:varmatch}
\frac{\lambda_{+}\lambda_{-}(\gamma_{+}-\gamma_{-})^2}{(\lambda_{-} +\lambda_{+})^2}
& = \frac{\sigma^2}{2\nu} \\
\end{split}
\end{equation}
instead of
Eq.~(\ref{eq:sc2}).
From 
Eqs.~(\ref{eq:sc1}) and (\ref{eq:varmatch}),
we obtain
\begin{equation}
\begin{split}
\label{eq:choice}
\Theta_{\text{naive}} \equiv
\begin{cases}
\mu & = \bar{\gamma}_\infty \\
\nu 
&=
\frac{8}{3}\frac{ \lambda_{-}^2\lambda_{+}^2 }{
(\lambda_{-}+\lambda_{+})
(\lambda_{+}^2+ \lambda_{-}^2)}  
\qquad \text{when} \quad \epsilon \ll 1
\\
\sigma^2 
&= 
\frac{16}{3}\frac{ \lambda_{-}^3\lambda_{+}^3 (\gamma_{-}-\gamma_{+})^2 }{
(\lambda_{-}+\lambda_{+})^3 (\lambda_{+}^2+ \lambda_{-}^2)}\\
\end{cases}
\end{split}
\end{equation}
which we term the naive set of DSM parameters,
valid when $\epsilon \ll 1$.

\subsubsection{Extremely Rare-Event Regime}
Using a similar argument to that employed in the case of DSM,
we set
${\rho}_{\pm}=\widehat{\rho}_{\pm}=\frac{\lambda_{\mp}}{\lambda_{-} +\lambda_{+}}$
and attempt to satisfy
\begin{equation*}
\begin{split}
\left\{ 
\begin{array}{ll}
\langle u_T \vert \gamma_0=\gamma_{\pm} \rangle & = \langle \widehat{u}_T \vert \widehat{\gamma}'_0=\gamma_{\pm} \rangle
\\
\text{Var}(u_T\vert \gamma_0=\gamma_{\pm})  & = \text{Var}(\widehat{u}_T\vert \widehat{\gamma}'_0=\gamma_{\pm})
\end{array} \right. 
\end{split}
\end{equation*}
hence
\begin{subnumcases}
{\label{eq:Dcriteria}}
\label{eq:Dcriteria1}
\quad\;  \quad\left\langle e^{\alpha \Gamma_T} \vert \gamma_0=\gamma_{\pm} \right\rangle 
 =
\left\langle e^{\alpha \widehat{\Gamma}'_T}\vert \widehat{\gamma}'_0=\gamma_{\pm}\right\rangle  
 \qquad \text{for} \;\; \alpha=-1, -2
\\
\label{eq:Dcriteria2}
 \left\langle e^{\alpha (\Gamma_T-\Gamma_t)}\vert {\gamma}_0=\gamma_{\pm}\right\rangle 
 =
\left\langle e^{\alpha (\widehat{\Gamma}'_T-\widehat{\Gamma}'_t)}
\vert \widehat{\gamma}'_0=\gamma_{\pm}\right\rangle 
\; \text{for}\; \; \alpha=-2 \;\;\text{and}\;\;  0\leq t \leq T
\end{subnumcases}
for dDSM.

In the case $\epsilon \gg 1$, we derive
\begin{equation}
  \begin{split}
\label{eq:ssmmgfjt5}
\left\langle e^{\alpha \Gamma_T} \vert {\gamma}_0=\gamma_{\pm} \right\rangle 
& \simeq
\exp\left( \alpha \gamma_{\pm}T-\frac{1}{\epsilon}\lambda_{\pm}T\right)\\
  \end{split}
\end{equation}
in the Appendix~\ref{subsec:ssmrer}
and
\begin{equation}
\begin{split}
\label{eq:dsmmgfp}
\left\langle e^{\alpha \widehat{\Gamma}'_T} \vert \widehat{\gamma}'_0=\gamma_{\pm} \right\rangle 
& \simeq \exp\left( 
\alpha 
\left(
{\gamma}_{\pm}T - \frac{1}{2\epsilon} ( {\gamma}_{\pm}-\mu_{\pm}) \nu_{\pm} T^2 \right) 
+\frac{\alpha^2}{2} \frac{(\sigma_{\pm})^2}{3\epsilon}T^3 
\right)\\
\end{split}
\end{equation}
in the Appendix~\ref{subsec:dsmrer}.
Note 
the exponents in
(\ref{eq:ssmmgfjt5}) and (\ref{eq:dsmmgfp})
are of different forms, indicating both
${\Gamma}_T$
and
$\widehat{\Gamma}'_T$
are distant from Gaussian in this parameter regime.

Differently from the case of DSM,
we here manage to asymptotically satisfy
Eq.~(\ref{eq:Dcriteria1}) alone, yielding
\begin{equation}
\begin{split}
\label{eq:ddsmpara}
\Theta'_{\text{naive}} \equiv
\begin{cases}
\widehat{\rho}_{\pm}=\frac{\lambda_{\mp}}{\lambda_{-} +\lambda_{+}}\\
\mu_{\pm}=2T\frac{\lambda_{+}\lambda_{-}(\gamma_{+}-\gamma_{-})^2}{(\lambda_{-} +\lambda_{+})^2}
+\gamma_{\pm}
\qquad \text{when} \quad \epsilon \gg 1
\\
\nu_{\pm}= \frac{3\lambda_{\pm}}{2T^2}
\frac{(\lambda_{-} +\lambda_{+})^2}{\lambda_{+}\lambda_{-}(\gamma_{+}-\gamma_{-})^2}
\\
(\sigma_{\pm})^2 = \frac{3\lambda_{\pm}}{T^2}\\
\end{cases}
\end{split}
\end{equation}
which we term the naive set of dDSM parameters,
valid when $\epsilon \gg 1$.
Unlike
Eq.~(\ref{eq:choice}),
due to the dependence on $T$, 
the set of parameters
(\ref{eq:ddsmpara})
is valid only for fixed-time prediction.
The Gaussian mixture from dDSM
with $\Theta'_{\text{naive}}$
leads to accurate mean approximations
but the accuracy of the variance approximation 
is not guaranteed in view of 
Eq.~(\ref{eq:uqmom}) where integration over $[0,T]$ is involved.

\subsection{Minimizing Sum-of-Squares}
\label{subsec:mini}
In the parameter regime $\epsilon \sim O(1)$,
due to the absence of small or large parameters
allowing for asymptotic analysis,
we invoke a minimization principle to determine 
the set of
parameters
$\Theta$ and $\Theta'$.

\subsubsection{Imprecise Scale-Separation Regime}
When $\epsilon < 1$,
we aim to find $\Theta$ which minimizes 
the sum-of-squares 
\begin{equation}
\begin{split}
\label{eq:qdf}
J(\epsilon) \equiv
\kappa \big\vert \langle u_T \rangle - \langle \widehat{u}_T\rangle \big\vert^2
+
\big\vert \text{Var}(u_T) - \text{Var}( \widehat{u}_T) \big\vert^2
\end{split}
\end{equation}
where
$\kappa \geq 0$ is introduced 
to ensure appropriate scaling 
of the two terms in the objective function.
To be more precise,
given $(\widehat{u}_0, \widehat{\gamma}_0)$,
Eq.~(\ref{eq:qdf})
is an algebraic relation in terms of $\Theta$
once we impose ${u}_0:= \widehat{u}_0$
and 
${\gamma}_0:=\gamma_\infty$
(see Appendices \ref{app:ssm} and \ref{app:dsm}).
Note that
a minimizer of $J(\epsilon)$ comes as close as possible to 
fulfilling Eq.~(\ref{eq:momcriteria}). 
It is worth mentioning that, differently from
the MFG matching (\ref{eq:criteria22})
for which 
$(\widehat{u}_0, \widehat{\gamma}_0)$ should be at most weakly correlated
for the approach to be valid,
the minimization methodology can be used
irrespective of 
their potentially strong correlation.

We identify a (local) minimizer by taking 
$\Theta_{\text{naive}}$ as an initial starting point,
and applying an optimizer such as gradient descent.
This minimization can be performed using continuation
in $\epsilon$, starting from $\epsilon \ll 1$ where
the initial guess will be accurate.
Because the solution of this minimization is computed
at each assimilation time step we name it {\em dynamic calibration}
and denote the resulting time-dependent parameters by
$\Theta_{\text{dynamic}}$.
Of course the key issue
in sequential filtering that we are addressing is
to maintain an accurate description of the 
evolving probability distribution
with reasonable computational cost. In this context
it is impractical to compute $\Theta_{\text{dynamic}}$
at every observation time.  In practice, one can 
take a time average of a range of dynamic calibrations.
We refer to this as static calibration and denote the
resulting parameter by $\Theta_{\text{static}}$.

\subsubsection{Moderately Rare-Event Regime}
As for the extremely rare-event regime,
we carry out the same procedure
for each stable and unstable Gaussian kernel.
As for the imprecise scale-separation regime we also
minimize an expression analogous to Eq.~(\ref{eq:qdf})
in which the conditioned mean and covariance are used instead.
We first find $\Theta'_{\text{dynamic}}$
from $\Theta'_{\text{naive}}$, and next find
$\Theta'_{\text{static}}$ from $\Theta'_{\text{dynamic}}$.
Unlike the method based on matching MGF asymptotics, where
the potential inaccuracy of variance approximations are present,
this method simultaneously accounts for accuracy in both 
the mean and covariance approximations.

\section{Numerical Simulations}
\label{sec:ns}
Having obtained 
three different versions of adaptive parameters 
(naive set, static calibration,
dynamic calibration)
for DSM and dDSM, we here investigate 
the filtering performances of the suggested models
using numerical simulations.

Very importantly, one distinguished
advantage of the framework we are currently adopting lies in 
the analytic tractability of the state space model.
In Appendix~\ref{subsec:selecton},
we derive the closed form solution (when $\lambda_{+} = \lambda_{-}$)
and the series solution (when $\lambda_{+} \neq \lambda_{-}$)
for MGFs of the SSM integral process.
In Appendix~\ref{subsec:ssmfilter}, we use them 
to design the Gaussian filter 
(suitable when $\epsilon$ is small) and
the Gaussian sum filter (suitable when $\epsilon$ is large)
for SSM. Those results from the direct filtering of SSM are 
then to be used as the reference solutions in subsequent experiments.
We emphasize that the presence of these
reference probability distributions
enables very careful examination of filter accuracy
in our numerical experiments, beyond measuring 
the distance between a realization of the truth signal 
and the mean of an approximate filtering solution and 
beyond what is seen
in most other works concerning the computational evaluations of filters;
this in turn gives further depth to our demonstrations.

In all our experiments, we use the following parameter values to specify the
SSM truth model: 
$\sigma_u=0.1549$,
$\gamma_{+}=2.27$,
$\gamma_{-}=-0.04$,
$\lambda_{+}=1$ and
$\lambda_{-}=2$
(these choices follow those in \cite{gershgorin2010test}).
Fixing inter-observation time
$T=1$,
we study the cases of
$\epsilon=10^{-1}$, $10^{0}$, $10^{1}$, $10^{2}$.
Each one 
is selected as representative of the parameter regimes:
sharp scale-separation,
imprecise scale-separation,
moderately rare-event,
extremely rare-event,
in the order given.
Since $\mathbb{E}(\tau_k) = 1/r$
for $\tau_k \sim \exp(r)$,
the reciprocal of $\epsilon$ 
equals the average number of transitions from 
the stable mode ($\gamma = \gamma_{+}$)
to the unstable mode ($\gamma = \gamma_{-}$)
on the unit time interval.
As $\lambda_{-}$ is twice $\lambda_{+}$ in this example,
the average time 
spent in the stable mode is twice that spent in the unstable mode.

We take the initial condition of SSM
according to $u_0 \triangleq \mathcal{N}(0.1, 0.0016)$
and $\gamma_0 \triangleq \gamma_\infty$,
independently from one the other.
For MSM (dMSM), we take
$\bar{u}_0( \bar{u}'_0) \triangleq u_0$.
We also take $\bar{\gamma}=\bar{\gamma}_\infty (= 1.5) $
and $\bar{\gamma}'\triangleq {\gamma}_\infty$.
For DSM (dDSM),
we take the independent Gaussian
$(\widehat{u}_0,\widehat{\gamma}_0)$
(or $(\widehat{u}'_0,\widehat{\gamma}'_0)$)
where
$\widehat{u}_0 \triangleq u_0$
and 
$\widehat{\gamma}_0 
\triangleq \mathcal{N}( 
1.2
\bar{\gamma}_\infty,
\text{Var}(\gamma_\infty))$.
We set $\widehat{\rho}^{\pm}(0)=\bar{\rho}_{\pm}$.
For the observational process in Eq.~(\ref{eq:obs}),
we use
$R_n=0.25 E$
where $E \equiv \sigma_u^2/(2\bar{\gamma})$
(in this case the variance of $\bar{u}_n \vert Y_n$
is independent of $n$).

\subsection{Performances of Simplified Filters}
\label{sec:psf}
\subsubsection{Sharp Scale-Separation Regime}
We first study the case of
$\epsilon=10^{-1}$.
For the implementation of DSM with dynamic calibration,
along with
$\Theta_{\text{naive}}$ as a starting point,
a local minimizer 
$\Theta_{\text{dynamic}}$
of Eq.~(\ref{eq:qdf})
is solved
at every observation time.
The choice of $\kappa$ 
in $J(\epsilon)$
plays a substantial role in this problem.
Here and hereafter, 
the value of $\kappa$ is set to zero
for simplicity and consistency of presentations; this allows
the prior mean from dynamic and static calibrations to
be inaccurate but, in filtering, the posterior is the 
main object of interest.
The time average of these parameters for $1\leq n \leq50$ 
is taken as
$\Theta_{\text{static}}$.

In addition to DSM filters,
we apply Gaussian filters for MSM and SSM.
For the latter,
due to distinct $\lambda_{\pm}$,
we need to truncate the series solution 
of the MGF.
Hereafter, the first $30$ terms of the series solution 
will be kept as this ensures accuracy 
by virtue of the fact that $\mathbb{P}(N_T > 30)< 10^{-5}$.

In Fig.~\ref{fig1ep1}, we depict the relative errors 
of the prior 
and posterior 
approximations
in terms of mean and variance.
We see that the approximations of DSM
with 
the parameters tuned by our methods
are significantly more accurate than the MSM approximation.
As expected,
the overall 
errors of the mean and variance relative to those from SSM filtering solution
are given in the order :
DSM (dynamic calibration) $\lesssim$
DSM (static calibration) $<$
DSM (naive set) $<$
MSM.
Admittedly,
this result is merely for a single realization of the observation process.
However we show that the result is 
indeed
robust with respect to 
the chosen observational data set 
in the following manner.

At each observation time step,
the posterior distributions of the 
approximate 
models are determined by the instance of observation,
which is drawn from a Gaussian.
In Fig.~\ref{fig2ep1}, we depict 
the dependence of the corresponding filter accuracy on $y_n$
for $n=20$ and $n=40$.
It is observed that,
for most values 
of $y_n$, Gaussian filters for DSM with dynamic and static calibrations 
significantly outperform MSM,
leading to
highly accurate posterior approximations.
In Fig.~\ref{fig2ep1}, we also depict
the statistical average of the posterior error with respect to $y_n$ for each $n$.
There, one can see the ordering of the accuracies
is exactly the same as in the single realization experiment.

\subsubsection{Imprecise Scale-Separation Regime}
Taking $\epsilon=10^{0}$,
it is not immediately intuitive whether 
either the 
Gaussian description or the Gaussian mixture description
is a better approximation of the SSM.
It turns out that, in this case,
the Gaussian filter for SSM is more suitable as the 
reference solution; 
our investigation of this issue can be found in 
subsection~\ref{sec:sa}.
Accordingly we 
find dynamic and static calibrations,
and
implement Gaussian filters for 
DSM, MSM and SSM.
We depict
Fig.~\ref{fig1e1}
and
Fig.~\ref{fig2e1},
which correspond, respectively, to
Fig.~\ref{fig1ep1}
and
Fig.~\ref{fig2ep1}.
The scenario interpreted from the figures is 
basically 
the same as
the one 
arising
when $\epsilon=10^{-1}$, with one exception that
the naive DSM is less accurate than the MSM.
This is 
no surprise,
because $\epsilon$ is no longer small and 
$\Theta_{\text{naive}}$ is no longer expected to be valid.
Therefore,
the overall errors are ordered as:
DSM (dynamic calibration) $\lesssim$
DSM (static calibration) $<$
MSM $<$
DSM (naive set).

\subsubsection{Moderately Rare-Event Regime}
When $\epsilon=10^1$, it is shown in 
subsection~\ref{sec:sa} that 
the Gaussian sum filter for SSM,
made efficient by merging the mixture approximation of the
posterior into a Gaussian at every observation time,
is indeed better than the Gaussian filter
for the reference solution.
We apply the same kind of Gaussian sum filters for dMSM and dDSM.
For the dDSM implementations, taking
$\Theta'_{\text{naive}}$ as a starting point,
we solve
dynamic calibrations 
for each of two evolving Gaussian kernels.
We then individually average them to obtain a static calibration.

In Fig.~\ref{fig1e10},
we depict the relative error for each of the
Gaussian kernel approximations.
Combining these two cases, we plot
Fig.~\ref{fig2e10} and Fig.~\ref{fig3e10},
which correspond to Fig.~\ref{fig1e1} and
Fig.~\ref{fig2e1} respectively. Importantly,
for comparison, we additionally plot the result from DSM with
$(\mu=\bar{\gamma}_\infty, \nu=0.1\bar{\gamma}_\infty,\sigma=5\sigma_u)$.
These parameters are the ones used in \cite{gershgorin2010test}.
They are selected as suitable from direct numerical simulations
in this parameter regime, and are interestingly very close to 
$\Theta_{\text{naive}}$.
Here the DSM appears as a reasonable approximation of SSM
but
this Gaussian filter 
is characterized by
significantly less accuracy than 
the remaining Gaussian sum filters.

Our simulations further
indicate, in this case, that
the dependency of filter accuracy on the observation
is much more complicated than the previous Gaussian filtering examples
(Fig.~\ref{fig3e10}).
The overall errors are of the order :
dDSM (dynamic calibration) 
$\lesssim$
dDSM (static calibration) 
$<$
dMSM 
$\lesssim$
dDSM (naive set)
$<$
DSM (naive set).

\subsubsection{Extremely rare-event regime}
Like the preceding case, 
we take as reference
the Gaussian sum filter for SSM 
with projection of posterior into the set of Gaussian distributions.
The overall scenario when $\epsilon=10^2$ is similar to the case with $\epsilon=10^1$,
except that dMSM becomes more accurate.
We plot
Fig.~\ref{fig1e100},
Fig.~\ref{fig2e100}
and
Fig.~\ref{fig3e100}
that correspond to
Fig.~\ref{fig1e10},
Fig.~\ref{fig2e10}
and
Fig.~\ref{fig3e10}
respectively.
We see
the overall errors are of the order :
dDSM (dynamic calibration) 
$\simeq$
dMSM 
$<$
dDSM (static calibration) 
$<$
dDSM (naive set).
Note dMSM is quite accurate in this case because $\epsilon$ is very large.

\subsubsection{Summary} 
To summarize we plot the root mean square errors 
of mean and variance
between the reference and approximations 
for all four choices of $\epsilon$ in Fig.~\ref{fig:rmse}.

\subsection{Supplementary Analysis}
\label{sec:sa}
This section discusses our choices, especially in relation  to choice
of reference solution,  made while performing numerical simulations 
in subsection~\ref{sec:psf}; it
can be skipped without harming the understanding of the main messages
of the paper.

\subsubsection{Imprecise Scale-Separation Regime}
In this case where we take $\epsilon=10^{0}$,
to make sure
whether either the 
Gaussian description or the Gaussian sum description
is a better approximation of the SSM,
what we do is to compare the similarity/distance between 
MSM (note the derivation corresponds to $\epsilon=0$) and Gaussian approximation of SSM,
and that between 
dMSM (that corresponds to $\epsilon=\infty$) and Gaussian sum approximation of SSM.

To that end,
we plot the prior distributions from all four cases,
when $n=10$ (and we do the same in the remaining examples),
in 
the left panel of
Fig.~\ref{fig:gsae1}.
We see that the
dMSM has a one-sided fat tail,
which is due to the contribution 
by the Gaussian kernel evolved while $\gamma$ is in the unstable mode.
However this feature is not apparent in the mixture approximation of SSM
(in fact both Gaussian and Gaussian mixture approximations of
SSM are very similar and unimodal).
Furthermore, the $L^1$ distance between MSM and SSM (Gaussian) is 
significantly smaller than 
the one between dMSM and SSM (Gaussian mixture), as shown in the right panel of
Fig.~\ref{fig:gsae1}.
The discussion demonstrates that, in this parameter regime,
the Gaussian filter for SSM is
more suitable as a reference solution than is the
Gaussian sum filter.

\subsubsection{Moderately Rare-Event Regime}
When $\epsilon=10^1$, 
we plot the four relevant prior distributions
in 
the top-left panel of
Fig.~\ref{fig:gsae10100} .
While MSM and SSM (Gaussian)
are distant from one another,
both dMSM and SSM (Gaussian mixture) are characterized by a one-sided fat tail,
in contrast to the case of $\epsilon=10^0$,
and further are very close to one another.
Therefore SSM with the Gaussian sum filter is chosen as 
the appropriate reference solution.

We turn our attention to the validity of Gaussian approximation of the 
Gaussian mixture
posterior.
The top-right panel of
Fig.~\ref{fig:gsae10100} depicts
the posterior of SSM (Gaussian mixture),
which consists of two kernels.
The distribution 
is well approximated by a single Gaussian that has the same mean and variance.
This is due to the sharpness of the likelihood we choose (discussed shortly).
We may thus approximate the filtering solution by a Gaussian 
at every observation time, 
and we can apply Gaussian sum filters 
in a computationally tractable way
without harming accuracy.

\subsubsection{Extremely Rare-Event Regime}
With regard to SSM filter,
the scenario when $\epsilon=10^2$ is the same as the case with $\epsilon=10^1$.
In the bottom of
Fig.~\ref{fig:gsae10100},
the priors of dMSM and SSM (Gaussian sum)
are almost indistinguishable,
and the SSM posterior is accurately approximated by a Gaussian.

We conclude the current section with 
further study of the Gaussian approximation of the posterior.
Recall we have fixed $R_n=0.25E$ thus far.
In this case,
it is shown that
the Gaussian approximation of the posterior can be performed without losing accuracy,
but this may not be the case when $R_n$ is bigger.
In Fig.~\ref{fig:gsa2e100}, we plot the prior and posterior with $R_n=0.75E$.
Due to the flatter likelihood, the posterior with two kernels significantly deviates 
from the Gaussian approximation.
In this case, 
the Gaussian approximation of the posterior cannot 
guarantee the accuracy of the filtering solution.

\begin{figure}
\vspace{-0.1in}
  \centering
\subfigure
{\includegraphics[width=0.46\textwidth]{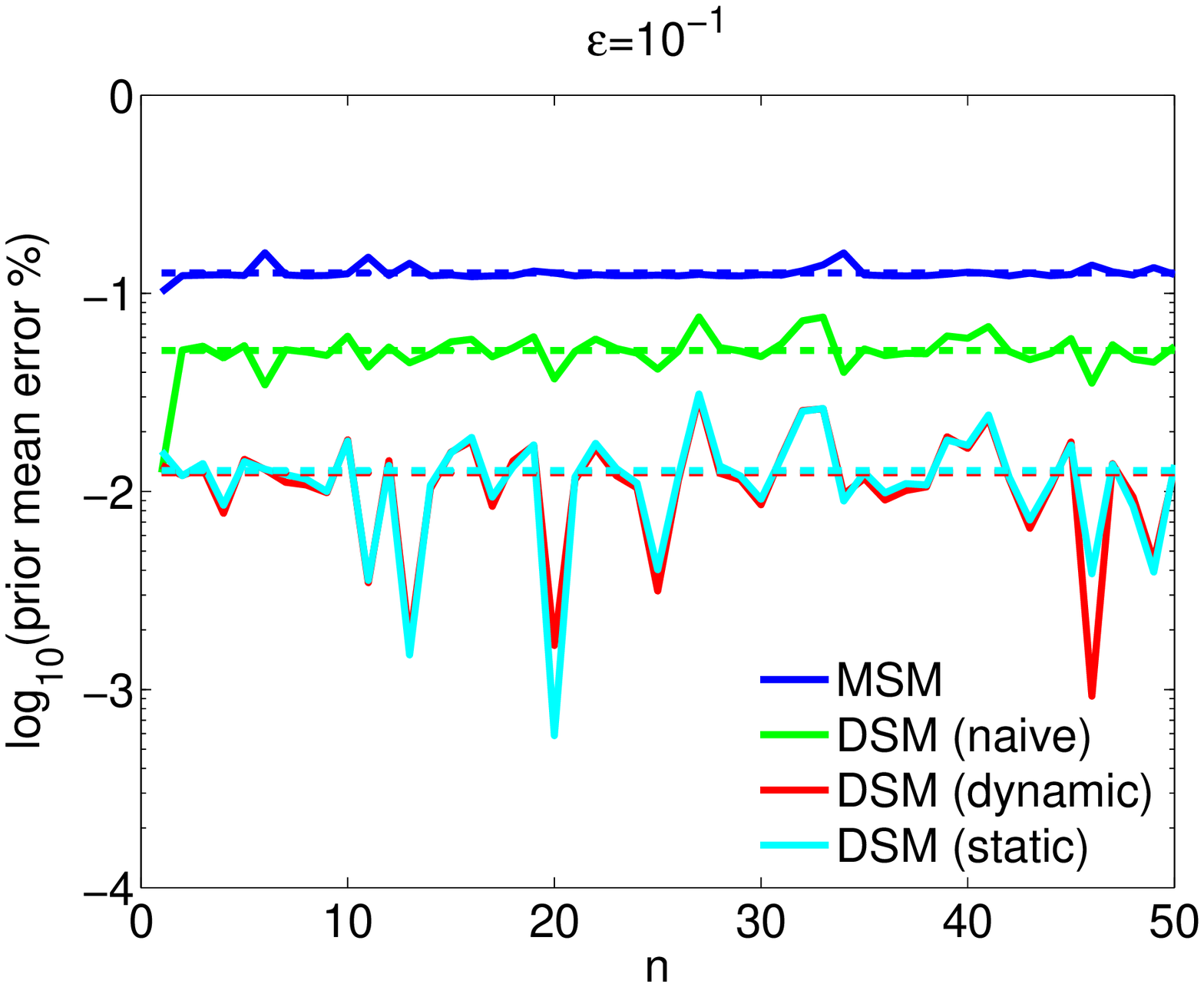} } 
\quad
\subfigure
{\includegraphics[width=0.46\textwidth]{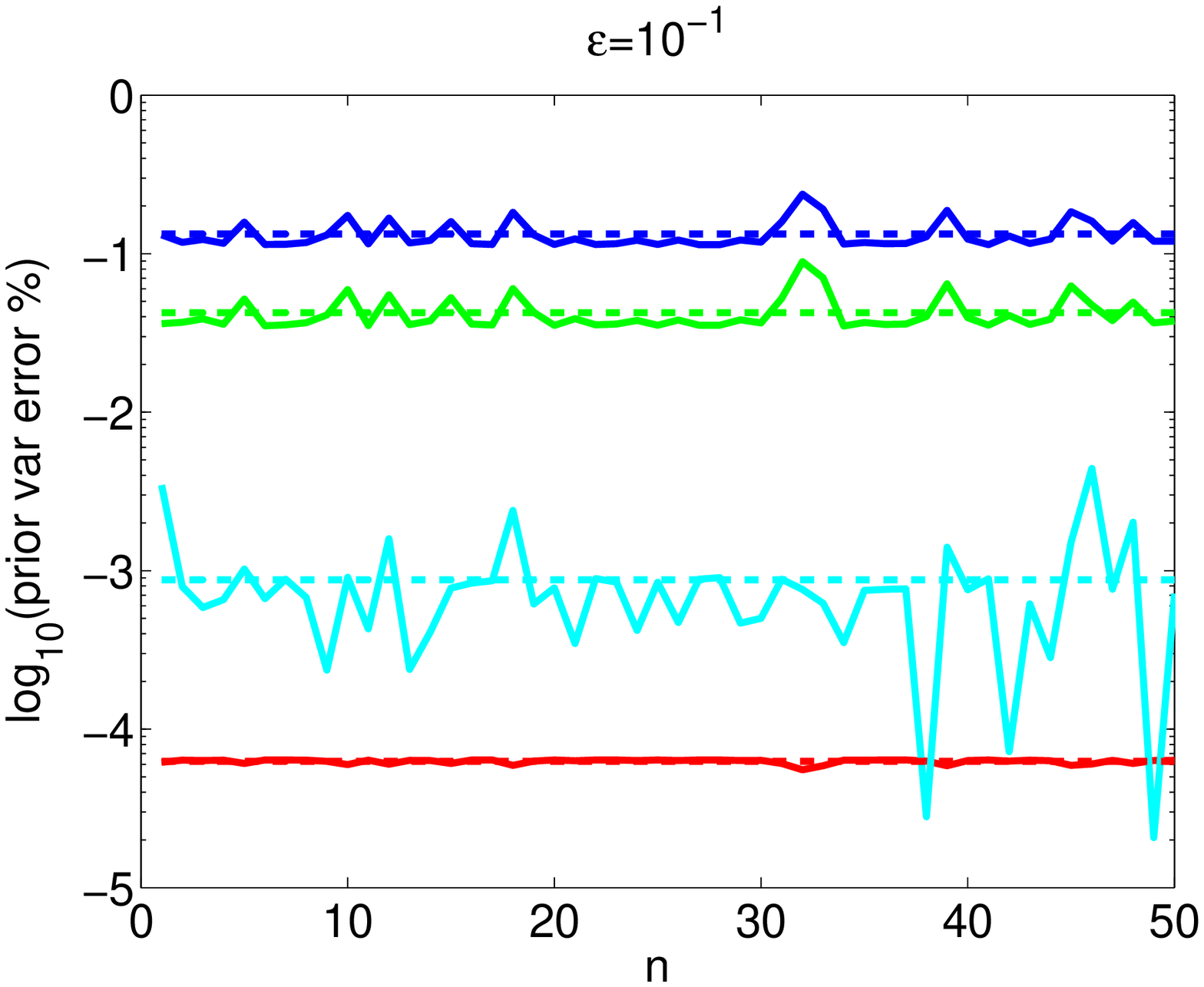} }
\\
\vspace{-0.1in}
\subfigure
{\includegraphics[width=0.46\textwidth]{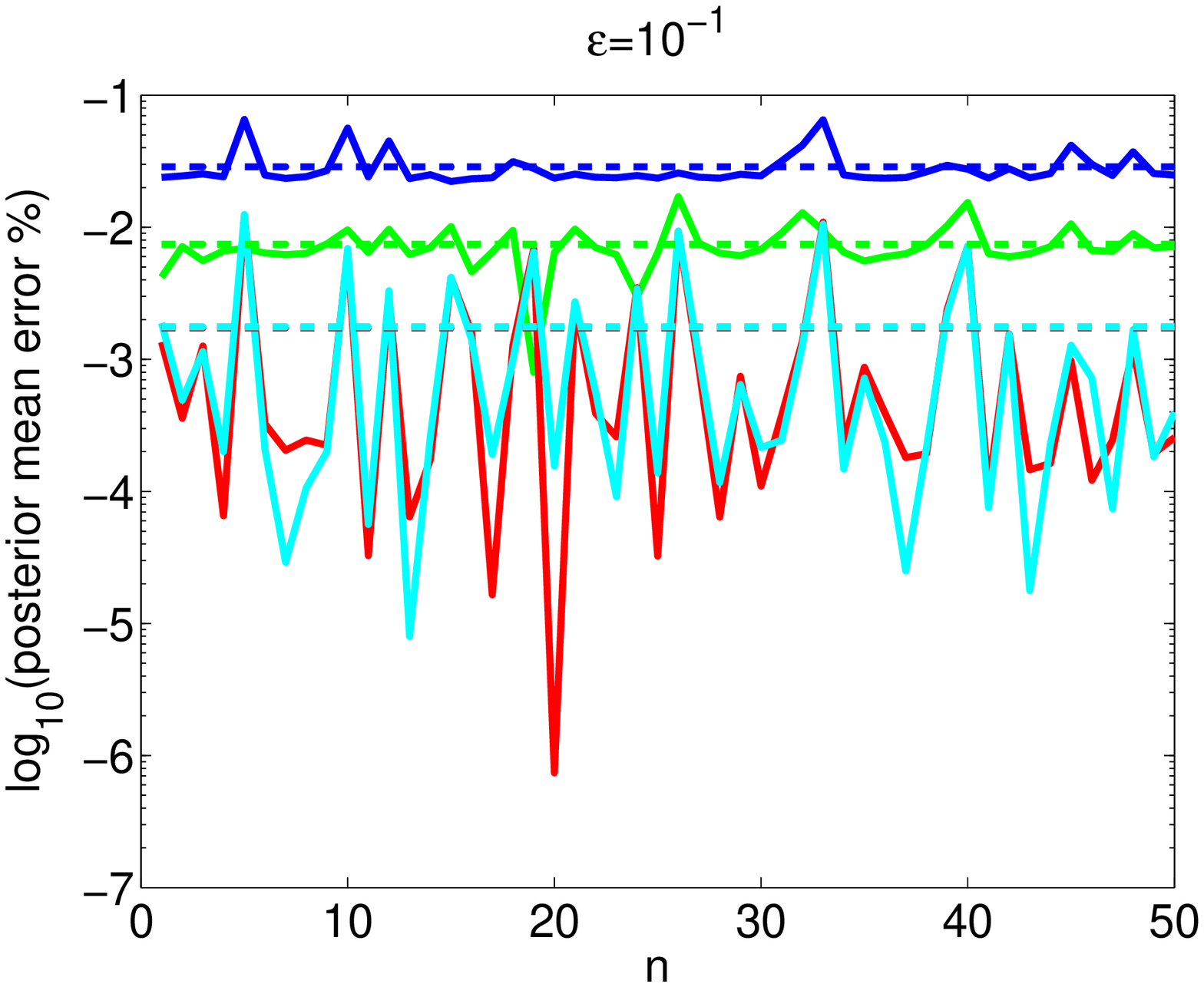}} 
\quad
\subfigure
{\includegraphics[width=0.46\textwidth]{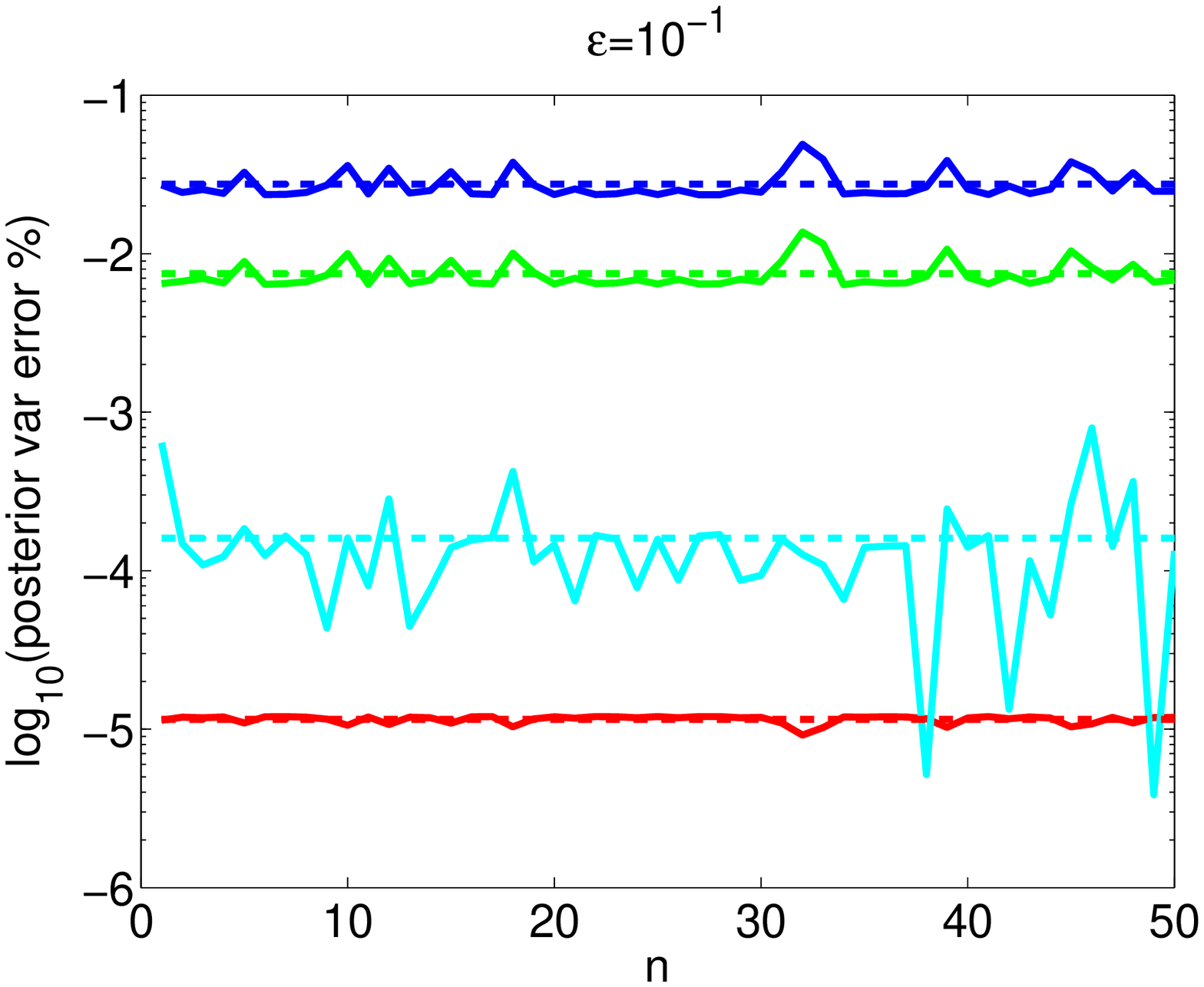}} 
\\
\vspace{-0.1in}
\caption{
The relative errors of the mean and variance approximations of
the prior $u_{n}|Y_{n-1}$ (top) and posterior $u_{n}|Y_{n}$ (bottom) distributions when $\epsilon = 10^{-1}$.
The dashed lines denote the time averages over $0 \leq n \leq 50$.
} 
\label{fig1ep1} 

  \centering
\subfigure
{\includegraphics[width=0.46\textwidth]{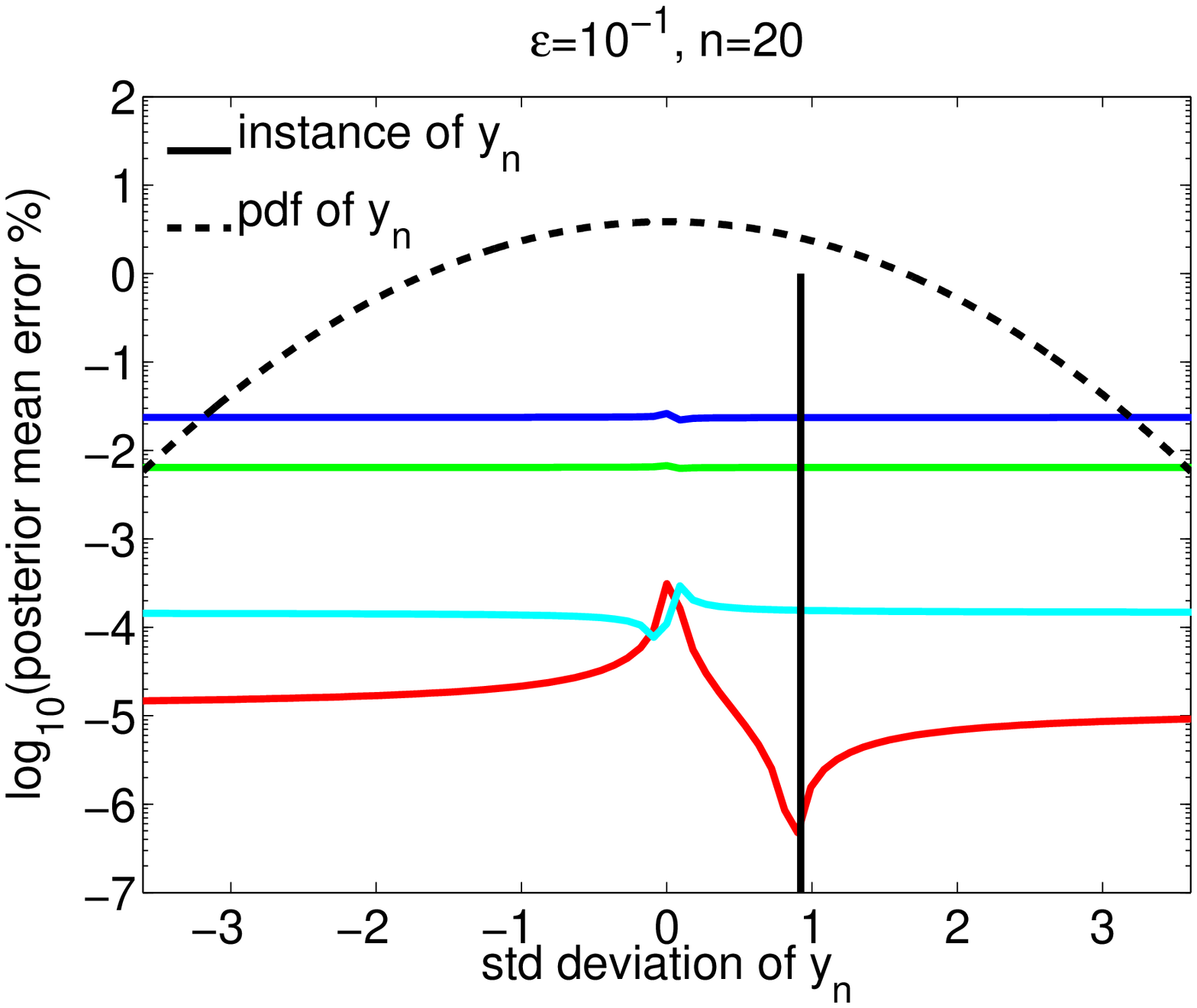} } 
\quad
\subfigure
{\includegraphics[width=0.46\textwidth]{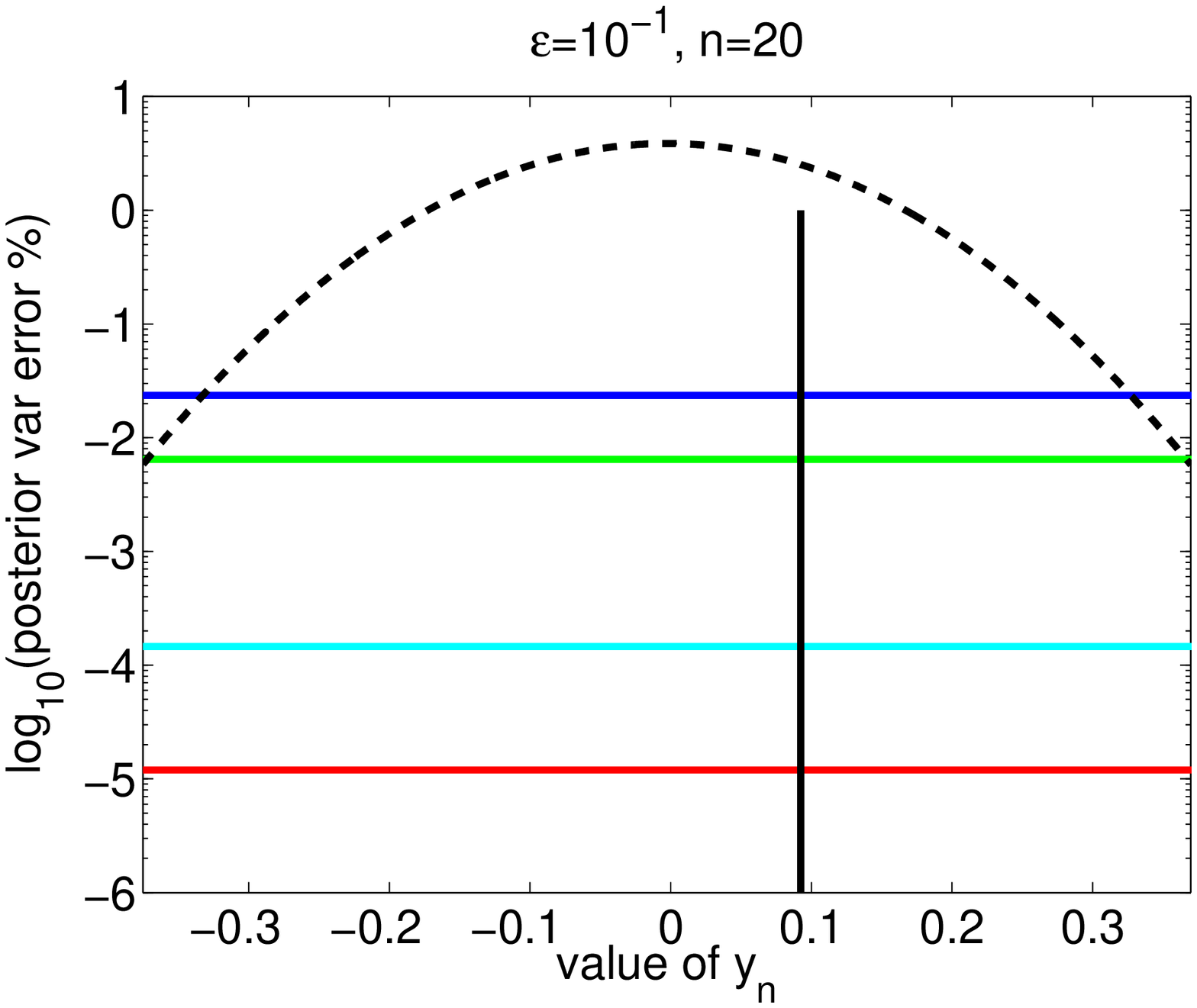} } 
\\
\vspace{-0.1in}
\subfigure
{\includegraphics[width=0.46\textwidth]{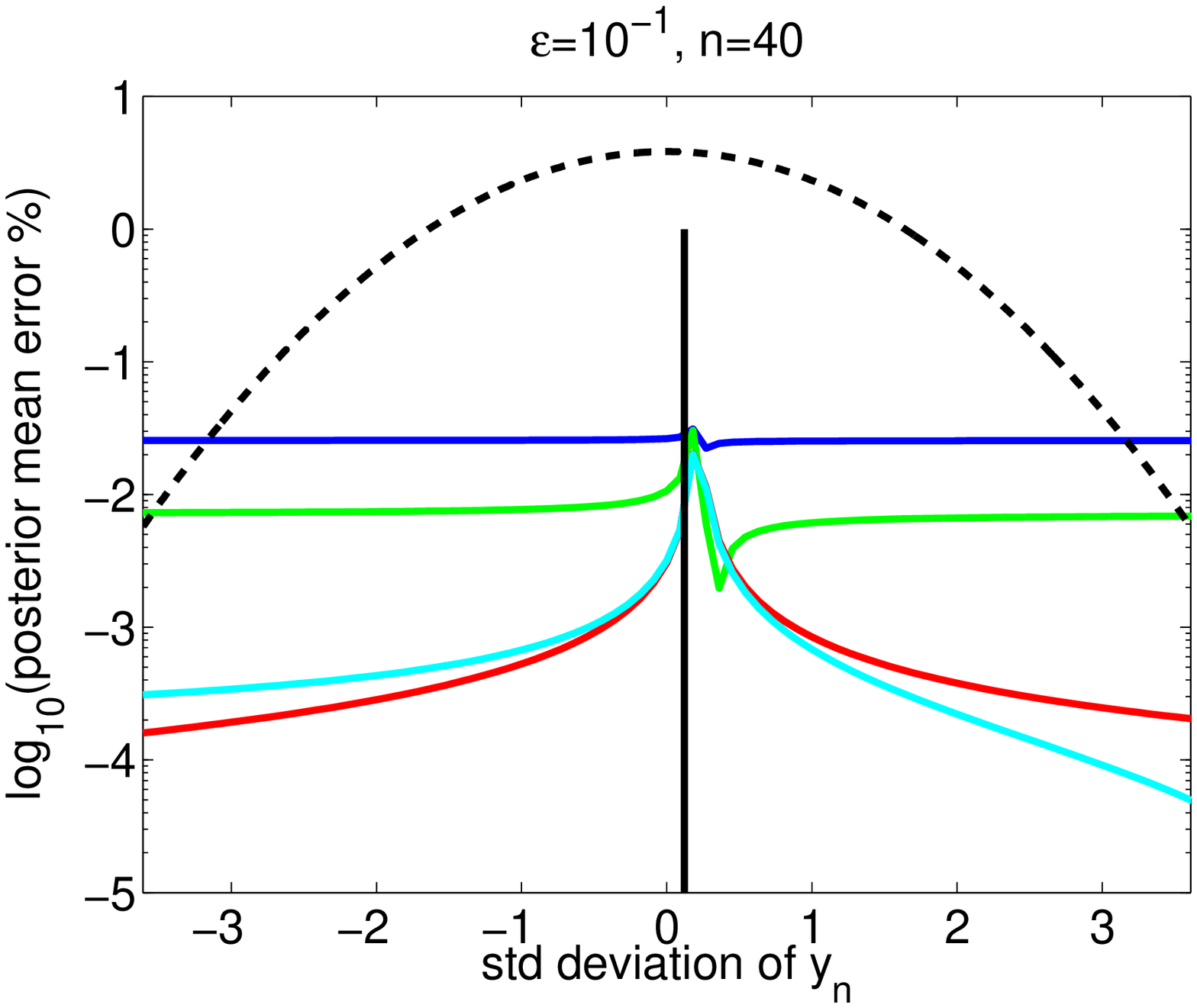} } 
\quad
\subfigure
{\includegraphics[width=0.46\textwidth]{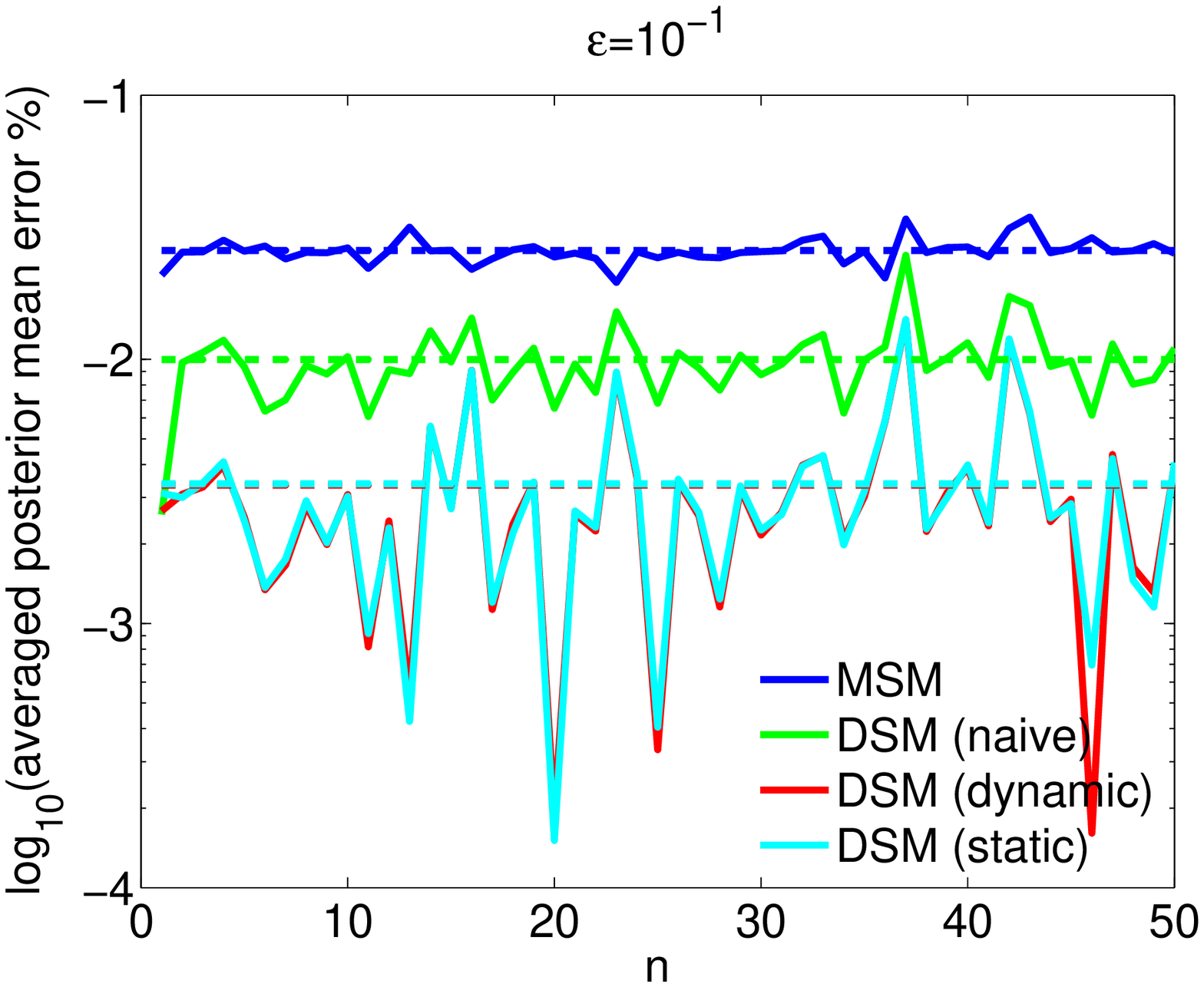} } 
\\
\vspace{-0.1in}
\caption{
The relative errors of the approximations of the posterior $u_n|Y_n$ distributions
that depend on 
the realization of $y_n= u_n + \eta_n$
(top and bottom-left),
and their statistical averages with respect to the law of $y_n$
(bottom-right)
when $\epsilon = 10^{-1}$.
In 
Gaussian filters, the accuracy of the posterior variance does not depend on the 
instance of $y_n$ (top-right).
} 
\label{fig2ep1} 
\end{figure}

\begin{figure}
  \centering
\subfigure
{\includegraphics[width=0.46\textwidth]{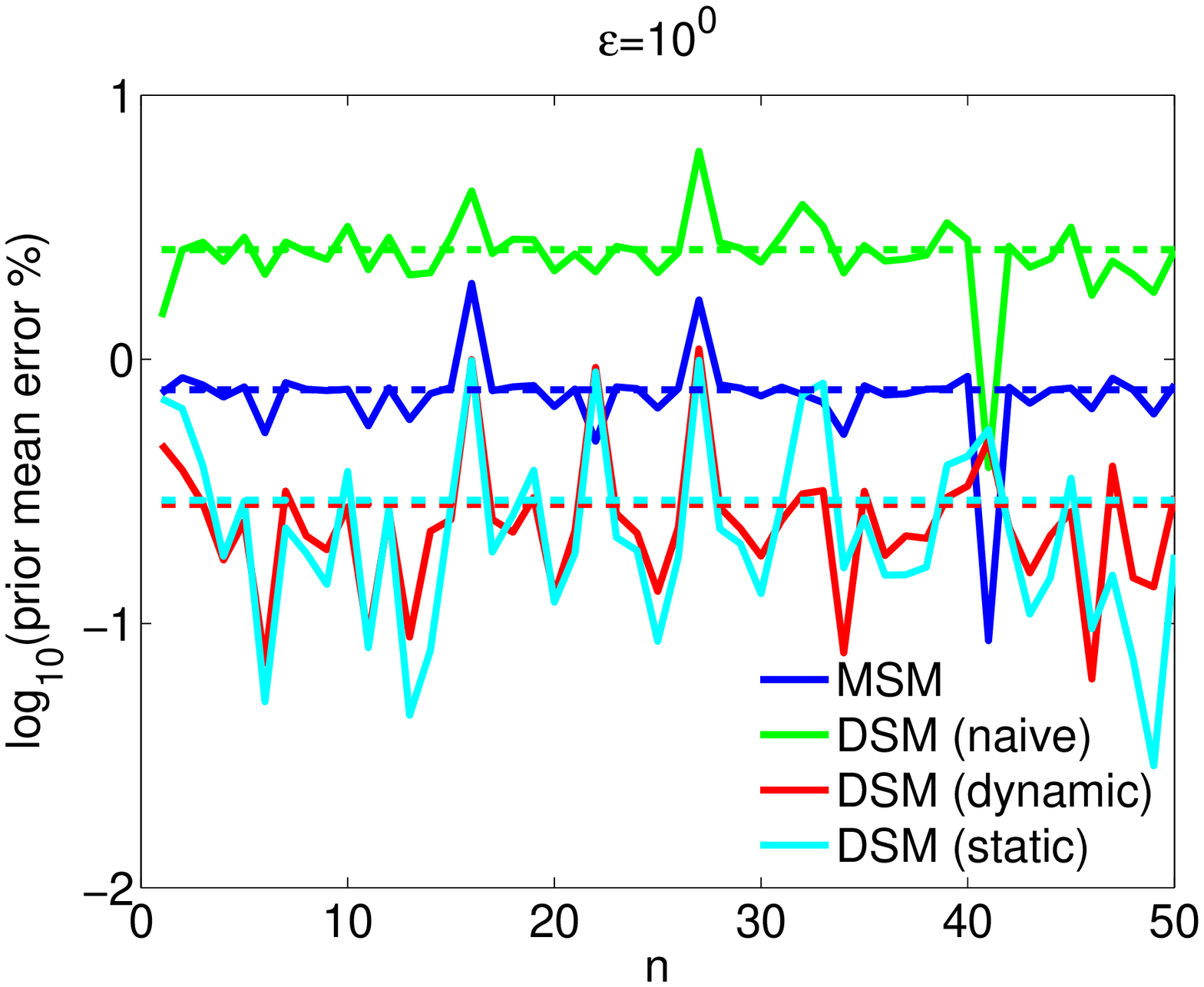} } 
\quad
\subfigure
{\includegraphics[width=0.46\textwidth]{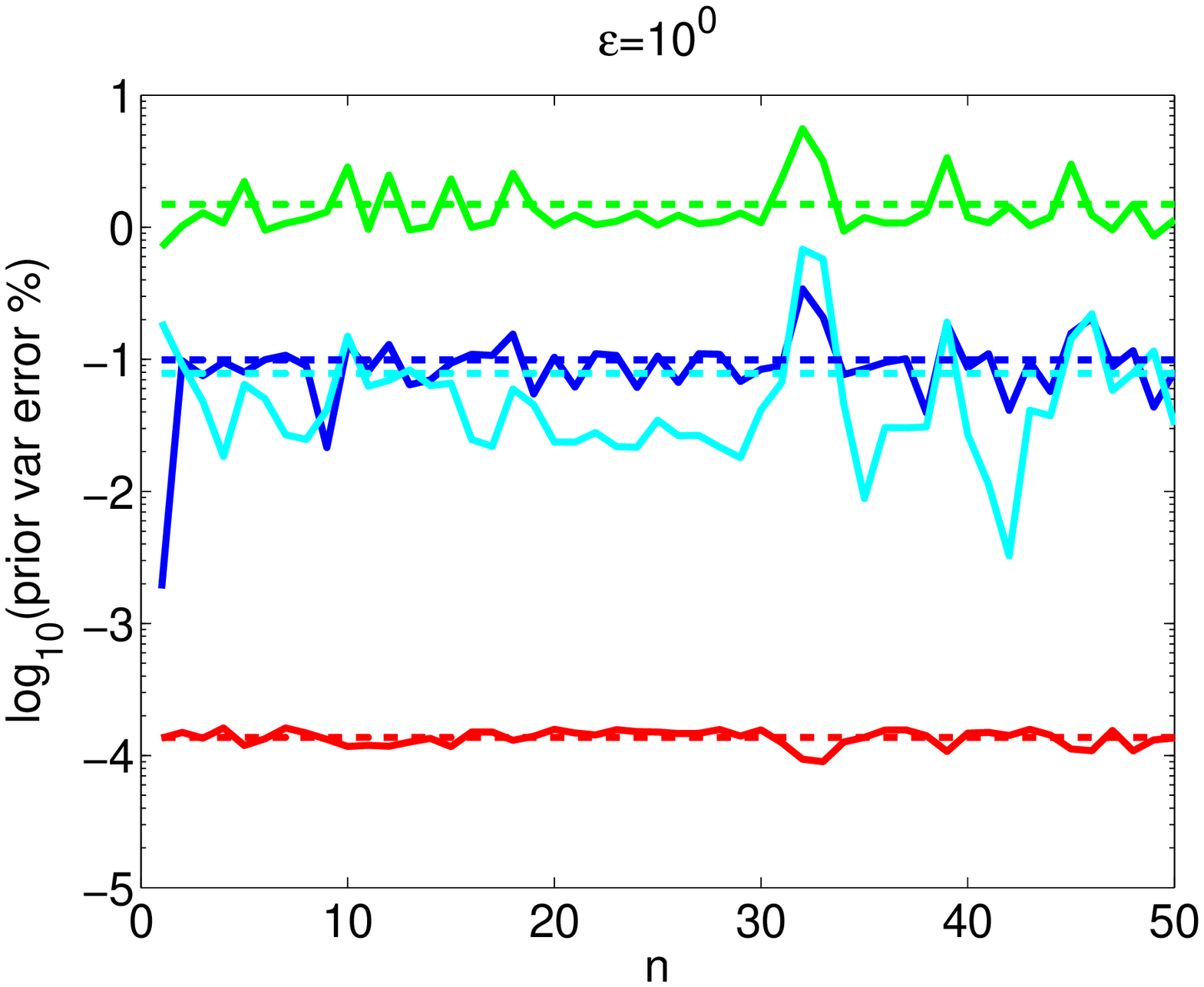} }
\\
\vspace{-0.1in}
\subfigure
{\includegraphics[width=0.46\textwidth]{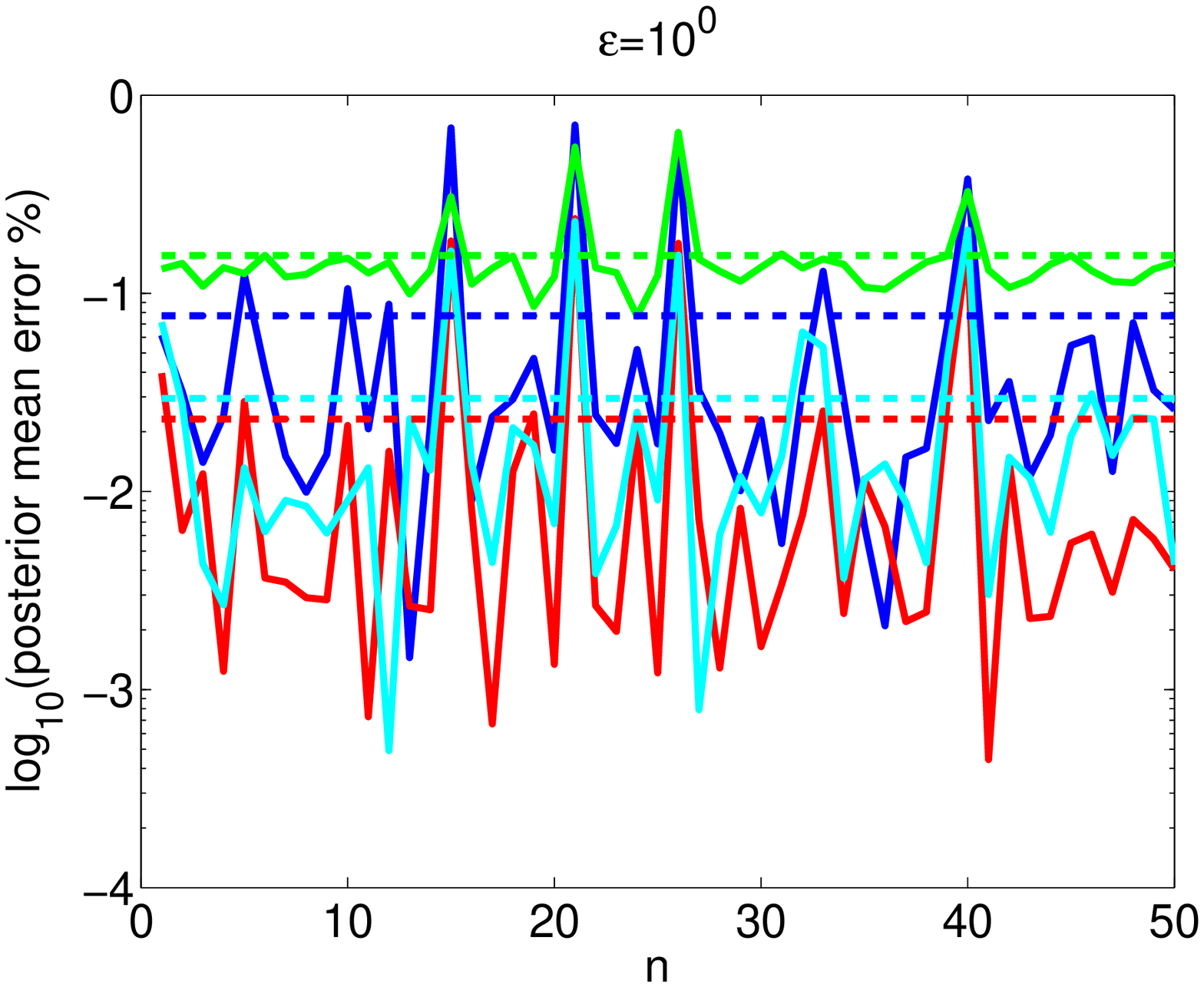}} 
\quad
\subfigure
{\includegraphics[width=0.46\textwidth]{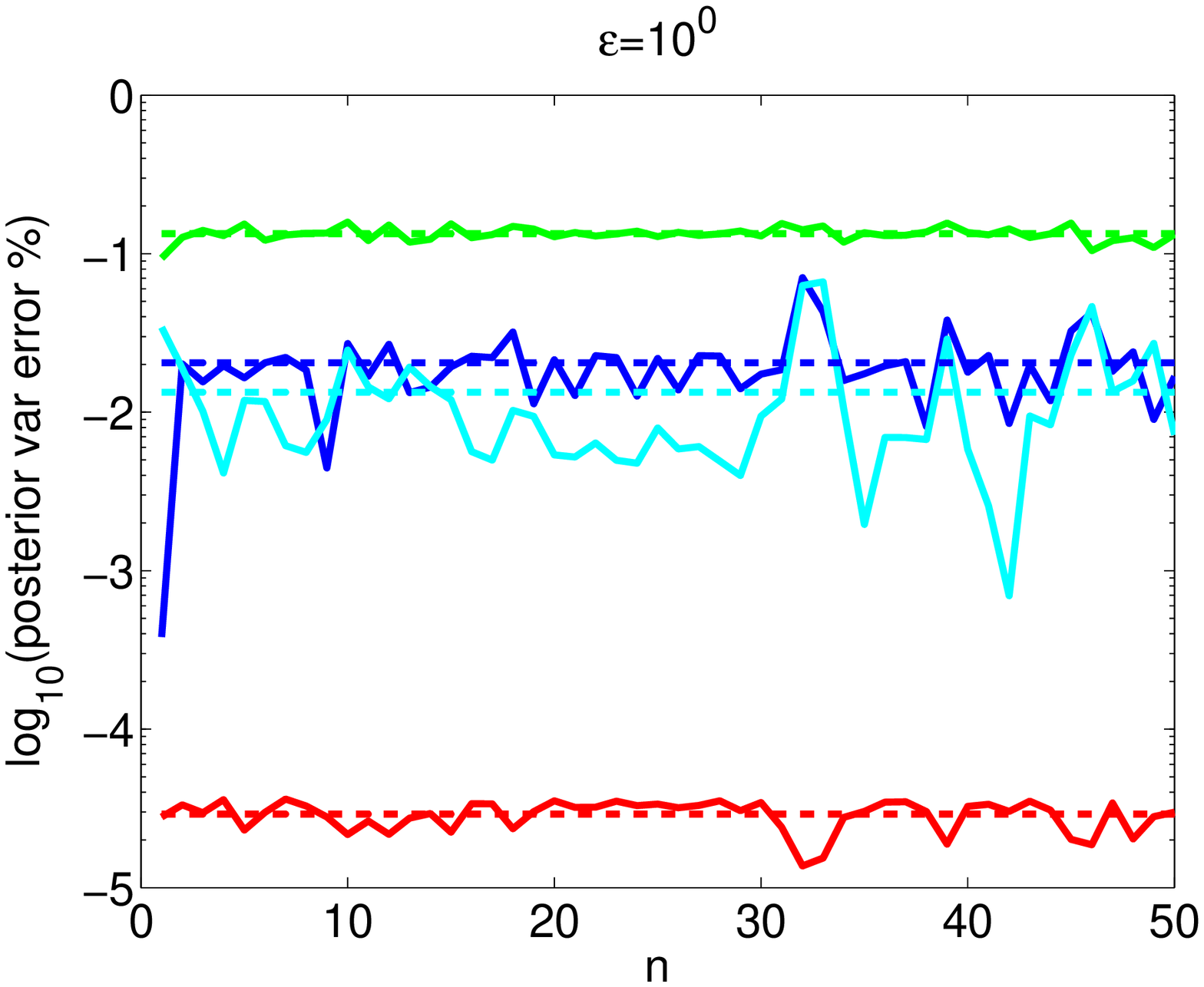}} 
\\
\vspace{-0.1in}
\caption{
The relative errors of the mean and variance approximations of
the prior $u_{n}|Y_{n-1}$ (top) and posterior $u_{n}|Y_{n}$ (bottom) distributions when $\epsilon = 10^{0}$.
} 
\label{fig1e1} 
  \centering
\subfigure
{\includegraphics[width=0.46\textwidth]{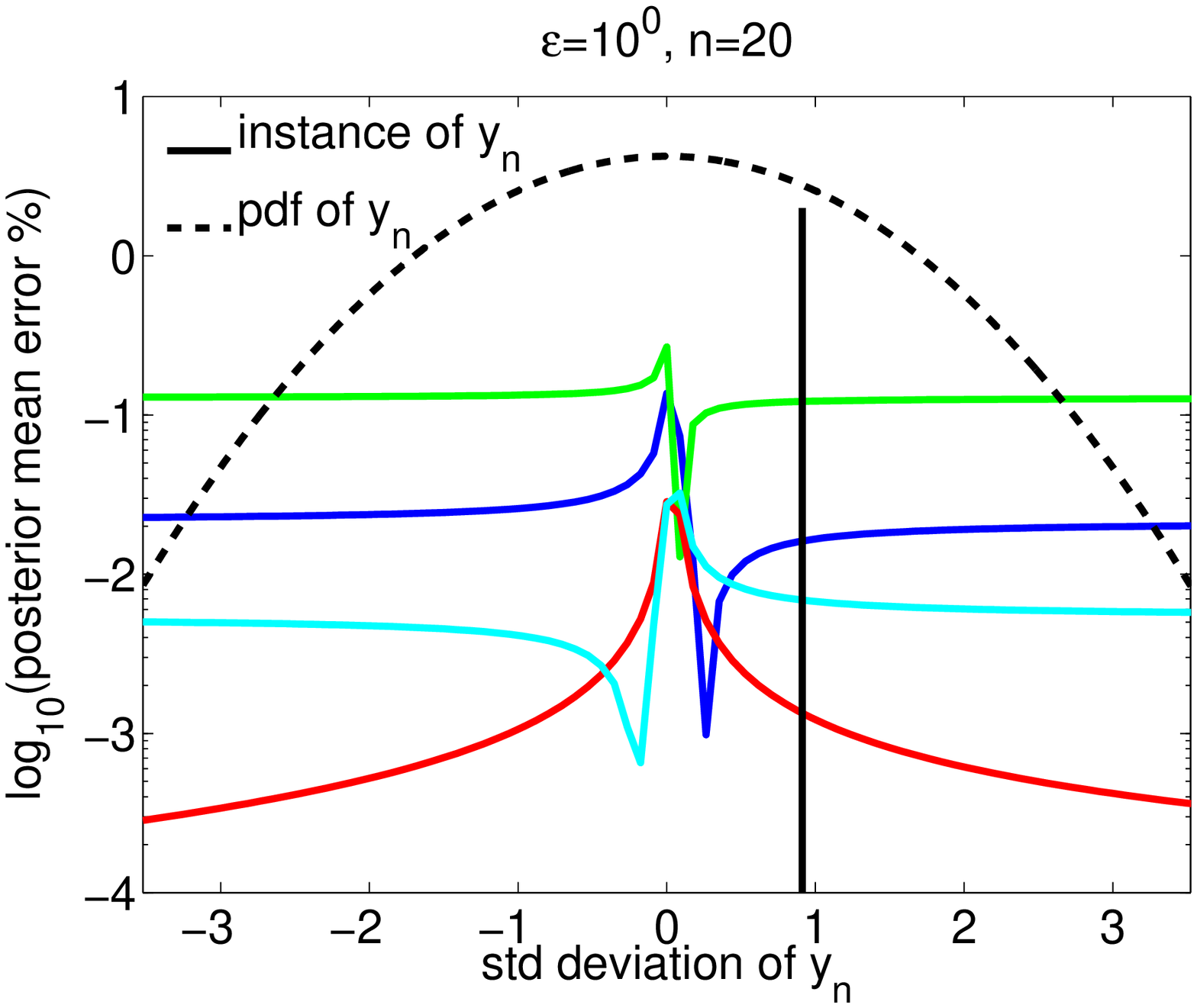} } 
\quad
\subfigure
{\includegraphics[width=0.46\textwidth]{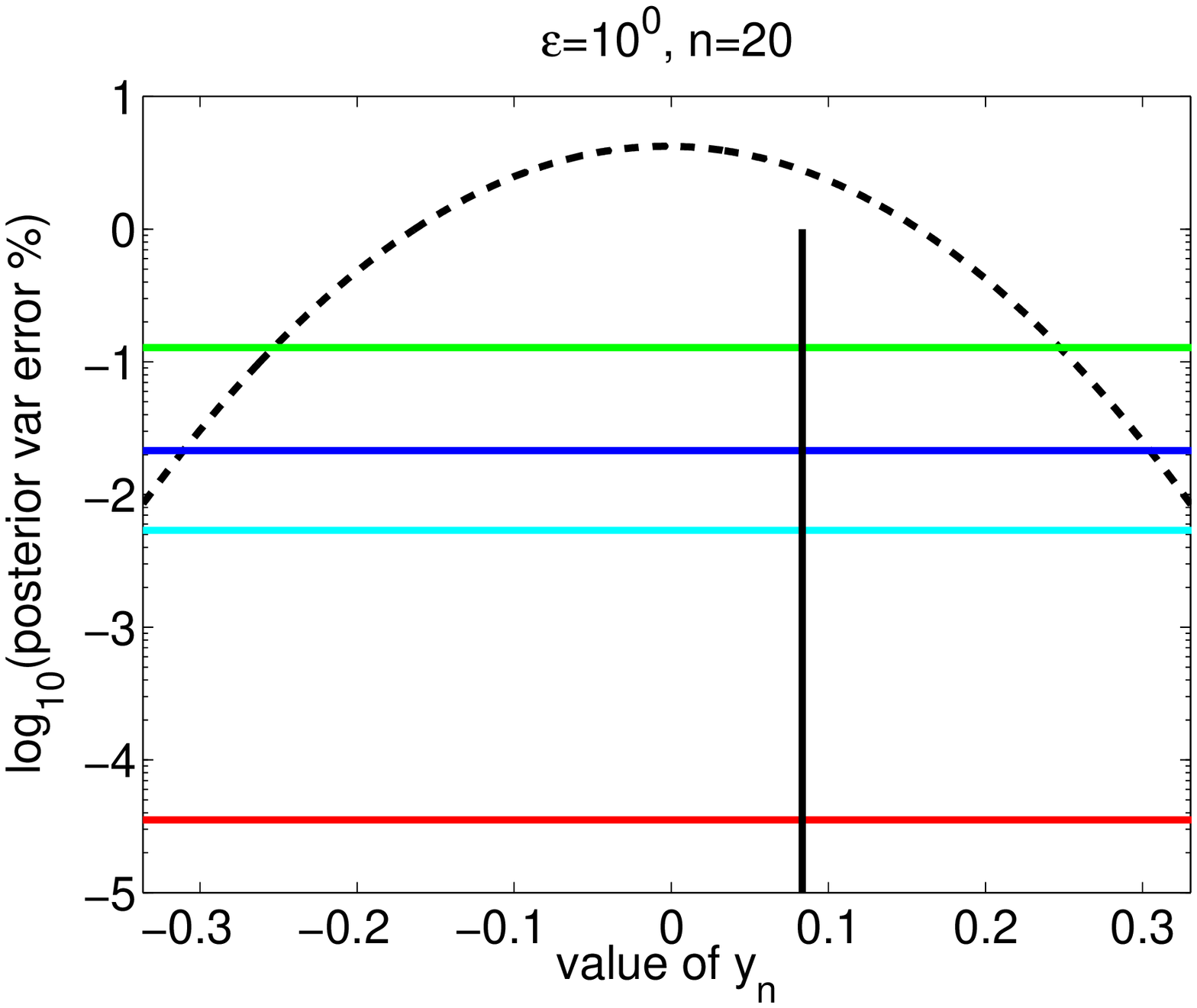} } 
\\
\vspace{-0.1in}
\subfigure
{\includegraphics[width=0.46\textwidth]{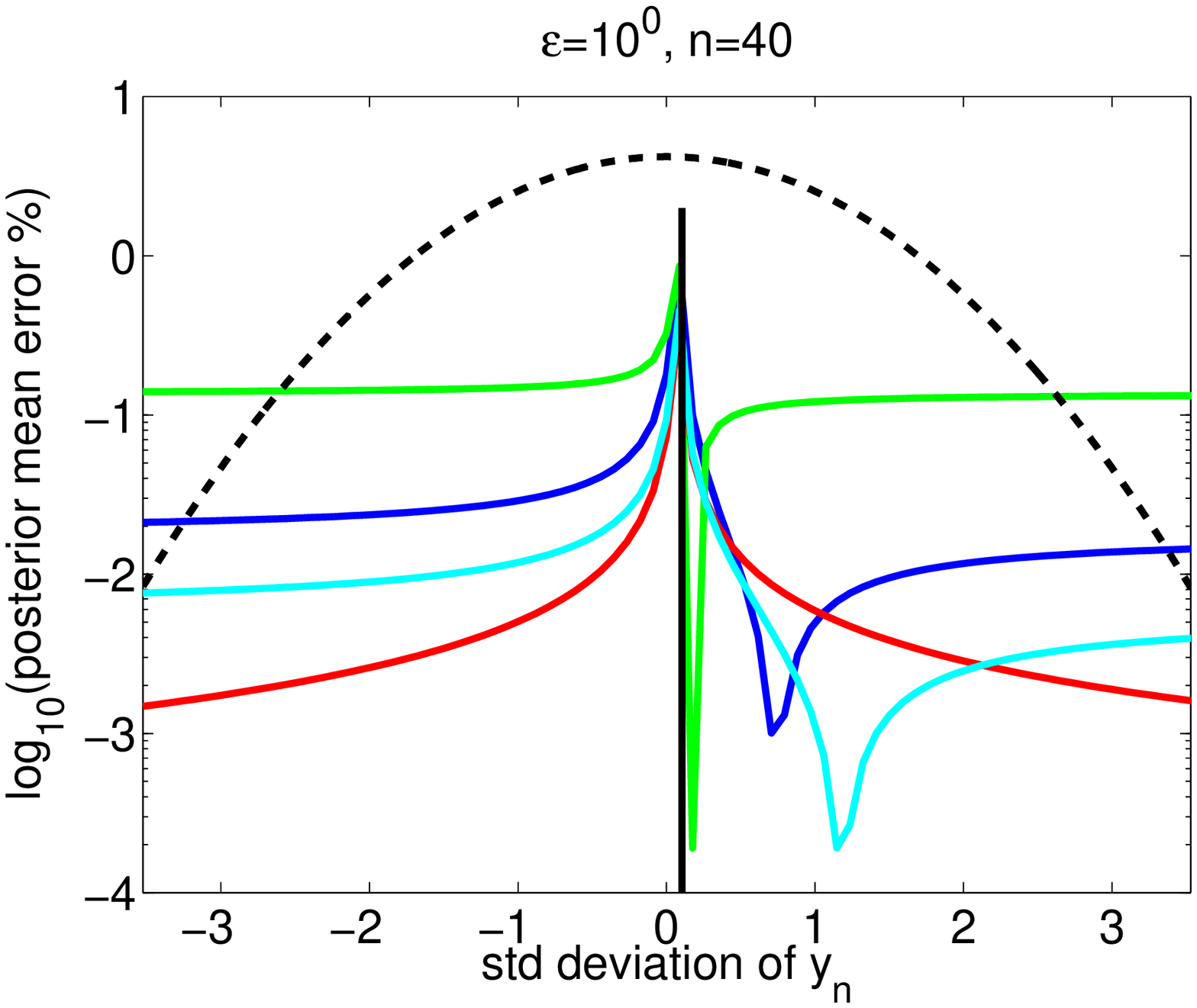} } 
\quad
\subfigure
{\includegraphics[width=0.46\textwidth]{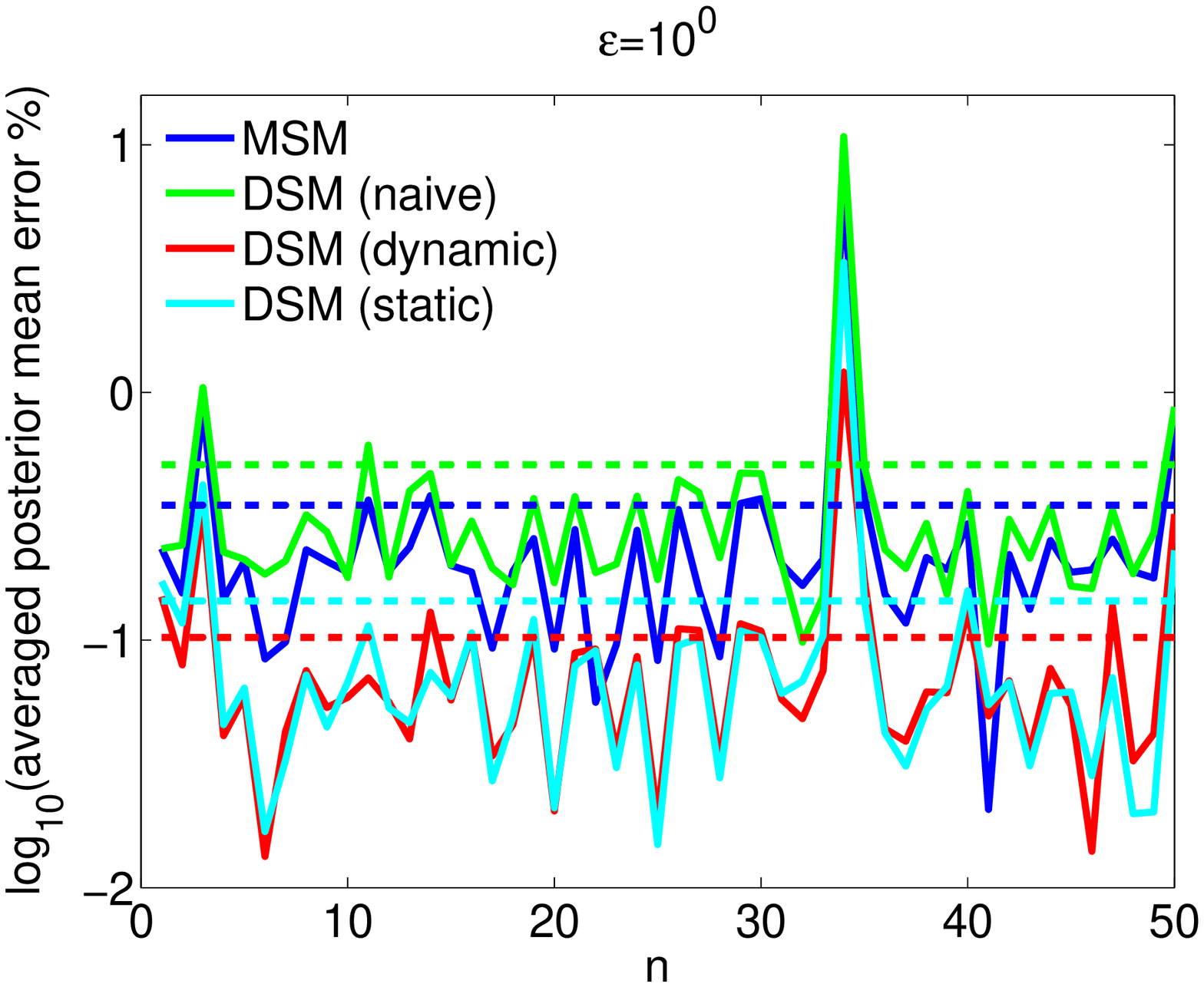} } 
\\
\vspace{-0.1in}
\caption{
The relative errors of the approximations of the posterior $u_n|Y_n$ distributions
that depend on 
the realization of $y_n= u_n + \eta_n$,
and their statistical averages with respect to the law of $y_n$
when $\epsilon = 10^{0}$.
} 
\label{fig2e1} 
\end{figure}

\begin{figure}
  \centering
\subfigure
{\includegraphics[width=0.46\textwidth]{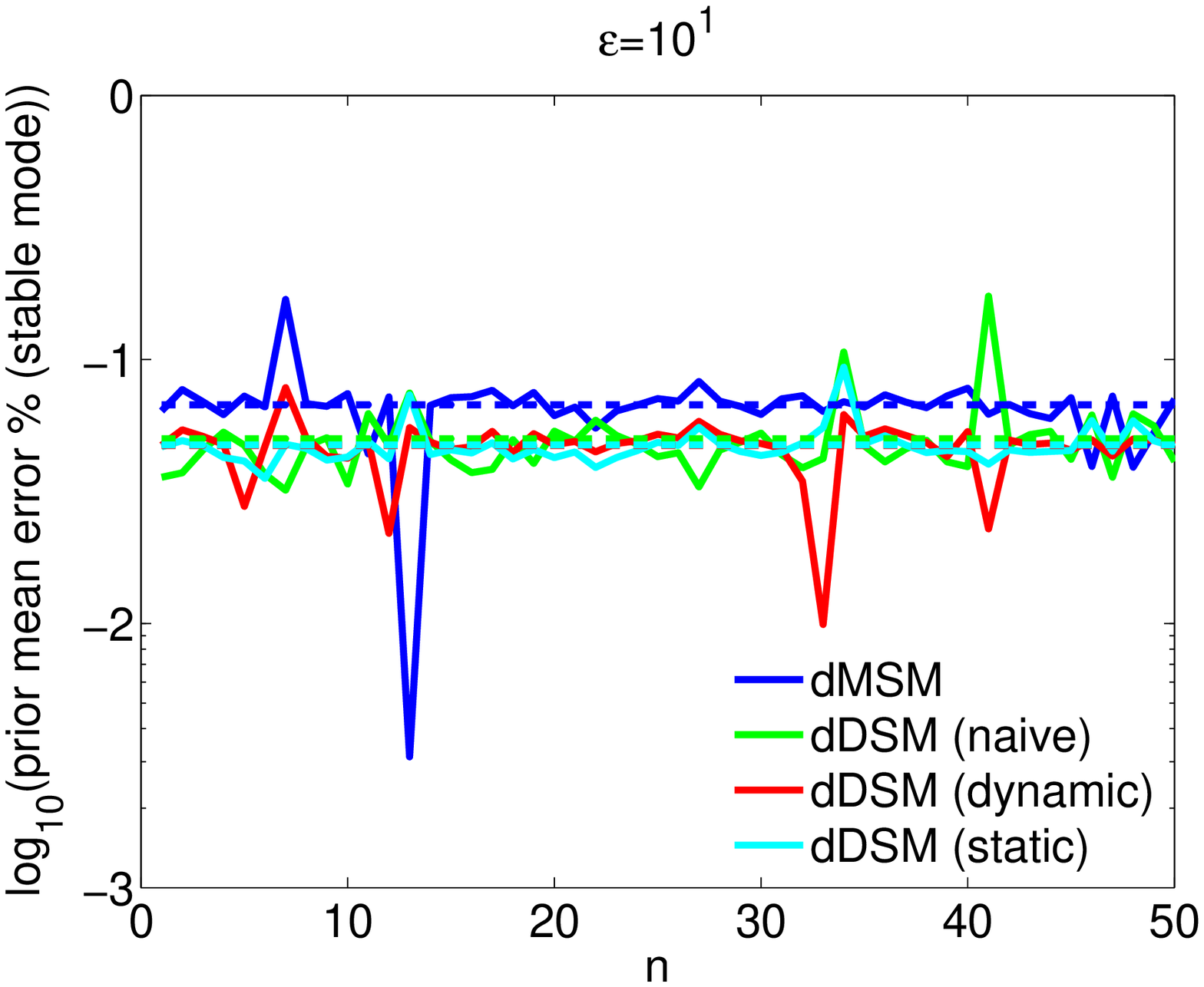} } 
\quad
\subfigure
{\includegraphics[width=0.46\textwidth]{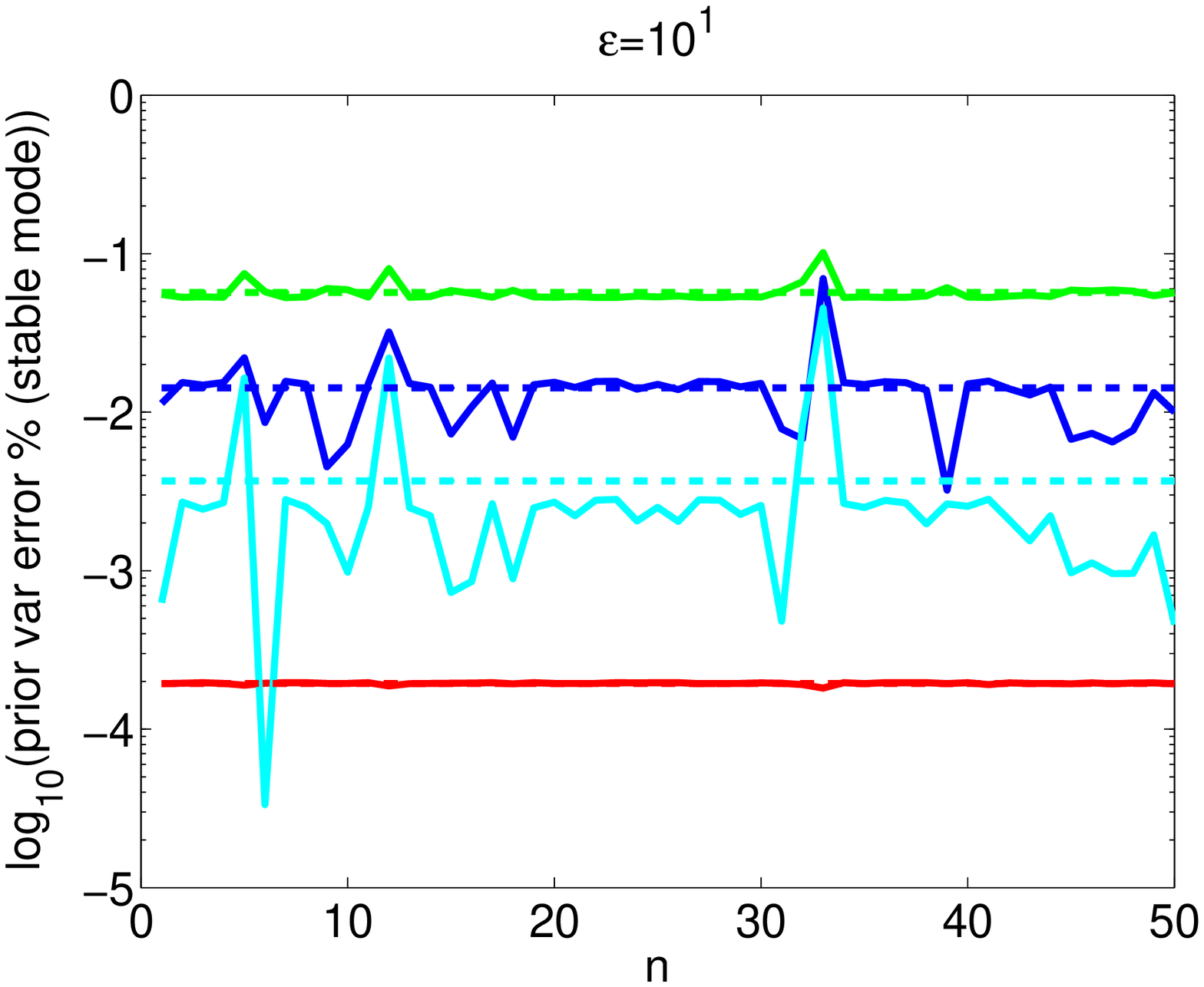} }
\\
\vspace{-0.1in}
\subfigure
{\includegraphics[width=0.46\textwidth]{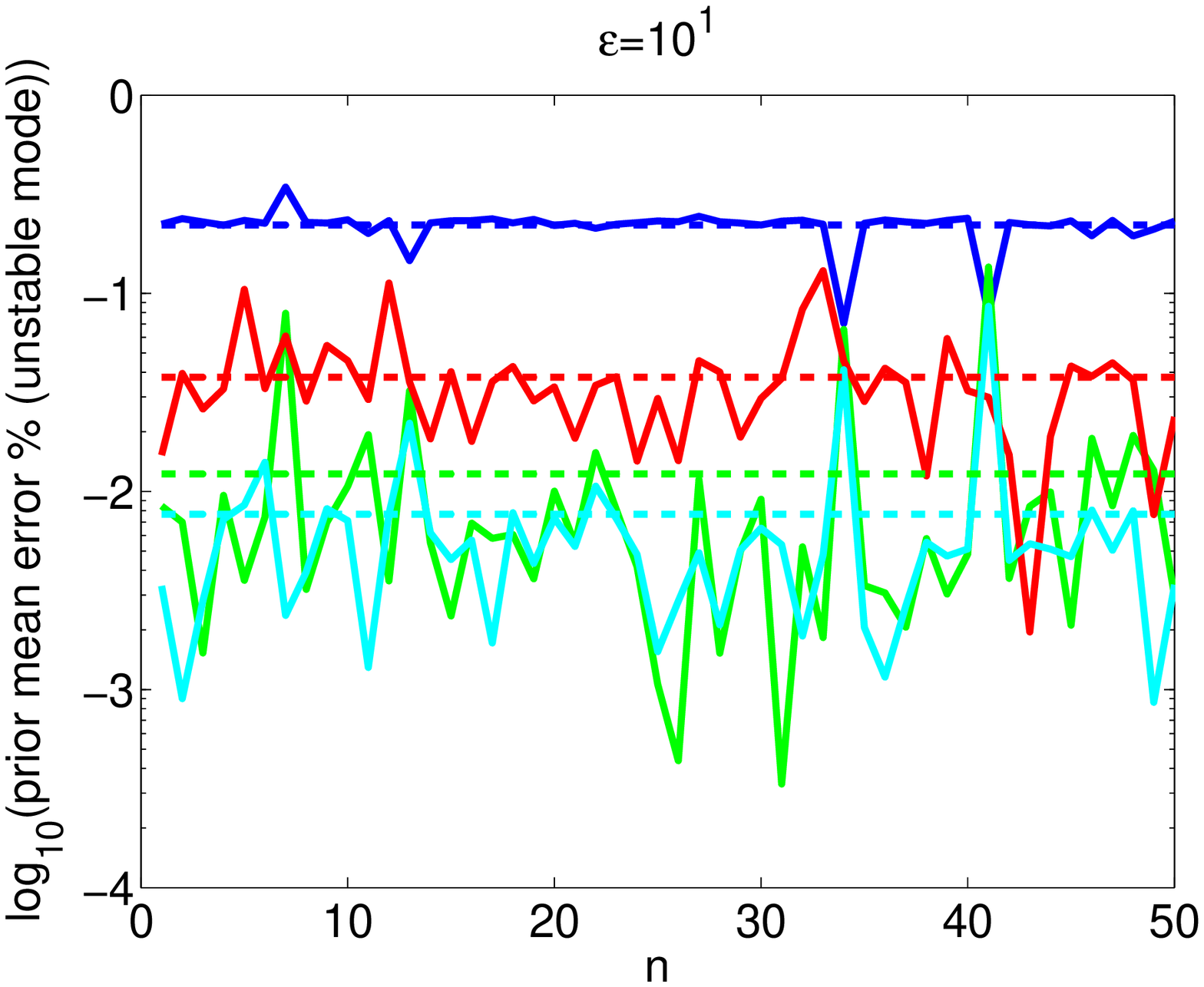}} 
\quad
\subfigure
{\includegraphics[width=0.46\textwidth]{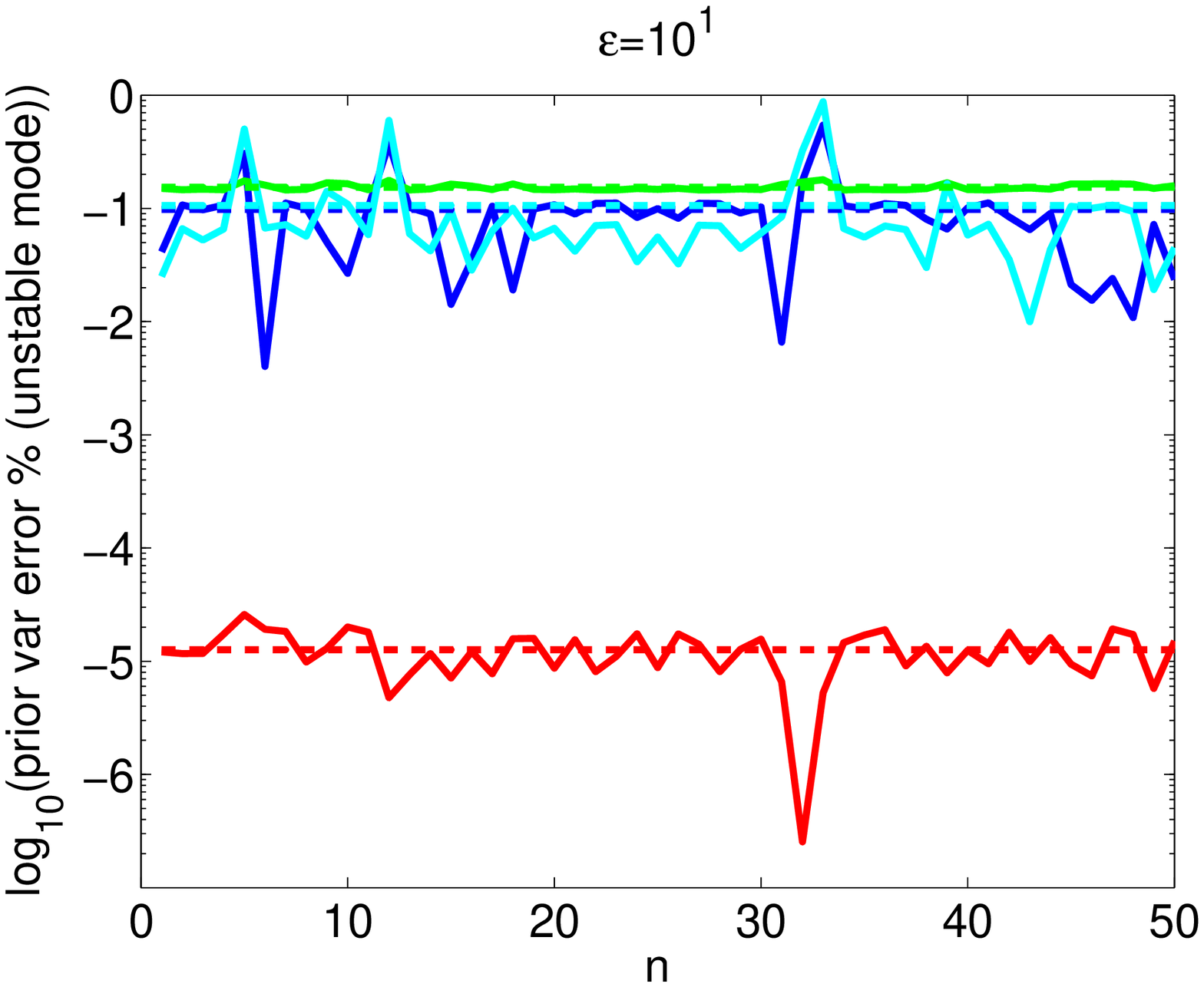}} 
\\
\vspace{-0.1in}
\caption{
The relative errors of the mean and variance approximations of
each Gaussian kernels of the prior distributions
when $\epsilon = 10^{1}$.
} 
\label{fig1e10} 
  \centering
\subfigure
{\includegraphics[width=0.46\textwidth]{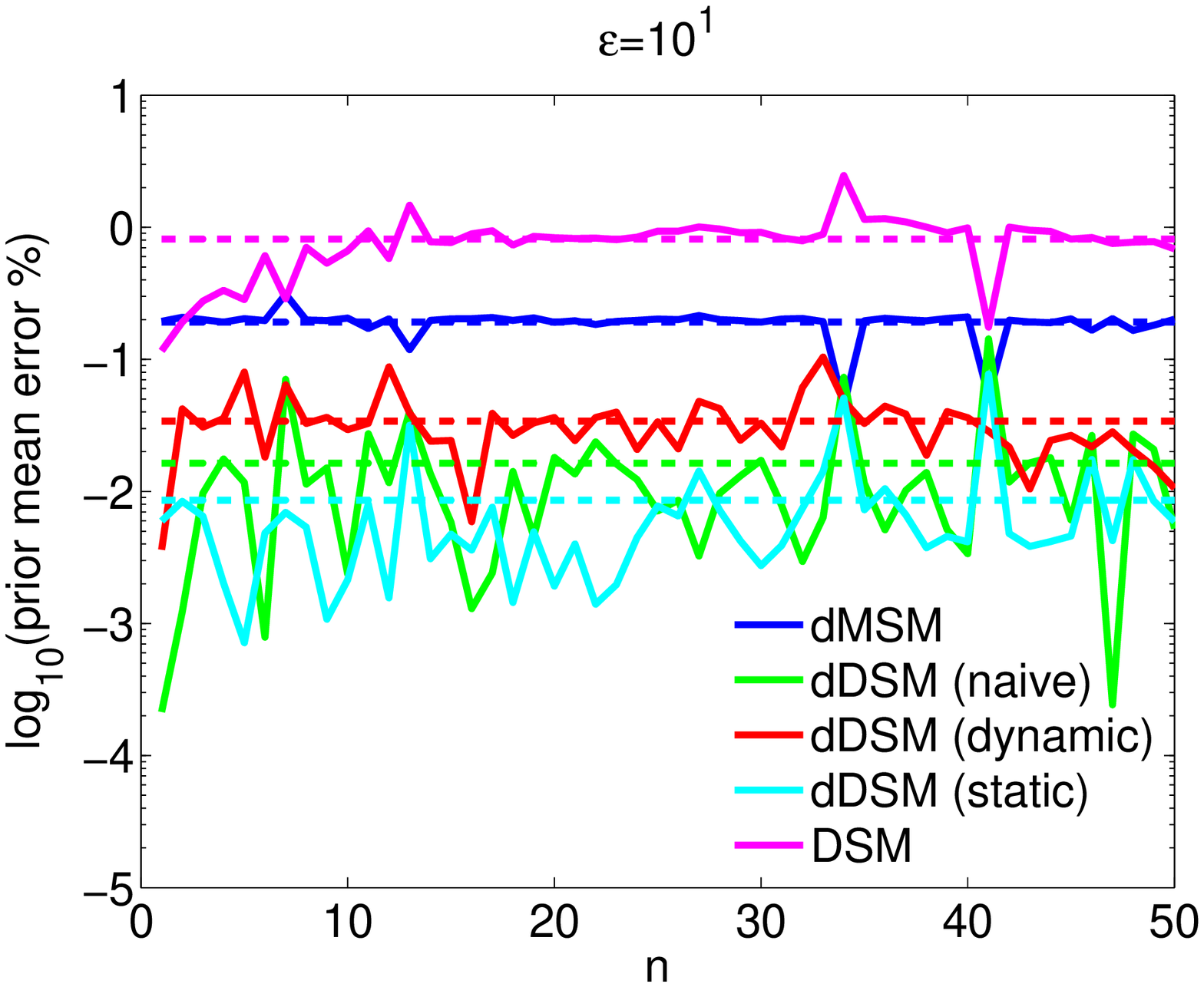} } 
\quad
\subfigure
{\includegraphics[width=0.46\textwidth]{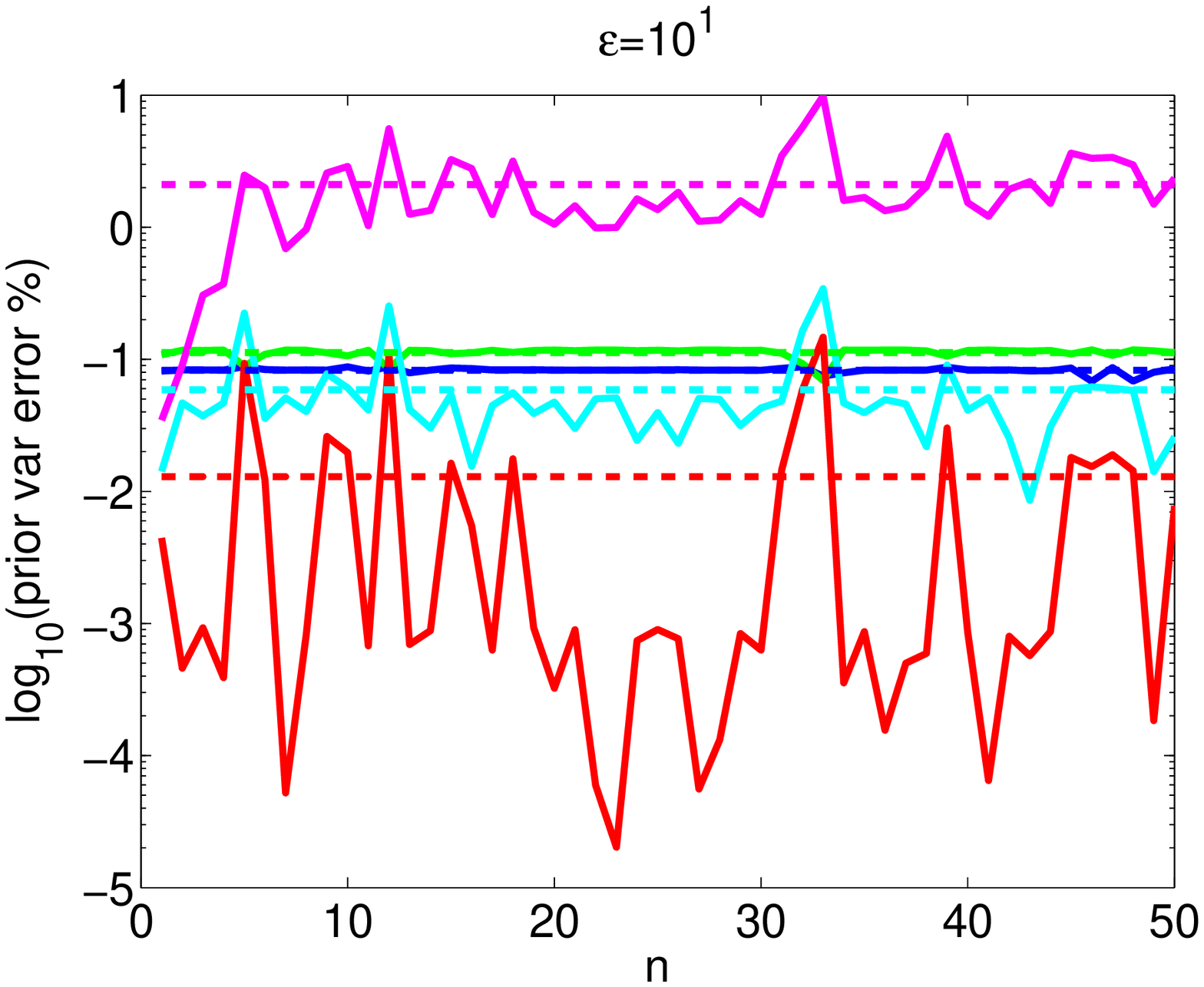} } 
\\
\vspace{-0.1in}
\subfigure
{\includegraphics[width=0.46\textwidth]{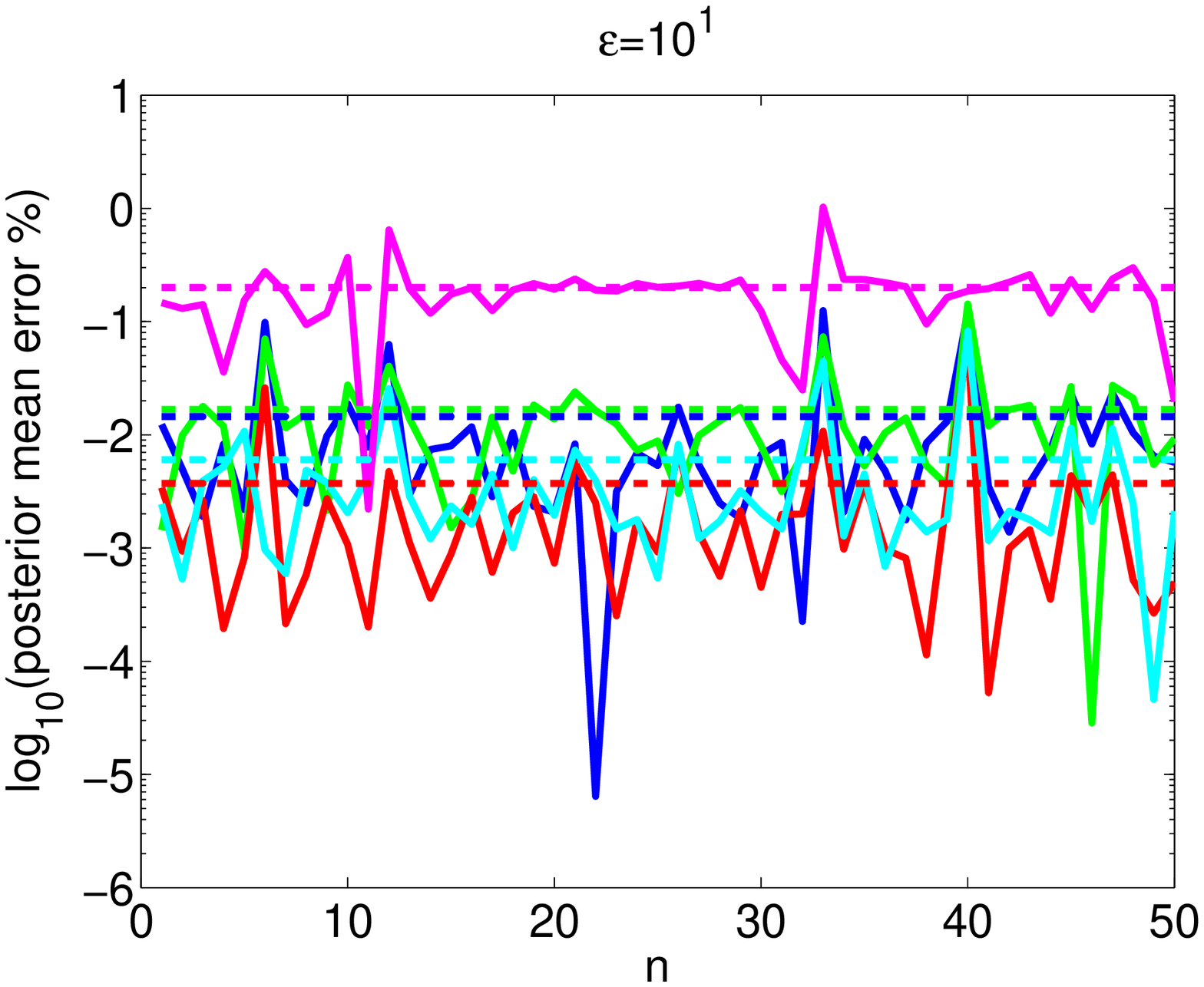} } 
\quad
\subfigure
{\includegraphics[width=0.46\textwidth]{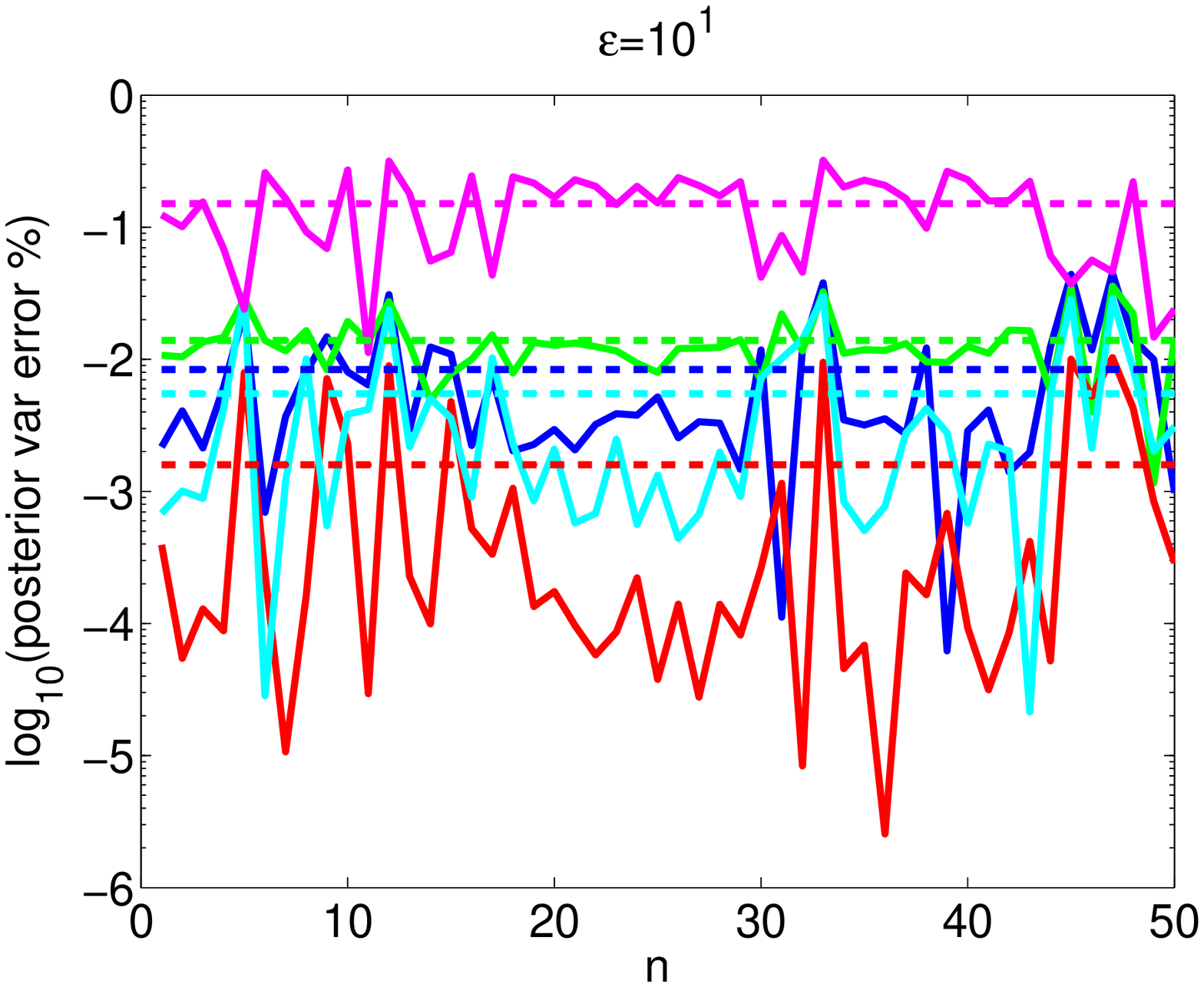} } 
\\
\vspace{-0.1in}
\caption{
The relative errors of the mean and variance approximations of
the prior $u_{n}|Y_{n-1}$ (top) and posterior $u_{n}|Y_{n}$ (bottom) distributions when $\epsilon = 10^{1}$.
} 
\label{fig2e10} 
\end{figure}

\begin{figure}
  \centering
\subfigure
{\includegraphics[width=0.46\textwidth]{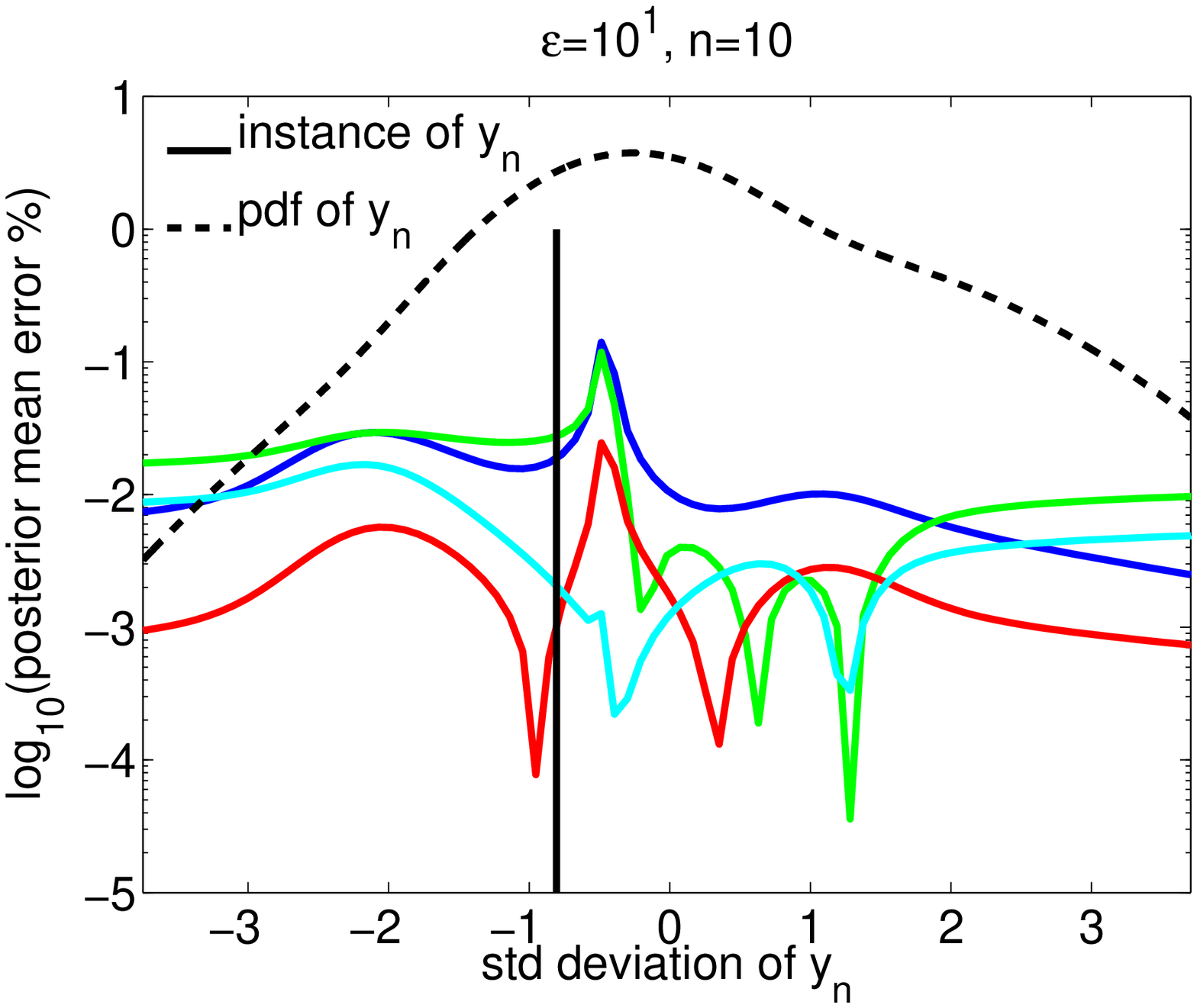} } 
\quad
\subfigure
{\includegraphics[width=0.46\textwidth]{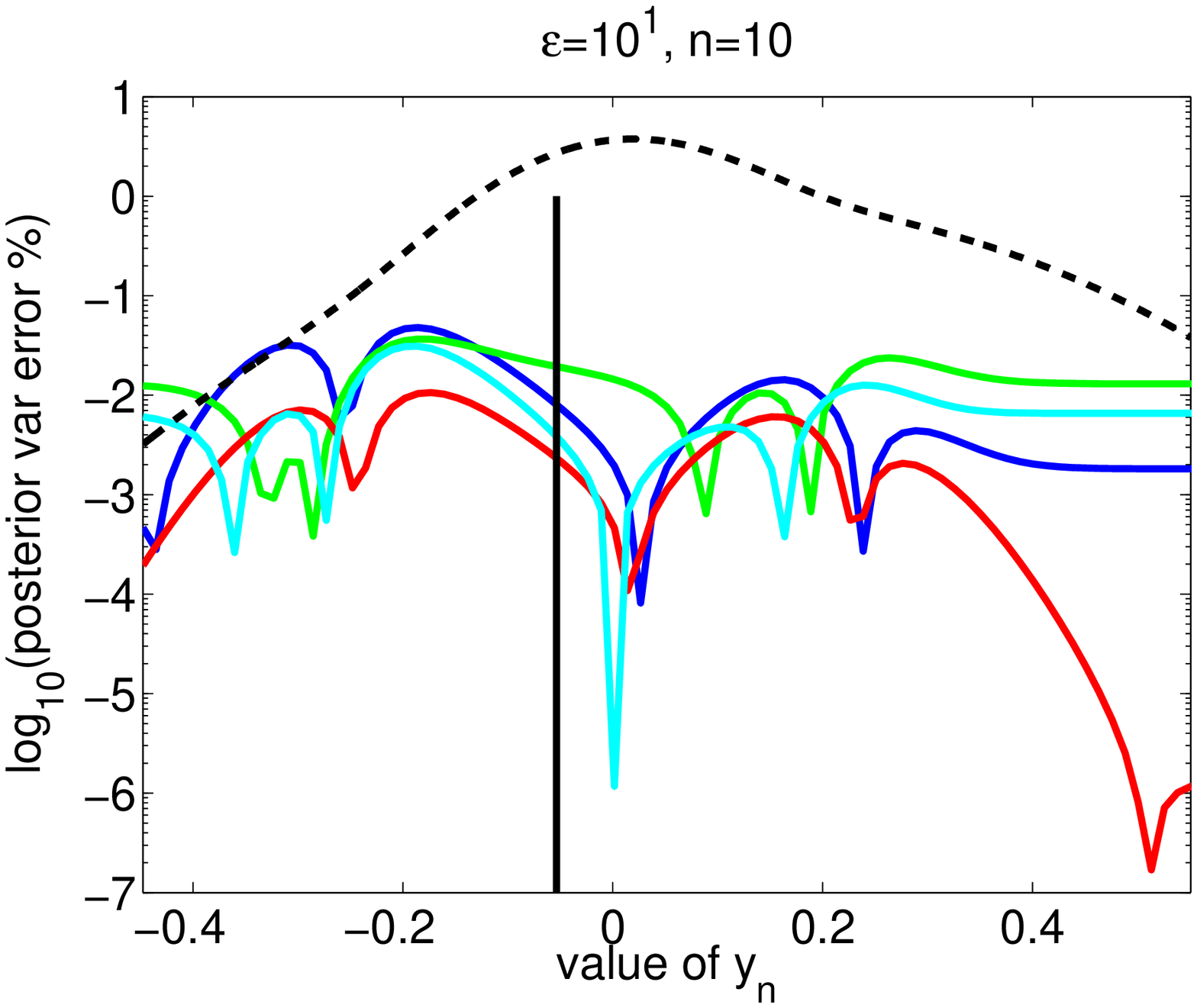} } 
\\
\vspace{-0.1in}
\subfigure
{\includegraphics[width=0.46\textwidth]{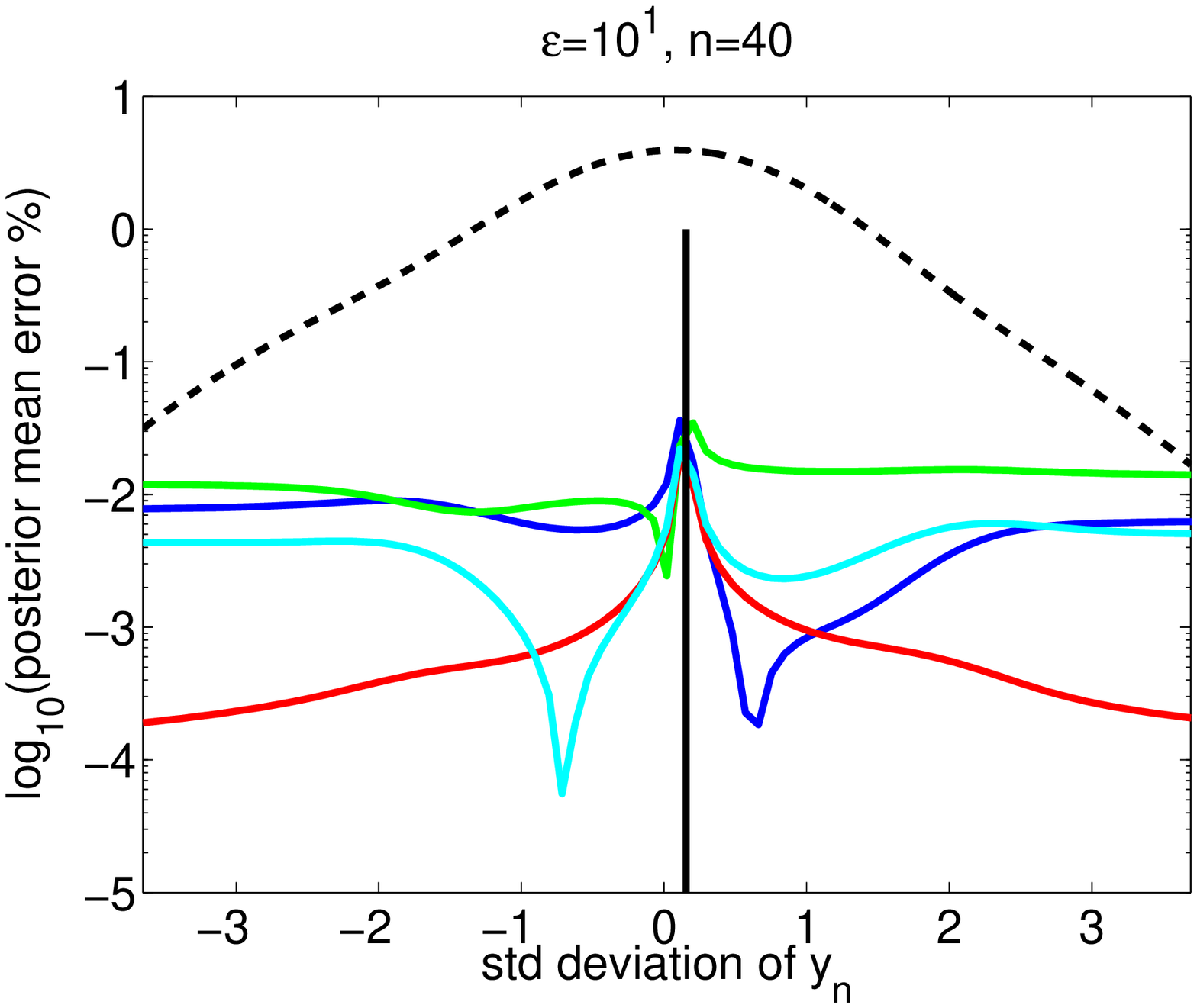} } 
\quad
\subfigure
{\includegraphics[width=0.46\textwidth]{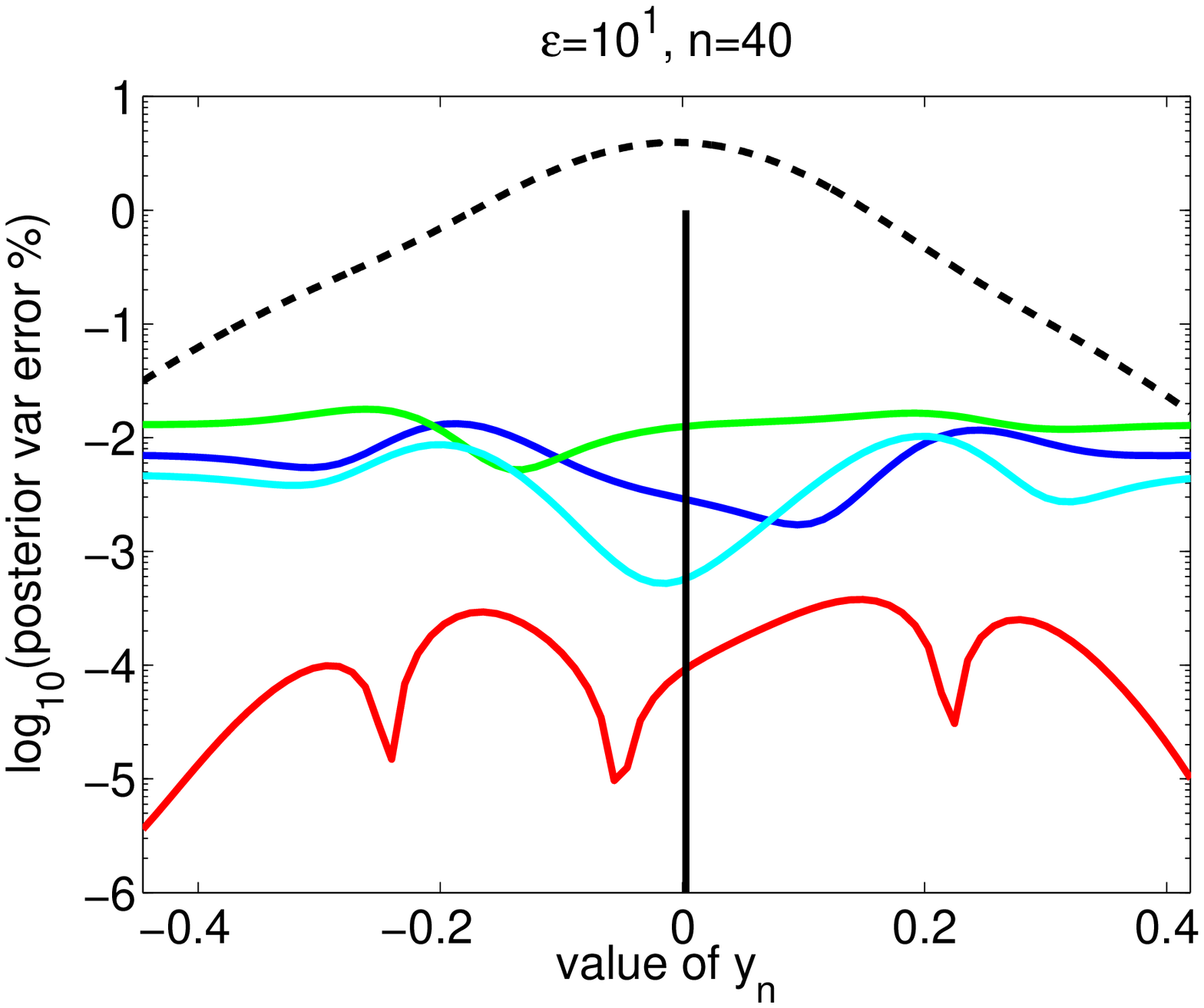} } 
\\
\vspace{-0.1in}
\subfigure
{\includegraphics[width=0.46\textwidth]{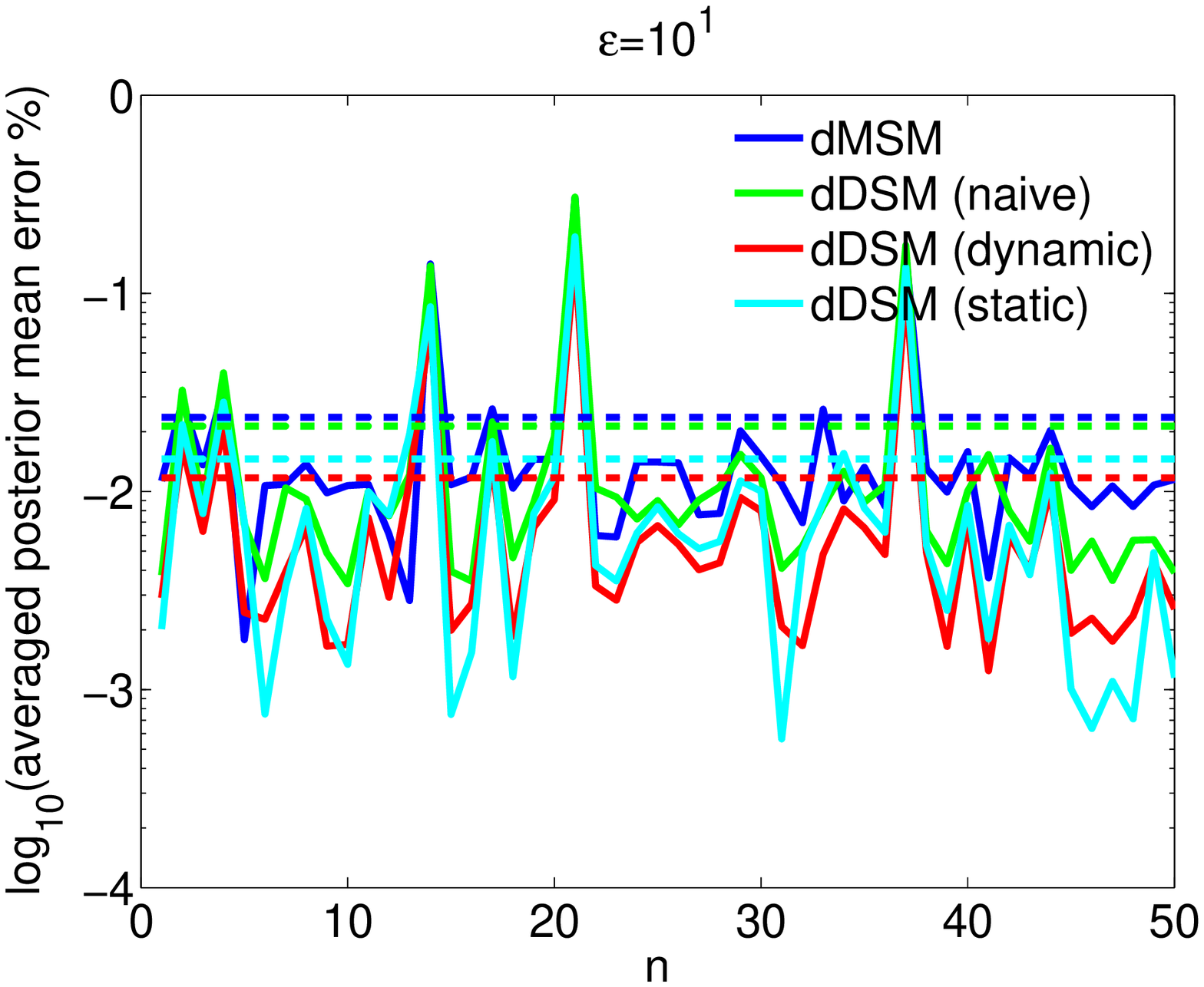} } 
\quad
\subfigure
{\includegraphics[width=0.46\textwidth]{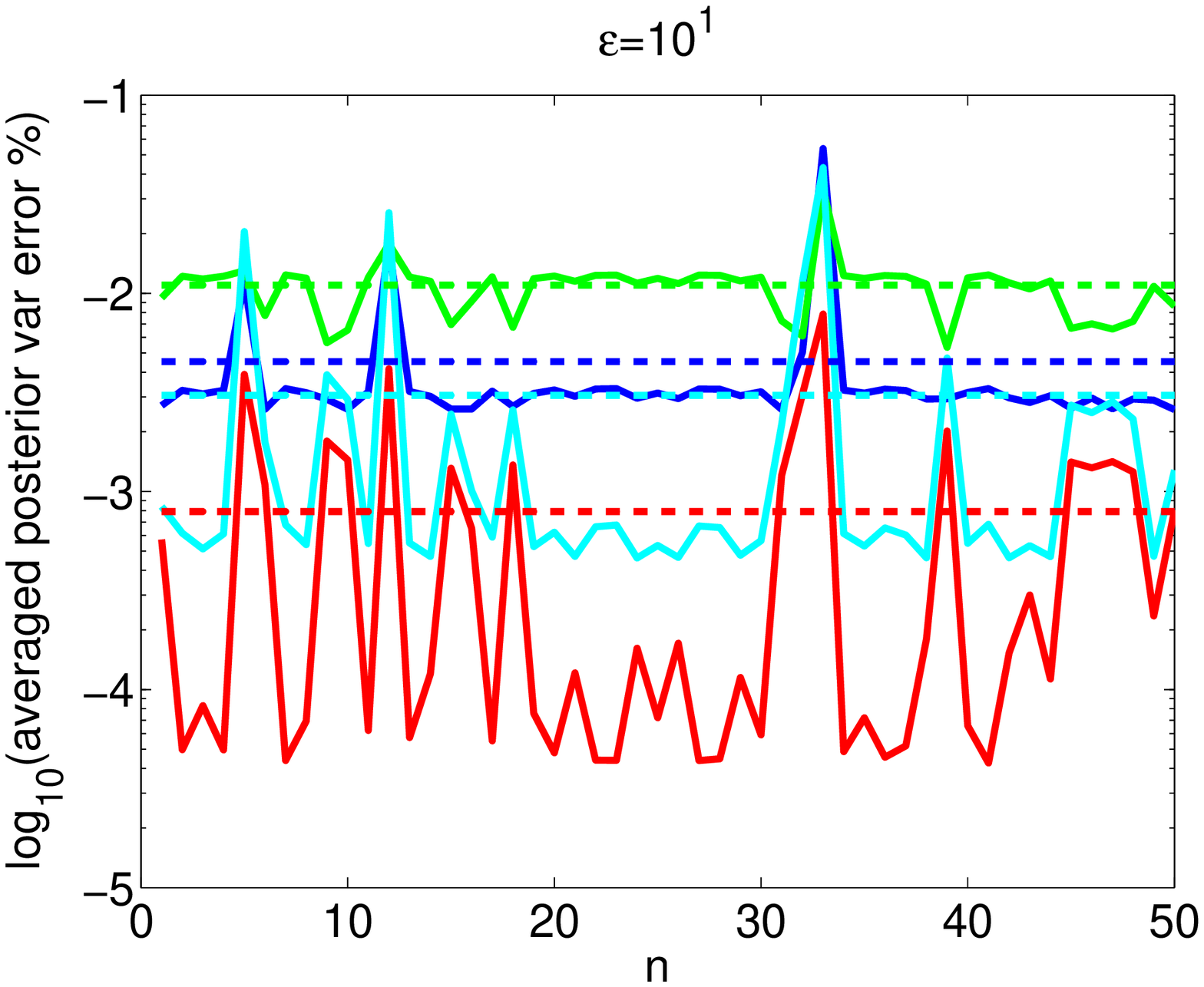} } 
\\
\vspace{-0.1in}
\caption{
The relative errors of the approximations of the posterior $u_n|Y_n$ distributions
that depend on 
the realization of $y_n= u_n + \eta_n$,
and their statistical averages with respect to the law of $y_n$
when $\epsilon = 10^{0}$.
In Gaussian sum filters, the accuracy of the posterior variance depends on 
the instance of $y_n$ (top-right and middle-right).
} 
\label{fig3e10} 
\end{figure}

\begin{figure}
  \centering
\subfigure
{\includegraphics[width=0.46\textwidth]{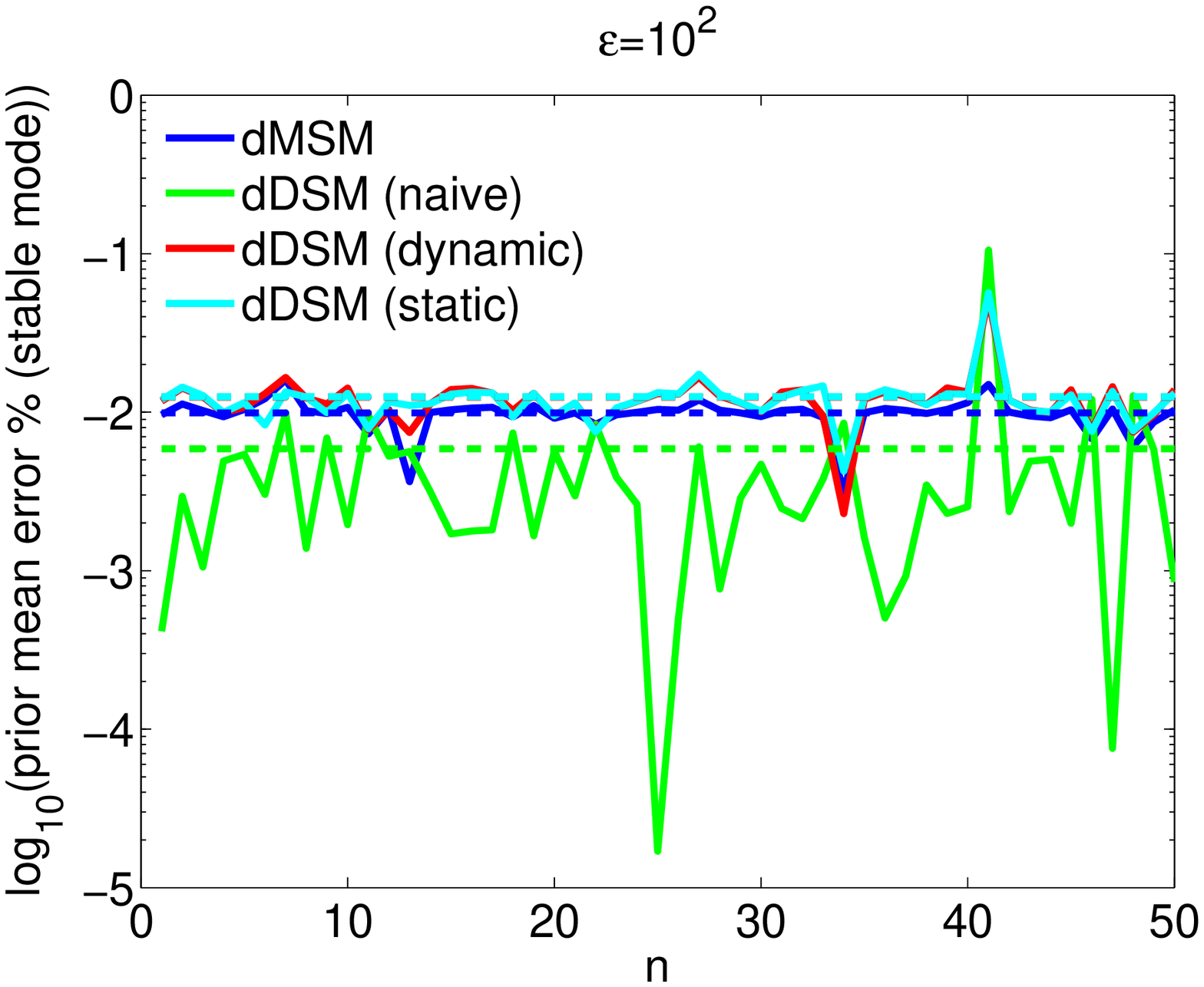} } 
\quad
\subfigure
{\includegraphics[width=0.46\textwidth]{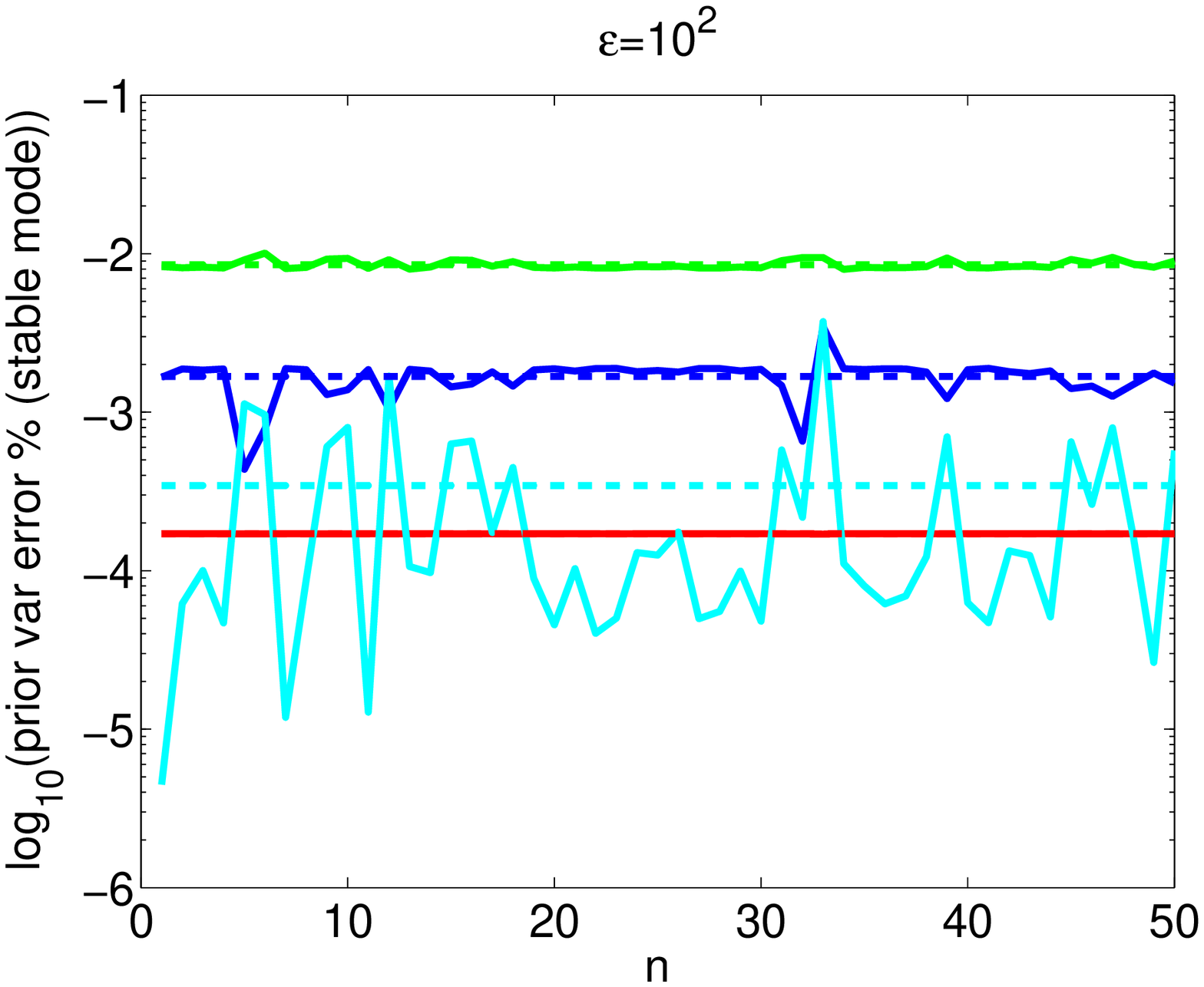} }
\\
\vspace{-0.1in}
\subfigure
{\includegraphics[width=0.46\textwidth]{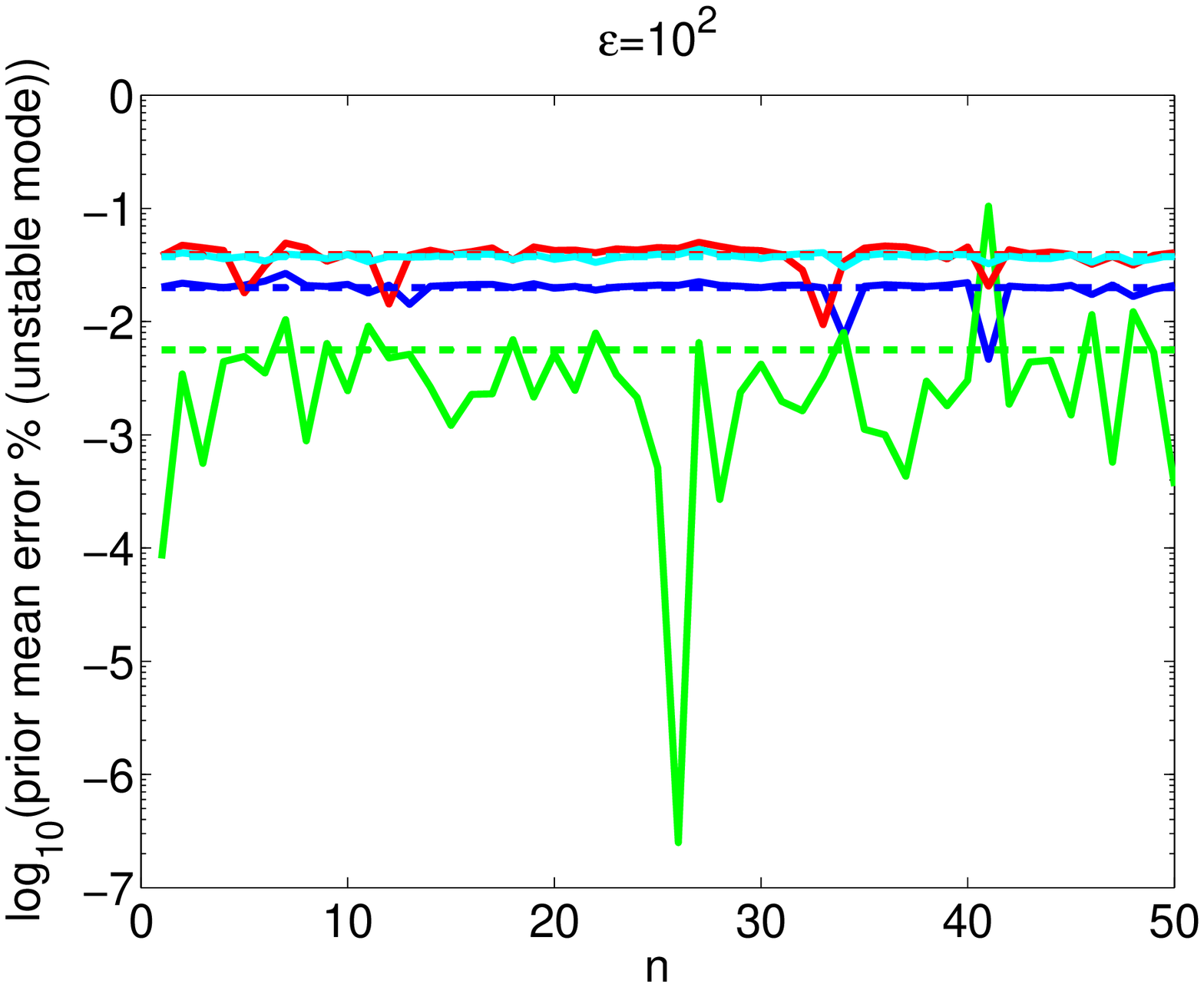}} 
\quad
\subfigure
{\includegraphics[width=0.46\textwidth]{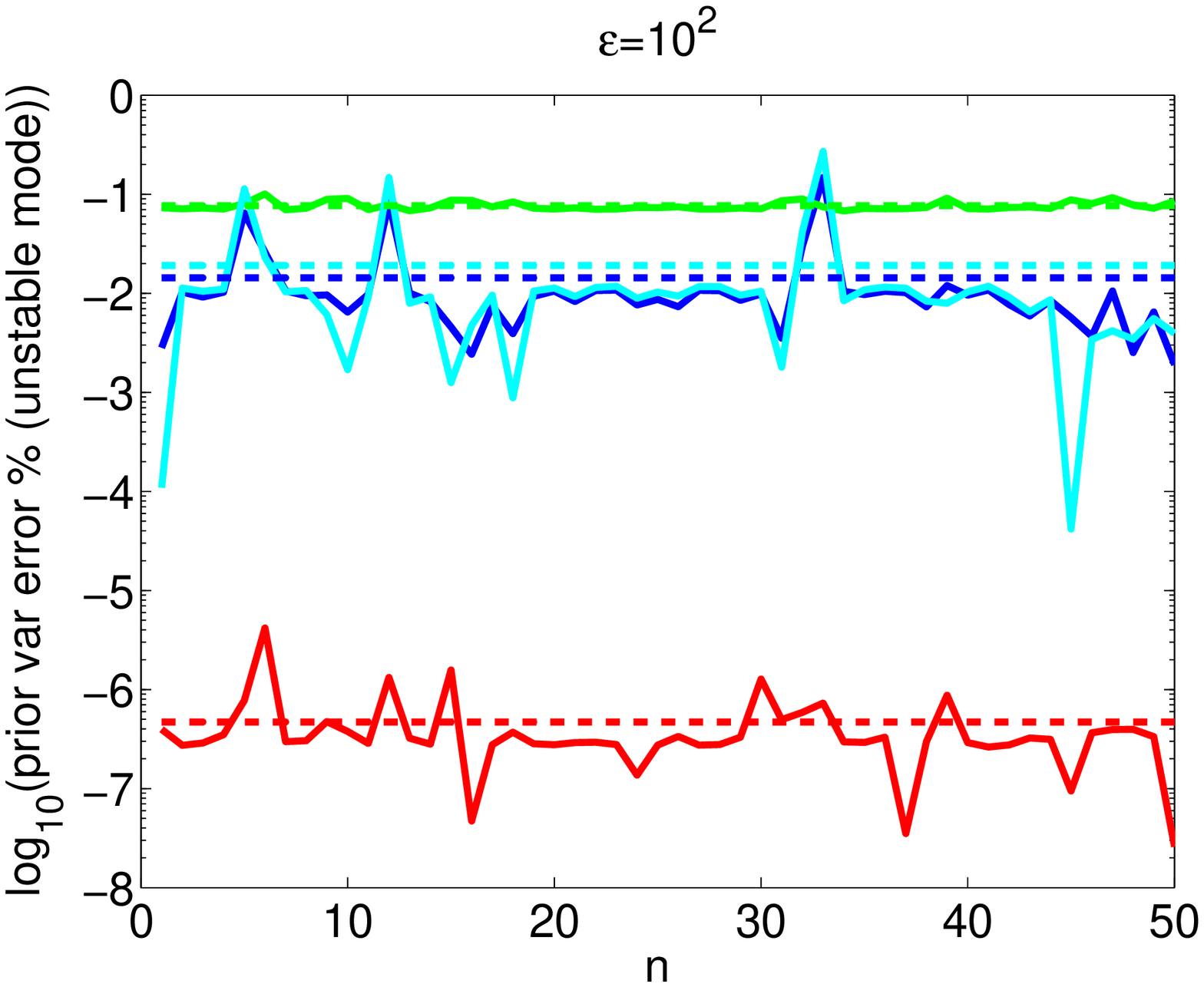}} 
\\
\vspace{-0.1in}
\caption{
The relative errors of the mean and variance approximations of
each Gaussian kernels of the prior distributions
when $\epsilon = 10^{2}$.
} 
\label{fig1e100} 
  \centering
\subfigure
{\includegraphics[width=0.46\textwidth]{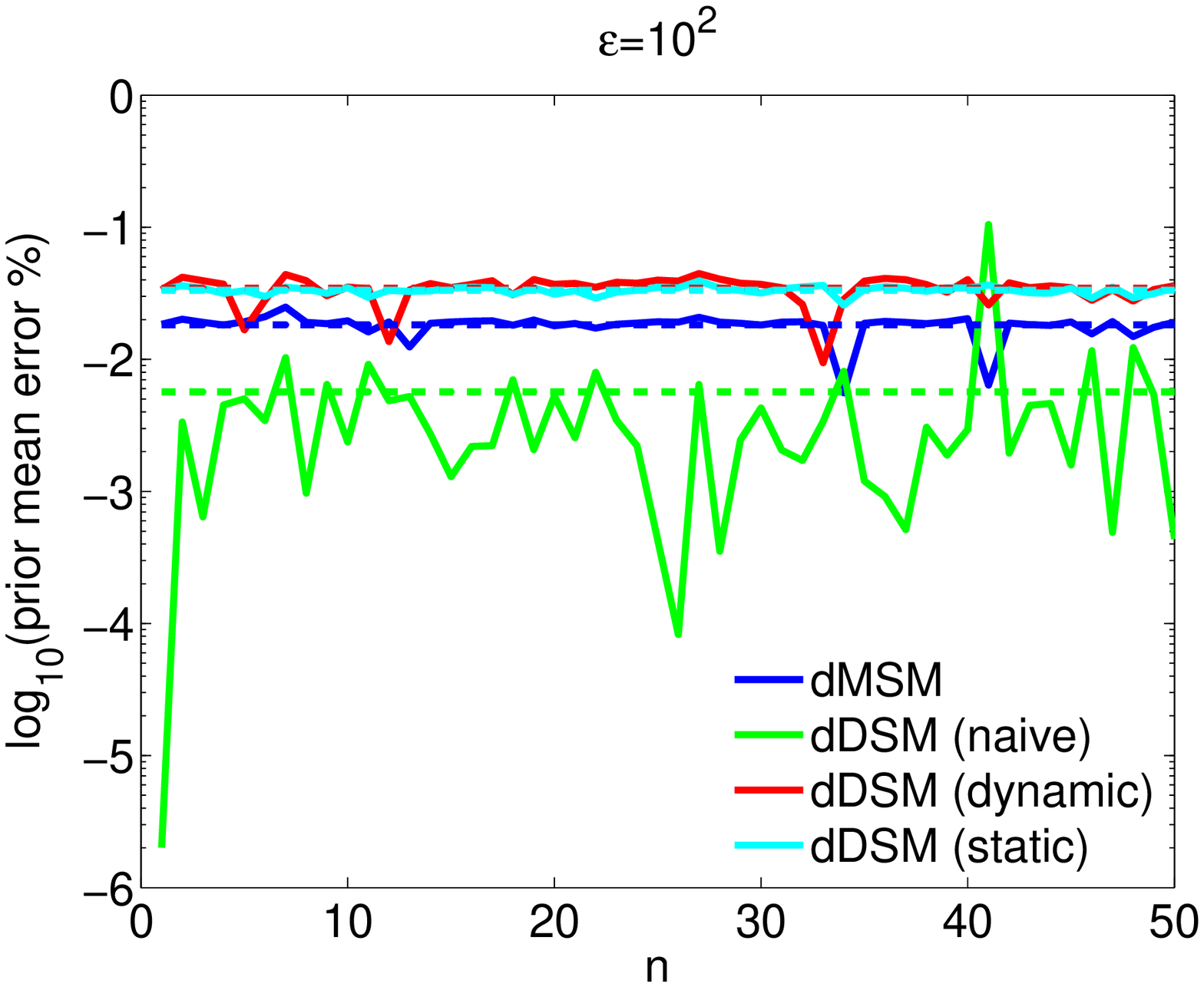} } 
\quad
\subfigure
{\includegraphics[width=0.46\textwidth]{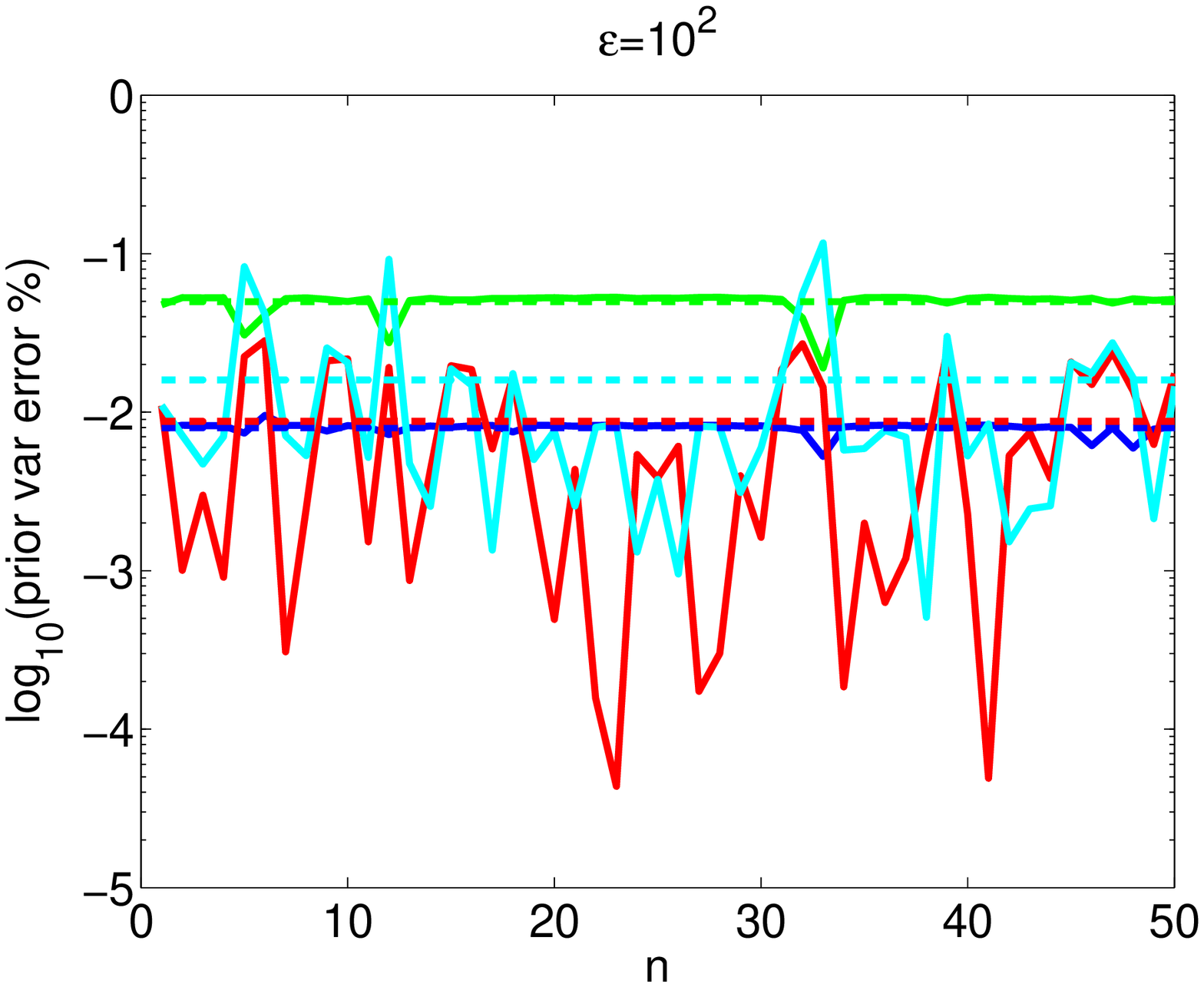} } 
\\
\vspace{-0.1in}
\subfigure
{\includegraphics[width=0.46\textwidth]{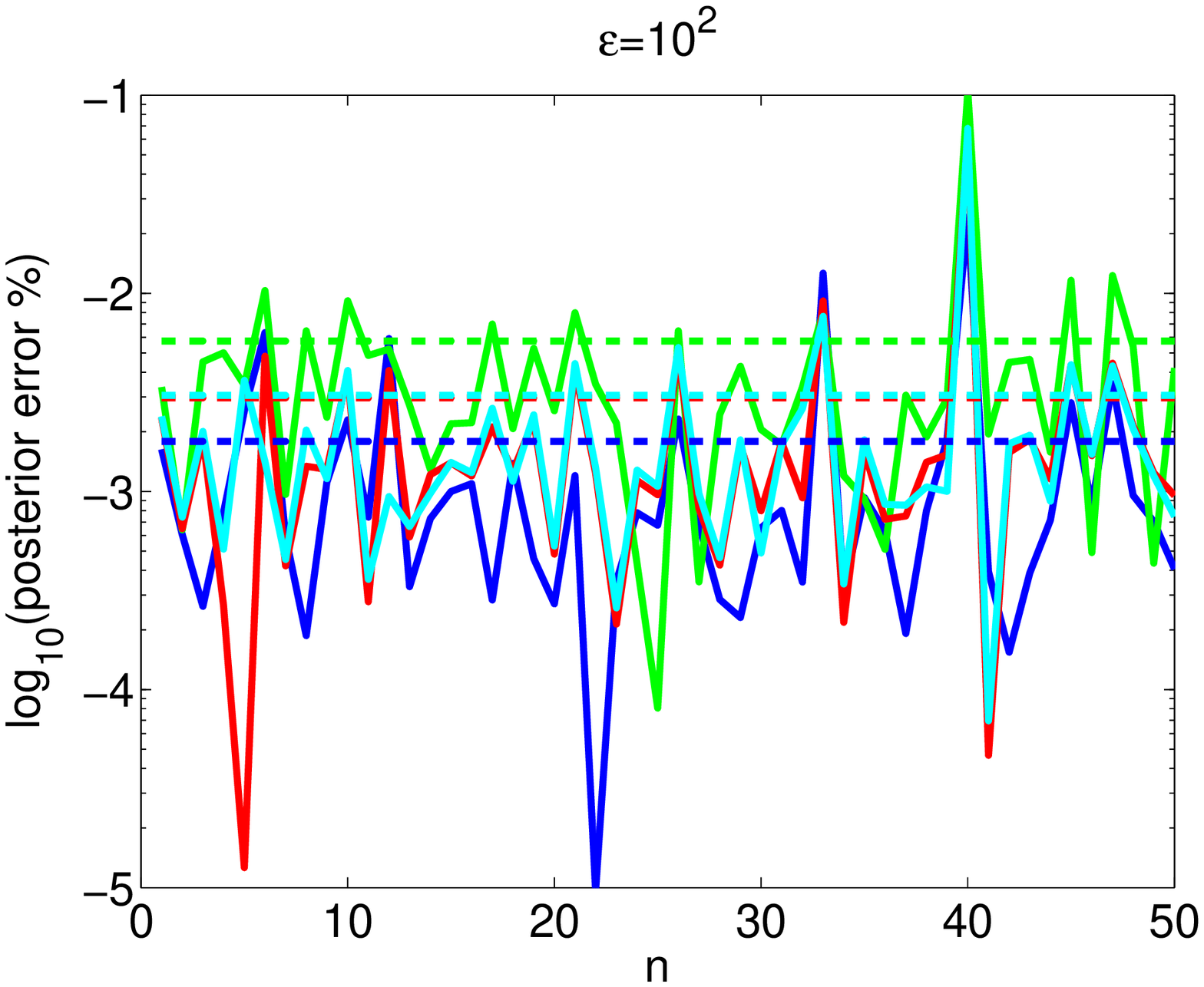} } 
\quad
\subfigure
{\includegraphics[width=0.46\textwidth]{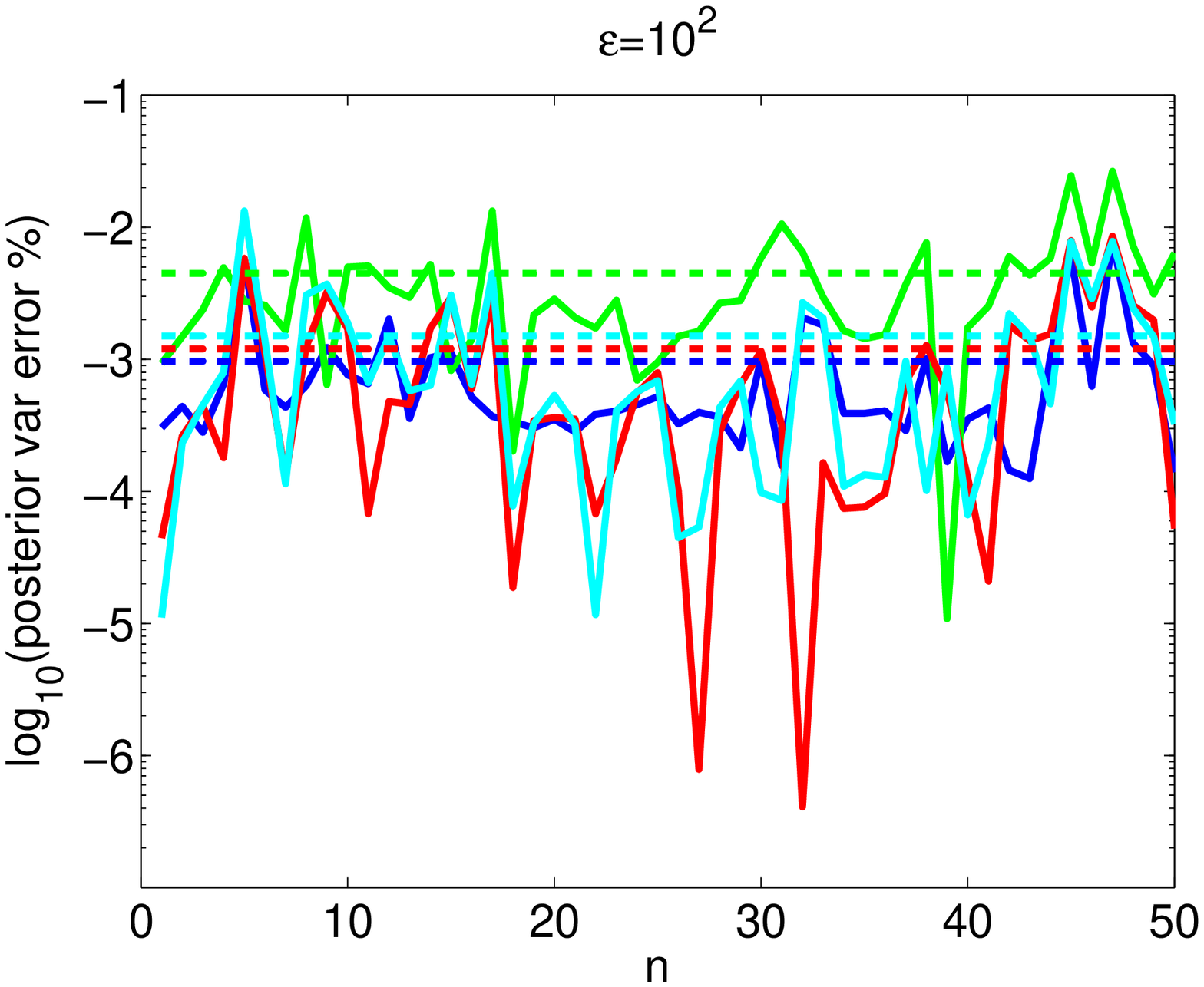} } 
\\
\vspace{-0.1in}
\caption{
The relative errors of the mean and variance approximations of
the prior $u_{n}|Y_{n-1}$ (top) and posterior $u_{n}|Y_{n}$ (bottom) distributions when $\epsilon = 10^{2}$.
} 
\label{fig2e100} 
\end{figure}

\begin{figure}
  \centering
\subfigure
{\includegraphics[width=0.46\textwidth]{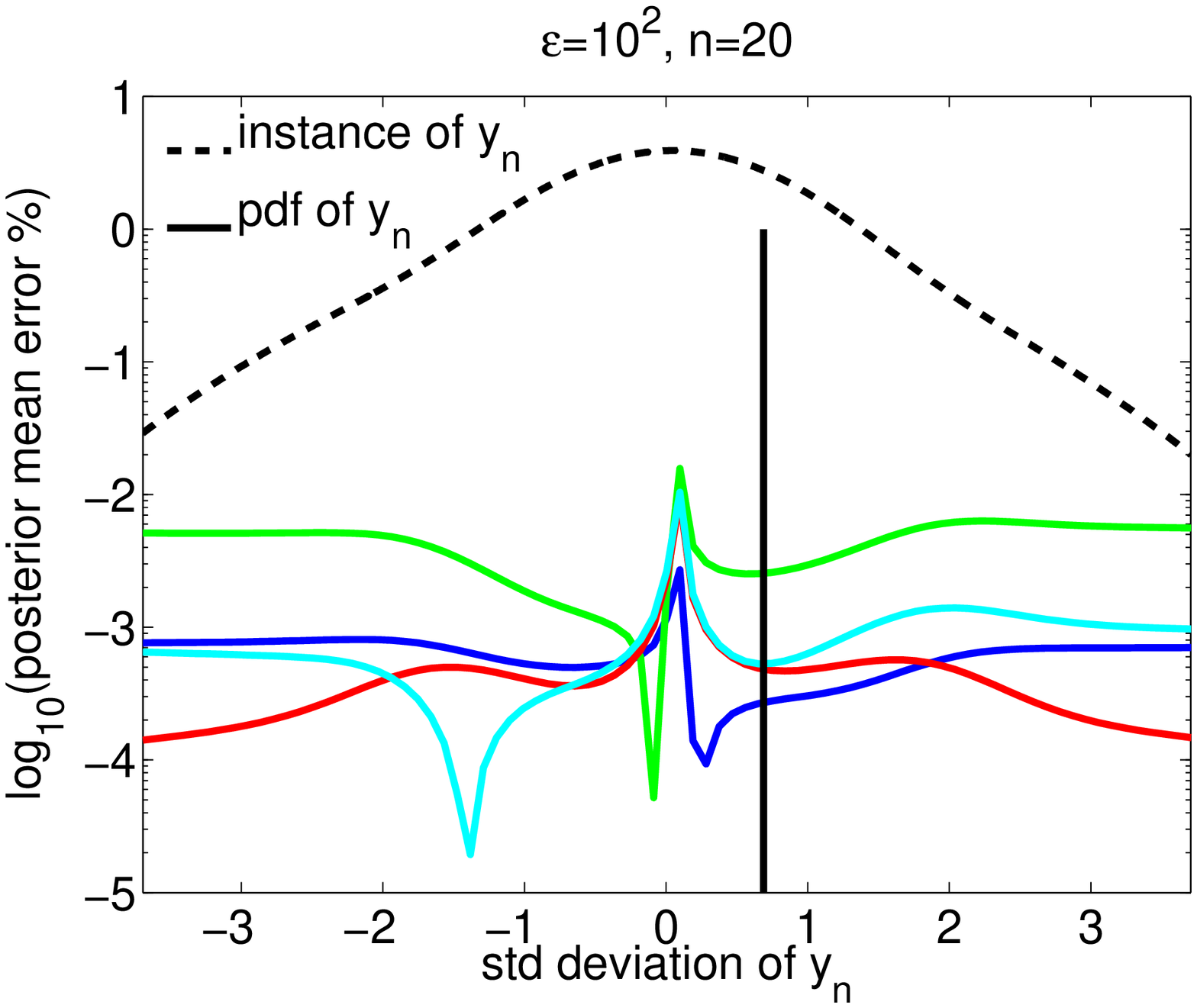} } 
\quad
\subfigure
{\includegraphics[width=0.46\textwidth]{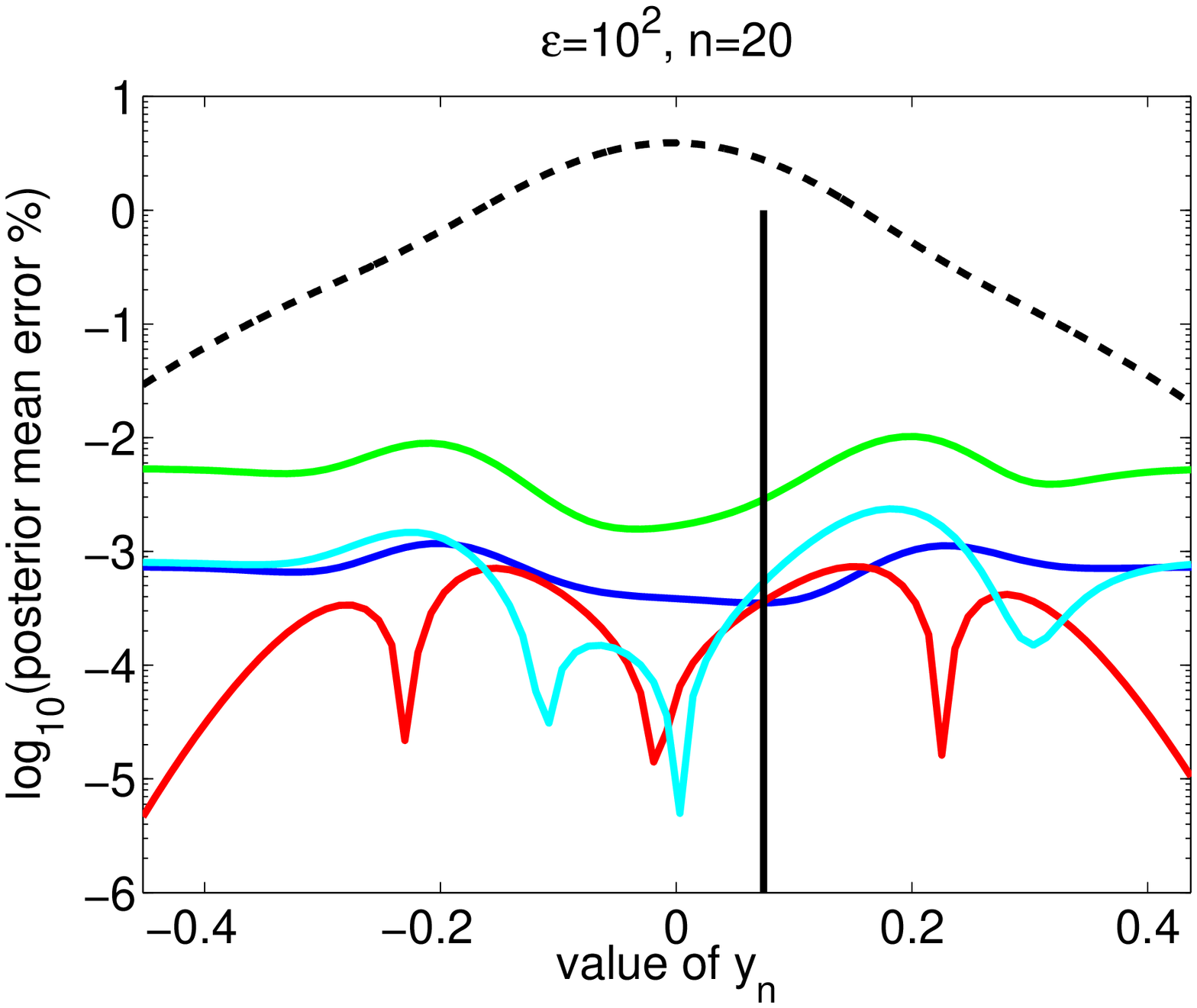} } 
\\
\vspace{-0.1in}
\subfigure
{\includegraphics[width=0.46\textwidth]{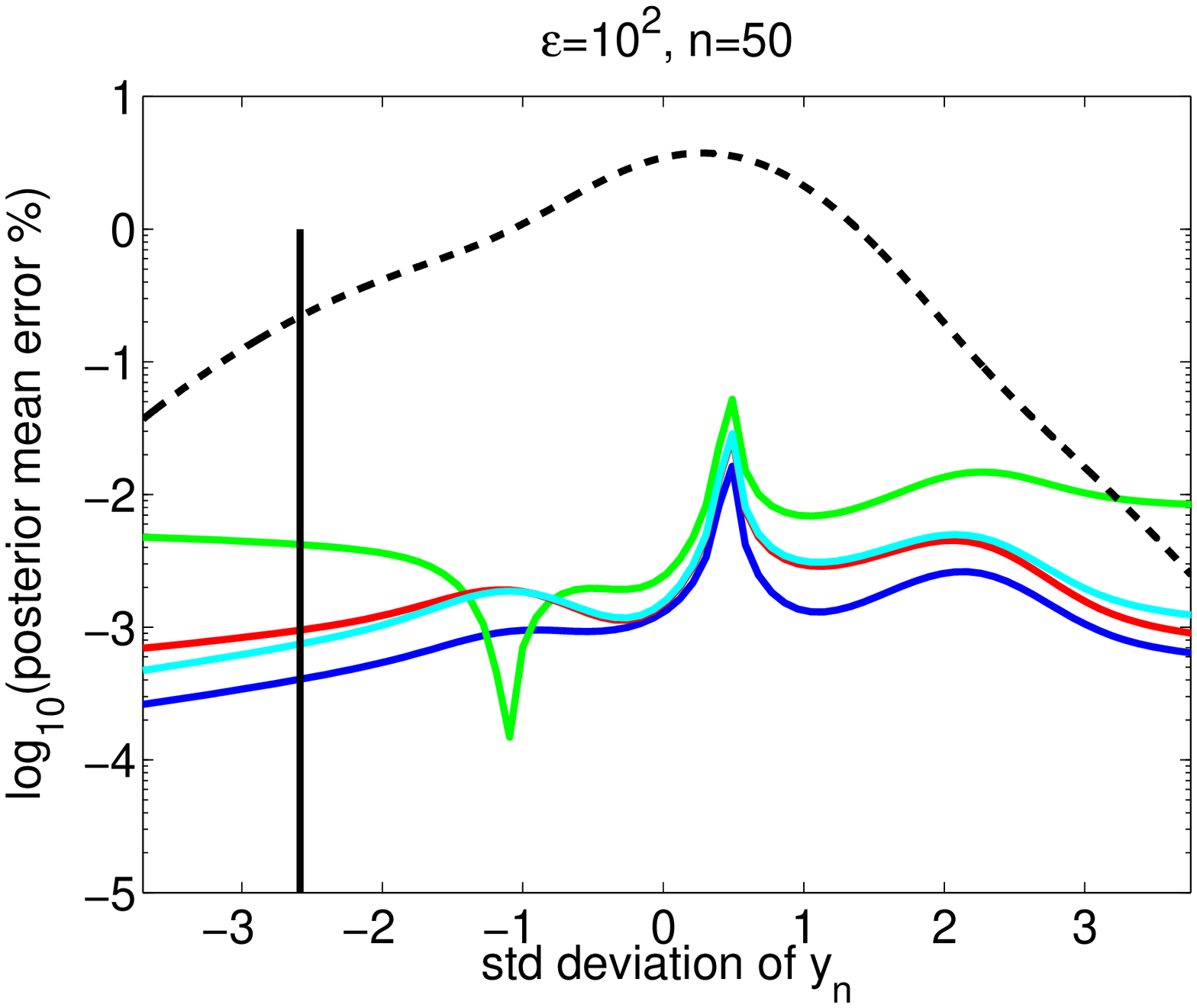} } 
\quad
\subfigure
{\includegraphics[width=0.46\textwidth]{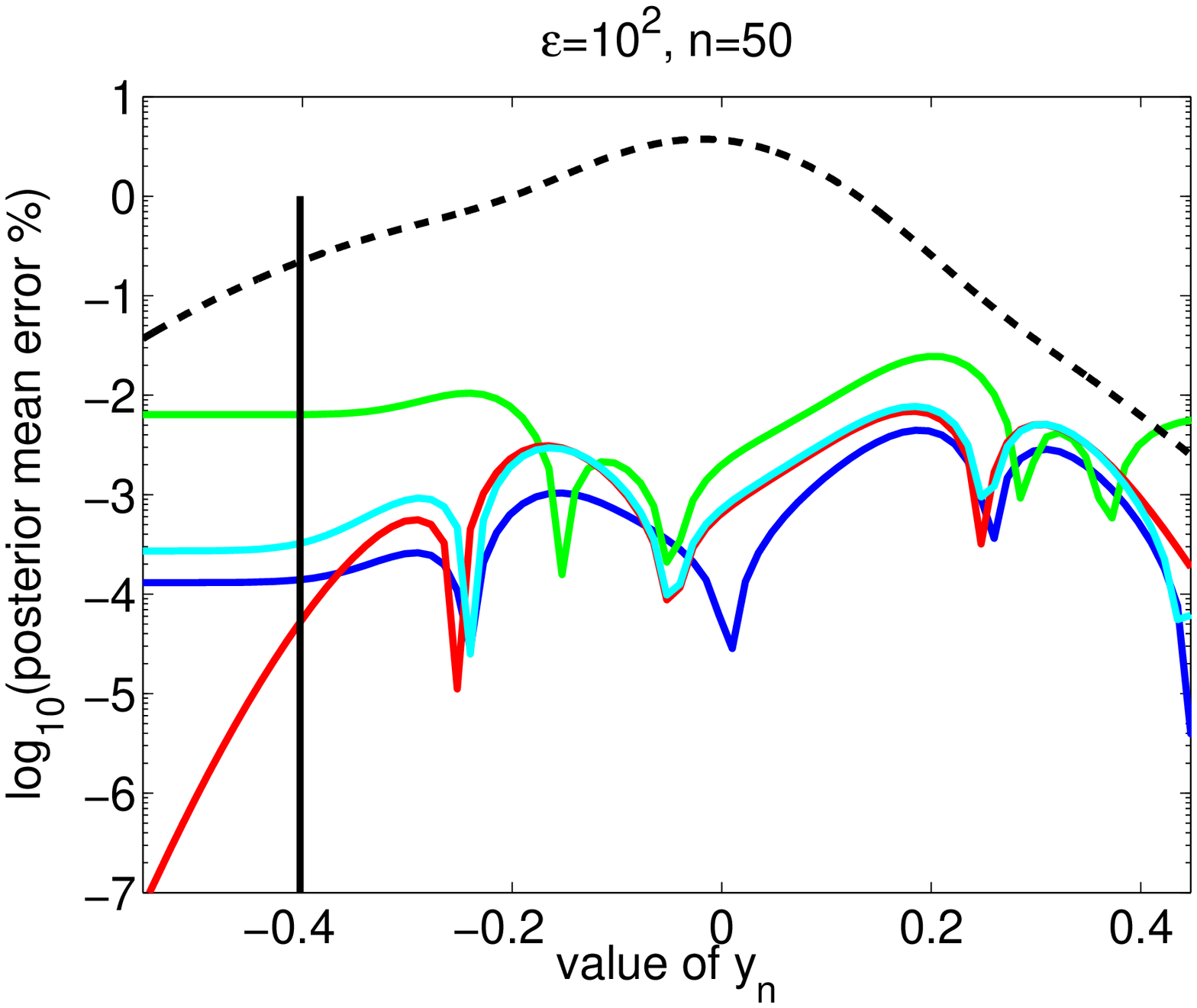} } 
\\
\vspace{-0.1in}
\subfigure
{\includegraphics[width=0.46\textwidth]{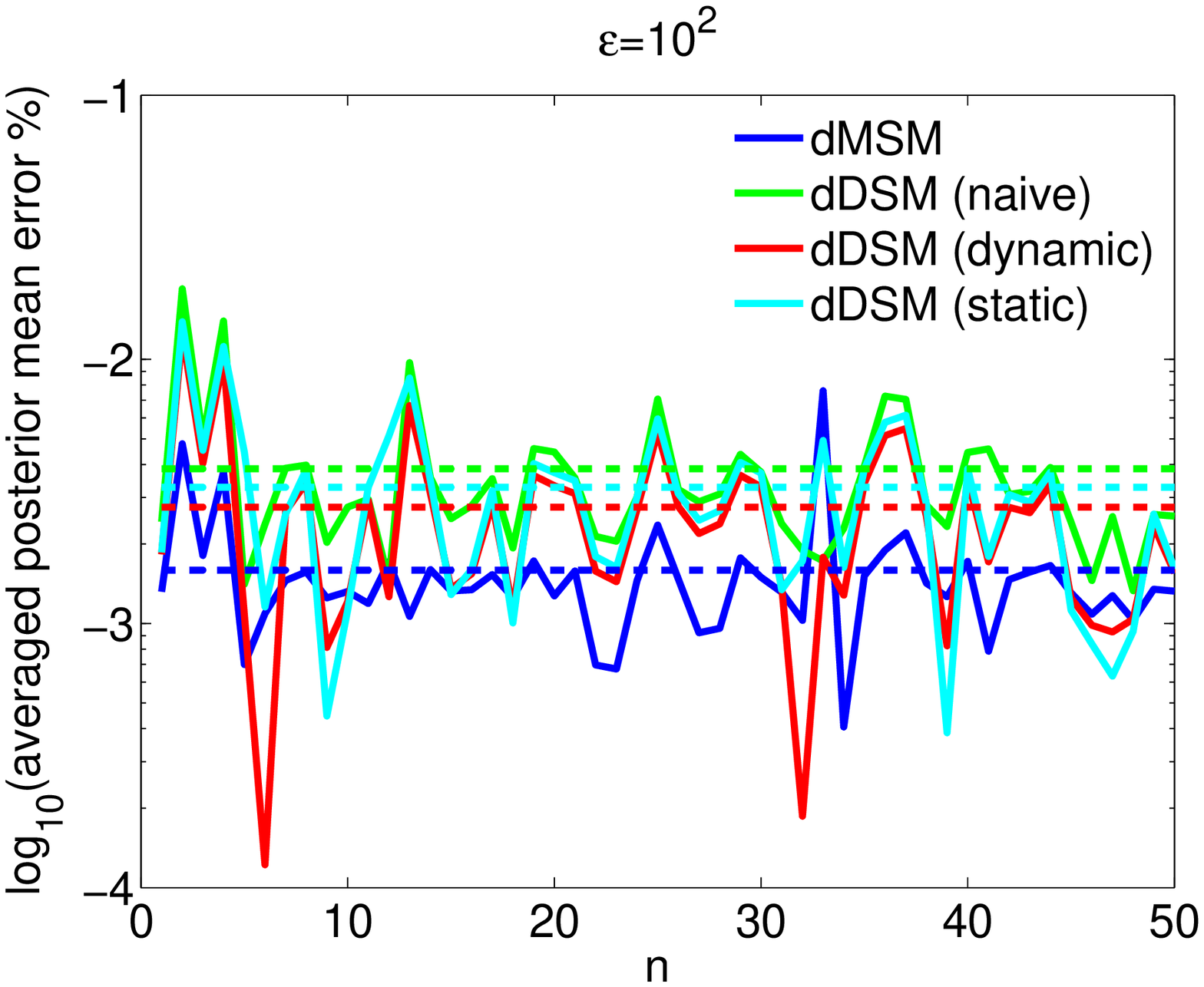} } 
\quad
\subfigure
{\includegraphics[width=0.46\textwidth]{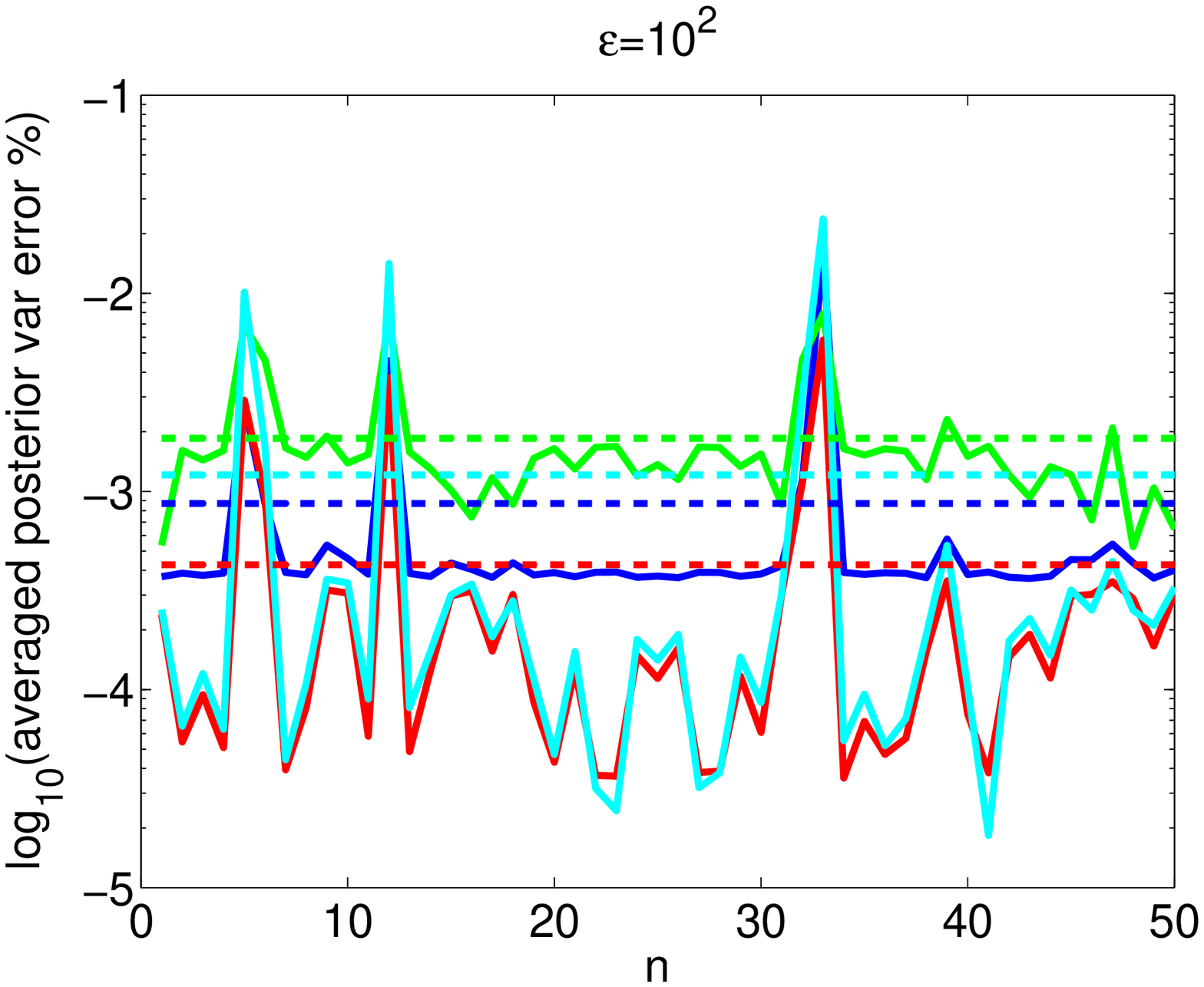} } 
\\
\vspace{-0.1in}
\caption{
The relative errors of the approximations of the posterior $u_n|Y_n$ distributions
that depend on 
the realization of $y_n= u_n + \eta_n$,
and their statistical averages with respect to the law of $y_n$
when $\epsilon = 10^{2}$.
} 
\label{fig3e100} 
\end{figure}

\begin{figure}
  \centering
\subfigure
{\includegraphics[width=0.46\textwidth]{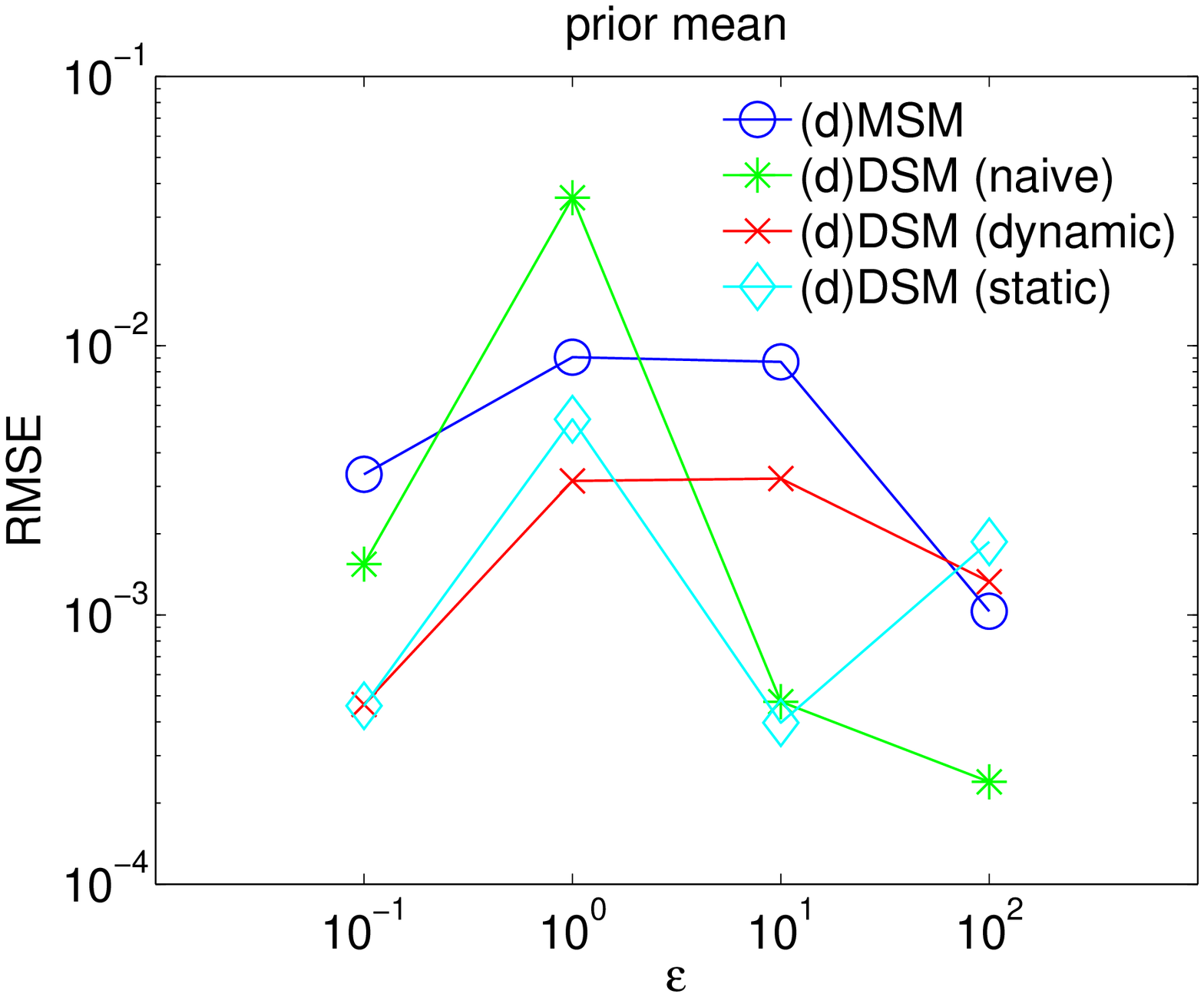} } 
\quad
\subfigure
{\includegraphics[width=0.46\textwidth]{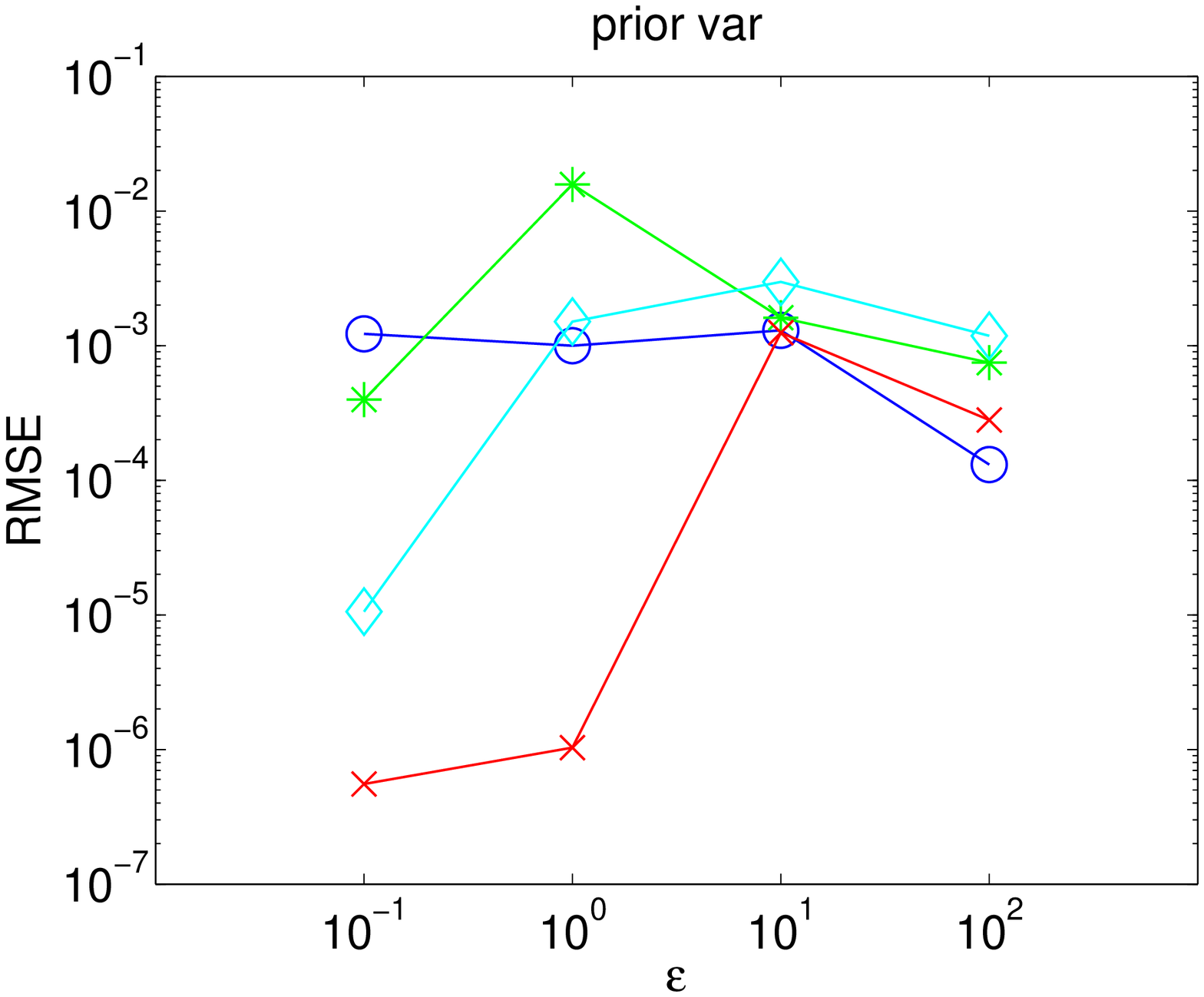} } 
\\
\vspace{-0.1in}
\subfigure
{\includegraphics[width=0.46\textwidth]{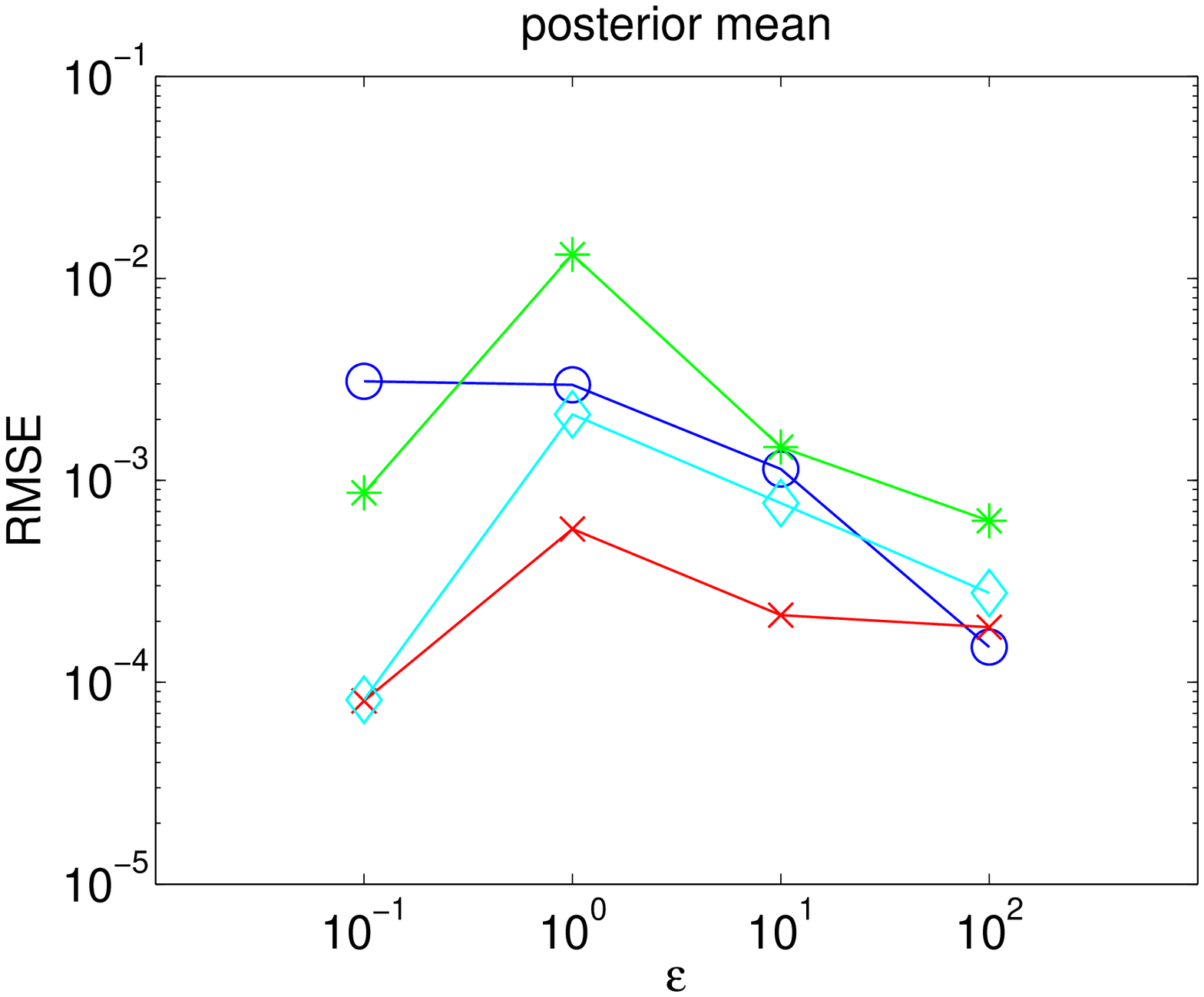} } 
\quad
\subfigure
{\includegraphics[width=0.46\textwidth]{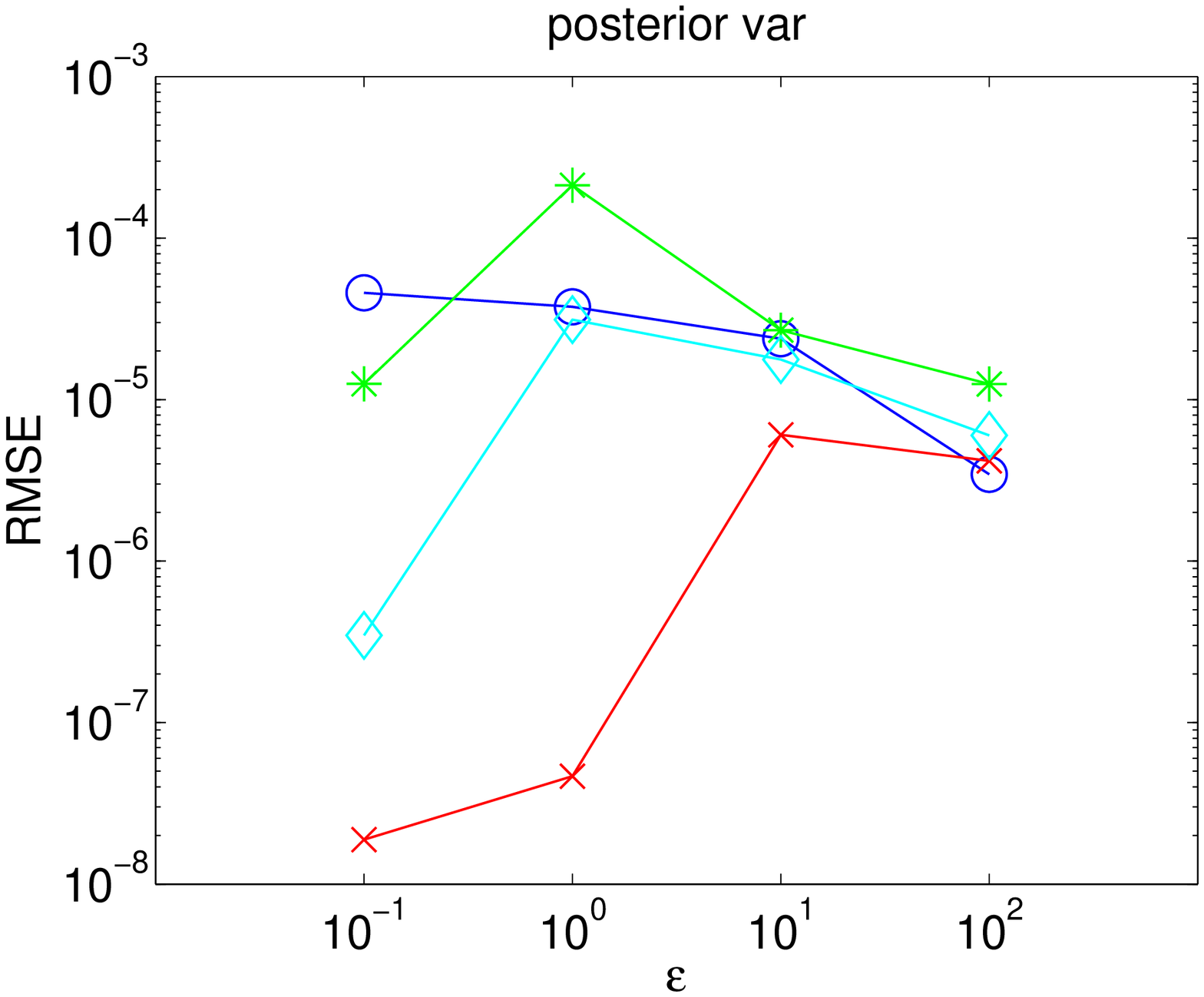} } 
\\
\vspace{-0.1in}
\caption{
The root mean square errors between the references from SSM filters and the approximations from MSM and DSM
filters.
} 
\label{fig:rmse} 
\end{figure}

\begin{figure}
\vspace{-0.2in}
\centerline{
\subfigure
{\includegraphics[width=0.46\textwidth]{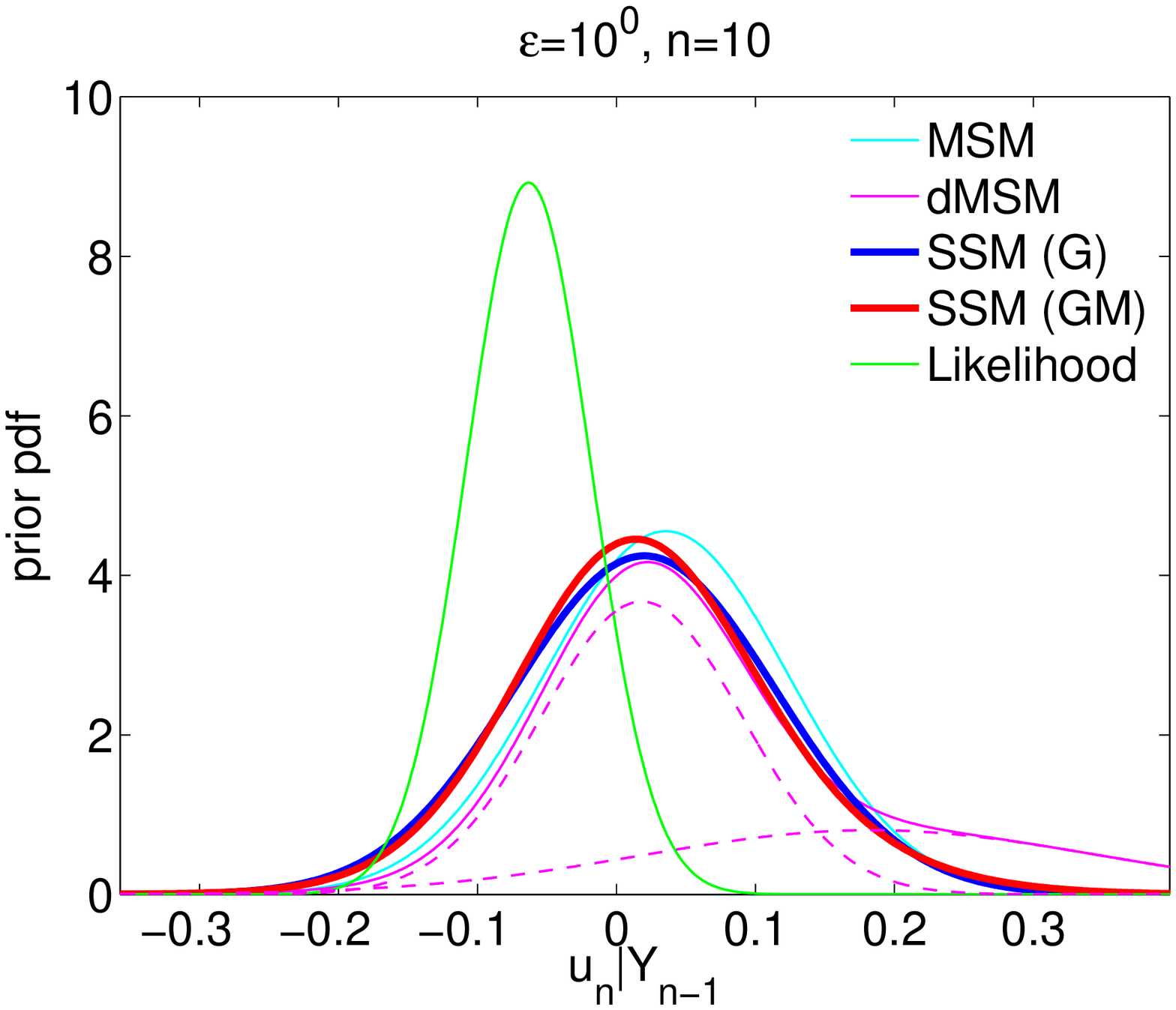} } 
\subfigure
{\includegraphics[width=0.46\textwidth]{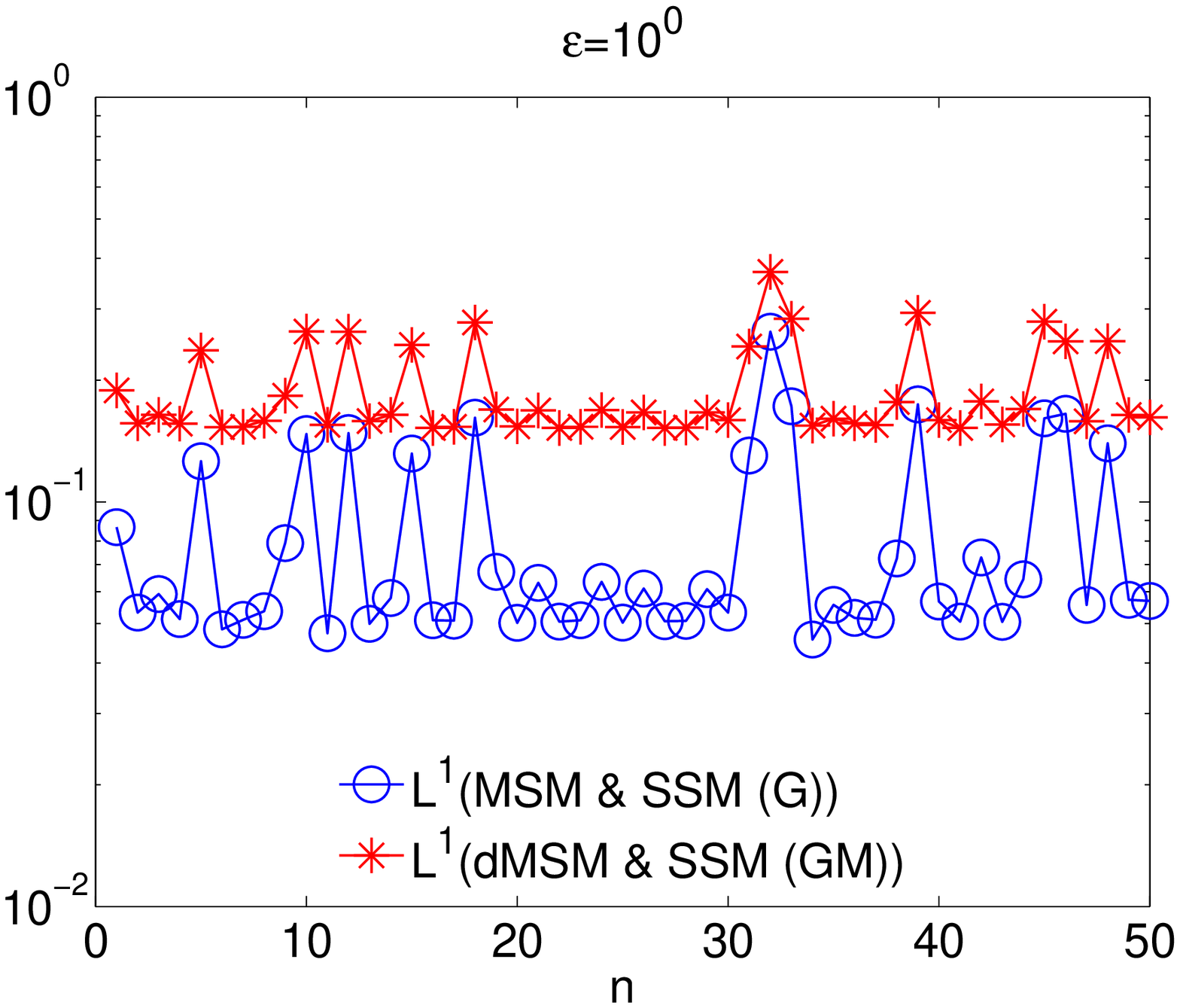} } 
}
\vspace{-0.2in}
\caption{
The Gaussian and Gaussian mixture approximations of SSM
(labeled SSM(G) and SSM(GM) respectively)
together with
MSM and dMSM when $\epsilon = 10^{0}$.
The dashed lines represent two Gaussian kernels consisting of dMSM (left).
} 
\label{fig:gsae1} 

\vspace{-0.1in}
\centerline{
\subfigure
{\includegraphics[width=0.46\textwidth]{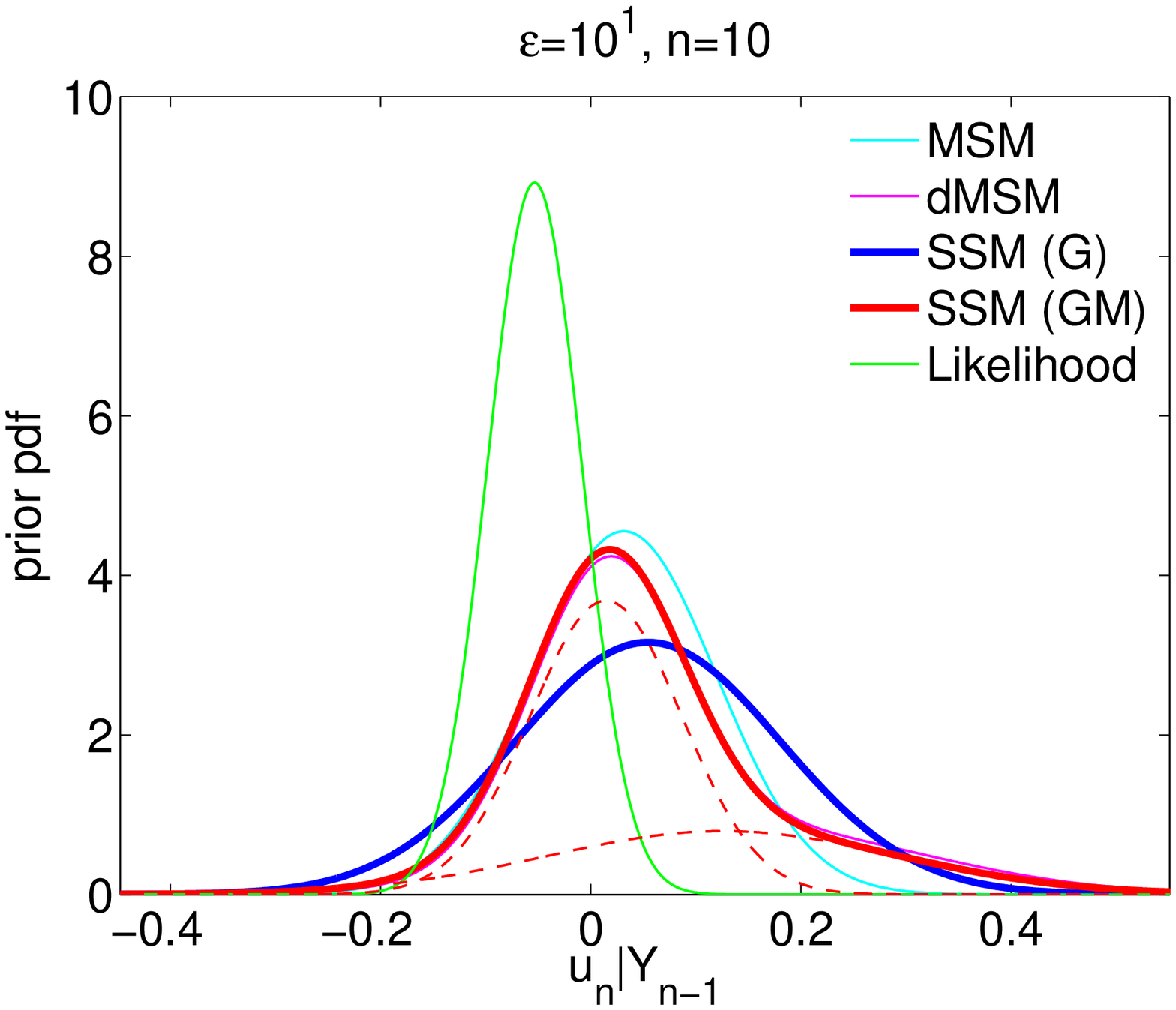} } 
\subfigure
{\includegraphics[width=0.46\textwidth]{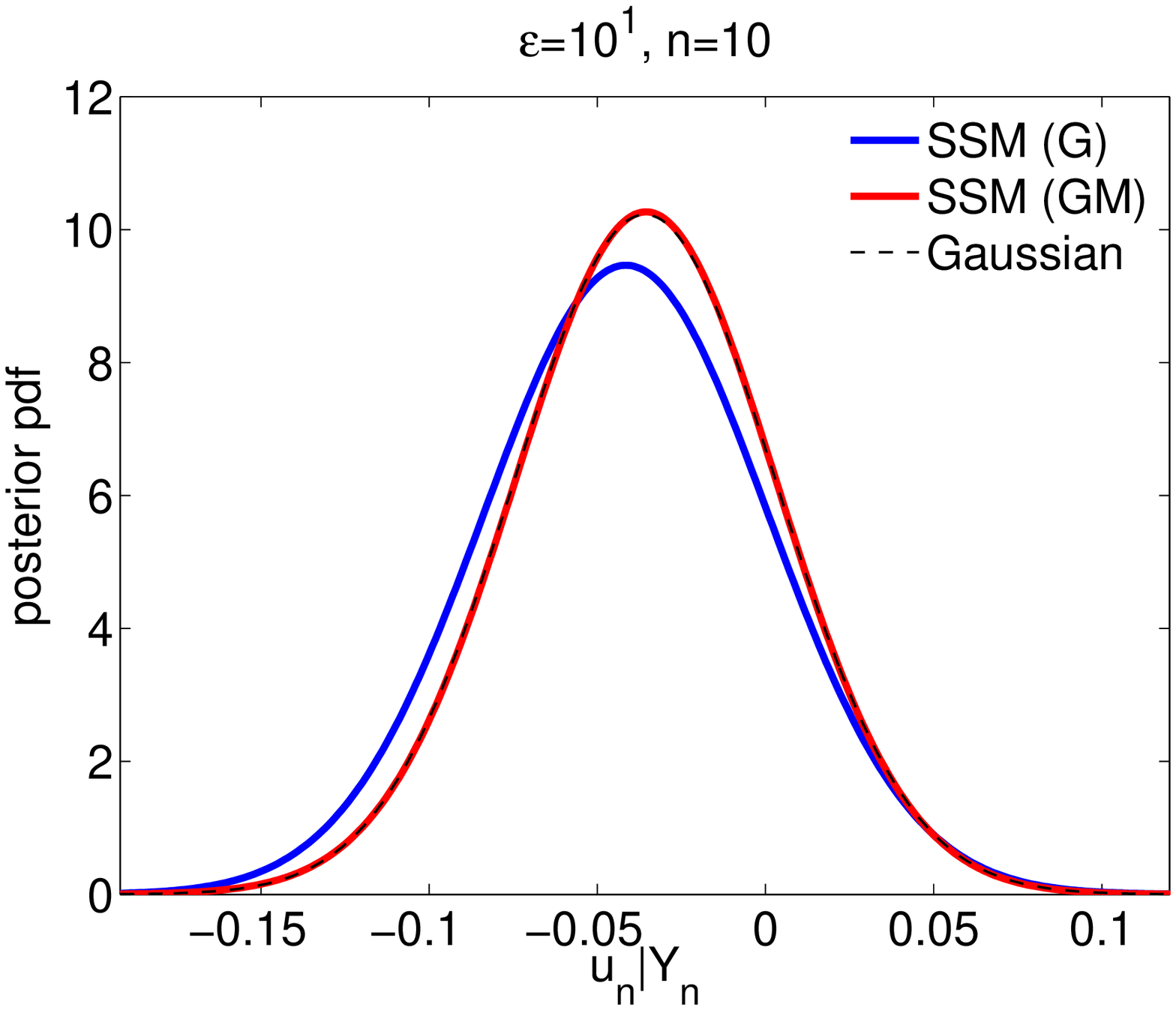} } 
}
\centerline{
\subfigure
{\includegraphics[width=0.46\textwidth]{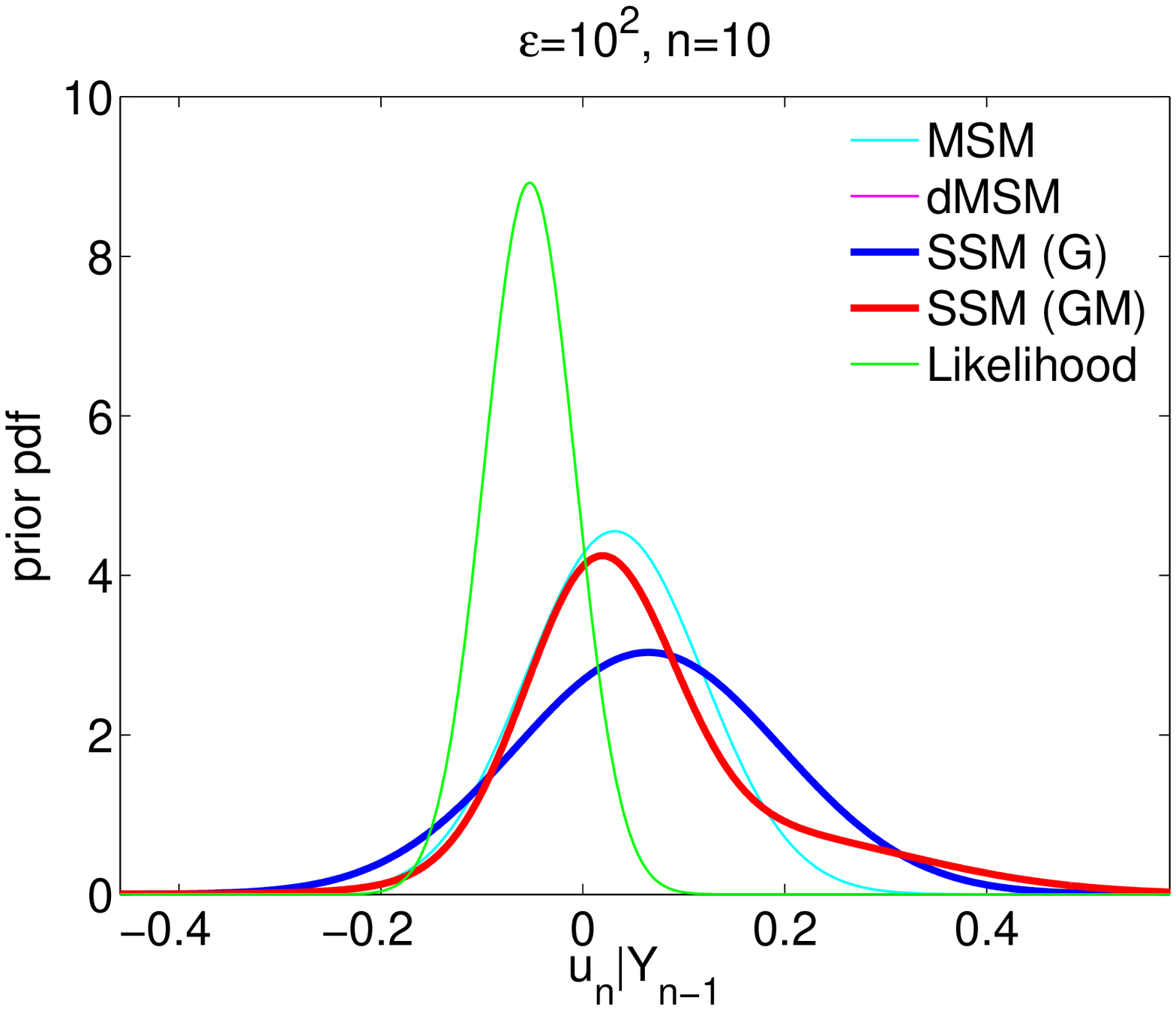} } 
\subfigure
{\includegraphics[width=0.46\textwidth]{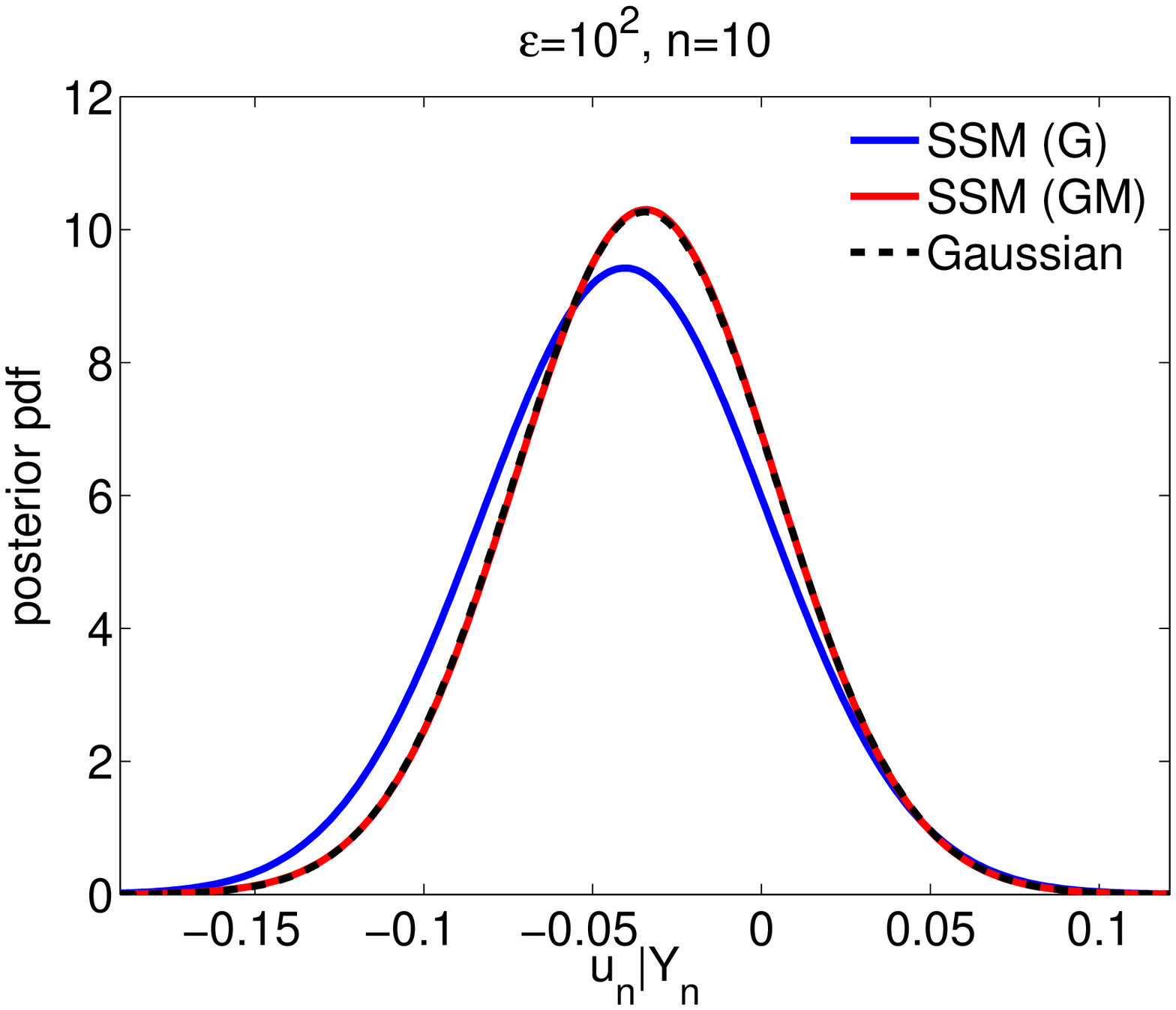} } 
}
\vspace{-0.2in}
\caption{
The Gaussian and Gaussian mixture approximations of SSM prior (left) 
and Gaussian approximations of SSM posterior (right) 
when $\epsilon = 10^{1}$ (top)
and 
$\epsilon = 10^{2}$ (bottom).
The dashed lines are two Gaussian kernels 
of SSM approximation (top-left)
and Gaussian approximation of Gaussian mixture SSM (right).
} 
\label{fig:gsae10100}

\vspace{-0.1in}
\centerline{
\subfigure
{\includegraphics[width=0.46\textwidth]{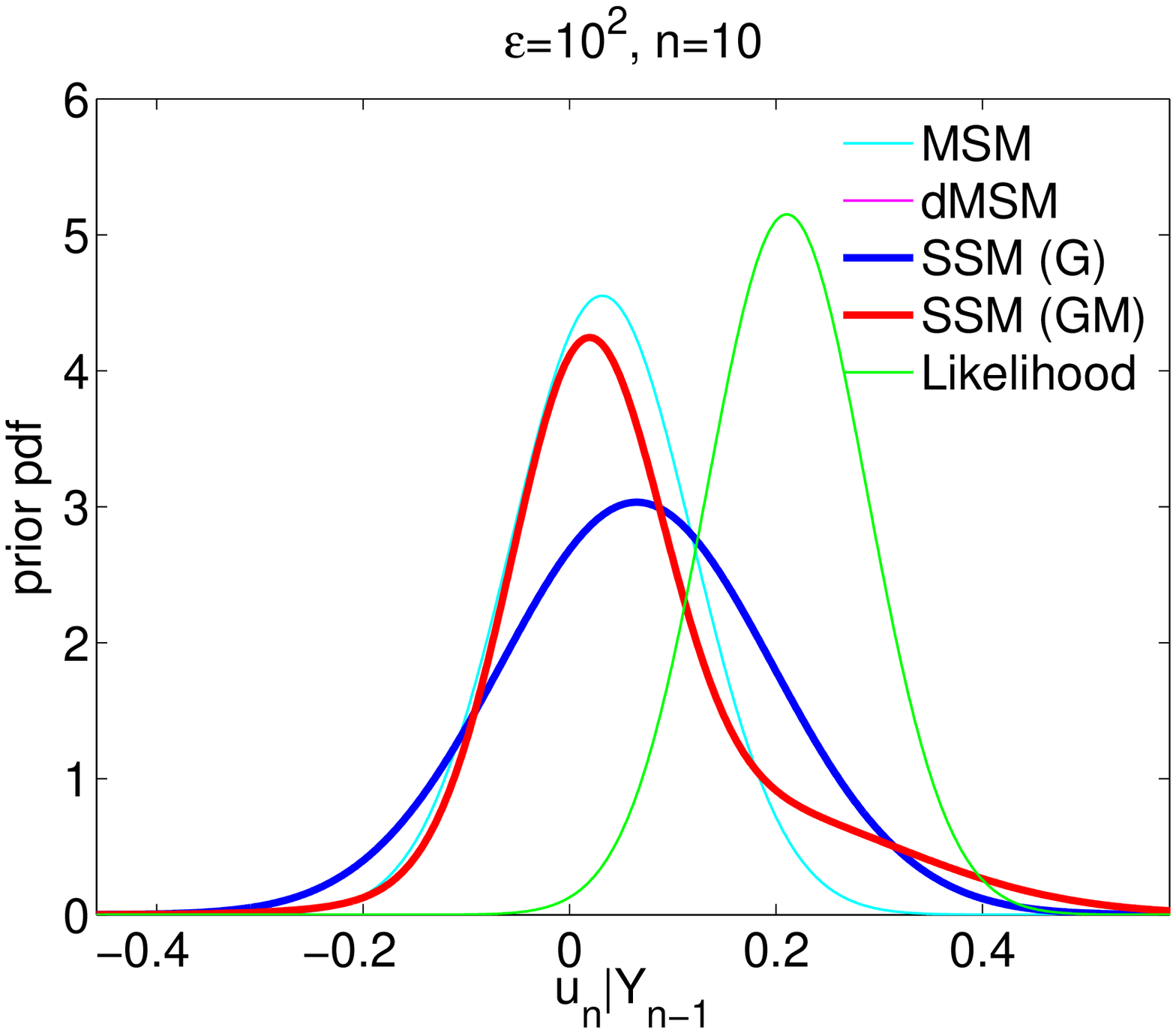} } 
\subfigure
{\includegraphics[width=0.46\textwidth]{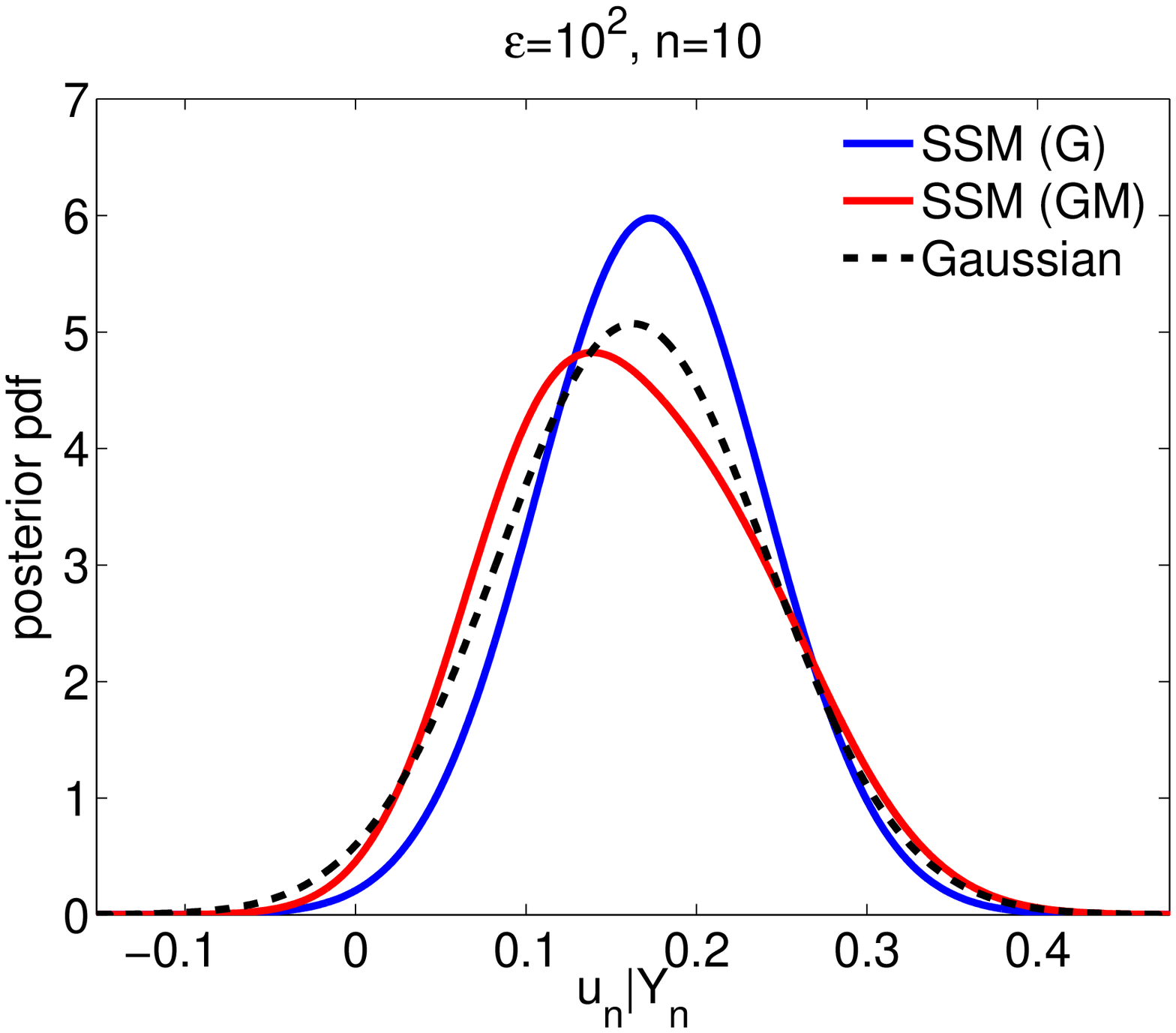} } 
}
\vspace{-0.2in}
\caption{
The Gaussian and Gaussian sum approximations of SSM prior (left) 
and Gaussian approximations of SSM posterior (right) when $\epsilon = 10^{2}$
and
$R=0.75E$.
} 
\label{fig:gsa2e100} 
\end{figure}

\section{Conclusions}
\label{sec:conclusions}
In this paper we have
employed simplified models for the estimation of a partially
observed turbulent signal. 
Our test bed, the switching stochastic model (SSM),
is a stochastic differential equation 
driven by a sign-alternating two-state Markov process.
The system is either forced or dissipated depending on the sign of the driving signal, 
and as a consequence exhibits intermittent turbulent bursting. It is
a cheap surrogate for turbulent signal generation, allowing rapid
prototyping of a variety of approximate filters -- filters with
model error.  Two approximate models (MSM, DSM) for SSM
have been constructed 
via simplification of the switching process underlying the turbulent
bursting,
leading to a Gaussian description for the filtering solution.
We study 
the moment generating function (MGF) with respect to the time integral of the switching process 
to reveal that
these two models precisely mimic the SSM behavior
when the switching frequency is relatively high.
In addition to these two models,
based on the same argument,
we also build 
two models (dMSM, dDSM)
whose regime of validity is rare switching for the driving signal.
We associate these two models with Gaussian sum filters.

We first use 
the ergodicity of the switching processes
to prove MSM (dMSM) is the high (low) switching frequency limit
of SSM and DSM (dDSM).  Besides verifying the consistency 
of the proposed approximate models,
the convergence results give rise to 
an analytic determination of DSM (dDSM) parameters 
when the time-scales of driving input and 
system output 
are well separated.
We achieve this 
from the comparison between
asymptotics of MGFs
in each of two opposing parameter regimes,
because their matching implies
the lower order moments of the corresponding DSM (dDSM)
are very close to those of SSM.
The result again gives rise to
a determination of DSM (dDSM) parameters 
when two time-scales are weakly separated.
In this case, 
we numerically find a 
minimizer of
the sum-of-squares error function between 
the mean and variance of
SSM and DSM (dDSM)
for which
the previous analytic solutions is used as the initial candidate.
In our numerical simulations,
the filtering results utilizing DSM (dDSM) with 
the parameters
tuned according to our suggestions
show
significant improvements 
in accuracy
in all the parameter regimes that we examined.
Furthermore, 
when the time-scale separation is weak,
the cost of performing 
the minimizations 
can be alleviated 
by averaging the parameters 
calculated only
for a number of observation time steps,
while maintaining the accuracy
of the filtering solution
to a considerable extent.

We have used the tools from
three different research areas:
limit theorems, asymptotic analysis
and computational optimization
to complete the whole scenario.
These methods are not separate but carefully 
chained
together 
through a solution cascade to 
provide a significant step in the analysis
and development of filters utilizing 
approximate models suggested from
a rigorous analysis of the underlying system.
As the ultimate goal of filtering with model error
is to estimate the system state and associated uncertainties 
of real-world turbulence, at tractable cost,
our
future work will include the development of these
algorithms, and their benchmarking, in the case
where the true signal is not generated by SSM, but rather
by a real turbulence model.

\section*{Acknowledgement}
Both authors are supported by the 
ERC-AMSTAT grant No.~226488. AMS is also
supported by EPSRC and ONR.

\begin{appendix}
\numberwithin{equation}{section}

\section{Switching Stochastic Model}
\label{app:ssm}
Here we analyze SSM.
Subsection~\ref{subsec:selecton} 
is with regard to the computation of MGFs of integral process of driving signal,
and subsection~\ref{subsec:amgfip}
to their asymptotic behaviors.
We develop SSM filters in 
subsection~\ref{subsec:ssmfilter}.
Note 
$\epsilon$ 
in $\lambda_{+}/\epsilon$
and $\lambda_{-}/\epsilon$
is dropped out
for the notational economy.

\subsection{Moment generating function (MGF) of integral process}
\label{subsec:selecton}
Here we aim to 
analytically compute 
\begin{equation}
  \begin{split}
\label{eq:ssmmgfjt20}
& \left\langle e^{\alpha (\Gamma_T-\Gamma_t )} \right\rangle \\
& \left\langle e^{\alpha (\Gamma_T-\Gamma_t )} \vert \gamma_0=\gamma_{\pm} \right\rangle
  \end{split}
\end{equation}
where $\Gamma_t = \int^t_0 {\gamma}(s) ds $.

We first point out that
it suffices 
provide the formula for
\begin{equation}
  \begin{split}
\label{eq:condssmmgf}
\left\langle e^{\alpha \Gamma_t} \vert \gamma_0 =\gamma_{+} \right\rangle.
  \end{split}
\end{equation}
Once it is done,
that of
$\left\langle e^{\alpha \Gamma_t} \vert \gamma_0 =\gamma_{-} \right\rangle$
is immediate from the exchange
$(\lambda_{+},\gamma_{+})
\leftrightarrow
(\lambda_{-},\gamma_{-})$.
Because
$\rho_\gamma(t)\equiv ( \mathbb{P}( \gamma_t =\gamma_{+} ), \mathbb{P}( \gamma_t =\gamma_{-} ) )^t$
solves
$d{\rho}_\gamma(t)/{dt}=L^t\rho_\gamma(t)$,
where
upper $t$ denotes transpose and
\begin{equation*}
L \equiv
\left( \begin{array}{cc} -\lambda_{+} &\lambda_{+}
\\ \lambda_{-} & -\lambda_{-} \end{array} \right),
\end{equation*}
it satisfies
\begin{equation}
  \begin{split}
\label{eq:probgamma}
\left( \begin{array}{c} 
\mathbb{P}( \gamma_t =\gamma_{+} )
\\
\mathbb{P}( \gamma_t =\gamma_{-} )
\end{array} \right) 
&=
\frac{1}{
\lambda_{+} +\lambda_{-}
}
 \left( \begin{array}{cc} 
\lambda_{-} +\lambda_{+}e^{-(\lambda_{-} +\lambda_{+})t} & \;  
\lambda_{-} -\lambda_{-} e^{-(\lambda_{-} +\lambda_{+})t}\\ 
\lambda_{+} -\lambda_{+} e^{-(\lambda_{-} +\lambda_{+})t}& \;
\lambda_{+} +\lambda_{-}e^{-(\lambda_{-} +\lambda_{+})t}
\end{array} \right) 
\left( \begin{array}{c} 
\mathbb{P}( \gamma_0 =\gamma_{+} )
\\
\mathbb{P}( \gamma_0 =\gamma_{-} )
\end{array} \right).
  \end{split}
\end{equation}
Then from substituting
(\ref{eq:probgamma})
into
\begin{equation}
  \begin{split}
\label{eq:ssmmgfjt}
\left\langle e^{\alpha (\Gamma_T-\Gamma_t )} \right\rangle 
&=
\mathbb{P}( \gamma_t =\gamma_{+} )
\left\langle e^{\alpha (\Gamma_T-\Gamma_t )} \vert \gamma_t =\gamma_{+} \right\rangle 
+
\mathbb{P}( \gamma_t =\gamma_{-} )
\left\langle e^{\alpha (\Gamma_T-\Gamma_t )} \vert \gamma_t =\gamma_{-} \right\rangle \\
&=
\mathbb{P}( \gamma_0 =\gamma_{+} )
\left\langle e^{\alpha (\Gamma_T-\Gamma_t )} \vert \gamma_0 =\gamma_{+} \right\rangle 
+
\mathbb{P}( \gamma_0 =\gamma_{-} )
\left\langle e^{\alpha (\Gamma_T-\Gamma_t )} \vert \gamma_0 =\gamma_{-} \right\rangle
  \end{split}
\end{equation}
it follows that
\begin{equation}
  \begin{split}
\label{eq:probgammats}
 \left\langle e^{\alpha (\Gamma_T-\Gamma_t )} \vert \gamma_0=\gamma_{+} \right\rangle 
 =
\frac{1}{
\lambda_{+} +\lambda_{-}
}
& \Bigg( 
\left(\lambda_{-} +\lambda_{+}e^{-(\lambda_{-} +\lambda_{+})t} \right)
\left\langle e^{\alpha (\Gamma_T-\Gamma_t )} \vert \gamma_t =\gamma_{+} \right\rangle \\
& \quad +
\left(\lambda_{+} -\lambda_{+} e^{-(\lambda_{-} +\lambda_{+})t}\right)
\left\langle e^{\alpha (\Gamma_T-\Gamma_t )} \vert \gamma_t =\gamma_{-} \right\rangle 
\Bigg).
  \end{split}
\end{equation}
Making use of
$\left\langle e^{\alpha (\Gamma_T-\Gamma_t )} \vert \gamma_t =\gamma_{+} \right\rangle 
=\left\langle e^{\alpha \Gamma_{T-t}} \vert \gamma_0 =\gamma_{+} \right\rangle$,
Eqs.~(\ref{eq:ssmmgfjt20}) are expressed in terms of
(\ref{eq:condssmmgf}) through
(\ref{eq:ssmmgfjt}), (\ref{eq:probgammats}).

We next attempt to compute 
Eq.~(\ref{eq:condssmmgf}).
In what follows, we will show
that
(\ref{eq:condssmmgf})
is given by
(\ref{eq:ssmcharasyhomo})
for identical $\lambda_{+}=\lambda_{-}$,
and that
(\ref{eq:condssmmgf})
can be computed 
with the help of 
(\ref{eq:ssmcharasy}),
(\ref{eq:pdfNt}) 
for distinct $\lambda_{+} \neq \lambda_{-}$.
When $\gamma(0)=\gamma_{+}$, let
$\gamma_k$ denote the value of $\gamma(s)$ after undergoing $k$ transitions, i.e.,
$\gamma_k=\gamma_{+}$ for even $k$ and $\gamma_k=\gamma_{-}$ for odd $k$.
Let $\tau_k \triangleq \tau^{\gamma_{+}}$ 
for even $k$ and
$\tau_k \triangleq \tau^{\gamma_{-}}$ for odd $k$.
Let
$T_{n}=\sum_{k=0}^{n-1} \tau_k$
and
let 
$N_t= \max\lbrace n\in \mathbb{N} : T_n \leq t\rbrace$
denote the number of transitions of $\gamma(s)$ in the interval $s\in (0, t]$.
From $\tau_k = T_{k+1}-T_k$, we have
\begin{equation*}
\Gamma_t = \int^t_0 {\gamma}(s) ds = \sum_{k=0}^{N_t-1} \gamma_k \tau_k + \gamma_{N_t}(t-T_{N_t})
= \sum_{k=0}^{N_t-1} (\gamma_k-\gamma_{N_t}) \tau_k + \gamma_{N_t}t.
\end{equation*}
Note $\tau \sim \exp(r)$ then
$\mathbb{E}(e^{\alpha \tau})=\int_0^\infty e^{\alpha t} re^{-rt} dt = (1-{\alpha \over r})^{-1}$ for $\alpha< r$.
Since
$\lbrace \tau_k \rbrace_{k=0}^{n-1}$ are mutually independent
and $\tau_k$ is distributed by exponential distribution,
Eq.~(\ref{eq:condssmmgf}) allows for
a formal expansion 
\begin{equation}
  \begin{split}
\label{eq:ssmcharasy}
\left\langle e^{\alpha \Gamma_t} \vert \gamma_0 =\gamma_{+} \right\rangle 
& = \sum_{n=0}^\infty 
\mathbb{P}(N_t=n\vert \gamma_0 =\gamma_{+} )
\left\langle e^{\alpha \left(\sum_{k=0}^{n-1} (\gamma_k-\gamma_{n})\tau_k +\gamma_{n}t\right)}
\vert \gamma_0 =\gamma_{+}
\right\rangle \\
& = \sum_{\text{n:even}} 
\mathbb{P}(N_t=n\vert \gamma_0 =\gamma_{+} )
e^{\alpha \gamma_{+}t} 
\left( 1-\frac{\alpha (\gamma_{-}-\gamma_{+})}{\lambda_{-}}\right)^{-\frac{n}{2}} \\
& \quad + \sum_{\text{n:odd}} 
\mathbb{P}(N_t=n\vert \gamma_0 =\gamma_{+} )
 e^{\alpha \gamma_{-}t} 
\left( 1-\frac{\alpha (\gamma_{+}-\gamma_{-})}{\lambda_{+}}\right)^{-\frac{n+1}{2}}
  \end{split}
\end{equation}
for
$ {\lambda_{-} \over \gamma_{-}-\gamma_{+} } < \alpha < {\lambda_{+} \over \gamma_{+}-\gamma_{-} } $.

To compute
$\mathbb{P}(N_t=n\vert \gamma_0 =\gamma_{+} )$,
notice
\begin{equation*}
  \begin{split}
\mathbb{P}(N_t=n) 
& = \mathbb{P}(N_t \geq n)-\mathbb{P}(N_t \geq n+1) \\
& = \mathbb{P}(T_n \leq t)-\mathbb{P}(T_{n+1} \leq t)
  \end{split}
\end{equation*}
which is,
in other words,
\begin{equation}
  \begin{split}
\label{eq:pdfofNt}
\begin{cases}
\mathbb{P}(N_t=0) & =1-\int^t_0 f_1(s)ds\\
\mathbb{P}(N_t=n) & = \int^t_0 f_n(s)ds -\int^t_0 f_{n+1}(s)ds, \quad n\geq 1
\end{cases}
  \end{split}
\end{equation}
where $f_n$ denotes the probability density function of $T_n$.
Let
$f_{+}(t) = \lambda_{+}e^{-\lambda_{+}t}$
and
$f_{-}(t) = \lambda_{-}e^{-\lambda_{-}t}$,
and let $*$ denote the convolution,
then
\begin{equation*}
  \begin{split}
f_{m+n} (t)
& = (f_{+})^{*m}*(f_{-})^{*n} (t) \\
& = \frac{\lambda_{+}^m\lambda_{-}^n}{(m-1)!(n-1)!} \int^t_0 s^{m-1}e^{-\lambda_{+}s}
(t-s)^{n-1}e^{-\lambda_{-}(t-s)}ds\\
& = \frac{\lambda_{+}^m\lambda_{-}^n}{(m-1)!(n-1)!} 
e^{-\lambda_{-}t}t^{m+n-1} \int^1_0 s^{m-1}(1-s)^{n-1}e^{-(\lambda_{+}-\lambda_{-})ts}ds\\
  \end{split}
\end{equation*}
with $|m-n|=1$.

\subsubsection{$\lambda_{+} = \lambda_{-}$}
In case of $\lambda_{+} = \lambda_{-}$,
both are denoted by $\lambda$.
Note
the beta function is given by
\begin{equation*}
\int^1_0 s^{m-1}(1-s)^{n-1}ds =
\frac{(m-1)!(n-1)!}{(m+n-1)!}
\end{equation*}
and the use of
integration by parts
yields
\begin{equation*}
  \begin{split}
\int^t_0 {f}_n ds 
= \int^t_0 \lambda^n \frac{s^{n-1}}{(n-1)!}e^{-\lambda s}ds
=\frac{\lambda^n}{n!}t^n e^{-\lambda t} + \int^t_0 f_{n+1}ds.
  \end{split}
\end{equation*}
Therefore,
from (\ref{eq:pdfofNt}),
$N_t$ is distributed by
Poisson distribution with parameter ${\lambda} t$
\begin{equation*}
\mathbb{P}(N_t = n ) = e^{-{\lambda}t}\frac{({\lambda}t)^n}{n!}.
\end{equation*}
In this case,
$\mathbb{P}(N_t=n\vert \gamma_0 =\gamma_{+} ) = \mathbb{P}(N_t=n)$
and
Eq.~(\ref{eq:ssmcharasy})
is summed into
the closed form
\begin{equation}
  \begin{split}
\label{eq:ssmcharasyhomo}
\left\langle e^{\alpha \Gamma_t} \vert \gamma_0 =\gamma_{+} \right\rangle 
& =  e^{-\lambda t}\Bigg(
e^{\alpha \gamma_{+}t} \cosh \left( \lambda t \left( 1-\frac{\alpha
(\gamma_{-}-\gamma_{+})}{\lambda_{-}}\right)^{-\frac{1}{2}}\right)
\\
& 
\quad
+
e^{\alpha \gamma_{-}t} \sinh \left( \lambda t \left( 1-\frac{\alpha
(\gamma_{+}-\gamma_{-})}{\lambda_{+}}\right)^{-\frac{1}{2}}\right)
\left( 1-\frac{\alpha
(\gamma_{+}-\gamma_{-})}{\lambda_{+}}\right)^{-\frac{1}{2}}
\Bigg)
  \end{split}
\end{equation}
when $ \lambda$ represents  $\lambda_{+} = \lambda_{-}$.

\subsubsection{$\lambda_{+} \neq \lambda_{-}$}
Consider the case where $\lambda_{+}$ and $\lambda_{-}$ are different.
Let $B(m,n;\alpha) \equiv \int^1_0 s^{m-1}(1-s)^{n-1}e^{\alpha s}ds$
then
integration by parts yields
\begin{equation*}
B(m,n;\alpha)=\frac{1}{\alpha} \left[ (m-1)B(m-1,n;\alpha)-(n-1)B(m,n-1;\alpha)\right].
\end{equation*}
Using
$B(m-1,n;\alpha) = e^{\alpha} B(n,m-1;-\alpha)$
and 
\begin{equation*}
\int t^n e^{\alpha t}dt=\frac{t^n e^{\alpha t}}{\alpha}-\frac{n}{\alpha}\int t^{n-1}e^{\alpha t}dt
\end{equation*}
we recursively obtain 
$f_n$ and 
the probability distribution of $N_t$
\begin{equation}
  \begin{split}
\label{eq:pdfNt}
\begin{cases}
\mathbb{P}(N_t=0 \vert \gamma_0=\gamma_{+}) & = e^{-\lambda_{+}t}\\
\mathbb{P}(N_t=1\vert \gamma_0=\gamma_{+}) 
& = \frac{\lambda_{+} (e^{-\lambda_{+}t}-e^{-\lambda_{-}t})}{-(\lambda_{+}-\lambda_{-})}\\
\mathbb{P}(N_t=2\vert \gamma_0=\gamma_{+}) 
&= \frac{ 
\lambda_{-}\lambda_{+}\exp(- \lambda_{-}t) 
- \lambda_{-}\lambda_{+}\exp(- \lambda_{+}t) 
+ t \lambda_{-}\lambda_{+}(\lambda_{-}-\lambda_{+}) \exp(- \lambda_{+}t) 
}{ (\lambda_{-}-\lambda_{+})^2}  \\
& \cdots
\end{cases}
  \end{split}
\end{equation}
from (\ref{eq:pdfofNt}).
One can make use of
(\ref{eq:pdfNt}) 
to compute
a truncation of
(\ref{eq:ssmcharasy}).
We conclude the section with the following theorem.

\bigskip

\begin{Theorem}
\label{thm:unif}
The RHS of
Eq.~(\ref{eq:ssmcharasy}) uniformly converges.
\end{Theorem}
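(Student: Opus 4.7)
The plan is to dominate the series (\ref{eq:ssmcharasy}) term-by-term by a convergent numerical series, uniformly over compact subsets of the admissible region in $(t,\alpha)$, and then conclude by the Weierstrass $M$-test.

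First I would obtain a sharp upper bound on $\mathbb{P}(N_t = n \vert \gamma_0 = \gamma_{+})$. Since the holding times $\{\tau_k\}$ are independent exponentials with parameters alternating between $\lambda_{+}$ and $\lambda_{-}$, each $\tau_k$ stochastically dominates an $\exp(\lambda_{\max})$ random variable with $\lambda_{\max} \equiv \max(\lambda_{+}, \lambda_{-})$. Therefore $T_n$ stochastically dominates a $\mathrm{Gamma}(n,\lambda_{\max})$ random variable, and a one-line estimate of the Gamma density gives
\begin{equation*}
\mathbb{P}(N_t = n \vert \gamma_0 = \gamma_{+}) \;\leq\; \mathbb{P}(N_t \geq n \vert \gamma_0 = \gamma_{+}) \;\leq\; \frac{(\lambda_{\max}\, t)^{n}}{n!}.
\end{equation*}

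Next I would exploit the restriction $\lambda_{-}/(\gamma_{-}-\gamma_{+}) < \alpha < \lambda_{+}/(\gamma_{+}-\gamma_{-})$. Setting $r_{+}(\alpha) \equiv 1 - \alpha(\gamma_{-}-\gamma_{+})/\lambda_{-}$ and $r_{-}(\alpha) \equiv 1 - \alpha(\gamma_{+}-\gamma_{-})/\lambda_{+}$, both quantities are strictly positive in this range, so writing $r_{\min}(\alpha) = \min(r_{+}(\alpha), r_{-}(\alpha))$, the modulus of the $n$-th summand in (\ref{eq:ssmcharasy}) is bounded by
\begin{equation*}
\frac{(\lambda_{\max}\, t)^{n}}{n!}\, e^{|\alpha|(|\gamma_{+}| \vee |\gamma_{-}|)\, t}\, r_{\min}(\alpha)^{-n/2}.
\end{equation*}
Summing over $n$ produces the finite quantity $e^{|\alpha|(|\gamma_{+}| \vee |\gamma_{-}|)\, t}\,\exp\!\bigl(\lambda_{\max}\, t/\sqrt{r_{\min}(\alpha)}\bigr)$, showing that the series converges absolutely at every admissible $(t,\alpha)$.

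To upgrade to uniform convergence, I would restrict $(t,\alpha)$ to an arbitrary compact subset of $[0,T_{\max}]\times\bigl(\lambda_{-}/(\gamma_{-}-\gamma_{+}),\lambda_{+}/(\gamma_{+}-\gamma_{-})\bigr)$; on such a set, $\lambda_{\max} t$ is bounded above and $r_{\min}(\alpha)$ is bounded below by a strictly positive constant $\rho$, so the displayed summands are majorised by the $(t,\alpha)$-independent series $\sum_n (\lambda_{\max} T_{\max})^n /(n! \rho^{n/2})$, which is summable. The Weierstrass $M$-test then delivers uniform convergence on every such compact set. The only nontrivial input is the stochastic-domination estimate above; once that is in hand, the remaining steps are mechanical.
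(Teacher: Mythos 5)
Your proof is correct and follows the same overall strategy as the paper: bound $\mathbb{P}(N_t=n\,\vert\,\gamma_0=\gamma_{+})$ by a Poisson-type tail of the form $(\mathrm{const}\cdot t)^n/n!$ and invoke the Weierstrass $M$-test. The difference lies in how that bound is obtained. The paper works directly with the convolution density $f_{m+n}=(f_{+})^{*m}*(f_{-})^{*n}$ and sandwiches it between pure-$\lambda_{+}$ and pure-$\lambda_{-}$ Gamma densities via the elementary inequality $1\leq e^{-(\lambda_{+}-\lambda_{-})ts}\leq e^{-(\lambda_{+}-\lambda_{-})t}$, arriving at a two-term bound involving $(\sqrt{\lambda_{+}\lambda_{-}}\,t)^n/n!$-type quantities. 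You instead use stochastic domination of each holding time by an $\exp(\lambda_{\max})$ variable, which gives the cruder but entirely sufficient bound $(\lambda_{\max}t)^n/n!$ in one line, and has the minor advantage of not caring which of $\lambda_{\pm}$ is larger. You are also more explicit than the paper about the domain of uniformity (compact subsets in $(t,\alpha)$ with $r_{\min}$ bounded away from zero), which is a point the paper leaves implicit. One small tidy-up: for odd $n$ the exponent in \eqref{eq:ssmcharasy} is $-(n+1)/2$ rather than $-n/2$, so your majorant should carry an extra harmless factor $r_{\min}(\alpha)^{-1/2}$ when $r_{\min}\leq 1$ (which does occur for the relevant negative $\alpha$); this does not affect summability.
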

\begin{proof}
Let
$f^{+}_n(s) = (\lambda_{+})^n \frac{s^{n-1}}{(n-1)!}e^{-\lambda_{+} s}$
and
$f^{-}_n(s) = (\lambda_{-})^n \frac{s^{n-1}}{(n-1)!}e^{-\lambda_{-} s}$.
The inequalities
\begin{equation*}
\left( \frac{\lambda_{+}}{\lambda_{-}}\right)^m {f}^{-}_{m+n}(t)
\leq f_{m+n}(t) \leq 
\left( \frac{\lambda_{-}}{\lambda_{+}}\right)^n {f}^{+}_{m+n}(t).
\end{equation*}
hold
from
$1 \leq e^{-(\lambda_{+}-\lambda_{-})ts}\leq 
e^{-(\lambda_{+}-\lambda_{-})t}$
for $s \in [0,1]$.
Let $ f_{m+1+n} (t) = (f_{+})^{*(m+1)}*(f_{-})^{*n} (t) $ then
using
\begin{equation*}
  \begin{split}
\int^t_0 \lambda^n \frac{s^{n-1}}{(n-1)!}e^{-\lambda s}ds
\leq
\int^t_0 \lambda^n \frac{s^{n-1}}{(n-1)!}ds
=\frac{(\lambda t)^n}{n!}
  \end{split}
\end{equation*}
we obtain
\begin{equation*}
\begin{split}
\int^t_0 
\left( f_{m+n}(s)-f_{m+1+n}(s)  \right) ds
& \leq 
\int^t_0 
\left(
\left( \frac{\lambda_{-}}{\lambda_{+}}\right)^n {f}^{+}_{m+n}(s)
-
\left( \frac{\lambda_{+}}{\lambda_{-}}\right)^{m+1} {f}^{-}_{m+1+n}(s)
\right) ds \\
& \leq 
\left( \frac{\lambda_{-}}{\lambda_{+}}\right)^n 
\frac{(\lambda_{+}t)^{m+n}}{(m+n)!}
+
\left( \frac{\lambda_{+}}{\lambda_{-}}\right)^{m+1} 
\frac{(\lambda_{-}t)^{m+1+n}}{(m+1+n)!}.
\end{split}
\end{equation*}
Therefore when $n$ is even
\begin{equation*}
\mathbb{P}(N_t = n \vert \gamma_0=\gamma_{+}) 
\leq
\left( \frac{\lambda_{-}}{\lambda_{+}}\right)^{\frac{n}{2}}
\frac{(\lambda_{+}t)^{n}}{n!}
+
\left( \frac{\lambda_{+}}{\lambda_{-}}\right)^{\frac{n}{2}+1} 
\frac{(\lambda_{-}t)^{n+1}}{(n+1)!}
\end{equation*}
is satisfied
and a similar bound holds when $n$ is odd. 
The application of Weierstrass M-test
leads to
the uniform convergence
in view of Eq.~(\ref{eq:ssmcharasyhomo}).
\end{proof}

\subsection{Asymptotics for MGF of integral process}
\label{subsec:amgfip}
We here derive asymptotic formulae for MGFs 
(\ref{eq:ssmmgfjt20}), (\ref{eq:condssmmgf})
when
$\epsilon \ll 1$ 
(scale-separation regime)
and $\epsilon \gg 1$
(rare-event regime).

\subsubsection{Scale-separation regime}
\label{subsec:scaleseparation}
When 
$\lambda_{+}$ 
and
$\lambda_{-}$ 
are distinct,
and $\epsilon$ is small,
we
replace
$\lambda$ 
in 
Eq.~(\ref{eq:ssmcharasyhomo})
by
the harmonic mean
$\bar{\lambda} \equiv
\frac{2\lambda_{+}\lambda_{-}}{\lambda_{+}+\lambda_{-} }$
to derive an approximation.
This is because
the density function of $T_n$ is the convolution of 
$f_{+}=\lambda_{+}e^{-\lambda_{+}t}$
and
$f_{-}=\lambda_{-}e^{-\lambda_{-}t}$, $n/2$ times respectively,
and
from
\begin{equation*}
\begin{split}
\left( 1- \frac{i\alpha}{\lambda_{-}}\right)^{-\frac{n}{2}}
\left( 1- \frac{i\alpha}{\lambda_{+}}\right)^{-\frac{n}{2}}
\simeq
\left( 1- \frac{i\alpha}{\bar{\lambda}}\right)^{-{n}}
\end{split}
\end{equation*}
we see
$T_n \sim f_{-}^{* n/2}* f_{+}^{* n/2} \simeq f$
where $f$ is the gamma distribution with rate $\bar{\lambda}$.
Note $P(N_t = n ) \simeq e^{-\bar{\lambda}t}(\bar{\lambda}t)^n/n!$ 
from $P(N_t \geq n ) = P(T_n \leq t)$.
Therefore
we get
\begin{equation*}
  \begin{split}
\left\langle e^{\alpha \Gamma_t} \vert \gamma_0 =\gamma_{+}\right\rangle 
& \simeq  e^{-\bar{\lambda} t}\Bigg(
e^{\alpha \gamma_{+}t}
\cosh
\left( \bar{\lambda} t \left( 1-\frac{\alpha
(\gamma_{-}-\gamma_{+})}{\lambda_{-}}\right)^{-\frac{1}{2}}\right)\\
& \quad +
e^{\alpha \gamma_{-}t} 
\sinh
\left( \bar{\lambda} t \left( 1-\frac{\alpha
(\gamma_{+}-\gamma_{-})}{\lambda_{+}}\right)^{-\frac{1}{2}}\right)
\left( 1-\frac{\alpha
(\gamma_{+}-\gamma_{-})}{\lambda_{+}}\right)^{-\frac{1}{2}}
\Bigg)
\\
& \simeq \exp\left( \alpha \bar{\gamma}_\infty t +\alpha^2\frac{3}{8}\frac{
(\gamma_{-}-\gamma_{+})^2(\lambda_{-}^2+\lambda_{+}^2)}{\lambda_{-}\lambda_{+}
(\lambda_{+}+ \lambda_{-})}t
+
\frac{\alpha (\gamma_{+}-\gamma_{-})}{4\lambda_{+}}
\right) 
  \end{split}
\end{equation*}
and
the rescaling 
$\lambda_{\pm} \to {\lambda_{\pm}}/{\epsilon}$
yields
\begin{equation*}
  \begin{split}
\left\langle e^{\alpha \Gamma_t} \vert \gamma_0 =\gamma_{+}\right\rangle 
\simeq \exp\left( \alpha \bar{\gamma}_\infty t +\alpha^2\frac{3}{8}\frac{
(\gamma_{-}-\gamma_{+})^2(\lambda_{-}^2+\lambda_{+}^2)}{\lambda_{-}\lambda_{+}
(\lambda_{+}+ \lambda_{-})}t \epsilon
+
\frac{\alpha (\gamma_{+}-\gamma_{-})}{4\lambda_{+}}
\epsilon
+\mathcal{O}(\epsilon^2)
\right) 
  \end{split}
\end{equation*}
where
$(1+x)^k = 1+kx+(k(k-1)/2!)x^2+\cdots$ and 
$\frac{1}{2}e^{x+\alpha \epsilon}+\frac{1}{2}e^{x+\beta \epsilon}(1+\gamma \epsilon)
\simeq e^{x+ ((\alpha+\beta+\gamma)/2)\epsilon}$ is used.

Further we use 
$c e^{A+\alpha \epsilon} + (1-c)e^{A+\beta \epsilon} \simeq e^{A+( c\alpha+(1-c) \beta )\epsilon}$ 
to obtain
\begin{equation*}
  \begin{split}
\left\langle e^{\alpha (\Gamma_T-\Gamma_t)} \right\rangle 
& \simeq \exp\Bigg( \alpha \bar{\gamma}_\infty (T-t) +\alpha^2\frac{3}{8}\frac{
(\gamma_{-}-\gamma_{+})^2(\lambda_{-}^2+\lambda_{+}^2)}{\lambda_{-}\lambda_{+}
(\lambda_{+}+ \lambda_{-})} (T-t) \epsilon\\
&  \qquad +
\alpha \left(
\mathbb{P}(\gamma_0=\gamma_{+})
\frac{(\gamma_{+}-\gamma_{-})}{4\lambda_{+}} 
+
\mathbb{P}(\gamma_0=\gamma_{-})
\frac{(\gamma_{-}-\gamma_{+})}{4\lambda_{-}} 
\right)
\epsilon  
+\mathcal{O}(\epsilon^2)
\Bigg) 
  \end{split}
\end{equation*}
when
$\epsilon < T-t$.

\subsubsection{Rare-event regime}
\label{subsec:ssmrer}
When $\epsilon$ is large,
we leave the first term in Eq.~(\ref{eq:ssmcharasy})
to obtain
\begin{equation}
  \begin{split}
\label{eq:rer}
\left\langle e^{\alpha \Gamma_T} \right\rangle 
& =
\mathbb{P}( \gamma_0 =\gamma_{+} )
\left\langle e^{\alpha \Gamma_T} \vert \gamma_0 =\gamma_{+} \right\rangle 
+
\mathbb{P}( \gamma_0 =\gamma_{-} )
\left\langle e^{\alpha \Gamma_T} \vert \gamma_0 =\gamma_{-} \right\rangle\\
& \simeq
\mathbb{P}( \gamma_0 =\gamma_{+} )
\exp\left( -\frac{\lambda_{+}}{\epsilon}t+\alpha \gamma_{+}t\right)
+
\mathbb{P}( \gamma_0 =\gamma_{-} )
\exp\left( -\frac{\lambda_{-}}{\epsilon}t+\alpha \gamma_{-}t\right).
  \end{split}
\end{equation}

\subsection{Filters for SSM}
\label{subsec:ssmfilter}
Here we make use of the calculations 
given in subsection~\ref{subsec:selecton}
to define
Gaussian filter and Gaussian sum filter for SSM.

\subsubsection{Gaussian filter}
\label{subsec:ssmgaussianfilter}
We design the filter as the assumed density filter.
Accordingly we assume $u_0$ to be Gaussian,
and further assume
the independence of 
$(u_0,\gamma_0)$ 
hence that of $(u_0,\Gamma_T )$.
Then, from Eq.~(\ref{eq:uqmom}), it satisfies
\begin{equation}
\begin{split}
\label{eq:ssmmomuq}
\left\langle u_T \right\rangle & 
= \left\langle e^{-\Gamma_T}\right\rangle 
\langle u_0 \rangle 
\\
\text{Var}(u_T)
& = \left\langle e^{-2\Gamma_T}\right\rangle 
\Big( \text{Var}(u_0) + \langle u_0 \rangle^2 \Big)
 - \left\langle e^{-\Gamma_T}\right\rangle^2
\langle u_0 \rangle^2 \\
& \quad
+ \sigma_u^2  \int^T_0  \left\langle e^{-2(\Gamma_T-\Gamma_t)} \right\rangle dt
\end{split}
\end{equation}
for SSM.
Either using closed form solution in case of identical $\lambda_{\pm}$
or 
a truncation of the series solution in case of distinct $\lambda_{\pm}$,
one can compute
\begin{equation*}
\left\langle e^{\alpha (\Gamma_T-\Gamma_t)} \right\rangle
\end{equation*}
where $\alpha=-1,-2$ and $t\in [0,T]$,
and
Eq.~(\ref{eq:ssmmomuq}). 
Together with
using Eq.~(\ref{eq:probgamma}) for the prediction of $\gamma_t$, 
the first two moments mapping for $(u_0,\gamma_0) \to (u_T,\gamma_T)$ 
has been achieved.
To complete the filter,
we apply Kalman data assimilation for $u_T$ and keep $\gamma_T$ unchanged
as this is consistent with Bayes' rule when $(u_T, \gamma_T)$ is independent
\cite{doucet2001sequential}.

\subsubsection{Gaussian sum filter}
\label{subsec:ssmgaussiansumfilter}
Let $u_0$ be Gaussian mixture and
the independence of $(u_0,\gamma_0)$ be assumed.
Using
\begin{equation*}
\begin{split}
\mathbb{P}({u}_T)
=\mathbb{P}({\gamma}_0=\gamma_{+})\mathbb{P}({u}_T\vert {\gamma}_0=\gamma_{+})
+\mathbb{P}({\gamma}_0=\gamma_{-})\mathbb{P}({u}_T\vert {\gamma}_0=\gamma_{-})
\end{split}
\end{equation*}
we approximate $u_T$ as Gaussian mixture
with doubled 
Gaussian kernels.
Similarly with Eq.~(\ref{eq:ssmmomuq}),
the mean and variance of $u_T\vert \gamma_0=\gamma_{\pm}$
are determined by
\begin{equation*}
\left\langle e^{\alpha (\Gamma_T-\Gamma_t)} \vert \gamma_0=\gamma_{\pm}\right\rangle
\end{equation*}
where $\alpha=-1,-2$ and $t\in [0,T]$.
Using prior calculations,
the conditioned mean and variance of each kernel are obtained.
Then, using Eq.~(\ref{eq:probgamma}) for the prediction of $\gamma_t$,
the algorithm of $(u_0,\gamma_0) \to (u_T,\gamma_T)$ 
is 
established.

To complete the filter,
we apply Kalman data assimilation for each Gaussian kernel of $u_T$
with care of weights,
while preserving the law of $\gamma_T$.
Because the 
latter
procedure
preserves
the number of Gaussians,
total $2^n$ weighted Gaussian kernels describe the posterior distribution
after $n$ inter-observation time steps provided $u_0$ is Gaussian.

\section{Diffusive Stochastic Models}
\label{app:dsm}

This section is concerned with
DSM and dDSM.
The moments mapping formulae of DSM
are derived
in subsection~\ref{sec:spekfmomentmapping}.
We
study the 
computation of 
MGFs of integral process in
subsection~\ref{subsec:dsmmgf}
and their asymptotic behaviors 
in subsection~\ref{subsec:asmgfip}.

\subsection{DSM moments mapping}
\label{sec:spekfmomentmapping}
We in this subsection provide the moments mapping of
\begin{equation*}
\begin{split}
\begin{cases}
du &= - \gamma  u\,dt+\sigma_u dB_u \\
d\gamma & = -d_\gamma(\gamma-\bar{\gamma}) \,dt+\sigma_\gamma dB_{\gamma}
\end{cases}
\end{split}
\end{equation*}
when $(u_0,\gamma_0)$ is Gaussian.
Let 
$\Gamma_\gamma(t)  \equiv \int^t_0 \gamma(s) ds$
then the path-wise solutions read
\begin{equation*}
\begin{split}
u_t & = e^{-\Gamma_\gamma(t)}u_0 + \sigma_u \int^t_0  e^{-(\Gamma_\gamma(t)-\Gamma_\gamma(s))}dB_u(s) 
\equiv {A}_t+{B}_t \\
\gamma_t & = \bar{\gamma}+(\gamma_0-\bar{\gamma})e^{-d_\gamma t} + \sigma_\gamma
\int^t_0 e^{-d_\gamma (t-s)} dB_\gamma(s). 
\end{split}
\end{equation*}
Define
\begin{equation*}
  \begin{split}
	b_\gamma(t) & 
	\equiv (1-e^{-d_\gamma t)})/ {d_\gamma}
	\\
\mathcal{B}_\gamma(t) & \equiv \sigma_\gamma \int^t_0 ds \int^s_0 e^{-d_\gamma(s-s')} dB_\gamma(s')
  \end{split}
\end{equation*}
then
$	\Gamma_\gamma(t) 
= \bar{\gamma}(t-b_\gamma(t))+b_\gamma(t) \gamma_0 + \mathcal{B}_\gamma(t) $
and 
we have
\begin{equation}
  \begin{split}
\label{eq:mommaping}
\langle u_t \rangle & = \langle A_t \rangle \\
\langle \gamma_t \rangle & 
= \bar{\gamma}+(\langle\gamma_0\rangle-\bar{\gamma})e^{-d_\gamma t}  \\
\text{Var}(u_t ) & = 
\langle u_t^2 \rangle - \langle u_t \rangle^2 =
\langle  A_t^2 \rangle +\langle B_t^2 \rangle - \langle A_t \rangle^2 \\
\text{Var}(\gamma_t ) & = e^{-2d_\gamma t} \text{Var}( \gamma_0 )+ 
\frac{\sigma_\gamma^2}{2d_\gamma}\left( 1-e^{-2d_\gamma t}\right) \\
\text{Cov}(u_t,\gamma_t ) & = 
\langle u_t \gamma_t\rangle - \langle u_t\rangle \langle
\gamma_t\rangle = \bar{\gamma}\left( 1- e^{-d_\gamma t}\right)\langle A_t \rangle + e^{-d_\gamma t}
\langle A_t \gamma_0 \rangle \\
& \qquad \qquad \qquad \qquad \qquad + \langle A_t \dot{\mathcal{B}}_\gamma(t) \rangle
- \langle A_t \rangle \langle \gamma_t \rangle \\
  \end{split}
\end{equation}
where upper dot denotes derivative.

Using
\begin{equation*}
  \begin{split}
	\langle \Gamma_\gamma(t)-\Gamma_\gamma(s) \rangle & = \left( b_\gamma(t)-b_\gamma(s) \right) \langle
	\gamma_0 \rangle 
+\bar{\gamma}((t-s)-(b_\gamma(t)-b_\gamma(s)))
 \\
	\text{Var}\left( \Gamma_\gamma(t)-\Gamma_\gamma(s) \right) & = \left( b_\gamma(t)-b_\gamma(s) \right)^2
	\text{Var}( \gamma_0 ) + \text{Var}\left( \mathcal{B}_\gamma(t)-\mathcal{B}_\gamma(s) \right) \\
	\langle \mathcal{B}_\gamma(t) \rangle & = 0 \\
	\text{Var}(\mathcal{B}_\gamma(t)) & = -\frac{\sigma_\gamma^2}{2d_\gamma^3}
	\left( 3-4e^{-d_\gamma t}+e^{-2d_\gamma t}-2d_\gamma t\right) \\
\text{Var}(\mathcal{B}_\gamma(t)-\mathcal{B}_\gamma(s)) & = -\frac{\sigma_\gamma^2}{d_\gamma^3}
\left( 1+d_\gamma(s-t)+e^{-d_\gamma(s+t)}\times\left( -1-e^{2d_\gamma s}+\cosh(d_\gamma(s-t))
\right) \right) \\
\left\langle e^{- \mathcal{B}_\gamma(t)} \dot{\mathcal{B}}_\gamma(t)\right\rangle
& = -\frac{1}{2} \partial_t \left( \text{Var}(\mathcal{B}_\gamma(t))  \right) 
\left\langle e^{- \mathcal{B}_\gamma(t)} \right\rangle
  \end{split}
\end{equation*}
and using
\begin{equation*}
  \begin{split}
\langle e^{z} \rangle & = e^{ \langle z \rangle + \frac{1}{2}\text{Var}(z) } \\
\langle e^{z}x \rangle & =  
e^{ \langle z \rangle + \frac{1}{2}\text{Var}(z) }
\big( \langle x\rangle + \text{Cov}(x,z ) \big) \\
\langle e^{z} xy \rangle & = 
e^{ \langle z \rangle + \frac{1}{2}\text{Var}(z) }
\Big( \text{Cov}(x,y) + 
\big( \langle x \rangle + \text{Cov}(x,z )  \big)
  \big( \langle y \rangle + \text{Cov}(y,z )  \big)
\Big)
  \end{split}
\end{equation*}
where $(x,y,z)$ is joint Gaussian,
we can compute
\begin{equation*}
  \begin{split}
\left\langle A_t  \right\rangle 
= \left\langle e^{- \Gamma_\gamma(t) }u_0 \right\rangle
& = e^{- \bar{\gamma}(t- b_\gamma(t))}
\left\langle e^{-  b_\gamma(t) \gamma_0 } u_0 \right\rangle
\left\langle e^{- \mathcal{B}_\gamma(t) }\right\rangle
\\
\left\langle A_t\gamma_0  \right\rangle 
= \left\langle e^{- \Gamma_T }u_0\gamma_0 \right\rangle
& = e^{-\bar{\gamma}(t- b_\gamma(t))}
\left\langle e^{-b_\gamma(t) \gamma_0} u_0\gamma_0
\right\rangle  
\left\langle e^{- \mathcal{B}_\gamma(t) }\right\rangle
\\
\left\langle A_t\dot{\mathcal{B}}_\gamma(t)  \right\rangle 
= \left\langle e^{- \Gamma_T }u_0\dot{\mathcal{B}}_\gamma(t) \right\rangle
& = e^{-\bar{\gamma}(t- b_\gamma(t))}
\left\langle e^{-b_\gamma(t) \gamma_0} u_0 \right\rangle  
\left\langle e^{-\mathcal{B}_\gamma(t)} \dot{\mathcal{B}}_\gamma(t)\right\rangle 
\\
\left\langle A_t^2  \right\rangle 
= \left\langle e^{- 2\Gamma_T }u_0^2 \right\rangle
& = e^{-2 \bar{\gamma}(t- b_\gamma(t))}
\left\langle e^{-2 b_\gamma(t) \gamma_0 } u_0^2 \right\rangle
\left\langle e^{- 2\mathcal{B}_\gamma(t) }\right\rangle
\\
\langle   B_t^2 \rangle 
& 
= \sigma_u^2 \int^t_0 \left \langle e^{- 2(\Gamma_\gamma(t)-\Gamma_\gamma(s))} \right\rangle ds 
\\
& = \sigma_u^2 \int^t_0 e^{-2 \langle \Gamma_\gamma(t)-\Gamma_\gamma(s) \rangle+2 
\text{Var}\left( \Gamma_\gamma(t)-\Gamma_\gamma(s) \right) } ds
  \end{split}
\end{equation*}
and thereby
Eq.~(\ref{eq:mommaping}).
Here numerical integrator like 
the trapezoidal rule can be employed 
for the computation of
$\langle B_t^2 \rangle$.
As a consequence,
the analytic moment-mapping $(u_0,\gamma_0 ) \to (u_t,\gamma_t)$ is obtained.

\subsection{Moment generating function of integral process}
\label{subsec:dsmmgf}
Recall
\begin{equation*}
\begin{split}
\textbf{(DSM)}\qquad
\left\{ 
\begin{array}{ll}
d\widehat{u} &= - \widehat{\gamma}  \widehat{u}\,dt+\sigma_u dB_u \\
d\widehat{\gamma} & = -\frac{\nu}{\epsilon}(\widehat{\gamma}-\mu) \,dt+
\frac{\sigma}{\sqrt{\epsilon}} dB_{\gamma} \\
\end{array} \right. 
\end{split}
\end{equation*}
and
$\widehat{\Gamma}_t = \int^t_0 \widehat{\gamma}(s) ds$
then, from the preceding subsection,
we have
\begin{equation*}
\begin{split}
 \langle \widehat{\Gamma}_t\rangle
& =\mu 
t
+\left\langle \widehat{\gamma}_0
-\mu \right\rangle b_\gamma(t)\\
\text{Var}(\widehat{\Gamma}_t)
& = 
\text{Var}(\widehat{\gamma}_0)b_\gamma(t)^2 +\text{Var}( \mathcal{B}_\gamma(t)) \\
	\langle \widehat{\Gamma}_t-\widehat{\Gamma}_s \rangle 
& = b_\gamma(t-s)  \langle \widehat{\gamma}_s \rangle 
+\mu((t-s)-b_\gamma(t-s))  \\
	\text{Var}( \widehat{\Gamma}_t-\widehat{\Gamma}_s ) & = (b_\gamma(t-s))^2
	\text{Var}( \widehat{\gamma}_s ) + \text{Var}\left( \mathcal{B}_\gamma(t-s) \right) \\
\langle \widehat{\gamma}_t \rangle & 
= \mu+(\langle\widehat{\gamma}_0\rangle-\mu)e^{-\nu t/\epsilon}  \\
\text{Var}(\widehat{\gamma}_t ) & = e^{-2\nu t/\epsilon} \text{Var}( \widehat{\gamma}_0 )+ 
\frac{\sigma^2}{2\nu}\left( 1-e^{-2\nu t/\epsilon}\right) \\
  \end{split}
\end{equation*}
where
\begin{equation*}
\begin{split}
	b_\gamma(t) & = \epsilon (1-e^{-\nu t/\epsilon)})/ {\nu}  \\
	\text{Var}(\mathcal{B}_\gamma(t)) &  = -\epsilon^2 \frac{\sigma^2}{2\nu^3}
	\left( 3-4e^{-\nu t/\epsilon}+e^{-2\nu t/\epsilon}-2\nu t/\epsilon\right).
\end{split}
\end{equation*}
Let $\widehat{\gamma}_0$ be Gaussian
then $\widehat{\Gamma}_t$
is Gaussian as well,
and the MGFs
\begin{equation}
\begin{split}
\label{eq:spmcharasy}
\left\langle e^{\alpha \widehat{\Gamma}_t}\right\rangle 
& = \exp\left( {\alpha \langle \widehat{\Gamma}_t\rangle +
\frac{\alpha^2}{2}\text{Var}(\widehat{\Gamma}_t)} \right)\\
\left\langle e^{\alpha (\widehat{\Gamma}_t-\widehat{\Gamma}_s)} \right\rangle 
& = \exp \left( {\alpha \langle \widehat{\Gamma}_t-\widehat{\Gamma}_s\rangle +
\frac{\alpha^2}{2}\text{Var}(\widehat{\Gamma}_t-\widehat{\Gamma}_s)} \right)
\end{split}
\end{equation}
can be computed.

\subsection{Asymptotics of MGFs of integral process}
\label{subsec:asmgfip}

\subsubsection{Scale-separation regime}
\label{subsec:dsmssr}
For small $\epsilon$,
from
substituting 
$b_\gamma(t)  = \frac{1}{d}\epsilon + \mathcal{O}(\epsilon^2)$
and
$\text{Var}(\mathcal{B}_\gamma(t)-\mathcal{B}_\gamma(s)) =\frac{\sigma^2}{d^2}(t-s)\epsilon 
+\mathcal{O}(\epsilon^2)$
into
(\ref{eq:spmcharasy}),
we obtain
\begin{equation*}
\begin{split}
\left\langle e^{\alpha \widehat{\Gamma}_t}\right\rangle 
& = \exp\left( \alpha \left( \mu t 
+ \langle \widehat{\gamma}_0-\mu\rangle
\frac{\epsilon}{\nu}\right)
+\alpha^2 \frac{\sigma^2}{2\nu^2}t\epsilon +\mathcal{O}(\epsilon^2) \right) 
\quad \epsilon < t\\
\left\langle e^{\alpha (\widehat{\Gamma}_t-\widehat{\Gamma}_s)} \right\rangle 
& = \exp\left( \alpha \mu(t-s) 
+\alpha^2 \frac{\sigma^2}{2\nu^2}(t-s)\epsilon + \mathcal{O}(\epsilon^2) \right) 
\quad \epsilon < s.
\end{split}
\end{equation*}

\subsubsection{Rare-event regime}
\label{subsec:dsmrer}
When $\epsilon$ is large,
we use $b_r(t)=t-\frac{1}{2}\nu t^2/\epsilon+\mathcal{O}(1/\epsilon^2)$
and
$\text{Var}(\mathcal{B}_\gamma(t))=\frac{\sigma^2}{3}t^3/\epsilon+\mathcal{O}(1/\epsilon^2)$
to obtain
\begin{equation*}
\begin{split}
\left\langle e^{\alpha \widehat{\Gamma}_t} \right\rangle 
& =\exp
\Bigg( \alpha \left( \mu 
t
+ \langle \widehat{\gamma}_0-\mu\rangle
\left(t-\frac{\nu}{2} \frac{t^2}{\epsilon}\right) \right)+\frac{\alpha^2}{2} 
\left(
\text{Var}(\widehat{\gamma}_0)
\left(t^2-\nu \frac{t^3}{\epsilon} \right)
+
\frac{\sigma^2}{3} \frac{t^3}{\epsilon}
\right) \\
& \qquad \qquad + \mathcal{O}\left(\frac{1}{\epsilon^2} \right)
\Bigg)
\end{split}
\end{equation*}
for DSM.
Therefore,
in case of dDSM, 
we have
\begin{equation}
\begin{split}
\label{eq:dsmdasym}
\left\langle e^{\alpha \widehat{\Gamma}'_t} \vert \widehat{\gamma}_0'=\gamma_{\pm} \right\rangle 
&
=\exp\left( 
\alpha \left( \mu_{\pm}
t
+ ( {\gamma}_{\pm}-\mu_{\pm})
\left(t-\frac{\nu_{\pm}}{2} \frac{ t^2}{\epsilon} \right) \right)+\frac{\alpha^2}{2} 
\frac{(\sigma_{\pm})^2}{3} \frac{t^3}{\epsilon}
+ \mathcal{O}\left(\frac{1}{\epsilon^2} \right)
\right)\\
& =\exp\left( 
\alpha 
\left(
{\gamma}_{\pm}t - \frac{1}{2\epsilon} ( {\gamma}_{\pm}-\mu_{\pm}) \nu_{\pm} t^2 \right) 
+\frac{\alpha^2}{2} \frac{(\sigma_{\pm})^2}{3\epsilon}t^3 
+ \mathcal{O}\left(\frac{1}{\epsilon^2} \right)
\right).
\end{split}
\end{equation}

\section{Proofs of Theorems}
\label{sec:equivalenceproof}

\subsection{Scale-Separation Limit}

\begin{proof}
[of Lemma~\ref{thm:uconv2}]
The convergence of the mean and variance
follows from 
Eq.~(\ref{eq:uqmom}) and
the bounded convergence theorem.

To show $L^2(\Omega;\mathbb{R})$ convergence,
from
Eq.~(\ref{eq:voc})
we obtain
\begin{equation*}
\begin{split}
u_T^\epsilon -\bar{u}_T
& = \left(e^{-\Gamma^\epsilon_T}u_0^\epsilon- e^{-\bar{\gamma}T}\bar{u}_0 \right)
+ \sigma_u \int^T_0  \left( e^{-(\Gamma^\epsilon_T-\Gamma^\epsilon_t)} - e^{-\bar{\gamma}(T-t)}   \right) dB_u(t) \\
\end{split}
\end{equation*}
and
\begin{equation*}
\begin{split}
 |u_T^\epsilon -\bar{u}_T|^2  
&  \leq 2   \left\lvert 
e^{-\Gamma^\epsilon_T}u_0^\epsilon- e^{-\bar{\gamma}T}\bar{u}_0 
\right\rvert^2
+2 \sigma_u^2  
\left\lvert \int^T_0  \left( e^{-(\Gamma^\epsilon_T-\Gamma^\epsilon_t)} - e^{-\bar{\gamma}(T-t)}   \right) dB_u(t)
\right\rvert^2.
\end{split}
\end{equation*}
Use
It\^o lemma to obtain
\begin{equation*}
\begin{split}
\left\langle |u_T^\epsilon -\bar{u}_T|^2 \right\rangle 
&  \leq 2  \left\langle \left\lvert 
e^{-\Gamma^\epsilon_T}u_0^\epsilon
- e^{-\bar{\gamma}T}\bar{u}_0 
\right\rvert^2
 \right\rangle 
+2 \sigma_u^2  
  \int^T_0  \left\langle
\left\lvert
e^{-(\Gamma^\epsilon_T-\Gamma^\epsilon_t)} - e^{-\bar{\gamma}(T-t)}   
\right\rvert^2
\right\rangle dt.
\end{split}
\end{equation*}
Here note the term
\begin{equation}
\begin{split}
\label{eq:integral}
\left\langle e^{-2(\Gamma^\epsilon_T-\Gamma^\epsilon_t)}\right\rangle  
-2  \left\langle e^{-(\Gamma^\epsilon_T-\Gamma^\epsilon_t)} \right\rangle
e^{-\bar{\gamma}(T-t)} + e^{-2\bar{\gamma}(T-t)}  
\end{split}
\end{equation}
converges to zero as $\epsilon \to 0$.
Then the bounded convergence theorem ensures
the integration of (\ref{eq:integral}) also 
converges to zero as $\epsilon \to 0$.
Therefore the convergence
\begin{equation*}
\begin{split}
\left\langle  |u_T^\epsilon - \bar{u}_T|^2 \right\rangle 
\to 0
\end{split}
\end{equation*}
as $\epsilon \to 0$
holds
for any $\gamma^\epsilon$.
\end{proof}

\bigskip

Now we state and prove
Lemma~\ref{thm:weakconv}
and Lemma~\ref{lem:mgf}
which
will be used to 
prove 
Lemma~\ref{thm:ssmmgf} and
Lemma~\ref{thm:dsmmgf}.

\bigskip

\begin{Lemma}
\label{thm:weakconv}
Let $\mathcal{Y}$ be a 
Markov chain 
or a diffusion process
associated with generator 
$\frac{1}{\epsilon}Q_0 $.
We assume $\mathcal{Y}$ is an ergodic process with invariant measure $\rho^\infty_\mathcal{Y}$
satisfying 
$\text{Null}({Q}_0)=\text{span}\{ \mathbf{1} \}$,
$\text{Null}({Q}_0^*)=\text{span}\{ \rho^\infty_\mathcal{Y} \}$.
Let $\mathcal{X}$ satisfy the ODE
\begin{equation*}
\begin{split}
\frac{d\mathcal{X}}{dt}=f(\mathcal{X},\mathcal{Y})
\end{split}
\end{equation*}
and
let the generator of the combined process $(\mathcal{X},\mathcal{Y})$
be of the form
\begin{equation*}
\begin{split}
Q = \frac{1}{\epsilon}Q_0+Q_1.
\end{split}
\end{equation*}
Let $\bar{\mathcal{X}}$ satisfy the ODE
\begin{equation}
\begin{split}
\label{eq:aode}
\frac{d\bar{\mathcal{X}}}{dt}= \bar{Q}_1(\bar{\mathcal{X}}) 
= \int  f(\bar{\mathcal{X}},\cdot) d\rho^\infty_\mathcal{Y} (\cdot)
\end{split}
\end{equation}
then,
for any $t>0$, $\mathcal{X}(t)$ 
converges weakly
or in distribution
to $\bar{\mathcal{X}}(t)$ as $\epsilon \to 0$
(recall
$X_\epsilon \rightharpoonup X$ is referred to converges weakly 
provided $\mathbb{E}(f(X_\epsilon))\to \mathbb{E}(f(X))$ for any bounded continuous function $f$).
\end{Lemma}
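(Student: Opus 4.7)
The plan is to deploy the perturbed test function method (equivalently, a two-scale asymptotic expansion of the backward Kolmogorov equation), which is the textbook route for justifying averaging of a slow variable driven by a fast ergodic component. Since weak convergence $\mathcal{X}(t) \rightharpoonup \bar{\mathcal{X}}(t)$ means $\mathbb{E}\phi(\mathcal{X}(t)) \to \mathbb{E}\phi(\bar{\mathcal{X}}(t))$ for every bounded continuous $\phi$, I would work at the level of the joint backward semigroup acting on such $\phi$, and then recover the statement for $\mathcal{X}$ alone by fixing an initial $y$.

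Concretely, set $u^\epsilon(t,x,y) := \mathbb{E}_{x,y}\phi(\mathcal{X}(t))$, which solves $\partial_t u^\epsilon = (\tfrac{1}{\epsilon}Q_0 + Q_1)u^\epsilon$ with $u^\epsilon(0,\cdot,\cdot)=\phi$. Substitute the ansatz $u^\epsilon = u_0 + \epsilon u_1 + \epsilon^2 u_2 + \cdots$ and equate powers of $\epsilon$. At order $\epsilon^{-1}$ one gets $Q_0 u_0 = 0$, so by $\text{Null}(Q_0) = \text{span}\{\mathbf{1}\}$ the leading term $u_0 = u_0(t,x)$ is independent of $y$. At order $\epsilon^0$ the Poisson equation $Q_0 u_1 = \partial_t u_0 - Q_1 u_0$ (in the $y$ variable, with $x$ a parameter) must be solvable; the Fredholm alternative applied through $\text{Null}(Q_0^*) = \text{span}\{\rho^\infty_\mathcal{Y}\}$ forces the right-hand side to average to zero against $\rho^\infty_\mathcal{Y}$, yielding $\partial_t u_0 = \int (Q_1 u_0)\, d\rho^\infty_\mathcal{Y}$. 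Noting that $Q_1$ applied to a function of $x$ alone produces $f(x,y)\cdot \nabla_x u_0$, this is precisely the backward equation associated with the averaged ODE (\ref{eq:aode}), so $u_0(t,x) = \mathbb{E}\phi(\bar{\mathcal{X}}(t) \vert \bar{\mathcal{X}}(0)=x)$.

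The main obstacle is rigorously controlling the remainder $r^\epsilon := u^\epsilon - u_0 - \epsilon u_1 - \epsilon^2 u_2$. One solves the successive Poisson equations using the pseudo-inverse of $Q_0$ restricted to the orthogonal complement of $\text{Null}(Q_0^*)$; in the Markov-chain setting of SSM this is a two-dimensional linear-algebra computation, while for the Ornstein-Uhlenbeck $\mathcal{Y}$ arising in DSM one invokes the spectral gap of the OU generator on $L^2(\rho^\infty_\mathcal{Y})$. The residual then satisfies $\partial_t r^\epsilon = (\tfrac{1}{\epsilon}Q_0 + Q_1)r^\epsilon - \epsilon(\partial_t u_2 - Q_1 u_2)$ with $O(\epsilon)$ initial data, and a Duhamel argument combined with the $L^\infty$-contractivity of the backward semigroup gives $\|r^\epsilon\|_\infty = O(\epsilon)$ uniformly on compact time intervals. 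Pointwise convergence $u^\epsilon(t,x,y) \to u_0(t,x)$ then translates directly into weak convergence of $\mathcal{X}(t)$ to $\bar{\mathcal{X}}(t)$.

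If the expansion proves delicate because $f$ is unbounded and growth estimates on $u_1$, $u_2$ are cumbersome, I would fall back on the martingale-problem approach of Kurtz and Papanicolaou: use the perturbed test function $\phi(\mathcal{X}) + \epsilon u_1(\mathcal{X},\mathcal{Y})$ to turn $\phi(\mathcal{X}(t)) - \int_0^t (Q_1 u_0)(\mathcal{X}(s),\mathcal{Y}(s))\, ds$ into an asymptotic martingale, establish tightness of the laws $\{\text{Law}(\mathcal{X}^\epsilon)\}$ on $D([0,T];\mathbb{R}^d)$ via an Aldous-type criterion, and identify every weak subsequential limit as the unique solution of the martingale problem associated with the averaged generator, namely $\bar{\mathcal{X}}$. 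This bypasses explicit remainder bounds at the cost of invoking well-posedness of the averaged ODE (\ref{eq:aode}), which here follows from standard Lipschitz hypotheses on $\bar{Q}_1$.
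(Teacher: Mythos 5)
Your proposal follows essentially the same route as the paper: a two-scale expansion of the backward Kolmogorov equation, with the Fredholm alternative (via $\text{Null}(Q_0^*)=\text{span}\{\rho^\infty_{\mathcal{Y}}\}$) identifying the averaged generator, and a Duhamel argument using $L^\infty$-contractivity of the semigroup to bound the remainder by $O(\epsilon)$. The only cosmetic differences are that you carry the expansion one order further before estimating the remainder and sketch a martingale-problem fallback, neither of which changes the substance.
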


\begin{proof}
[of Lemma~\ref{thm:weakconv}]
The first step is to show that the averaged ODE is given by 
Eq.~(\ref{eq:aode}).
Let be $\Phi$ be a bounded continuous function
and
let 
\begin{equation*}
\begin{split}
v(x,y, t) &= \mathbb{E}( \Phi(\mathcal{X}_t, \mathcal{Y}_t) \vert \mathcal{X}_0 =x,\mathcal{Y}_0=y).
\end{split}
\end{equation*}
Then it satisfies the backward equation
\begin{equation}
\begin{split}
\label{eq:bwdeq}
\partial_t {v}(x,y, t) =Q {v}(x,y,t) =\left( \frac{1}{\epsilon} Q_0 +Q_1 \right){v}(x,y,t).
\end{split}
\end{equation}
We seek solution $v=v(x,y,t)$ in the form of the multi-scale expansion
\begin{equation*}
\begin{split}
v=v_0+\epsilon v_1 + \mathcal{O}(\epsilon^2).
\end{split}
\end{equation*}
From substituting the expansion and equating coefficients of equal powers of $\epsilon$ to zero,
we find
\begin{equation}
\begin{split}
\label{eq:multieq}
\mathcal{O}\left(\frac{1}{\epsilon}\right): & \qquad  Q_0v_0=0 \\
\mathcal{O}(1): & \qquad Q_0v_1 = -Q_1v_0 +\frac{dv_0}{dt}
\end{split}
\end{equation}
and
we see
$v_0$ is independent of $y$
due to $\text{null}(Q_0)=\mathbf{1}$.
The operator $Q_0$ is singular and,
for Eq.~(\ref{eq:multieq}) to have a solution,
the Fredholm alternative implies the solvability condition 
$$ -Q_1v_0+\frac{dv_0}{dt} \perp \text{Null}(Q_0^*).$$
For arbitrary $c(x)$, we find
\begin{equation*}
\begin{split}
\int \int  c(x) \left( \frac{dv_0}{dt}- Q_1 v_0  \right)\,dx d\rho_\infty(y)
=\int c(x) \left( \frac{dv_0}{dt}- \bar{Q}_1 v_0 \right) \,dx
= 0
\end{split}
\end{equation*}
implying that 
$$\frac{dv_0}{dt}- \bar{Q}_1 v_0=0  .$$

The second step is to show the weak convergence.
Substituting 
$$ v=v_0+\epsilon v_1 + r$$
into 
Eq.~(\ref{eq:bwdeq})
yields
\begin{equation*}
\begin{split}
\frac{dr}{dt} &= \left( \frac{1}{\epsilon}Q_0+Q_1 \right) r+ \epsilon q \\
q &= Q_1 v_1-\frac{dv_1}{dt}
\end{split}
\end{equation*}
and
\begin{equation*}
\begin{split}
r(t)=e^{Qt}r(0)+\epsilon \int^t_0 e^{Q(t-s)}q(s)ds
\end{split}
\end{equation*}
from the variation-of-constants.
From $v(t)=e^{Qt}v(0)$
we obtain
$|e^{Qt}|_\infty \leq 1$
because $\Phi$ is bounded.
We then have
\begin{equation*}
\begin{split}
|r(t)|_\infty 
& \leq \epsilon |e^{Qt}|_\infty |r(0)|_\infty +\epsilon \int^t_0 |e^{Q(t-s)}|_\infty |q(s)|_\infty ds \\
& \leq \epsilon |v_1(0)|_\infty +\epsilon \int^t_0 |q(s)|_\infty ds \\
& \leq \epsilon \left( |v_1(0)|_\infty +t \sup_{0 \leq s \leq t} |q(s)|_\infty \right)
\end{split}
\end{equation*}
and obtain
\begin{equation*}
\begin{split}
|v(t)-v_0(t)|_\infty \leq C(T)\epsilon
\end{split}
\end{equation*}
for $0 \leq t \leq T$.
\end{proof}

\bigskip

\begin{Lemma}
\label{lem:mgf}
Let 
$F_{X_\epsilon}(\cdot) \equiv \mathbb{P}(X_\epsilon \leq \cdot)$
and 
$F_{X}$
be the distribution function of $X_\epsilon$
and non-random variable $X$, respectively.
If
\begin{subequations}
\label{eq:covcond}
\begin{align}
\label{eq:covcond1}
&\lim_{ b\to \infty} e^{\alpha b}(F_{X_\epsilon}(b)-1) \to 0 \\
\label{eq:covcond2}
&\lim_{a\to -\infty} e^{\alpha a }F_{X_\epsilon}(a) \to 0\\
\label{eq:covcond3}
&\lim_{a\to -\infty, b\to \infty} 
\int^b_a \left( F_{X_\epsilon}(x)-F_{X}(x)\right) e^{\alpha x}dx \to 0
\end{align}
\end{subequations}
as $\epsilon \to 0$
then $\left\langle e^{\alpha X_{\epsilon}}\right\rangle \to e^{\alpha X}$ follows.
The convergence rate is given by
the lowest one in Eq.~(\ref{eq:covcond}).
\end{Lemma}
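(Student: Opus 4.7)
The plan is to represent both $\langle e^{\alpha X_\epsilon}\rangle$ and $e^{\alpha X}$ as Lebesgue--Stieltjes integrals against their respective cumulative distribution functions, and then to apply integration by parts to transfer all the $\epsilon$-dependence onto $F_{X_\epsilon}-F_X$ together with two boundary terms. Because $X$ is deterministic, $F_X(\cdot)=\mathbf{1}_{\{\cdot \ge X\}}$ is a Heaviside step, so on any interval $[a,b]$ with $a<X<b$ we have $F_X(a)=0$ and $F_X(b)=1$; this is exactly what makes the boundary contributions of $F_X$ trivial and singles out the expressions appearing in hypotheses~(\ref{eq:covcond1}) and~(\ref{eq:covcond2}).

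Concretely, for each fixed $\epsilon$ and $a<X<b$, I would apply the standard identity
\[
\int_a^b e^{\alpha x}\,dF(x) \;=\; e^{\alpha b}F(b)-e^{\alpha a}F(a)-\alpha\int_a^b e^{\alpha x}F(x)\,dx
\]
to $F=F_{X_\epsilon}$ and to $F=F_X$, subtract the two, and use $F_X(a)=0$, $F_X(b)=1$ to obtain
\[
\int_a^b e^{\alpha x}\,d(F_{X_\epsilon}-F_X)(x) \;=\; e^{\alpha b}(F_{X_\epsilon}(b)-1)\;-\;e^{\alpha a}F_{X_\epsilon}(a)\;-\;\alpha\int_a^b e^{\alpha x}\bigl(F_{X_\epsilon}(x)-F_X(x)\bigr)\,dx .
\]
Letting $a\to-\infty$ and $b\to\infty$ (with $\epsilon$ fixed, using existence of the MGF implicit in the hypotheses) the left-hand side equals $\langle e^{\alpha X_\epsilon}\rangle-e^{\alpha X}$.

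Finally, sending $\epsilon\to 0$, the three terms on the right vanish by hypotheses (\ref{eq:covcond1}), (\ref{eq:covcond2}), (\ref{eq:covcond3}) respectively, so that $\langle e^{\alpha X_\epsilon}\rangle\to e^{\alpha X}$; moreover, bounding the difference by the sum of the three contributions shows that the rate of convergence is governed by the slowest of the three, as claimed.

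The main (and essentially only) obstacle is the correct handling of the order of the two limits, in $(a,b)$ and in $\epsilon$, in view of the somewhat compressed way the three hypotheses are written. I would resolve this by first passing to the limit in $(a,b)$ for each fixed $\epsilon$ to obtain an exact identity for $\langle e^{\alpha X_\epsilon}\rangle-e^{\alpha X}$, and only then exploiting the $\epsilon\to 0$ assumptions; after this bookkeeping, the proof is essentially a one-line Lebesgue--Stieltjes integration by parts.
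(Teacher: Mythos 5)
Your proposal is correct and follows essentially the same route as the paper: both rewrite $\langle e^{\alpha X_\epsilon}\rangle$ as a Lebesgue--Stieltjes integral, integrate by parts, and use the Heaviside form of $F_X$ to isolate exactly the three terms controlled by hypotheses (\ref{eq:covcond1})--(\ref{eq:covcond3}). Your explicit handling of the order of the $(a,b)$ and $\epsilon$ limits is a minor tidying of the same argument.
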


\bigskip

\begin{proof}
[of Lemma~\ref{lem:mgf}]
Use integration by parts to obtain
\begin{equation*}
\begin{split}
\left\langle e^{\alpha X_\epsilon} \right\rangle 
&= 
\lim_{a\to -\infty, b\to \infty} 
\int^b_a e^{\alpha x}dF_{X_\epsilon}(x) \\
& = \lim_{a\to -\infty, b\to \infty} 
e^{\alpha x}F_{X_\epsilon}(x) \vert^b_a-\int^b_a F_{X_\epsilon}(x) \alpha e^{\alpha x}dx \\
& =
\lim_{ b\to \infty} 
e^{\alpha b}(F_{X_\epsilon}(b)-1)-
\lim_{a\to -\infty} 
e^{\alpha a }F_{X_\epsilon}(a)
+e^{\alpha X}\\
& \quad - \lim_{a\to -\infty, b\to \infty} 
\int^b_a \left( F_{X_\epsilon}(x)-F_{X}(x)\right) \alpha e^{\alpha x}dx.
\end{split}
\end{equation*}
\end{proof}

\bigskip

\begin{proof}
[of Lemma~\ref{thm:ssmmgf}]
For a bounded continuous function $\Phi$,
let
\begin{equation*}
\begin{split}
v(x,y_i, t) &= \mathbb{E}( \Phi(\Gamma_t, \gamma_t) \vert \Gamma_0 =x,\gamma_0=y_i) \\
\end{split}
\end{equation*}
where
$y_1=\gamma_{+}$
and
$y_2=\gamma_{-}$.
It satisfies
the backward equation
\begin{equation*}
\begin{split}
\partial_t v(x,y_i, t) &= \sum_j L_{ij} v(x,y_j,t)+y_i \partial_x v(x,y_i,t) \\
\end{split}
\end{equation*}
or
\begin{equation*}
\begin{split}
\partial_t {v}(x, t) & =  Q {v}(x,t)\\
Q
&= \frac{1}{\epsilon}Q_0+Q_1 
=
\frac{1}{\epsilon}
 \left( \begin{array}{cc} 
-\lambda_{+} &\lambda_{+}  \\ \lambda_{-} & -\lambda_{-}
\end{array} \right) 
+
 \left( \begin{array}{cc} 
y_1 \partial_x & 0  \\ 0 & y_2 \partial_x
\end{array} \right) 
\end{split}
\end{equation*}
in vector notation.
The generator of $\gamma$ is then given by
\begin{equation*}
L=
\frac{1}{\epsilon}
\left( \begin{array}{cc} -\lambda_{+} &\lambda_{+}
\\ \lambda_{-} & -\lambda_{-} \end{array} \right)
\end{equation*}
and $\gamma$ is ergodic process
\cite{pavliotis2008multiscale}.

From Eq.~(\ref{eq:probgamma}),
the time invariant measure of 
$\gamma$ 
is
\begin{equation*}
\begin{split}
\rho_\gamma^\infty 
 = 
\frac{1}{\lambda_{-} +\lambda_{+}}
\left( \begin{array}{c} 
\lambda_{-}
\\
\lambda_{+}   
\end{array} \right) 
\end{split}
\end{equation*}
or
\begin{equation*}
\begin{split}
\rho_\gamma^\infty & \triangleq
\frac{\lambda_{-}}{\lambda_{-} +\lambda_{+}}\delta_{\gamma_{+}}
+\frac{\lambda_{+}}{\lambda_{-} +\lambda_{+}}\delta_{\gamma_{-}}
\end{split}
\end{equation*}
on $\mathbb{R}$.
An averaging of
\begin{equation*}
\begin{split}
\frac{d{\Gamma}}{dt} = {\gamma}
\end{split}
\end{equation*}
yields
\begin{equation*}
\begin{split}
\frac{d\bar{\Gamma}}{dt} = 
\int \gamma d\rho^\infty_\gamma 
= \frac{\lambda_{-}\gamma_{+}+\lambda_{+}\gamma_{-}}{\lambda_{-} +\lambda_{+}}
\equiv \gamma_\infty. 
\end{split}
\end{equation*}
Let 
\begin{equation*}
\begin{split}
v_0(x,t) &= \mathbb{E}( \phi(\bar{\Gamma}_t) \vert \bar{\Gamma}_0 =x)
\end{split}
\end{equation*}
where
$\phi(\cdot)=\Phi(\cdot,y)$
then
\begin{equation*}
\begin{split}
\partial_t v_0(x,t) ={\gamma}_\infty\partial_x v_0(x,t) = \bar{Q}_1 v_0(x,t)
\end{split}
\end{equation*}
and
Lemma~\ref{thm:weakconv} ensures 
$v(x,y_i,t)\to v_0(x,t)$ as $\epsilon \to 0$.
In this case
the weak convergence of 
$\Gamma_t$ to $\gamma_\infty t$ implies
$\Gamma_T-\Gamma_t \rightharpoonup {\gamma}_\infty(T-t) $
from
Slutsky's theorem stating that
if $X_\epsilon \rightharpoonup X$ and $Y_\epsilon \rightharpoonup Y$ as $\epsilon \to 0$,
where $Y$ is non-random, 
then $X_\epsilon+Y_\epsilon \rightharpoonup X+Y$ as $\epsilon \to 0$.

Let the distribution function of $\Gamma_T-\Gamma_t$ 
be denoted by $F_{\Gamma_T-\Gamma_t}(x) \equiv \mathbb{P}(\Gamma_T-\Gamma_t \leq x)$ then
\begin{equation*}
\begin{split}
F_{\Gamma_T-\Gamma_t}(x) =
\begin{cases}
& 0 \qquad \text{for} \quad x < \gamma_{-} (T-t)\\
& 1 \qquad \text{for} \quad x \geq \gamma_{+} (T-t)
\end{cases}
\end{split}
\end{equation*}
Taking $a<\gamma_{-} (T-t)$ and $b>\gamma_{+} (T-t)$,
Eqs.~(\ref{eq:covcond1}), (\ref{eq:covcond2}) are satisfied.
Note $\Gamma_T-\Gamma_t \rightharpoonup {\gamma}_\infty(T-t)$ 
is equivalent to $F_{\Gamma_T-\Gamma_t}(x) \to F_{{\gamma}_\infty(T-t)}(x)$ 
for every $x$ that is continuity point of
$F_{{\gamma}_\infty(T-t)}$,
given by
\begin{equation*}
\begin{split}
F_{{\gamma}_\infty(T-t)}(x)=
\begin{cases}
& 0 \qquad \text{for} \quad x < {\gamma}_\infty(T-t)\\
& 1 \qquad \text{for} \quad x \geq {\gamma}_\infty(T-t)
\end{cases}
\end{split}
\end{equation*}
from L\'{e}vy-Cram\'{e}r continuity theorem.
Then
Eq.~(\ref{eq:covcond3})
is satisfied from the bounded convergence theorem
and 
Lemma~\ref{lem:mgf} ensures the MGF convergence. 
The convergence rate of
$\left\langle e^{\alpha (\Gamma_T-\Gamma_t)}\right\rangle \to e^{\alpha \gamma_\infty(T-t)}$ 
is equal to the one in
\begin{equation*}
\begin{split}
\lim_{\epsilon \to 0} 
\int^{\gamma_{+}(T-t)}_{\gamma_{-}(T-t)} 
\left( F_{\Gamma_T-\Gamma_t}(x)-F_{\gamma_\infty(T-t)}(x)\right) e^{\alpha x}dx = 0.
\end{split}
\end{equation*}
\end{proof}

\bigskip

\begin{proof}
[of Lemma~\ref{thm:dsmmgf}]
The generator of the system
\begin{equation*}
\begin{split}
\left\{ 
\begin{array}{ll}
d\widehat{\Gamma} &= \widehat{\gamma} dt \\
d\widehat{\gamma} & = -\frac{1}{\epsilon}\nabla U(\widehat{\gamma})dt+
\frac{1}{\sqrt{\epsilon}} \beta( \widehat{\gamma})dB_{\gamma} \\
\end{array} \right. 
\end{split}
\end{equation*}
is given by
\begin{equation*}
\begin{split}
y \partial_x + \frac{1}{\epsilon}
\left( -\nabla U(y)\partial_y +\frac{1}{2}\beta(y)^2\partial_y^2 \right)
=Q_1+\frac{1}{\epsilon}Q_0.
\end{split}
\end{equation*}
If $\widehat{\gamma}$ is an ergodic process 
with invariant measure 
$\rho_{\widehat{\gamma}}^\infty$
then
Lemma~\ref{thm:weakconv} ensures 
$\widehat{\Gamma}(t) \rightharpoonup \overline{\widehat{\Gamma}}(t)$
solving
\begin{equation*}
\begin{split}
\frac{d\overline{\widehat{\Gamma}}}{dt} = 
\int \widehat{\gamma} d\rho_{\widehat{\gamma}}^\infty.
\end{split}
\end{equation*}

In case of DSM,
the generator reads
\begin{equation*}
\begin{split}
y \partial_x + \frac{1}{\epsilon}
\left( -\nu(y-\mu)\partial_y +\frac{\sigma^2}{2}\partial_y^2 \right)
=Q_1+\frac{1}{\epsilon}Q_0
\end{split}
\end{equation*}
and
the invariant measure for $\widehat{\gamma}$ is
\begin{equation*}
\begin{split}
\rho_{\widehat{\gamma}}^\infty & = 
\mathcal{N}\left(\mu, \frac{\sigma^2}{2\nu} \right)
\end{split}
\end{equation*}
because it solves
$Q_0^*\rho_{\widehat{\gamma}}^\infty=0$.
Therefore we obtain
$\widehat{\Gamma}_t\rightharpoonup\mu t$
and 
further
$\widehat{\Gamma}_T-\widehat{\Gamma}_t \rightharpoonup \mu(T-t) $
from 
Slutsky's theorem.

Since $\widehat{\gamma}$ is Gaussian process,
we use Chernoff bound $F_{\mathcal{N}(0,1)}(x) \leq \frac{1}{2}e^{-x^2/2}$
to meet Eqs.~(\ref{eq:covcond1}), (\ref{eq:covcond2}).
Note Eq.~(\ref{eq:covcond3}) is satisfied from bounded convergence theorem.
As a consequence, MGF convergence follows
and the analyis of convergence rate is the same with SSM case.
\end{proof}

\subsection{Rare-Event Limit}
\begin{proof}
[of Lemma~\ref{thm:uconv3}]
It follows from
\begin{equation*}
\begin{split}
\left\langle {u}^\epsilon_t |\gamma^\epsilon_0=\gamma_{\pm}\right\rangle 
& = \left\langle e^{-{\Gamma}^\epsilon_t}{u}^\epsilon_0 |\gamma^\epsilon_0=\gamma_{\pm}\right\rangle 
\\
\text{Var}(u^\epsilon_t|\gamma^\epsilon_0=\gamma_{\pm}) &=
\left\langle \left( e^{-{\Gamma}^\epsilon_t}u_0^\epsilon \right)^2 
|\gamma^\epsilon_0=\gamma_{\pm} \right\rangle
-\left\langle e^{-{\Gamma}^\epsilon_t}u^\epsilon_0 |\gamma^\epsilon_0=\gamma_{\pm}\right\rangle^2\\
& \quad + \sigma_u^2  \int^t_0  \left\langle e^{-2({\Gamma}^\epsilon_t-{\Gamma}^\epsilon_s)}
 |\gamma^\epsilon_0=\gamma_{\pm}\right\rangle ds.
\end{split}
\end{equation*}
\end{proof}

\begin{proof}
[of Lemma~\ref{lem:ssmd}]
We take $\epsilon\to \infty$ in
Eq.~(\ref{eq:ssmcharasy})
and use
Theorem~\ref{thm:unif}.
In view of 
(\ref{eq:rer}),
this yields the case of $t=0$.
We
invoke
(\ref{eq:probgammats}) to complete proof.

\end{proof}

\begin{proof}
[of Lemma~\ref{lem:dsmd}]
We take $\epsilon\to \infty$ to
Eq.~(\ref{eq:dsmdasym})
for the case of $t=0$.
Direct computation of
(\ref{eq:spmcharasy}) leads to the result.

\end{proof}

\end{appendix}

\bibliographystyle{siam.bst}
\bibliography{ssm}

\begin{thebibliography}{10}

\bibitem{anderson1979optimal}
{\sc B.~Anderson and J.~Moore}, {\em Optimal filtering}, vol.~11, Prentice-hall
  Englewood Cliffs, NJ, 1979.

\bibitem{bennett1992inverse}
{\sc A.~F. Bennett}, {\em {Inverse Methods in Physical Oceanography}},
  Cambridge University Press, 1992.

\bibitem{bensoussan2011asymptotic}
{\sc A.~Bensoussan, J.-L. Lions, and G.~Papanicolaou}, {\em Asymptotic analysis
  for periodic structures}, vol.~374, American Mathematical Soc., 2011.

\bibitem{bishop2006pattern}
{\sc C.~M. Bishop et~al.}, {\em Pattern recognition and machine learning},
  vol.~4, springer New York, 2006.

\bibitem{bohr2005dynamical}
{\sc T.~Bohr, M.~Jensen, G.~Paladin, and A.~Vulpiani}, {\em Dynamical systems
  approach to turbulence}, vol.~8, Cambridge Univ Pr, 2005.

\bibitem{branicki2012filtering}
{\sc M.~Branicki, B.~Gershgorin, and A.~Majda}, {\em Filtering skill for
  turbulent signals for a suite of nonlinear and linear extended kalman
  filters}, Journal of Computational Physics, 231 (2012), pp.~1462--1498.

\bibitem{branicki2013dynamic}
{\sc M.~Branicki and A.~Majda}, {\em Dynamic stochastic superresolution of
  sparsely observed turbulent systems}, Journal of Computational Physics,
  (2013).

\bibitem{castronovo2008mathematical}
{\sc E.~Castronovo, J.~Harlim, and A.~Majda}, {\em Mathematical test criteria
  for filtering complex systems: Plentiful observations}, Journal of
  Computational Physics, 227 (2008), pp.~3678--3714.

\bibitem{chen2000mixture}
{\sc R.~Chen and J.~Liu}, {\em Mixture kalman filters}, Journal of the Royal
  Statistical Society: Series B (Statistical Methodology), 62 (2000),
  pp.~493--508.

\bibitem{cioranescu2000introduction}
{\sc D.~Cioranescu and P.~Donato}, {\em Introduction to homogenization},
  (2000).

\bibitem{5977695}
{\sc D.~Crouse, P.~Willett, K.~Pattipati, and L.~Svensson}, {\em A look at
  gaussian mixture reduction algorithms}, in Information Fusion (FUSION), 2011
  Proceedings of the 14th International Conference on, July 2011, pp.~1--8.

\bibitem{doucet2001sequential}
{\sc A.~Doucet, N.~De~Freitas, and N.~Gordon}, {\em Sequential Monte Carlo
  methods in practice}, Springer Verlag, 2001.

\bibitem{evensen2009data}
{\sc G.~Evensen}, {\em Data assimilation: the ensemble Kalman filter}, Springer
  Verlag, 2009.

\bibitem{frisch1996turbulence}
{\sc U.~Frisch}, {\em Turbulence}, Turbulence, by Uriel Frisch, pp. 310. ISBN
  0521457130. Cambridge, UK: Cambridge University Press, January 1996., 1
  (1996).

\bibitem{gelb1974applied}
{\sc A.~Gelb}, {\em Applied optimal estimation}, MIT Press, 1974.

\bibitem{gershgorin2010test}
{\sc B.~Gershgorin, J.~Harlim, and A.~Majda}, {\em Test models for improving
  filtering with model errors through stochastic parameter estimation}, Journal
  of Computational Physics, 229 (2010), pp.~1--31.

\bibitem{gershgorin2010improving}
{\sc B.~Gershgorin, J.~Harlim, and A.~J. Majda}, {\em Improving filtering and
  prediction of spatially extended turbulent systems with model errors through
  stochastic parameter estimation}, Journal of Computational Physics, 229
  (2010), pp.~32--57.

\bibitem{gershgorin2008nonlinear}
{\sc B.~Gershgorin and A.~Majda}, {\em A nonlinear test model for filtering
  slow-fast systems}, Communications in Mathematical Sciences, 6 (2008),
  pp.~611--649.

\bibitem{ghahramani2000variational}
{\sc Z.~Ghahramani and G.~E. Hinton}, {\em Variational learning for switching
  state-space models}, Neural computation, 12 (2000), pp.~831--864.

\bibitem{gordon1993novel}
{\sc N.~Gordon, D.~Salmond, and A.~Smith}, {\em Novel approach to
  nonlinear/non-gaussian bayesian state estimation}, in Radar and Signal
  Processing, IEE Proceedings F, vol.~140, IET, 1993, pp.~107--113.

\bibitem{harlim2008filtering}
{\sc J.~Harlim and A.~Majda}, {\em Filtering nonlinear dynamical systems with
  linear stochastic models}, Nonlinearity, 21 (2008), p.~1281.

\bibitem{harlim2013test}
{\sc J.~Harlim and A.~J. Majda}, {\em Test models for filtering with
  superparameterization}, Multiscale Modeling \& Simulation, 11 (2013),
  pp.~282--308.

\bibitem{jazwinski1970stochastic}
{\sc A.~Jazwinski}, {\em Stochastic processes and filtering theory, vol. 64.
  san diego, california: Mathematics in science and engineering}, 1970.

\bibitem{Kalman60}
{\sc R.~Kalman et~al.}, {\em {A new approach to linear filtering and prediction
  problems}}, Journal of basic Engineering, 82 (1960), pp.~35--45.

\bibitem{kalman1961new}
{\sc R.~E. Kalman and R.~S. Bucy}, {\em New results in linear filtering and
  prediction theory}, Journal of Basic Engineering, 83 (1961), pp.~95--108.

\bibitem{kalnay2003atmospheric}
{\sc E.~Kalnay}, {\em {Atmospheric Modeling, Data Assimilation, and
  Predictability}}, Cambridge University Press, 2003.

\bibitem{keating2011new}
{\sc S.~R. Keating, A.~J. Majda, and K.~S. Smith}, {\em New methods for
  estimating poleward eddy heat transport using satellite altimetry}, Mon. Wea.
  Rev,  (2011).

\bibitem{kushner1967approximations}
{\sc H.~Kushner}, {\em Approximations to optimal nonlinear filters}, Automatic
  Control, IEEE Transactions on, 12 (1967), pp.~546--556.

\bibitem{law2014data}
{\sc K.~Law, A.~Stuart, and K.~Zygalakis}, {\em {Data Assimilation: A
  Mathematical Introduction}}, Springer, 2015.

\bibitem{majda2012filtering}
{\sc A.~J. Majda and J.~Harlim}, {\em {Filtering Complex Turbulent Systems}},
  Cambridge University Press, 2012.

\bibitem{majda2010mathematical}
{\sc A.~J. Majda, J.~Harlim, and B.~Gershgorin}, {\em Mathematical strategies
  for filtering turbulent dynamical systems}, Discrete and Continuous Dynamical
  Systems, 27 (2010), pp.~441--486.

\bibitem{mao2006stochastic}
{\sc X.~Mao and C.~Yuan}, {\em Stochastic differential equations with Markovian
  switching}, World Scientific, 2006.

\bibitem{maybeck1982stochastic}
{\sc P.~S. Maybeck}, {\em Stochastic models, estimation, and control}, vol.~3,
  Academic Press, 1982.

\bibitem{murphy2012machine}
{\sc K.~P. Murphy}, {\em Machine learning: a probabilistic perspective}, MIT
  Press, 2012.

\bibitem{oliver2008inverse}
{\sc D.~S. Oliver, A.~C. Reynolds, and N.~Liu}, {\em Inverse theory for
  petroleum reservoir characterization and history matching}, Cambridge
  University Press, 2008.

\bibitem{pavliotis2008multiscale}
{\sc G.~Pavliotis and A.~Stuart}, {\em Multiscale methods: averaging and
  homogenization}, vol.~53, Springer, 2008.

\bibitem{reich2015probabilistic}
{\sc S.~Reich and C.~Cotter}, {\em {Probabilistic Forecasting and Bayesian Data
  Assimilation}}, Cambridge University Press, 2015.

\bibitem{sapsis2013statistically}
{\sc T.~P. Sapsis and A.~J. Majda}, {\em A statistically accurate modified
  quasilinear gaussian closure for uncertainty quantification in turbulent
  dynamical systems}, Physica D: Nonlinear Phenomena,  (2013).

\bibitem{sorenson1971recursive}
{\sc H.~W. Sorenson and D.~L. Alspach}, {\em Recursive bayesian estimation
  using gaussian sums}, Automatica, 7 (1971), pp.~465--479.

\bibitem{stordal2011bridging}
{\sc A.~Stordal, H.~Karlsen, G.~N{\ae}vdal, H.~Skaug, and B.~Vall{\`e}s}, {\em
  Bridging the ensemble kalman filter and particle filters: the adaptive
  gaussian mixture filter}, Computational Geosciences, 15 (2011), pp.~293--305.

\bibitem{walter2006conditional}
{\sc J.~Walter and C.~Sch{\"u}tte}, {\em Conditional averaging for diffusive
  fast-slow systems: a sketch for derivation}, in Analysis, Modeling and
  Simulation of Multiscale Problems, Springer, 2006, pp.~647--682.

\bibitem{zakharov1992kolmogorov}
{\sc V.~Zakharov, V.~L'vov, and G.~Falkovich}, {\em Kolmogorov spectra of
  turbulence 1. wave turbulence.}, Kolmogorov spectra of turbulence 1. Wave
  turbulence., by Zakharov, VE; L'vov, VS; Falkovich, G.. Springer, Berlin
  (Germany), 1992, 275 p., ISBN 3-540-54533-6,, 1 (1992).

\end{thebibliography}

\end{document}